\tikzset{node distance=2em, ch/.style={circle,draw,on chain,inner sep=2pt},chj/.style={ch,join},every path/.style={shorten >=4pt,shorten <=4pt},line width=1pt,baseline=-1ex}
\newtheorem{thm}{Theorem}
\newtheorem{lem}[thm]{Lemma}
\newtheorem{prop}[thm]{Proposition}
\newtheorem{conj}[thm]{Conjecture}
\newtheorem{cor}[thm]{Corollary}
\newtheorem{defe}[thm]{Definition}
\theoremstyle{remark}
\newtheorem{rem}[thm]{Remark}
\newtheorem{exam}[thm]{Example}
\theoremstyle{definition}
\newcommand\myurl[1]{\url{#1}}
\newcommand{\nc}{\newcommand}
\nc{\ssec}{\subsection}
\nc{\on}{\operatorname}
\nc{\sE}{\mathscr{E}}
\nc{\sF}{\mathscr{F}}
\nc{\sL}{\mathscr{L}}
\nc{\sD}{\mathscr{D}}
\nc{\sA}{\mathscr{A}}
\nc{\cC}{\mathcal{C}}
\nc{\cG}{\mathcal{G}}
\nc{\cV}{\mathcal{V}}
\nc{\cK}{{k(\!(s)\!)}}
\nc {\K}{\mathcal{K}}
\nc{\cE} {\mathcal{E}}
\nc{\Kl}{\mathrm{Kl}}
\nc{\cO}{\mathcal{O}}
\nc{\cF}{\mathcal{F}}
\nc{\cZ}{\mathcal{Z}}
\nc{\cD}{\mathcal{D}}
\nc{\cDt}{\mathcal{D}^\times}
\nc{\cH}{\mathcal{H}}
\nc{\bZ}{\mathbb{Z}}
\nc{\bQ}{\mathbb{Q}}
\nc{\bR}{\mathbb{R}}
\nc{\bC}{\mathbb{C}}
\nc{\bQl}{\overline{\mathbb{Q}}_\ell}
\nc{\bQlt}{\bQl^\times} 
\nc{\FG}{\mathrm{FG}}
\nc{\dR}{\mathrm{dR}}
\nc{\uG}{\underline{G}}
\nc{\cB}{\mathcal{B}}
\nc{\cU}{\mathcal{U}}
\nc{\rat}{\mathrm{rat}}
\nc{\Hyp}{\mathrm{Hyp}}
\nc{\Lie}{\mathrm{Lie}}
\nc{\fF}{\mathfrak{F}}
\nc{\fB}{\mathfrak{B}}
\nc{\fZ}{\mathfrak{Z}}
\nc{\fx}{\mathfrak{x}}
\nc{\fy}{\mathfrak{y}}
\nc{\fb}{\mathfrak{b}}
\nc{\fk}{\mathfrak{k}}
\nc{\fI}{\mathfrak{i}}
\nc{\fj}{\mathfrak{j}}
\nc{\fg}{\mathfrak{g}}
\nc{\fu}{\mathfrak{u}}
\nc{\fl}{\mathfrak{l}}
\nc{\fn}{\mathfrak{n}}
\nc{\cP}{\mathcal{P}}
\nc{\ft}{\mathfrak{t}}
\nc{\fz}{\mathfrak{z}}
\nc{\fc}{\mathfrak{c}}
\nc{\fh}{\mathfrak{h}}
\nc{\fp}{\mathfrak{p}}
\nc{\bone}{\mathbf{1}}
\nc{\tg}{\mathtt{g}}
\nc{\hfg}{\widehat{\fg}}
\nc{\cg}{\check{\fg}}
\nc{\ch}{\check{\fh}}
\nc{\hG}{\check{G}}
\nc{\hg}{\widehat{\mathfrak{g}}}
\nc{\Ug}{\widehat{U}(\mathfrak{g})}
\nc{\bGm}{\mathbb{G}_m}
\nc{\bGa}{\mathbb{G}_a}
\nc{\bL}{\mathbf{L}}
\nc{\bK}{\mathbf{K}}
\nc{\bJ}{\mathbf{J}}
\nc{\bI}{\mathbf{I}}
\nc{\bV}{\mathbb{V}}
\nc{\bP}{\mathbb{P}}
\nc{\bA}{\mathbb{A}}
\nc{\bN}{\mathbb{N}}
\nc {\Q}{\mathrm{Q}}
\nc{\diag}{\mathrm{diag}}
\nc{\ev}{\mathrm{ev}}
\nc{\Res}{\mathrm{Res}}
\nc{\Fl}{\mathcal{F}\ell}
\nc{\Ad}{\mathrm{Ad}}
\nc{\ad}{\mathrm{ad}}
\nc{\pr}{\mathrm{pr}}
\nc{\Sl}{\mathfrak{sl}}
\nc{\gl}{\mathfrak{gl}}
\nc{\ra}{\rightarrow}
\nc{\tra}{\twoheadrightarrow}
\nc{\hra}{\hookrightarrow}
\nc{\quo}{\mathopen{ /\!/}}
\nc{\GL}{\mathrm{GL}}
\nc{\SL}{\mathrm{SL}}
\nc{\Sp}{\mathrm{Sp}}
\nc{\SO}{\mathrm{SO}}
\nc{\so}{\mathfrak{so}}
\nc{\PGL}{\mathrm{PGL}}
\nc{\Bun}{\mathrm{Bun}}
\nc{\supp}{\mathrm{supp}}
\nc{\bgamma}{\bar{\gamma}}
\nc{\I}{\mathrm{I}}
\nc{\II}{\mathrm{II}}
\nc{\III}{\mathrm{III}}
\nc{\ab}{\mathrm{ab}}
\nc{\td}{\mathrm{d}}
\nc{\Ht}{\mathrm{ht}}
\nc         {\rar}[1]       {\stackrel{#1}{\longrightarrow}}
\nc{\fa}{\mathfrak{a}}
\nc{\Hitch}{\mathrm{Hitch}}
\nc{\RS}{\mathrm{RS}}
\nc{\tp}{\mathfrak{p}}
\nc{\cA}{\mathcal{A}}
\nc{\cN}{\mathcal{N}}
\nc{\cW}{\mathcal{W}}
\nc{\opp}{\mathrm{opp}}
\nc{\Ind}{\mathrm{Ind}}
\nc{\sAn}{\mathrm{can}}
\nc{\Vac}{\mathrm{Vac}}
\nc{\Op}{\mathrm{Op}}
\nc{\Lg}{\check{\fg}}
\nc{\LV}{\check{V}}
\nc{\Lh}{\check{h}}
\nc{\LG}{\check{G}}
\nc{\Spec}{\mathrm{Spec}}
\nc{\End}{\mathrm{End}}
\nc{\rX}{\mathring{X}}
\nc{\ru}{\mathring{u}}
\nc{\sW}{\mathscr{W}}
\nc{\sH}{\mathscr{H}}
\nc{\sV}{\mathscr{V}}
\nc{\geom}{\mathrm{geom}}
\nc{\Irr}{\mathrm{Irr}}
\nc{\fm}{\mathfrak{m}}
\nc{\aff}{\mathrm{aff}}
\nc{\Aut}{\mathrm{Aut}}
\nc{\cJ}{\mathcal{J}}
\nc{\fs}{\mathfrak{s}}
\nc{\Stab}{\mathrm{Stab}}
\nc{\tw}{{\widetilde{w}}}
\nc{\gen}{\mathrm{gen}}
\nc{\genn}{\mathrm{genn}}
\nc{\sss}{\mathrm{ss}}
\nc{\spp}{\mathfrak{sp}}
\nc{\Hom}{\mathrm{Hom}}
\nc{\bm}{\mathbf{m}}
\nc{\HG}{\mathcal{HG}}
\nc{\Gal}{\mathrm{Gal}}
\nc{\Sym}{\mathrm{Sym}}
\nc{\rank}{\mathrm{rank}}
\nc{\tP}{\mathtt{P}}
\nc{\tL}{\mathtt{L}}
\nc{\tU}{\mathtt{U}}
\nc{\tW}{\widetilde{W}}
\nc{\Hk}{\on{Hk}}
\nc{\cL}{\mathcal{L}}
\nc{\talpha}{\widetilde{\alpha}}
\nc{\tQ}{{\widetilde{Q}}}
\nc{\ochi}{\overline{\chi}}
\nc{\tdelta}{\widetilde{\Delta}}
\nc{\wt}{\mathrm{wt}}
\nc{\fQ}{\mathfrak{Q}}
\nc{\Rep}{\mathrm{Rep}}
\nc{\Conn}{\mathrm{Conn}}
\nc{\Hecke}{\mathrm{Hecke}}
\nc{\Gr}{\mathrm{Gr}}
\nc{\GR}{\mathrm{GR}}
\nc{\IC}{\mathrm{IC}}
\nc{\Std}{\mathrm{Std}} 
\nc{\Db}{\mathrm{D}^{\mathrm{b}}}
\nc{\tr}{\mathrm{tr}}
\nc{\Hit}{\mathrm{Hit}}
\nc{\gr}{\mathrm{gr}}
\nc{\Fun}{\mathrm{Fun}~}
\newcommand{\quash}[1]{}
\title{Hypergeometric sheaves for classical groups via geometric Langlands} 
\author{Masoud Kamgarpour, Daxin Xu, Lingfei Yi} 
\date{\today}
\begin{document} 

\maketitle

\begin{abstract}
	In a previous paper, the first and third authors gave an explicit realization of the geometric Langlands correspondence for hypergeometric sheaves, considered as $\GL_n$-local systems.
	Certain hypergeometric local systems admit a symplectic or orthogonal structure, which can be viewed as $\hG$-local systems, for a classical group $\hG$. 
	This article aims to realize the geometric Langlands correspondence for these $\hG$-local systems. 

	We study this problem from two aspects. 
	In the first approach, we define the hypergeometric automorphic data for a classical group $G$ in the framework of Yun, 
	one of whose local components is a new class of euphotic representations in the sense of Jakob--Yun. 
	We prove the rigidity of hypergeometric automorphic data under natural assumptions, which allows us to define $\hG$-local systems $\cE_{\hG}$ on $\bGm$ as Hecke eigenvalues (in both $\ell$-adic and de Rham setting).
	In the second approach (which works only in the de Rham setting), we quantize an enhanced ramified Hitchin system, following Beilinson--Drinfeld and Zhu, and identify $\cE_{\hG}$ with certain $\hG$-opers on $\bGm$. 
	Finally, we compare these $\hG$-opers with hypergeometric local systems. 
\end{abstract}

\tableofcontents

\section{Introduction}
\subsection{Overview}

Hypergeometric functions have a long history, going back to the works of Wallis, Euler, Gauss and etc. 
The geometry underpinning hypergeometric functions emerged from Riemann's study of the local system of solutions of the Euler–Gauss hypergeometric differential equation.
In modern time, Katz \cite{KatzBook} has systematically studied the (generalized) hypergeometric local systems on the torus $\mathbb{G}_m$ (or $\mathbb{G}_m-\{1\}$) over a finite field $k$ (resp. field of complex numbers $\bC$). 
They are $\ell$-adic realizations of \textit{finite hypergeometric functions} over $k$ (resp. expressed as \textit{hypergeometric differential equations} over $\mathbb{C}$).

Let $n> m$ be two non-negative integers, $\lambda\in \mathbb{C}^{\times}$, and $\underline{\alpha}=\{\alpha_1,\dots,\alpha_n\}$, $\underline{\beta}=(\beta_1,\cdots,\beta_m)$ two sequences of complex numbers. 
We denote by $\Hyp(\underline{\alpha},\underline{\beta})$ the associated (irregular) hypergeometric differential equation (of type $(n,m)$) on $\bGm$: 
\begin{equation}\label{eq:hyp}
	\Hyp_{\lambda}(\underline{\alpha};\underline{\beta}):=\prod_{i=1}^n(t \frac{d}{dt}-\alpha_i)-(-1)^{n+m}\lambda t\prod_{j=1}^{m}(t\frac{d}{dt}-\beta_j), 
\end{equation}
where $t$ denotes a coordinate of $\bGm$.  
We abusively use this notation to denote the corresponding connection on the rank $n$ trivial bundle over $\bGm$. 
In the definition of $\ell$-adic hypergeometric local system, $\alpha_i$ and $\beta_j$'s are replaced by $\ell$-adic mutiplicative characters $\delta_i$ and $\rho_j$ of $k^{\times}$. 

Certain hypergeometric local systems admit a symplectic or orthogonal symmetry structure \cite[\S~3.4, \S~8.8]{KatzBook}. 
Roughly speaking, the present paper aims to realize the geometric Langlands correspondence for hypergeometric $\hG$-local systems, with a classical group $\hG$. 

Some examples have been explored in the work of Heinloth-Ng\^o-Yun on generalized Kloosterman sheaves \cite{HNY} and Frenkel-Gross' rigid connection for reductive groups \cite{FG, Zhu} (see \cite[\S 5.2]{XuZhu} for their relationship with hypergeometric local systems). 
When $\hG=\GL_n$, this problem is completely solved in a recent work of the first and third authors \cite{KY}. Note that in \textit{loc.cit}, they also constructed the geometric Langlands correspondence for \emph{tame} hypergeometric local systems, i.e. one may assume $n=m$ in \eqref{eq:hyp}. 
While in the current paper, we only work with \emph{wild} hypergeometric local systems, where $n> m$.

In this article, we adopt two methods for studying this problem. 

In the first approach, we introduce a class of \textit{euphotic representations} in the sense of Jakob-Yun \cite{JY} for a classical group $G$ and use them to define the \textit{hypergeometic automorphic data of $G$} in the framework of Yun \cite{YunCDM}. 
Under appropriate genericness conditions, we show the rigidity of hypergeometric automorphic data, generalizing the rigidity results in \cite{HNY,KY}. 
After \cite{YunCDM}, we can associate to hypergeometric automorphic data $\hG$-local systems on $\bGm$, as resulting Hecke eigenvalues, where $\hG$ denotes the Langlands dual group of $G$.
In \cite{KY}, they proved that, when $\hG=\GL_n$, these Hecke eigenvalues are hypergeometric local systems by comparing Frobenius traces. And we expect similar results hold for general $\hG$. 
However, obtaining an explicit expression for the trace function of these $\hG$-local systems, for general $\hG$, seems infeasible, since the geometry of the intervening moduli stack of bundles is more complicated. 

This brings us to our second method in the de Rham setting, via quantization of an appropriate ramified Hitchin system. 
The hypergeometric automorphic data gives rise to a group scheme $\cG$ over $\bP^1$ with generic fiber $G$. 
We study the associated Hitchin map on $T^*\Bun_\cG$ and determine its image $\Hitch(\bP^1)_\cG$. 
In the Galois side, an explicit space $\Op_{\cg}(\bP^1)_{\cG}$ of $\cg$-opers on $\bP^1$ with suitable singularities at $\{0,\infty\}$ associated to $\cG$ quantizes the Hitchin base $\Hitch(\bP^1)_\cG$. 
Following \cite{BD, Zhu}, we then quantize the enhanced Hitchin map to obtain Hecke eigensheaves on $\Bun_\cG$ from $\Op_{\cg}(\bP^1)_{\cG}$. 
We compare two constructions and identify opers of $\Op_{\cg}(\bP^1)_{\cG}$ with (de Rham) $\hG$-local systems produced in the previous paragraph. 
In some cases, we can relate these opers to hypergeometric local systems, which partially achieve our goal in the de Rham setting. 

\subsection{Hypergeometric automorphic data}\label{intro ss: def of hyp auto data}
The notion of automorphic data and its rigidity were developed by Yun \cite{YunCDM}, building on earlier works of Heinloth-Ng\^o-Yun and Yun \cite{HNY,YunGalois,YunEpipelagic}. 
A rigid automorphic data is a collection of level structures such that the space of corresponding equivariant automorphic forms has a small dimension, see \cite[\S~2]{YunCDM} for the precise formulation. 
We briefly explain the automorphic data considered in this paper. 

Let $k$ be a finite field of characteristic $p$, $F=k(t)$ the rational function field over $k$ where $t$ is a coordinate at $0\in|\bP^1|$, and $F_x$ the local fields of $F$ at $x\in |\bP^1|$. 
Let $G$ be a split classical group over $k$\footnote{In this article, a classical group means $\GL_n$ or an almost simple group whose root system is of type A, B, C, or D.}, $d$ a fundamental degree of $G$ (different to $n$ when $G$ is type $D_{n}$ and $n$ is odd\footnote{We exclude this case for the sake of the construction of linear functional $\phi$.}). 
We denote by $P$ the parahoric subgroup of $G(F_\infty)$ associated to $\check{\rho}/d$, where $\check{\rho}$ is the half-sum of positive coroots. 
Consider the Moy-Prasad filtration of $P$, where we denote the first three terms as $P\supset P(1)\supset P(2)$.

The notion of \textit{euphotic representations} was introduced by Jakob-Yun \cite[\S 2.2]{JY}. 
We consider euphotic representations of the $t^{-1}$-adic group $G(F_\infty)$ defined by $P$ and a triple $\mu=(\phi,\psi,\rho)$:
\begin{itemize}
\item $\phi$ is a linear function on the vector space $V=P(1)/P(2)$. We require that $\phi$ is semistable (in the sense that its orbit under the action of $L=P/P(1)$ is closed) and that there is only one invariant polynomial of degree $d$ taking non-vanishing value on $\phi$. In \ref{sss:generic functional}, \ref{sss:toric pairs}, we give an explicit formula for $\phi$ as a generic linear function associated to certain Levi subgroup $M$, whose root system $\Phi_M$ is irreducible and Coxeter number equals to $d$.

\item $\psi:k\to \overline{\mathbb{Q}}_{\ell}^{\times}$ is a non-trivial additive character. 

\item Let $L_{\phi}$ be the connected stablizer of $\phi$ under the action of $L$, and $B_\phi$ a Borel of $L_\phi$ with quotient torus $T_\phi$. 
	Then $\rho$ is a character $\rho:T_\phi(k)\to \overline{\mathbb{Q}}_{\ell}^{\times}$.  
\end{itemize}

We set $J=B_\phi(k) P(1)$ and the above data defines a character $\mu=(\rho,\psi\phi):J\to \overline{\mathbb{Q}}_{\ell}^{\times}$. 
	
Let $B_0$ be a Borel subgroup of $L$ containing $B_\phi$, then $I=B_0P(1)$ is an Iwahori subgroup. Let $I^{\opp}\subset G(F_0)$ be the Iwahori subgroup opposite to $I$. We take a character $\delta: T(k)\to \overline{\mathbb{Q}}_\ell^{\times}$, viewed as a character of $I^{\opp}$ via quotient by $I^\opp(1)$. 
The data $(I^{\opp},\delta),(J,\mu)$ are called \textit{hypergeometric automorphic data}, see Definition \ref{d:hyp auto data}.

We consider the vector space of compact supported $\overline{\mathbb{Q}}_{\ell}$-valued functions: 
\[
\mathscr{C}(\delta,\mu)= \textnormal{Fun} \biggl(G(F)  \backslash G(\mathbb{A}_F)/\prod_{x\neq 0,\infty} G(\mathscr{O}_{x}),\  \overline{\mathbb{Q}}_{\ell}\biggr)^{(I^{\opp},\delta),(J,\mu)}.
\]
Assume that the Coxeter number $h_G$ of $G$ is less than $p$. Our first main result is the following: 

\begin{thm}[Theorem \ref{t:main rigid}, Proposition \ref{p:eigenvalue}] \label{t:intro main thm}Assume the characters $\delta$, $\rho$ are in general position as in Definition \ref{d:general position}.

\textnormal{(i)} The hypergeometric automorphic data is rigid in the sense of \cite{YunCDM}.
 
\textnormal{(ii)} The space $\mathscr{C}(\delta,\mu)$ has finite dimension and is decomposed into a direct sum of one-dimensional subspace parametrized by the set $Z_{G,\phi}^*$ of characters of $Z_{G,\phi}=T_\phi\cap Z_G$. 
\end{thm}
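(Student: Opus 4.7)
The plan is to prove parts (i) and (ii) together by giving an explicit description of $\mathscr{C}(\delta,\mu)$ as a space of functions on a suitable double coset set, following the framework of \cite{YunCDM} as implemented for Kloosterman data in \cite{HNY} and for the $\GL_n$ hypergeometric case in \cite{KY}.

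First, I would apply Weil--Heinloth uniformization to rewrite $\mathscr{C}(\delta,\mu)$ as the space of $(\delta,\mu)$-equivariant functions on $\Bun^{\mathrm{lev}}_\cG(k)$, where $\cG$ is the Bruhat--Tits group scheme on $\bP^1$ determined by the parahorics $(I^\opp, P(2))$ at $\{0,\infty\}$, enriched by the level structures encoded by the Levi quotients $T$ and $L$ respectively. One then stratifies $\Bun^{\mathrm{lev}}_\cG$ by the Birkhoff decomposition, whose strata are indexed by a subset of the extended affine Weyl group $\widetilde{W}$.

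Second, I would use the generic position of $\delta$ to kill all but a short list of strata. The $(I^\opp,\delta)$-equivariant function space on a Birkhoff cell vanishes whenever $\delta$ is nontrivial on a subtorus determined by the associated Weyl element, so general position cuts the support down to a finite collection of cells. This generalizes the direct matrix computation carried out for $\GL_n$ in \cite{KY}.

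Third, on each surviving stratum the remaining automorphism group is contained in $P/P(2)$, and its action on the $(J,\mu)$-isotypic component is controlled by $L_\phi$. The semistability of $\phi$ together with the uniqueness hypothesis on the degree-$d$ invariant polynomial imply that this action factors through $T_\phi$; the generic position of $\rho$ then projects onto the $Z_{G,\phi}$-fixed part. Assembling, $\mathscr{C}(\delta,\mu)$ decomposes canonically as a direct sum of one-dimensional subspaces indexed by $Z_{G,\phi}^*$, giving both the finite-dimensionality in (ii) and, a fortiori, the rigidity in (i).

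The main obstacle is the second step: identifying the precise list of surviving strata. For $\GL_n$ this is a direct matrix calculation, but for the remaining classical types the Moy--Prasad parahoric at $\check{\rho}/d$ interacts with the affine Weyl group in a more intricate, type-dependent way, requiring a case-by-case analysis across types A, B, C, and D, using the explicit description of $\phi$ as the generic functional attached to the toric pair $(M,\phi)$ with $M$ a Levi of Coxeter number $d$. Once this combinatorial input is in hand, passing to the $Z_{G,\phi}$-equivariant decomposition follows formally from the construction of $\mu$ via a semistable $\phi$ paired with a generic character on $T_\phi$.
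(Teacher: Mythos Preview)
Your high-level outline (uniformization, Birkhoff stratification, kill irrelevant strata) matches the paper's shape, but the plan misidentifies the key technical steps and has genuine gaps.

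First, the Birkhoff stratification by $\widetilde{W}$ alone is too coarse: for fixed $\tw$ the cell $I^-\tw I/J$ is not a single object but is further stratified by $T_\phi$-orbits on $U_0/U_\phi$ (see \eqref{eq:Bun_cG' decomp}). The paper's central structural input (Theorem~\ref{t:toric}) is that this $T_\phi$-action has an open dense orbit with finite stabilizer --- the \emph{toric} property --- and its proof occupies all of \S\ref{s:toric} via a case-by-case analysis of root systems and $T_\phi$-weight spaces. Your proposal does not isolate this; without it the ``short list of surviving strata'' cannot even be correctly parametrized.

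Second, the killing of irrelevant strata does not proceed by ``generic position of $\delta$'' alone. General position (Definition~\ref{d:general position}) is a joint condition $\rho|_S\neq\delta^w|_S$ on $\delta$ and $\rho$, and it is what rules out strata whose automorphism group contains a positive-dimensional subtorus of $T_\phi$ (cases (i) and (iii) of Proposition~\ref{p:wildAut}). But there is a separate family of strata whose automorphism group contains a unipotent one-parameter subgroup coming from $P(1)$ (case (ii)); these are killed not by the multiplicative characters at all, but by the additive character $\psi\phi$, together with the fact that every irreducible $G_0$-submodule of $\fg_1$ meets $\fm_1$ nontrivially (Proposition~\ref{p:Levi nonzero intersects subrepns}). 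This second mechanism is entirely absent from your plan.

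Third, your step 3 is not how the argument concludes. On the surviving strata $I^-\tw\ru J$ with $\tw\in\Omega$, the automorphism group is computed directly to equal the finite group $Z_{G,\phi}$ (Proposition~\ref{p:wildAut}(iv)); there is no intermediate reduction through $L_\phi$ or appeal to semistability of $\phi$ at this stage. The decomposition in (ii) then follows formally from the action of $Z_{G,\phi}$ on the resulting one-dimensional equivariant function space on each component.
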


For each character $\sigma$ of $Z_{G,\phi}$, the machinery of \cite{YunCDM} allows us to produce an $\ell$-adic $\hG$-local system $\cE_{\hG}(\delta,\mu)_{\sigma}$ on $\mathbb{G}_{m,k}$. It is defined as the Hecke eigenvalue of the Hecke eigensheaf $\cA(\delta,\mu)_{\sigma}$, on the moduli of $G$-bundles on $\bP^1$ with certain level structures, associated to the function of $\mathscr{C}(\delta,\mu)$ indexed by $\sigma$. 
When $\sigma=1$ is the trivial character, we set $\cE_{\hG}(\delta,\mu)=\cE_{\hG}(\delta,\mu)_1$ for simplicity. 

There is a parallel story when $\ell$-adic local systems on varieties over $k$ are replaced by connections on algebraic vector bundles on varieties over $\mathbb{C}$. Similarly we choose a generic linear function $\phi: V\to \bC$ and linear functions $\rho:\Lie(T_\phi)\to \bC$ and $\delta:\Lie(T)\to \bC$. 
We replace the Artin-Schreier sheaf associated to $\psi$ by the exponential $\cD$-module $\bC\langle x,\partial_x\rangle/(\partial_x-1)$ on $\bA^1_\bC$. If $(\delta,\mu)$ forms a rigid hypergeometric automorphic data, then the same construction as in the $\ell$-adic setting allows us to define a Hecke eigensheaf $\cA^{\dR}(\delta,\mu)_{\sigma}$ and its Hecke eigenvalue $\cE_{\hG}^{\dR}(\delta,\mu)_{\sigma}$, a de Rham $\hG$-local system on $\mathbb{G}_{m,\mathbb{C}}$.

\subsection{Comparisons with previous works} 
When $d$ is equal to the Coxeter number $h_G$ of $G$, the above result is due to Heinloth-Ng\^o-Yun \cite{HNY}. 
When $G=\GL_n$, the above result is obtained in \cite{KY}. 
In these works, the Hecke eigenvalues $\cE_{\hG}(\delta,\mu)$ are well-understood: 
\begin{itemize}
\item When $d=h_G$, the $\ell$-adic Hecke eigenvalues generalize the classical Kloosterman sheaf introduced by Deligne \cite{Del77}. 
The parallel story for de Rham local systems is studied in the work of Frenkel-Gross \cite{FG} and of Zhu \cite{Zhu}. 
In a recent work of the second author with Zhu \cite{XuZhu}, they unified previous works and show that generalized Kloosterman sheaves for classical groups come from certain hypergeometric local systems. 
		
\item When $G=\GL_n$, the Hecke eigenvalues $\cE_{\GL_n}(\delta,\mu)$ are the $\ell$-adic hypergeometric local systems defined by characters $\delta$ and $\mu=(\phi,\psi,\rho)$ \cite[Proposition 25]{KY}. 
In this situation, they can treat the case $d=1$ and tame hypergeometric local systems as well. 
\end{itemize}

\subsection{Study of $\cE^{\dR}_{\hG}(0,\mu)$ via quantization of Hitchin's integrable system} \label{ss:EdRquantization}
The second part of the paper is devoted to the study of the Hecke eigenvalue $\cE^{\dR}_{\hG}(0,\mu)$ in the de Rham setting, where we take $\delta,\sigma$ to be trivial characters. 
We follow Beilinson-Drinfeld's approach \cite{BD} and a variant due to Zhu \cite{Zhu}, where they constructed the ``Galois-to-automorphic'' direction of the geometric Langlands correspondence over $\mathbb{C}$ for certain opers. 

We keep the above notations and assume moreover that $G$ is simply connected (and $\hG$ is therefore of adjoint type). 
Let $\cg$ denote the Lie algebra of $\hG$. 
Suppose $d>\frac{h_G}{2}$. We consider the space $\Op_{\cg}(\bGm)_{(0,\varpi(0)),(\infty,\frac{1}{d})}$ of $\hG$-opers on $\bGm$ that
\begin{itemize}
\item have a regular singularity at $0$ with residue equals to $0$;
		
\item have a possibly irregular singularity of maximal formal slope $\le 1/d$ at $\infty$. 
\end{itemize}

There is a simple description of these opers. 
Let $n$ be the rank of $G$, $\cg=\fn^-\oplus \ft \oplus \fn^+$ the Cartan decomposition and $p_{-1}\in \fn^-$ a principal nilpotent element. 
Consider the unique principal $\mathfrak{sl}_2$-triple $\{p_{-1},2\check{\rho},p_1\}$ and the $\ad_{p_1}$-invariant subspace $\fn^{p_1}\subset \fn$. 
The adjoint action of $\check{\rho}$ gives rise to a grading on $\fn^{p_1}$. 
Let $d_1\le d_2 \le\dots \le d_n$ be the fundamental degrees of $G$ and $p_1,\dots,p_n$ be a homogeneous basis of $\fn^{p_1}$ with $\deg(p_i)=d_i-1$.  
If $t$ denotes a coordinate of $\bGm$ with a single pole at $\infty$, then an oper of $\Op_{\cg}(\bGm)_{(0,\varpi(0)),(\infty,\frac{1}{d})}$ can be uniquely expressed as the following $\cg$-connection on the trivial $\hG$-bundle over $\bGm$:
\begin{equation} \label{eq:oper intro}
\nabla=\partial_t + p_{-1}t^{-1}+\sum_{d_i\ge d} \lambda_i p_i,\quad \lambda_i \in \mathbb{C}.
\end{equation}
In particular, $\Op_{\cg}(\bGm)_{(0,\varpi(0)),(\infty,\frac{1}{d})}$ is isomorphic to an affine space. 

The character $\mu$ in the automorphic data factors through an abelian quotient $\overline{J}$ of $J$ (see Proposition \ref{l:J}) and is parametrized by the dual of its Lie algebra $\overline{\fj}^*$. 
The conditions that $\phi$ being generic (Definition \ref{d:phi generic}) and $\rho$ being in the general position from $\delta=0$ (Definition \ref{d:general position}) cut out a generic open subset $U$ of $\overline{\fj}^*$ parametrizing characters $\mu$ such that $\{(I^{\opp},0),(J,\mu)\}$ forms a rigid hypergeometric automorphic data.
Our second main result can be summarized as follows: 

\begin{thm}[Theorem \ref{th:oper Hecke eigensheaf}] \label{t:intro Hecke eigenvalues}
Suppose $d>h_G/2$ in the case of type A, C \footnote{We restrict ourself to the case $d>h_G/2$ for the simplicity of the statement.} and $d=h_G$ or $h_G-2$ in the case of type B, D. 
There exists a canonical dominant map $\pi: \bar{\fj}^*\to \Op_{\cg}(\bGm)_{(0,\varpi(0)),(\infty,\frac{1}{d})}$ such that, 
if $\mu\in U$, 
then the underlying connection of the $\hG$-oper $\pi(\mu)$ is isomorphic to the de Rham $\hG$-local system $\cE^{\dR}_{\hG}(0,\mu)$. 
\end{thm}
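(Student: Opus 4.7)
The plan is to quantize an enhanced ramified Hitchin system attached to the group scheme $\cG\to\bP^1$ determined by the parahoric levels $(I^{\opp},P)$ at $(0,\infty)$, following Beilinson--Drinfeld \cite{BD} and Zhu \cite{Zhu}. First I would describe $\cG$ and compute the classical Hitchin base $\Hitch(\bP^1)_\cG$: Hitchin sections are cut out by a regular singularity at $0$ (Iwahori level, nilpotent leading term) and an irregular singularity at $\infty$ whose pole orders on the invariant polynomials are controlled by the Moy--Prasad grading attached to $\check{\rho}/d$. A degree count using the Chevalley isomorphism then identifies $\Hitch(\bP^1)_\cG$ with an affine space of the same dimension as $\Op_{\cg}(\bGm)_{(0,\varpi(0)),(\infty,\frac{1}{d})}$, which provides the expected canonical quantization of the former.

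Next I would construct the map $\pi$. The character $\mu$ factors through the abelian quotient $\overline{J}$ of $J$ (Proposition \ref{l:J}) and is classified by $\overline{\fj}^*$. Via the Feigin--Frenkel / Kostant slice description underlying \eqref{eq:oper intro}, each coefficient $\lambda_i$ with $d_i\ge d$ pairs naturally against a component of $\overline{\fj}^*$ produced by a Moy--Prasad graded piece of $\fg(F_\infty)$; the remaining components of $\overline{\fj}^*$ pair against the residue datum at $0$, which is trivial since $\delta=0$. This yields a linear map $\pi:\overline{\fj}^*\to\Op_{\cg}(\bGm)_{(0,\varpi(0)),(\infty,\frac{1}{d})}$. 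The restriction $d>h_G/2$ in types A, C, respectively $d\in\{h_G,h_G-2\}$ in types B, D, is precisely the range in which a direct count shows both sides have equal dimension; together with linearity this gives dominance.

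The Hecke eigensheaf is then produced by Beilinson--Drinfeld localization. For $\chi\in\Op_{\cg}(\bGm)_{(0,\varpi(0)),(\infty,\frac{1}{d})}$, the vacuum module for $\hfg$ at the critical level, quotiented by the maximal ideal determined by $\chi$ on the Feigin--Frenkel center, localizes to a twisted D-module $\cA_{\chi}$ on $\Bun_\cG$, and a Hecke eigenproperty with eigenvalue equal to the underlying $\cg$-connection of $\chi$ is formal from the BD--Zhu machinery. The central check is equivariance at the two ramification points: vanishing residue at $0$ matches $\delta=0$ on $I^{\opp}$, while equivariance under $(J,\mu)$ at $\infty$ amounts to matching $\mu\in\overline{\fj}^*$ with the image of $\chi$ under the symbol map restricted to the parahoric $P$, which by construction of $\pi$ is exactly $\pi^{-1}(\chi)$.

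The principal obstacle is precisely this equivariance check at $\infty$: one must analyze how the generic semistable functional $\phi$, its stabilizer $L_\phi$, and the Moy--Prasad filtration of $P$ interact with the opers \eqref{eq:oper intro} at slope $1/d$, and verify compatibility with the Kostant slice presentation at level $P$. The dichotomy between type A, C and type B, D in the statement presumably reflects subtle combinatorial features of the fundamental degrees entering this comparison. Once this check is in hand, the rigidity established in Theorem \ref{t:intro main thm} forces the Hecke eigensheaf attached to the distinguished function in $\mathscr{C}(0,\mu)$ indexed by $\sigma=1$ to be unique up to scalar, so that $\cA^{\dR}(0,\mu)\simeq\cA_{\pi(\mu)}$ for $\mu\in U$; comparing Hecke eigenvalues then gives the claimed isomorphism between $\cE^{\dR}_{\hG}(0,\mu)$ and the underlying connection of $\pi(\mu)$.
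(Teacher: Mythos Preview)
Your overall strategy is right --- quantize the Hitchin system via Beilinson--Drinfeld/Zhu and then compare with $\cA^{\dR}(0,\mu)$ using rigidity --- and the final step (clean extension forced by the unique relevant orbit) matches the paper. But several concrete pieces are off.

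First, the level at $\infty$ is not the parahoric $P$ but a specific subgroup $J^+\subset J=B_\phi P(1)$ with $J/J^+\simeq\overline{J}=T_\phi\times\fm_1$. The whole construction hinges on $\Bun_\cG\to\Bun_{\cG'}$ being a $\overline{J}$-torsor: this is what produces the homomorphism $U(\overline{\fj})\to\Gamma(\Bun_\cG,D')$ and makes the localized $\cD$-module automatically $(\overline{J},\mu)$-equivariant. With level $P$ you lose this structure and the equivariance check at $\infty$ becomes a genuine problem rather than a tautology.

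Second, the map $\pi$ is neither linear nor between spaces of equal dimension. One has $\dim\overline{\fj}^*=\dim T_\phi+\dim\fm_1=(n-m)+(m+1)=n+1$, while the oper space has dimension $n-m+1$; the map is a dominant morphism of affine spaces, not an isomorphism. In the paper $\pi$ arises by showing that a specific polynomial subalgebra $A=\bC[S_{i,d_i}:d_i\ge d]\subset\fZ$ generated by Segal--Sugawara operators embeds into $U(\overline{\fj})\subset\End(\Vac_{\fj^+})$, and that $A\xrightarrow{\sim}\Fun\Op_{\cg}(\bP^1)_\cG$. Dominance follows from injectivity of $A\hookrightarrow U(\overline{\fj})$, not from a dimension count. (Also, $\overline{\fj}$ lives entirely at $\infty$; none of its components ``pair against the residue at $0$''.)

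Third, the restriction on $d$ in types B, D is not a dimension phenomenon: it is exactly what is needed to prove that $S_{i,d_i}\cdot\bone\in U(\overline{\fj})$ inside $\Vac_{\fj^+}$. The argument analyzes the PBW components of $S_{i,d_i}$ and uses a height inequality to force all factors into $\ft_\phi\oplus\fm_1$; in types A, C the explicit Segal--Sugawara vectors of Molev and Yakimova give a bound $|\{\beta_b\in\Phi(\fm_1)\}|\le d$ that closes the argument for all $d>h_G/2$, while in types B, D one only gets it when $d_i\le d+2$, i.e.\ $d\in\{h_G,h_G-2\}$. This computation is the heart of the proof and your outline does not supply a mechanism for it.
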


\subsubsection{Strategy of the proof} \label{sss:intro proof quantization}
We prove the above result following Zhu's strategy \cite{Zhu}, where he treats the case $d=h_G$ (for a simple simply-connected almost simple group $G$). 
Let $\cG$ be the group scheme over $\bP^1$, defined by \eqref{eq:group cG}
\[
\cG|_{\bP^1-\{0,\infty\}} =G\times(\bP^1-\{0,\infty\}),\qquad \cG(\cO_0)=I^{\opp},\qquad \cG(\cO_\infty)=\ker(J\to \overline{J}). 
\]
We study the global Hitchin map $H^{cl}$ from the cotangent bundle $T^*\Bun_\cG$ of the moduli stack of $\cG$-bundles to the Hitchin base over $\bGm$ and its local variant $h^{cl}$ at $\infty$. 
Then we determine the schematic image $\Hitch(\bP^1)_\cG$ of $H^{cl}$, which is quantized by the space $\Op_{\cg}(\bP^1)_{(0,\varpi(0)),(\infty,\frac{1}{d})}$. 
We also quantize the global and local Hitchin maps $H^{cl}$ and $h^{cl}$ (Propositions \ref{p:j^+ image} and \ref{p:global Hitchin diagram}).
Compared to Zhu's setting, our proof involves more complicated computation of images of Hichin maps, and the action of Segal-Sugawara operators on certain induced module of affine Kac-Moody algebra. 

These quantization results allow us to apply a variant of the machinery of Beilinson-Drinfeld, dealing with suitable level structures, due to Zhu \cite[Corollary 9]{Zhu} to produce the automorphic $\cD$-modules on $\Bun_{\cG}$, with opers of $\Op_{\cg}(\bP^1)_{(0,\varpi(0)),(\infty,\frac{1}{d})}$ as their Hecke eigenvalues. 
Finally, we conclude the theorem by comparing these automorphic $\cD$-modules with $\cA^{\dR}(0,\mu)$ constructed by hypergeometric automorphic data \eqref{intro ss: def of hyp auto data}. 

\subsubsection{Functoriality of opers spaces} \label{sss:intro functoriality}
In view of equation \eqref{eq:oper intro}, we obtain the functorial relationship between $\Op_{\cg}(\bGm)_{(0,\varpi(0)),(\infty,\frac{1}{d})}$ via pushout by $\spp_{2n}\to\Sl_{2n}$, $\so_{2n+1}\to\Sl_{2n+1}$ when $d>n$, and $\so_{2n+1}\to \so_{2n+2}$ when $d>n+1$ (see Proposition \ref{p:oper functoriality} for details). 
This suggests that there should be certain functoriality between eigenvalues $\cE^{\dR}_{\hG}(0,\mu)$ for different groups via the same pushout. 
In the following, we conjecture an explicit description of Hecke eigenvalues $\cE^{\dR}_{\hG}(0,\mu)$, which implies the functoriality between $\cE^{\dR}_{\hG}(0,\mu)$.

\subsection{Hypergeometric differential equation and $\cE_{\hG}^{\dR}(0,\mu)$}
Opers \eqref{eq:oper intro} are closely related to hypergeometric differential equations \eqref{eq:hyp}. 
Let $G$ be one of the classical groups $\GL_n$, $\SO_{2n+1}$, $\Sp_{2n}$, $\SO_{2n}$, $\Std$ the standard representation of $G$ and $\{(I^{\opp},\delta),(J,\mu)\}$ a rigid hypergeometric automorphic data such that $(\delta,\rho)$.
In the de Rham setting, $\rho$ (resp. $\delta$) is defined as a linear function on $\Lie(T_\phi)$ (resp. $\Lie(T)$) (\S~\ref{intro ss: def of hyp auto data}). 
There exist decompositions of subtori (see \S~\ref{ss:classical}, \eqref{eq:T_phi})
\[
\Lie(T)\simeq \oplus_{i=1}^n \Lie(T_i)\quad \text{and} \quad
\Lie(T_\phi)\simeq \oplus_{j=1}^{n-m} \Lie(T_j)\oplus\Lie(Z)
\]
and we set $\delta_i=\delta(1_i)$, $\rho_j =\rho(1_j) \in \mathbb{C}$, where $1_i$ is a basis of $\Lie(T_i)\simeq\bC$. 
When each $\delta_i=0$, the rigidity of hypergeometric automorphic data is satisfied by requiring each $\rho_j\notin \mathbb{Z}$. 

When $G=\GL_n$, we take $\phi$ as in \cite[\S 7.2.2]{KY}. 
A variant of \cite[Proposition 25]{KY} in the de Rham setting shows the following isomorphism: 
\[
\cE^{\dR}_{\GL_{n}}(\delta,\mu)\simeq \Hyp_1(\delta_1,\dots,\delta_n;\rho_1,\dots,\rho_{n-m}). 
\]
Inspired by this fact and \S~\ref{sss:intro functoriality}, we make the following conjecture on the description of the de Rham local system $\cE_{\hG}^{\dR}(0,\mu)(\Std)$ associated to the standard representation of $\hG$. 

\begin{conj} \label{c:conjecture opers}
For $\hG=\SO_{n},\Sp_{2n}$ and a rigid hypergeometric automorphic data $(\delta=0,\mu=\rho\times \phi)$ of $G$, there exists a $\lambda\in \mathbb{C}^{\times}$ (depending on $\phi$), such that the following isomorphisms hold:

\textnormal{(i)} For $\hG=\SO_{2n+1}$, we have 
$\cE^{\dR}_{\SO_{2n+1}}(0,\mu)(\Std) \simeq 
\Hyp_\lambda(\underline{0};\rho_1,\dots,\rho_{n-m},-\rho_1,\dots,-\rho_{n-m},1/2)$.

\textnormal{(ii)} For $\hG=\Sp_{2n}$, we have 
$\cE^{\dR}_{\Sp_{2n}}(0,\mu)(\Std)
\simeq 
\Hyp_\lambda(\underline{0};\rho_1,\dots,\rho_{n-m},-\rho_1,\dots,-\rho_{n-m}).$

\textnormal{(iii)} For $\hG=\SO_{2n+2}$ and $d=2m>n+1$, we have 
$\cE^{\dR}_{\SO_{2n+2}}(0,\mu)(\Std) \simeq
\cE^{\dR}_{\SO_{2n+1}}(0,\mu)(\Std)
\oplus (\mathscr{O}_{\mathbb{G}_m},d).$
\end{conj}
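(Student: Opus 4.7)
The plan is to combine Theorem~\ref{t:intro Hecke eigenvalues} with the functoriality of oper spaces from \S~\ref{sss:intro functoriality} and the $\GL_n$ case already established in \cite[Proposition~25]{KY}.

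\textbf{Parts (i) and (ii).} By Theorem~\ref{t:intro Hecke eigenvalues}, $\cE^{\dR}_{\hG}(0,\mu)$ is the underlying $\cg$-connection of the oper $\pi(\mu)$ in the explicit form \eqref{eq:oper intro}. I would push this oper forward along the standard representation $\Std\colon\hG\to\SL_N$. By the functoriality of oper spaces (Proposition~\ref{p:oper functoriality}), the result lies in $\Op_{\Sl_N}(\bGm)_{(0,\varpi(0)),(\infty,1/d)}$, hence has principal nilpotent residue at $0$ and maximal slope at most $1/d$ at $\infty$. The central step is to identify this pushforward with the oper attached by \cite{KY} to a suitable hypergeometric automorphic datum on $\GL_N$. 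Once that is done, the de Rham version of \cite[Proposition~25]{KY} expresses the connection as a hypergeometric equation $\Hyp_\lambda(\underline{\alpha};\underline{\beta})$.

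\textbf{Parameter matching.} Since $\delta=0$, the residue of $\pi(\mu)$ at $0$ is the $\Std$-image of the principal nilpotent, so all exponents at $0$ vanish and $\underline{\alpha} = \underline{0}$. The exponents $\underline{\beta}$ at $\infty$ are determined by pairing $\rho$ with the $T_\phi$-weights on $\Std$. Self-duality of the standard representations of $\Sp_{2n}$ and $\SO_{2n+1}$ forces these weights, modulo the central line, to come in $\pm$ pairs, producing $\{\pm\rho_j\}_{j=1}^{n-m}$. For $\hG = \SO_{2n+1}$ the invariant line under the $\bZ/2$ self-duality contributes the extra exponent $1/2$, which should be traceable to the half-integer structure of the Moy-Prasad filtration at $\check{\rho}/d$. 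The scalar $\lambda$ is read off from the highest coefficient $\lambda_n$ in \eqref{eq:oper intro} and depends only on the normalisation of $\phi$, matching the form of the conjecture.

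\textbf{Part (iii).} For $\hG = \SO_{2n+2}$ with $d = 2m > n+1$, I would use functoriality along $\so_{2n+1} \hra \so_{2n+2}$ (valid precisely in this range, as noted in \S~\ref{sss:intro functoriality}): combined with Theorem~\ref{t:intro Hecke eigenvalues}, this realises $\cE^{\dR}_{\SO_{2n+2}}(0,\mu)$ as the pushforward of a corresponding $\SO_{2n+1}$-oper. Since $\Std_{\SO_{2n+2}}|_{\SO_{2n+1}} \simeq \Std_{\SO_{2n+1}} \oplus \bC$, applying $\Std$ to this pushforward splits the connection as $\cE^{\dR}_{\SO_{2n+1}}(0,\mu)(\Std) \oplus (\cO,d)$, reducing (iii) to (i). The residual compatibility to be verified is that the hypergeometric datum for $\SO_{2n+2}$ restricts to the expected datum for $\SO_{2n+1}$, which is immediate from the definition of $\rho$.

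\textbf{Main obstacle.} The hardest part will be the identification in Parts (i)--(ii) of the $\Std$-pushforward of $\pi(\mu)$ with the oper attached by \cite{KY} to a specific $\GL_N$-hypergeometric automorphic datum. This amounts to a commutativity statement between the Beilinson-Drinfeld type quantisations of Hitchin maps for $\hG$ and for $\SL_N$, and requires tracking both the principal $\mathfrak{sl}_2$-triple and the Moy-Prasad data under the embedding $\hG \hra \SL_N$. A secondary technical point is the appearance of the $1/2$ exponent in the odd orthogonal case and the precise value of $\lambda$, both of which demand a careful local computation at $\infty$ involving the explicit basis $p_1,\dots,p_n$ of $\fn^{p_1}$ and the action of the Segal-Sugawara operators used in \S~\ref{ss:EdRquantization}.
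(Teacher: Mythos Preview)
The statement you are attempting to prove is labeled a \emph{Conjecture} in the paper, and indeed the paper does not prove it. What the paper establishes (Proposition~\ref{p:opers hyp}, deduced from Theorem~\ref{th:oper Hecke eigensheaf} and Corollary~\ref{c:opers hyp}) is only that, under the restrictions of Theorem~\ref{t:intro Hecke eigenvalues} on $d$, the connection $\cE^{\dR}_{\hG}(0,\mu)(\Std)$ is isomorphic to \emph{some} hypergeometric differential equation, without identifying the parameters $\underline{\beta}$ in terms of $\rho_1,\dots,\rho_{n-m}$. Your overall strategy---identify $\cE^{\dR}_{\hG}(0,\mu)$ with an oper via Theorem~\ref{t:intro Hecke eigenvalues}, push forward along $\Std$ using the functoriality of Proposition~\ref{p:oper functoriality}, and land in the space of opers that Proposition~\ref{p:Hyp opers} matches with hypergeometric equations---is exactly the route the paper takes to reach this partial result.

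The genuine gap lies in your ``Parameter matching'' paragraph, and you correctly flag it as the main obstacle. The assertion that the exponents $\underline{\beta}$ are obtained by ``pairing $\rho$ with the $T_\phi$-weights on $\Std$'' is a heuristic, not an argument: the map $\pi\colon\bar{\fj}^*\to\Op_{\cg}(\bP^1)_\cG$ is produced via the action of Segal--Sugawara operators on $\Vac_{\fj^+}$ (Proposition~\ref{p:local quant}), and the paper does not compute this map explicitly enough to read off how $\rho\in\ft_\phi^*$ determines the hypergeometric coordinates of Proposition~\ref{p:Hyp opers}. Neither self-duality of $\Std$ nor the Moy--Prasad half-integer structure by itself pins down the $\pm\rho_j$ or the extra $1/2$; what is missing is an explicit computation of the composite $\bar{\fj}^*\to\Op_{\cg}(\bP^1)_\cG\hookrightarrow\Op_{\Sl_N}(\bP^1)_\cG\xrightarrow{\psi}\Hyp^{(N,N-d)}$, or alternatively a direct comparison of Hecke eigensheaves for $G$ and $\GL_N$ as you suggest. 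Both remain open, which is why the statement is a conjecture. Your reduction of (iii) to (i) via $\Std_{\SO_{2n+2}}|_{\SO_{2n+1}}\simeq\Std_{\SO_{2n+1}}\oplus\bC$ is sound in outline, but it inherits the same unresolved parameter identification.
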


Note that above hypergeometric differential equations are equipped with an orthogonal or symplectic symmetric structure. 
When $d=h_G$ (i.e. $n=m$), the above result was obtained by Zhu \cite{Zhu} (c.f. \cite[\S 5.2]{XuZhu} for details). 

As discussed in \S~\ref{ss:EdRquantization}, Conjecture \ref{c:conjecture opers} implies functorial relationships between Hecke eigenvalues $\cE_{\hG}^{\dR}(0,\mu)$ of different groups via pushout $\Sp_{2n} \to \SL_{2n}$, $\SO_{2n+1}\to \SL_{2n+1}$ and $\SO_{2n+1}\to \SO_{2n+2}$ respectively. 

Moreover, one can deduce the $p$-adic variants (and therefore the $\ell$-adic variants) of Conjecture \ref{c:conjecture opers} and the functorial relationships from their de Rham versions, by investigating the Frobenius structure on these de Rham local systems as in \cite{XuZhu}. 

We obtain some results towards to this conjecture:

\begin{prop}\label{p:opers hyp}
Under the assumption of Theorem \ref{t:intro Hecke eigenvalues}, for $\hG=\Sp_{2n}$ (resp. $\SO_{2n+1}$), $\cE_{\hG}^{\dR}(0,\mu)(\Std)$ is isomorphic to a hypergeometric differential equation.	
\end{prop}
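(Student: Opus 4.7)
The plan is to combine Theorem \ref{t:intro Hecke eigenvalues}, the special structure of the principal $\mathfrak{sl}_2$-action on the standard representation, and a rigidity argument. By Theorem \ref{t:intro Hecke eigenvalues}, $\cE^{\dR}_{\hG}(0,\mu)(\Std)$ is isomorphic to the underlying connection of the oper $\pi(\mu)$ in the standard representation; explicitly, on the trivial rank-$N$ bundle over $\bGm$ one has
\[
\nabla = \partial_t + p_{-1}\, t^{-1} + \sum_{d_i \geq d} \lambda_i\, p_i,
\]
with $N = 2n$ for $\hG = \Sp_{2n}$ and $N = 2n+1$ for $\hG = \SO_{2n+1}$.

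The first key step is a simplification available in the standard representation. For both these groups, the restriction of $\Std$ to the principal $\mathfrak{sl}_2$-triple $\{p_{-1}, 2\check\rho, p_1\}$ is the irreducible $\mathfrak{sl}_2$-representation $S^{N-1}$; hence $p_{\pm 1}|_{\Std}$ are regular nilpotents in $\mathfrak{gl}_N$, and the commutant of $p_1|_{\Std}$ is the polynomial algebra $\bC[p_1|_{\Std}]$. Since each $p_i \in \fn^{p_1}$ has principal degree $d_i - 1$, a degree count forces $p_i|_{\Std} = c_i\,(p_1|_{\Std})^{d_i - 1}$ for some $c_i \in \bC^\times$. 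Consequently $\nabla$ takes the ``Kloosterman-type'' form $\partial_t + J\, t^{-1} + P(E)$, where $J = p_{-1}|_{\Std}$, $E = p_1|_{\Std}$, and $P(x) = \sum_{d_i \geq d} \lambda_i c_i \, x^{d_i - 1}$ is an explicit nilpotent polynomial in $E$.

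The second step is rigidity matching. The connection $\cE^{\dR}_{\hG}(0,\mu)(\Std)$ is irreducible (the monodromy lies in $\hG$, which acts irreducibly via $\Std$) and cohomologically rigid on $\bGm$, with the following formal local types. At $0$, it is regular singular with residue the regular nilpotent $J$, giving a single unipotent Jordan block of size $N$. At $\infty$, it is irregular of slope $1/d$, with Stokes/Turrittin data determined by $P$. A hypergeometric equation $\Hyp_\lambda(\underline 0;\underline\beta)$ with $|\underline\beta|=N-d$ shares the same local type at $0$ (since all $\alpha_i=0$) and has slope $1/(N-|\underline\beta|)=1/d$ at $\infty$. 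Choosing the hypergeometric parameters so that the formal decompositions at $\infty$ agree, Katz's rigidity theorem identifies $\cE^{\dR}_{\hG}(0,\mu)(\Std)$ with the corresponding hypergeometric connection.

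The main obstacle is pinning down the Stokes data at $\infty$ of the oper connection in terms of the $\lambda_i$'s and realizing it by a hypergeometric of the above shape. For $d = h_G$, only the top term $P(E)= \lambda_n c_n E^{d_n-1}$ survives, reducing to the Kloosterman situation already treated in \cite{FG} and \cite{Zhu}. For general $d$ in the allowed range, one analyzes the commuting polynomial $P(E)$ and tracks how its spectral data (viewed via the principal grading) controls the Levelt--Turrittin decomposition at $\infty$; the desired match with a hypergeometric factor $\prod(t\partial_t - \beta_j)$ then follows from the classification of rigid irregular singularities of formal type $(N,N-d)$. An alternative, more computational route is to derive a scalar ODE directly via a cyclic vector (e.g.\ taking the lowest weight vector $v_0 \in \Std$ and expanding flat sections in the weight basis $p_1^k v_0/k!$) and to verify that the resulting order-$N$ operator coincides, up to a gauge transformation absorbing higher $t$-order corrections, with a hypergeometric operator $\Hyp_\lambda(\underline 0;\underline\beta)$.
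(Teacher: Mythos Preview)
Your first step is correct and is exactly the content of the paper's Proposition~\ref{p:oper functoriality}: for $\hG=\Sp_{2n}$ or $\SO_{2n+1}$ the principal $\mathfrak{sl}_2$ acts irreducibly on $\Std$, so each $p_i|_{\Std}$ is a scalar multiple of $(p_1|_{\Std})^{d_i-1}$, and the pushout of the $\cg$-oper along $\cg\hookrightarrow\Sl_N$ lands in $\Op_{\Sl_N}(\bGm)_{(0,\varpi(0)),(\infty,\frac{1}{d})}$.

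Your second step, however, has genuine gaps. The irreducibility claim is not justified: the differential Galois group lies in $\hG$, but nothing you have said forces it to act irreducibly on $\Std$; a priori it could be a proper reductive subgroup with reducible action. You also do not establish that the rank-$N$ connection is cohomologically rigid (rigidity of the $\hG$-local system does not automatically give rigidity after applying $\Std$), nor do you carry out the matching of formal types at $\infty$: you assert one can ``choose hypergeometric parameters so the formal decompositions agree'', but you neither compute the Levelt--Turrittin decomposition of $\partial_t+Jt^{-1}+P(E)$ nor verify that every such decomposition is realised by some $\Hyp_\lambda(\underline{0};\underline{\beta})$. Without all three ingredients the appeal to a Katz-type uniqueness theorem is incomplete.

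The paper avoids rigidity entirely. After reducing to the $\Sl_N$-oper space (your first step), it proves directly that this space \emph{is} the hypergeometric space: Proposition~\ref{p:Hyp opers} constructs an explicit isomorphism $\Op_{\Sl_N}(\bP^1)_{\cG}\xrightarrow{\sim}\Hyp^{(N,N-m)}$ by a cyclic-vector/companion-form computation (Lemmas~\ref{l:isomorphism companion} and~\ref{l:compare companions}), showing that after a lower-triangular change of coordinates the oper $\nabla(\lambda_m,\dots,\lambda_n)$ coincides with $\Hyp(u_m,\dots,u_n)$. This is precisely the ``alternative, more computational route'' you mention at the end; the paper makes it the actual proof, and Corollary~\ref{c:opers hyp} then combines it with Proposition~\ref{p:oper functoriality} and Theorem~\ref{th:oper Hecke eigensheaf} to give the result.
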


Above statement follows from Theorem \ref{th:oper Hecke eigensheaf} and Corollary \ref{c:opers hyp}.

\subsection{Structure of the article.}
Sections \ref{s:hyp data} and \ref{s:eigen via quant} contains the main results and constructions of this article, while their proofs are postponed to sections \ref{s:toric}-\ref{s:opers proof}. 
In section \ref{s:hyp data}, we first briefly review the notion of rigid automorphic data following Yun \cite{YunCDM}. Then we define the hypergeometric automorphic data using a family of Levi subgroups, that we call \textit{admissible Levi}. 
We also specify the general position condition for the hypergeometric automorphic data to be rigid and give an explicit description of the relevant orbits (Theorem \ref{t:rigid}), which refines Theorem \ref{t:intro main thm}. 
The proofs of these theorems are given in \S~\ref{s:rigid}, based on the study of admissible Levi subgroups in \S~\ref{s:toric}. 
In section \ref{s:eigen via quant}, we study the de Rham local systems $\cE^{\dR}_{\hG}(0,\mu)$ produced by hypergeometric automorphic data and prove Theorem \ref{t:intro Hecke eigenvalues}, Proposition \ref{p:opers hyp}. 
Our proofs consist of two steps (see \S~\ref{sss:intro proof quantization}): we first study the associated local and global Hitchin maps in \S~\ref{s:pfs II}; we quantize these maps to establish the geometric Langlands correspondence for opers of $\Op_{\cg}(\bP^1)_{\cG}$, and compare $\cE^{\dR}_{\hG}(0,\mu)$, $\Op_{\cg}(\bP^1)_{\cG}$ and hypergeometric local systems in \S~\ref{s:opers proof}.

\textbf{Acknowledgement.} 
We would like to thank Alexander Molev and Xinwen Zhu for valuable discussions.  M. K. is supported by an Australian Research Council Discovery Grant. 
D. X. is partially supported by National Natural Science Foundation of China Grant No. 11688101.

\subsection{Notations}\label{ss:classical} We first fix some notations for the root systems in classical types. 

\subsubsection{Type A: $\GL_{n+1}$} 
We take a maximal torus $T=\{t=\diag(t_1,t_2,...,t_{n+1})|t_i\neq 0\}$ and define subtori $T_i=\{(1,...,1,t_i,1,...,1)|t_i\neq 0\}$, $1\leq i\leq n+1$. A basis of $X^*(T)$ is $\chi_i:t\mapsto t_i$, $1\leq i\leq n+1$. 
Let $\lambda_i:t_i\mapsto(1,...,1,t_i,1,...,1)$, where $t_i$ sits at $i$-th diagonal entry, $1\leq i\leq n+1$. The roots, positive roots, simple roots, half sum of positive coroots are
\begin{align*}
\Phi_G&=\{\chi_i-\chi_j|1\leq i\neq j\leq n+1 \}; \quad &
\Delta_G&=\{\alpha_i=\chi_i-\chi_{i+1}|1\leq i\leq n \};\\
\Phi_G^+&=\{\chi_i-\chi_j|1\leq i<j\leq n+1 \}; \quad &
\check{\rho}_G&=\frac{1}{2}\sum_{i=1}^{n+1}(n-2i+2)\lambda_i.
\end{align*}

\subsubsection{Type B: $\SO_{2n+1}$}
Let $J$ be the anti-diagonal matrix with $J_{ij}=(-1)^i\delta_{i,2n+2-j}$ and 
\[
\SO_{2n+1}=\{ A\in \SL_{2n+1} \mid AJA^T=J\}.
\]
We take a maximal torus $T=\{t=\mathrm{diag}(t_1,t_2,...,t_n,t_1^{-1},t_2^{-1},...,t_n^{-1},1)| t_i\neq0 \}$ and define subtori $T_i=\{(1,...,1,t_i,1,...,1,t_{i+n}^{-1}, 1,...,1)|t_i\neq 0\}$, $1\leq i\leq n$. A basis of characters $X^*(T)$ is $\chi_i:t\mapsto t_i$, $1\leq i\leq n$. A basis of cocharacters $X_*(T)$ is $\lambda_i:t_i\mapsto(1,...,t_i,...,t_i^{-1},...,1)$, where $t_i$ and $t_i^{-1}$ are $i$-th and $i+n$-th diagonal entries, $1\leq i\leq n$. The roots, positive roots, simple roots and half sum of positive coroots are
\begin{align*}
\Phi_G&=\{\pm\chi_i|1\leq i\leq n) \}\cup\{\pm\chi_i\pm\chi_j)|1\leq i<j\leq n \};\\
\Phi_G^+&=\{\chi_i-\chi_{i+k}=\sum_{j=i}^{i+k-1}\alpha_j|1\leq i<i+k\leq n\}\cup\{\chi_i=\sum_{j=i}^{n-1}\alpha_j+\alpha_n|1\leq i\leq n \};\\
&\qquad\cup\{\chi_i+\chi_{i+k}=\sum_{j=i}^{i+k-1}\alpha_j+2\sum_{j=i+k}^{n-1}\alpha_j+2\alpha_n|1\leq i<i+k\leq n \};\\
\Delta_G&=\{\alpha_i=\chi_i-\chi_{i+1}|1\leq i\leq n-1 \}\cup\{\alpha_n=\chi_n \}; \quad
\check{\rho}_G=\sum_{i=1}^n(n-i+1)\lambda_i.
\end{align*}

\subsubsection{Type C: $\Sp_{2n}$}
Let $J$ be the anti-diagonal matrix with $J_{ij}=(-1)^i\delta_{i,2n+1-j}$ and 
\[
\Sp_{2n}=\{ A\in \SL_{2n} \mid AJA^T=J\},
\]
We take a maximal torus $T=\{t=\mathrm{diag}(t_1,t_2,...,t_n,t_n^{-1},t_{n-1}^{-1},...,t_1^{-1})| t_i\neq0 \}$ and define subtori $T_i=\{(1,...,1,t_i,1,...,1,t_{i}^{-1}, 1,...,1)|t_i\neq 0\}$, $1\leq i\leq n$. 
A basis of characters $X^*(T)$ is $\chi_i:t\mapsto t_i$, $1\leq i\leq n$. A basis of cocharacters $X_*(T)$ is $\lambda_i:t_i\mapsto(1,...,t_i,...,t_i^{-1},...,1)$, where $t_i$ and $t_i^{-1}$ are $i$-th and $(2n+1-i)$-th diagonal entries, $1\leq i\leq n$. The roots, positive roots, simple roots and half sum of positive coroots are 
\begin{align*}
\Phi_G&=\{\pm(\chi_i-\chi_j)|1\leq i<j\leq n) \}\cup\{\pm(\chi_i+\chi_j)|1\leq i\leq j\leq n \};\\
\Phi_G^+&=\{\chi_i-\chi_{i+k}=\sum_{j=i}^{i+k-1}\alpha_j|1\leq i<i+k\leq n\}\cup\{\chi_i+\chi_{i+k}=\sum_{j=i}^{i+k-1}\alpha_j+2\sum_{j=i+k}^{n-1}\alpha_j+\alpha_n|1\leq i\leq i+k\leq n \};\\
\Delta_G&=\{\alpha_i=\chi_i-\chi_{i+1}|1\leq i\leq n-1 \}\cup\{\alpha_n=2\chi_n \}; \quad
\check{\rho}_G=\frac{1}{2}\sum_{i=1}^n(2(n-i)+1)\lambda_i.
\end{align*}

\subsubsection{Type D: $\SO_{2n}$}
Let $J$ be the anti-diagonal matrix with $J_{ij}=(-1)^{\max\{i,j\}}\delta_{i,2n+1-j}$ and 
\[
\SO_{2n}=\{A\in \SL_{2n}, AJA^T=J\},	
\]
We take a maximal torus $T=\{t=\mathrm{diag}(t_1,t_2,...,t_n,t_n^{-1},t_{n-1}^{-1},...,t_1^{-1})| t_i\neq0 \}$ and define subtori $T_i=\{(1,...,1,t_i,1,...,1,t_{i}^{-1}, 1,...,1)|t_i\neq 0\}$, $1\leq i\leq n$. A basis of characters $X^*(T)$ is $\chi_i:t\mapsto t_i$, $1\leq i\leq n$. A basis of cocharacters $X_*(T)$ is $\lambda_i:t_i\mapsto(1,...,t_i,...,t_i^{-1},...,1)$, where $t_i$ and $t_i^{-1}$ are $i$-th and $(2n+1-i)$-th diagonal entries, $1\leq i\leq n$. The roots, positive roots, simple roots and half sum of positive coroots are
\begin{align*}
\Phi_G&=\{\pm\chi_i\pm\chi_j|1\leq i<j\leq n \};\\
\Phi_G^+&=\{\chi_i-\chi_{i+k}=\sum_{j=i}^{i+k-1}\alpha_j|1\leq i<i+k\leq n\}\\
&\qquad\cup\{\chi_i+\chi_{i+k}=\sum_{j=i}^{i+k-1}\alpha_j+2\sum_{j=i+k}^{n-2}\alpha_j+\alpha_{n-1}+\alpha_n|1\leq i<i+k\leq n-1 \}\\
&\qquad\cup\{\chi_i+\chi_n=\sum_{j=i}^{n-2}\alpha_j+\alpha_n|1\leq i<n \};\\
\Delta_G&=\{\alpha_i=\chi_i-\chi_{i+1}|1\leq i\leq n-1 \}\cup\{\alpha_n=\chi_{n-1}+\chi_n \}; \quad
\check{\rho}_G=\sum_{i=1}^{n-1}(n-i)\lambda_i.
\end{align*}

\section{Hypergeometric automorphic data}\label{s:hyp data}
Let $k$ be a finite field of characteristic $p$, $\overline{k}$ an algebraic closure of $k$ and $\ell\neq p$ a prime.
Let $X$ be the projective line $\bP^1$ over $k$ and $F=k(X)$ its function field . For any point $x\in |X|$, let $F_x$ be the local field at $x$, $\cO_x$ the ring of integers and $t_x\in\cO_x$ a uniformizer. 
Let $G$ be a connected split reductive group over $k$, $h_G$ its Coxeter number. 
We assume $p>h_G$\footnote{One reason for this assumption is to define the grading given by $\check{\rho}/d$ on $\fg$; another reason is that we will later use the exponential of nilpotent elements, which is given by Springer isomorphism assuming $p>h_G$.}. 
We fix a Borel subgroup $B\subset G$ and a maximal torus $T\subset B$. 
Our goal is to define the notion of hypergeometric automorphic data for $G$ on $X$ and formulate our main theorems about rigidity of such automorphic data.

\subsection{Recollections on rigid automorphic data} 
In this subsection, we briefly recall a simplified version of Yun's notion of rigid automorphic data \cite{YunCDM} ignoring the central characters, which is sufficient for our purpose.
For a more detailed exposition of Yun's theory, cf. \cite[\S~6]{KY}. 
\begin{defe}\label{d:auto data}
Let $S$ be a finite subset of $|X|$. 
An \emph{automorphic data} (on $(X,S)$) is a collection of data $(K_x,\gamma_x)_{x\in S}$:
\begin{itemize}
\item For each $x\in S$, $K_x\subset L_xG$ is a pro-algebraic subgroup of loop group, contained in some parahoric subgroup with finite codimension;

\item For each $x\in S$, $\gamma_x$ is a (rank one) character sheaf on $K_x$, which is pullback from a finite dimensional quotient $K_x\tra L_x:=K_x/K_x^+$, where $K_x^+\subset K_x$ is a normal subgroup. 
\end{itemize}
\end{defe}

We set $K_S=\prod_{x\in S} K_x$ and $L_S=\prod_{x\in S} L_x$. 
Then $\gamma_S=\prod_{x\in S} \gamma_x$ defines a character sheaf on $L_S$. 

The set $S$ is called the set of ramification points of the automorphic data. For each $x\in X$, the pair $(K_x,\gamma_x)$ (and the subgroup $K_x^+$) is called the automorphic data at $x$. 

\begin{rem} In practice, when constructing rigid automorphic data, the groups $K_x$ need to be chosen carefully and are usually unique within each context. In contrast, there is some flexibility in the choice of the groups $K_x^+$. 
\end{rem}

\subsubsection{Associated moduli stack of bundles} \label{sss:def of BuncG}
To every automorphic data, one associates group schemes $\cG$ and $\cG'$ over $X$ satisfying
\begin{align*}
\cG|_{X-S}=G\times(X-S)&,\quad \cG|_{\cO_x}=K_x^+,\ \  \forall x\in S;\\
\cG'|_{X-S}=G\times(X-S)&,\quad \cG'|_{\cO_x}=K_x,\ \  \forall x\in S.
\end{align*}

Let $\Bun_\cG$ (resp. $\Bun_{\cG'}$) be the algebraic stack of $\cG$-bundles (resp. $\cG'$-bundles) on $X$. 
The action of $L_S$ on $\Bun_\cG$ makes $\Bun_\cG$ into a $L_S$-torsor over $\Bun_{\cG'}$. 
Therefore, for a $\cG$-bundle $\cE\in\Bun_\cG(\overline{k})$, its stabilizer 
$$
\Stab(\cE):=\{(l,\eta)|l\in L_S, \eta\in\mathrm{Isom}(\cE,l\cdot\cE) \}
$$
has a canonical morphism to $L_S$. 
Let
$$
p:\Bun_\cG\rightarrow\Bun_{\cG'}
$$
be the quotient by $L_S$. For $\cE'=p(\cE)$, we have
$\Aut_{\cG'}(\cE')\simeq\Stab(\cE)$. 
The restriction of an automorphism to formal disks around $x\in S$ induces a morphism 
$$
\Aut_{\cG'}(\cE')\rightarrow\prod_{x\in S}\Aut(\cE'|_{\cO_x})\simeq K_S.
$$ 
The morphism $\Stab(\cE)\to L_S$ is compatible with the composition of above morphism and $K_S\twoheadrightarrow L_S$. 

\begin{defe}\label{d:relevant}
\textnormal{(i)} A $\cG$-bundle $\cE\in\Bun_\cG(\overline{k})$ is \emph{relevant} if the pullback of $\gamma_S$ along $\Stab(\cE)\rightarrow L_S$ is constant over the neutral component $\Stab(\cE)^\circ$.

\textnormal{(ii)} A $\cG'$-bundle $\cE'\in\Bun_{\cG'}(\overline{k})$ is \emph{relevant} if the pullback of $\gamma_S$ along $\Aut_{\cG'}(\cE')\to K_S \twoheadrightarrow L_S$ is constant over the neutral component $\Aut(\cE')^\circ$. 
\end{defe}

By above discussion, we see that $\cE$ is relevant if and only if $p(\cE)$ is relevant, and that $\cE$ is relevant if and only if every bundle in its $L_S$-orbit is relevant. Such an $L_S$-orbit is called \emph{relevant orbit}.

\subsubsection{Rigidity} Let $\hG$ be the dual group of $G$ over $\overline{\mathbb{Q}}_{\ell}$ with center $Z_{\hG}$. 
The Kottwitz morphism
$$
\kappa:\Bun_\cG\rightarrow X^*(Z_{\hG}) 
$$
exhibits $X^*(Z_{\check{G}})$ as the set of connected components of $\Bun_{\cG}$ (resp. $\Bun_{\cG'}$). 
For $\alpha\in X^*(Z_{\hG})$, we denote the associated component by $\Bun_\cG^\alpha$ (resp. $\Bun_{\cG'}^\alpha$).

\begin{defe}\label{d:rigid}
An automorphic data is called \emph{rigid} if on each component $\Bun_\cG^\alpha$ of $\Bun_\cG$, there is a unique relevant orbit. 
(Equivalently, there is a unique relevant element on each component $\Bun_{\cG'}^\alpha$.)
\end{defe}

The main result of \cite{YunCDM} states that in favourable situations, rigid automorphic data give rise to Hecke eigensheaves on $\Bun_{\cG}$. 
The corresponding Hecke eigenvalues are expected to be rigid $\hG$-local systems.

\subsection{Toric pairs} \label{ss:toric} 
Let $M$ be a standard Levi subgroup of $G$ such that its root system $\Phi_M$ is irreducible. The aim of this subsection is to define the notion of \textit{toric pair} $(G,M)$. This notion will be used to define  hypergeometric automorphic data in the next subsection. We assume that $M\neq T$ since the case  $M=T$ behaves differently, cf. \S \ref{sss:M=T}.

\subsubsection{Generic functionals of Levi subalgebras}\label{sss:generic functional}
For a root $\alpha\in\Phi_G$, we denote its height by $\Ht(\alpha)=\langle\check{\rho}_G,\alpha\rangle$. Note that since $M$ is a standard Levi, $\langle\check{\rho}_G,\alpha\rangle=\langle\check{\rho}_M,\alpha\rangle$ for $\alpha\in\Phi_M$. Denote the Coxeter number of $M$ by $d:=h_M$. This is the same $d$ as in \S\ref{intro ss: def of hyp auto data}. Consider the Coxeter grading of $\fm$ defined by $\check{\rho}_M/d$: 
\[
\fm=\bigoplus_{i\in \bZ/d \bZ} \fm_i, \qquad  \qquad 
\fm_i:=\bigoplus_{\alpha \in \Phi_M, \,\, \Ht(\alpha) \equiv i ~\textnormal{mod}~ d} 
\fm_\alpha.
\]
We recall a definition from \cite{HNY}: 
\begin{defe}\label{d:phi generic}
An element $\phi\in \fm_1^*$ is called \emph{generic} if its restriction to each root subspace of $\fm_1$ is non-trivial. 
\end{defe}

A generic functional on $\fm_1$ can be written as
$ \lambda_{-\theta} E^*_{-\theta_M} + \sum_{\alpha \in \Delta_M} \lambda_\alpha E^*_\alpha$, 
where $E^*_\alpha$ is a basis of $\fg_\alpha^*$ and $\lambda_\alpha, \lambda_{-\theta} \in k^\times$. Such a functional is $T$-conjugate to 
\begin{equation}\label{eq:generic phi}
	\phi = \lambda E^*_{-\theta_M} + \sum_{\alpha \in \Delta_M} E^*_\alpha,\quad \textnormal{for}~ \lambda \in k^\times. 
\end{equation}

\subsubsection{Grading on $\fg$} \label{sss:gradingG}
Consider the grading on $\fg$ defined by 
 $\check{\rho}_G/d$:
\begin{equation}
\fg=\bigoplus_{i\in \bZ/d \bZ} \fg_i, \qquad  \qquad 
\fg_i:=\bigoplus_{\alpha \in \Phi_G, \,\, \Ht(\alpha) \equiv i ~\textnormal{mod}~ d} 
\fg_\alpha.
\end{equation}
	
Let $G_0$ be the connected subgroup of $G$ with Lie algebra $\fg_0$.  Let $B_0:=B\cap G_0$ and $U_0:=U\cap G_0$. Then $G_0$ is a reductive group containing $T$ and $B_0$ is a Borel subgroup of $G_0$ with Levi decomposition $B_0=T U_0$.  
The adjoint action of $G$ on $\fg$ restricts to an action of $G_0$ on $\fg_1$, called the \emph{Vinberg representation}. The pair $(G_0, \fg_1)$ is known as a \emph{Vinberg $\theta$-group} \cite{Vinberg}.

\subsubsection{} \label{sss:stabiliser} 
Let $\phi \in \fm_1^*$ be a generic functional. We have a canonical $T$-equivariant projection $\fg_1\ra \fm_1$. Thus, we can think of $\phi$ as an element of $\fg_1^*$. 	
Let $L_\phi$, $B_\phi$, $U_\phi$, and $T_\phi$ be the connected stabilizer of $\phi$ in $G_0$, $B_0$, $U_0$, and $T$, respectively. As $\phi$ is semisimple, $L_\phi$ is a connected reductive group with solvable subgroup $B_\phi$, unipotent subgroup $U_\phi$, and torus $T_\phi$. 
We set $Z_{G,\phi}=T_\phi\cap Z_G$. 

\begin{prop} \label{p:stabiliser}
	\textnormal{(i)} $T_\phi=\left(\bigcap_{\alpha \in \Delta_M} \ker(\alpha)\right)^\circ$.
	
	\textnormal{(ii)} $T_\phi$ is a maximal torus of $L_\phi$. 

	\textnormal{(iii)} $C_{G}(T_\phi)=M$. 

	\textnormal{(iv)} $B_\phi$ is a Borel subgroup of $L_\phi$ with Levi decomposition  $B_\phi=T_\phi U_\phi$. 
\end{prop}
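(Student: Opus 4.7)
My plan is to prove the four parts sequentially; the first three follow from direct calculations, while part (iv) is the main difficulty.

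For (i), I will compute the coadjoint $T$-action on the normal form $\phi = \lambda E^*_{-\theta_M} + \sum_{\alpha \in \Delta_M} E^*_\alpha$ of \eqref{eq:generic phi}. Since $t \cdot E^*_\gamma = \gamma(t)^{-1} E^*_\gamma$, the equation $t \cdot \phi = \phi$ collapses to $\alpha(t) = 1$ for every $\alpha \in \Delta_M$; the condition on the $E^*_{-\theta_M}$ component is automatic, as $\theta_M$ is a nonnegative integral combination of $\Delta_M$. Hence $\Stab_T(\phi) = \bigcap_{\alpha \in \Delta_M} \ker(\alpha)$, whose identity component is $T_\phi$.

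Part (iii) then follows from the standard description $C_G(T_\phi) = \langle T, U_\gamma : \gamma|_{T_\phi} = 1\rangle$: the characters of $T$ trivial on $T_\phi$ are exactly those in the $\bQ$-span of $\Delta_M$, and by the standard-Levi property the roots of $G$ inside this span are exactly $\Phi_M$, so $C_G(T_\phi) = M$. For (ii), I combine this with the observation that, because $d = h_M$ is the Coxeter number of the irreducible $\Phi_M$, every root of $\Phi_M$ has $G$-height in $[-(d-1), d-1] \setminus \{0\}$ (using that $\langle \check{\rho}_G, \alpha \rangle = \langle \check{\rho}_M, \alpha \rangle$ for $\alpha \in \Phi_M$ by the standard-Levi property), and none is divisible by $d$; hence $\Phi_M \cap \Phi_{G_0} = \emptyset$ and $M \cap G_0 = T$. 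Thus $C_{L_\phi}(T_\phi) \subseteq L_\phi \cap C_{G_0}(T_\phi) = L_\phi \cap T$, whose identity component is $T_\phi$, and since centralizers of tori in connected reductive groups are connected, $C_{L_\phi}(T_\phi) = T_\phi$, proving $T_\phi$ is a maximal torus of $L_\phi$.

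Part (iv) is the heart of the argument, and the main subtlety is that although $\fl_\phi \subset \fg_0$ is $T_\phi$-stable, its one-dimensional $T_\phi$-root spaces $\fl_\phi^\beta$ need not be $T$-root spaces of $\fg_0$: each lies in the possibly larger weight space $\bigoplus_{\alpha|_{T_\phi} = \beta} \fg_\alpha$ and may be spanned by a nontrivial linear combination. My plan is first to establish the Lie-algebra equality $\fb_\phi = \ft_\phi \oplus \fu_\phi$ by noting that the Lie-algebra analogue of (i) yields $\ft \cap \fl_\phi = \ft_\phi$, and that the $T$-weight grading splits $\fl_\phi \cap \fb_0$ into the zero-weight piece $\ft_\phi$ and the positive-weight piece, the latter coinciding with $\fl_\phi \cap \fu_0 = \fu_\phi$. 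Exponentiating then gives the semidirect product $B_\phi = T_\phi \ltimes U_\phi$, since $U_\phi$ is connected unipotent and normalized by $T_\phi$. To verify $B_\phi$ is a Borel of $L_\phi$, I would use that by (ii) no root of $\Phi_{G_0}$ restricts trivially to $T_\phi$, so the $\Phi_{G_0}^+$-support of each $\fl_\phi^\beta$ assigns to $\beta$ a well-defined sign, and a dimension count confirms $B_\phi$ has the dimension of a Borel. The delicate point --- where I expect the genericity of $\phi$ to enter essentially --- is the \emph{coherent sign} property: that each $\fl_\phi^\beta$ is supported entirely inside $\fu_0$ or entirely inside its opposite, with no straddling; I would verify this by an explicit weight analysis of the coadjoint condition $x \cdot \phi = 0$ on $\fg_1^*$ combined with the explicit form of $\phi$ in \eqref{eq:generic phi}.
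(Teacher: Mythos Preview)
Your arguments for (i)--(iii) are correct and essentially match the paper's; the paper deduces (ii) by taking a maximal torus $S\supseteq T_\phi$ of $L_\phi$ and showing $S\subseteq M\cap G_0=T$, while you argue via $C_{L_\phi}(T_\phi)=T_\phi$ --- both routes rest on the same observation that $\Phi_M\cap\Phi(G_0)=\varnothing$.

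For (iv) you correctly isolate the ``coherent sign'' property as the crux, but you overestimate what it requires. You expect genericity of $\phi$ to enter and plan a coadjoint-condition analysis on $\fg_1^*$; in fact the sign is already determined at the level of the \emph{ambient} $T_\phi$-weight space $\fg_{0,\bgamma}=\bigoplus_{\pi(\gamma)=\bgamma}\fg_\gamma$, independently of $\phi$, by pure root combinatorics. Since the restriction $\pi(\sum n_\alpha\alpha)=\sum_{\alpha\in\Delta_M^c}n_\alpha\alpha$ records exactly the $\Delta_M^c$-coefficients, all roots in a fibre $\pi^{-1}(\bgamma)$ share the same $\Delta_M^c$-part; the height argument you already used for (ii) (no root of $\fg_0$ lies in $\Phi_M$) shows this common part is nonzero, and since the simple-root coefficients of any root all have one sign, every $\gamma\in\pi^{-1}(\bgamma)$ is positive or every one is negative. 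Hence $\fl_{\phi,\bgamma}\subset\fg_{0,\bgamma}\subset\fu_0$ in the first case, giving $\bgamma\in\Phi_{T_\phi}(\fu_\phi)$, and symmetrically $-\bgamma\in\Phi_{T_\phi}(\fu_\phi)$ in the second. This is exactly the paper's Lemma~\ref{l:nonzero initial} and the argument of \S\ref{ss:stabiliser}; your planned coadjoint analysis would also succeed but is unnecessary, and genericity of $\phi$ plays no role in (iv) beyond what already went into (i)--(ii).
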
 
\begin{proof}
	
Assertion (i) follows immediately from the definition of $\phi$. 
For (ii-iii), let $S$ be a maximal torus of $L_\phi$ containing $T_\phi$. 
Then we have $S\subseteq C_G(T_{\phi})$. 
By (i) and \cite[Proof of Proposition 6.8]{ND}, we have $M=C_G(T_{\phi})$ and therefore $S\subseteq M$. 
We deduce that $S\subseteq G_0\cap M=T$ and $S\subseteq(T\cap L_\phi)^\circ=T_{\phi}$. We conclude that $T_\phi=S$ is a maximal torus of $L_\phi$.

We postpone the proof assertion (iv) to \S \ref{ss:stabiliser}.
\end{proof}

\subsubsection{Definition of toric pairs}  Observe that the torus $T_\phi$ acts on $U_0$ by conjugation, preserving the subgroup $U_\phi$. Thus, $T_\phi$ acts on the affine variety $U_0/U_\phi$.

\begin{defe} \label{d:toric}
We say that the pair $(G,M)$ is \emph{toric} if the action of $T_\phi$ on $U_0/U_\phi$ has an open dense orbit whose stabilizer is finite modulo center.
\end{defe} 

If $(G,M)$ is toric, then $U_0/U_\phi$ is a toric variety. In particular, the action of $T_\phi$ on $U_0/U_\phi$ has finitely many orbits. In this case, we have 
\begin{equation} \label{eq:numerical} 
\dim(T_\phi)=\dim(U_0/U_\phi)+\dim Z_G.
\end{equation}  
We regard this equality as a numerical requirement for a pair $(G,M)$ being toric.

\subsubsection{Classification of toric pairs}\label{sss:toric pairs} 
The classification of toric pairs is an interesting open problem. One readily verifies that the pair $(G,G)$ is toric for every connected reductive group $G$. If $G=\GL_n$, then one can show that $(G,M)$ is toric if and only if $M$ is conjugate to $\GL_{m}\times \GL_1\times \cdots \times \GL_1$. The if direction was established in \cite{KY} and the only if direction can be proved using the numerical requirement and a quiver description of the grading\footnote{We thank K. Jakob for explaining the argument to us.}. The main result of this section, stated below, concerns toric pairs for classical groups, i.e. the almost simple groups whose root systems are of type A, B, C, or D. 

Consider the following family of Levi subgroups:

\begin{defe} \label{d:admissible}
Let $G$ be a classical group and $\Delta_G=\{\alpha_1,\cdots, \alpha_n\}$ the standard set of simple roots in its usual ordering (see \S \ref{ss:classical}). A standard Levi subgroup  $M\subseteq G$ is said to be \emph{admissible} if $\Delta_M=\{\alpha_{n-m+1},...,\alpha_n\}$ where $1\leq m\leq n$ when $G$ is of type $A_n$, $B_n$,  or $C_n$, and $3\leq m\leq n$ when $G$ is of type $D_n$. 
\end{defe} 

The Coxeter number $d=h_M$ of $M$ and $m$ are related by
\begin{equation} \label{eq:d and m} 
	m= 
\begin{cases} 
d-1, & \textrm{Type A}; \\
d/2, & \textrm{Type B and C};\\
d/2+1, & \textrm{Type D}.
\end{cases}
\end{equation}

\begin{thm}\label{t:toric}
 Let $G$ be a classical group and $M\subseteq G$ an admissible Levi. Then $(G,M)$ is toric. 
\end{thm}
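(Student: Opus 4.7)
The plan is to establish toricity case by case for the four classical types, using the root data of \S\ref{ss:classical}. By definition I must produce a $T_\phi$-orbit in $U_0/U_\phi$ that is open and dense, with stabilizer finite modulo $Z_G$. Since $U_0/U_\phi$ is a smooth $T_\phi$-variety, this reduces to the conjunction of the numerical identity \eqref{eq:numerical} and the existence of a point at which the differential of the orbit map $T_\phi \to U_0/U_\phi$ has rank $\dim T_\phi - \dim Z_G$. Note that $Z_G \subseteq T_\phi$ for these classical groups, so $Z_{G,\phi} = Z_G$.

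The first step is to describe $\mathfrak{u}_0/\mathfrak{u}_\phi$ root-theoretically. By construction $\mathfrak{u}_0 = \bigoplus_{\alpha \in \Phi_G^+,\ d\,\vert\,\Ht(\alpha)} \fg_\alpha$, and granting Proposition \ref{p:stabiliser}(iv) (which is proved independently in \S\ref{ss:stabiliser}), $\mathfrak{u}_\phi$ is the kernel of the linear map $X \mapsto (\ad^* X)\phi$ from $\mathfrak{u}_0$ to $\fg_1^*$. Using the explicit form \eqref{eq:generic phi} of $\phi$, this kernel splits along the root decomposition: the roots of height divisible by $d$ lying in $\Phi_M$ contribute as the positive part of the centraliser of a principal nilpotent in $\mathfrak{m}$, while those with support meeting $\Delta_G \setminus \Delta_M$ admit an explicit Kostant-type complement. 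The outcome is a set $R$ of positive roots of $G$ whose images form a basis of $\mathfrak{u}_0/\mathfrak{u}_\phi$. I would then enumerate $R$ in each type via relation \eqref{eq:d and m} and check $|R| = \dim T_\phi - \dim Z_G$, establishing the numerical identity.

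For the second step, I would test the point $u_0 = \prod_{\alpha \in R} u_\alpha(1)$. Since $T_\phi$ acts on the $u_\alpha$-coordinate via the character $\alpha|_{T_\phi}$, the differential of the orbit map at $u_0$ sends $X \in \mathfrak{t}_\phi$ to the tuple $(\alpha(X))_{\alpha \in R}$. Hence toricity reduces to the linear independence of $\{\alpha|_{T_\phi}\}_{\alpha \in R}$ modulo $X^*(Z_G)_\bQ$, a concrete full-rank condition on an integer matrix built from the $\chi_i$-coordinates of \S\ref{ss:classical}.

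The hardest case will be type D. There the roots $\alpha_{n-1} = \chi_{n-1} - \chi_n$ and $\alpha_n = \chi_{n-1} + \chi_n$ both involve $\chi_n$, so the defining relations of $T_\phi$ kill the two tail coordinates simultaneously. Among the positive roots of $D_n$ of height divisible by $d = 2(m-1)$ one finds both $\chi_i - \chi_j$ and $\chi_i + \chi_j$ types, whose restrictions to $T_\phi$ interact non-trivially; a careful case analysis, organised by the parity of $m$ and the residues of indices modulo $d$, is needed to exhibit $R$ and verify that its restricted character matrix has full rank modulo $X^*(Z_G)$. By contrast, type A is essentially immediate (the roots $\chi_i - \chi_{i+d}$ produce a triangular matrix), and types B and C exhibit milder versions of the sign issues seen in type D, where only one end of the Dynkin diagram is affected.
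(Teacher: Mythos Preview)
Your overall strategy matches the paper's: build a $T_\phi$-stable complement to $U_\phi$ in $U_0$ and verify that the $T_\phi$-weights appearing on it are independent. However, your assertion that ``this kernel splits along the root decomposition'' is false in types B and D, and this is exactly where the substantive work lies. The paper organises the argument around the restriction map $\pi:\Phi(\fu_0)\to X^*(T_\phi)$. In types A and C this map is injective (Proposition~\ref{p:G type v.s. weight type}), so each $T_\phi$-weight space $\fu_{\bgamma}\subset\fu_0$ is a single root line, $\fu_\phi$ \emph{is} a sum of root spaces (Lemma~\ref{l:roots in U_phi}), and your plan works as stated. But in type B once $n\ge\tfrac32 d$, and in type D already once $n\ge d+1$, there exist $T_\phi$-weights $\bgamma$ with $\pi^{-1}(\bgamma)=\{\gamma_1,\gamma_2\}$ of size two (the paper's ``type (II)'' and ``type (III)'' roots). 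For such $\bgamma$ the brackets $[E_{\gamma_1},\phi]$ and $[E_{\gamma_2},\phi]$ land in the \emph{same} one-dimensional space, and one computes $\fu_{\phi,\bgamma}=k(E_{\gamma_1}-E_{\gamma_2})$, an anti-diagonal that is not a root space (see \eqref{eq:B (III)-wts in fu_phi and fu^c}, \eqref{eq:D (II),(III)-wts in fu_phi and fu^c}). Your ``Kostant-type complement'' picture, and hence your construction of $R$ and the ensuing differential calculation at $u_0=\prod_{\alpha\in R}u_\alpha(1)$, does not account for this.

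The paper repairs this by taking $\fu^c_{\bgamma}=k(E_{\gamma_1}+E_{\gamma_2})$ for these weights, exponentiating to a subscheme $U^c\subset U_0$ that is \emph{not} a product of root subgroups, and proving $U^c\times U_\phi\xrightarrow{\sim}U_0$ via a Luna-slice argument using a $T_\phi$-cocharacter with positive weights (Proposition~\ref{l:U^cU_phi=U_L})---a passage from Lie algebra to group that you also skip. The independence check you envisage then becomes Proposition~\ref{p:U^c wts=T_phi basis}. A smaller separate slip: your claim $Z_G\subseteq T_\phi$ fails for the semisimple classical groups, where $Z_G$ is finite and disconnected while $T_\phi$ is a connected torus; the relevant group is $Z_{G,\phi}=Z_G\cap T_\phi$, which is trivial for $\Sp_{2n}$ and $\SO_n$.
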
 

This theorem is proved in \S \ref{s:toric} via a detailed case by case analysis.

\subsubsection{The case $M=T$} \label{sss:M=T}
Observe that every functional $\phi \in \ft^*$ is generic, since $T$ has no roots. An easy case by case analysis shows that the numerical requirement \eqref{eq:numerical} holds if and only if $G$ is of type $A_n$ and $L_\phi$ is of type $A_{n-1}$. In \cite{KY}, it is proved that in this case, the action of $T_\phi$ on $U_0/U_\phi$ has a dense open orbit with finite stabilizers.

\subsection{Hypergeometric representations}  
In this subsection, we define a new class of euphotic representations in the sense of \cite[\S 2.2]{JY}, which we call (wild) hypergeometric representations. These representations  are closely related to the notion of hypergeometric automorphic data, introduced in the next subsection.

\subsubsection{}  
Let $M$ be a Levi subgroup of $G$ such that $M\neq T$ and $(G,M)$ is toric (Definition \ref{d:toric}). In previous section, we study the grading of $\fg$ defined by $\check{\rho}_G/d$.
Now observe that  $\check{\rho}_G/d$ also defines a (principal) parahoric subgroup $P\subset L_\infty G$ equipped with a Moy-Prasad filtration 
\[
P\supset P(1)\supset P(2)\supset \cdots.
\]
We set $L=P/P(1)$ and $V=P(1)/P(2)$. Then $L$ is a reductive group over $k$ equipped with a canonical embedding into $P$. Thus, we can (and do) think of $L$ as a subgroup of $P$. Moreover, we have canonical isomorphisms 
\begin{equation} 
L\simeq G_0,\qquad V \simeq \fg_1. 
\end{equation}

\subsubsection{} 
Let $\phi\in \fg_1^*$ be a functional induced by a generic functional of $\fm^*_1$ via projection $\fg_1\ra\fm_1$, and $\psi: k\ra \bQlt$ a nontrivial additive character. Then we can view $\psi\phi$ as a character $P(1)\ra \bQlt$, which factors through $V$. 
We can extend this character to a larger subgroup of $P$ as follows. 
Recall that stabilizer $L_\phi \subset G_0$ and its Borel subgroup $B_\phi=T_\phi U_\phi$ studied in the previous section.
We take again notations of \S~\ref{sss:stabiliser}. 
Let $\rho:B_\phi\ra B_\phi/U_\phi\simeq T_\phi\ra\bQlt$ be a multiplicative character. Using the isomorphism $L\simeq G_0$, we can think of these as subgroups of $L$ (and therefore as subgroups of $P$). 
We denote by $J$ the following subgroup of $P$:
\begin{equation}
	J=B_\phi \ltimes P(1) \subset P.
	\label{eq:defJ}
\end{equation}
As $B_{\phi}$ stabilizes $\phi$, the following morphism is a character:
\begin{equation} \label{eq:defmu} 
\mu: J\ra \bQlt, \qquad \mu(bp) = \rho(b) \psi\phi(p),\qquad b\in B_\phi, p\in P(1). 
\end{equation}  

\begin{defe} 
The irreducible subquotients of the (compactly) induced representation $\mathrm{ind}_{J}^{G(K)}\mu$ are called \emph{(wild) hypergeometric representations}.  
\end{defe} 
By definition, hypergeometric representations are the euphotic representations associated to the functional $\phi$, cf. \cite[\S 2.2]{JY}. 
If $M=G$, then $J=P$ equals to the Iwahori subgroup $I$, and wild hypergeometric representations are the simple supercuspidal representations \cite{GR}. Note, however, that hypergeometric representations associated to toric pairs $(G,M)$ with $M\neq G$ are not expected to be supercuspidal.

\subsection{Hypergeometric automorphic data} 
In this subsection, we give the main definition of this paper introducing hypergeometric automorphic data and state our main theorem regarding these automorphic data. We continue with the notations of the previous subsection. Thus, $G$ is a connected split reductive group over a finite field $k$, $M\neq T$ is a Levi subgroup of $G$ such that $(G,M)$ is toric. After choosing a generic functional $\phi \in \fm_1^*$, a character $\rho: T_\phi \ra \bQlt$, and an additive character $\psi$ of the ground field, we obtain a character $\mu$ on $J=B_\phi \ltimes P(1)\subset L_\infty G$ \eqref{eq:defmu}. 

Let  $\cL_\mu=\cL_\rho\boxtimes\phi^*\cL_\psi$ be  the character sheaf on the proalgebraic group $J$ whose Frobenius trace equals to $\mu$. Let $I^\opp$ be the Iwahori subgroup of the loop group $L_0G$ at $0$, which is opposite to the Iwahori $I=B_0 P(1)\subset L_\infty G$. Choose a character $\delta:T\ra\bQlt$, regarded as a character of $I^\opp$ via $I^\opp\ra I^\opp/I^\opp(1)\simeq T$. Let $\cL_\delta$ be the character sheaf on $I^\opp$ whose Frobenius trace equals to $\delta$.  

\begin{defe}\label{d:hyp auto data}
The (wild) hypergeometric automorphic data on $(\bP^1,\{0,\infty\})$ associated to $(G, M, \mu, \delta)$ is defined by 
\begin{equation}\label{eq:auto data}
(K_x,\gamma_x):=
\begin{cases}
(I^\opp,\cL_\delta), \quad &x=0;\\
(J,\cL_\mu), \quad &x=\infty.
\end{cases}
\end{equation}
\end{defe} 

For the results of this section, we can take any suitable subgroups $K_x^+$ of $K_x$ in Definition \ref{d:auto data} for hypergeometric automorphic data, e.g. $K_0^+=I^\opp(1)$, $K_\infty^+=P(2)$. 
We will discuss certain particular choices of $K_x^+$ in \S~\ref{s:eigen via quant}.

\subsubsection{Numerical condition for rigidity} 
We denote by $\cG'$ the group scheme over $\bP^1$ defined by
\begin{equation}\label{eq:gp schemes}
\begin{split}
\cG'|_{\bP^1-\{0,\infty\}}&=G\times(\bP^1-\{0,\infty\}),\quad \cG'(\cO_0)=I^\opp,\quad \cG'(\cO_\infty)=J=B_\phi P(1). 
\end{split}
\end{equation}

\begin{lem}\label{r:numerical criteria}
For a toric pair $(G,M)$, we have $\dim\Bun_{\cG'}=0$. 	
\end{lem}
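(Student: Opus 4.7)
The plan is to apply the Riemann-Roch formula
\[
\dim\Bun_{\cG'}=-\chi(\bP^1,\Lie(\cG'))=-\dim G-\deg(\Lie(\cG'))
\]
to the rank-$\dim G$ adjoint bundle $\Lie(\cG')$. Since $\cG'$ agrees with the constant group scheme $G$ away from $\{0,\infty\}$, the degree factors as $\deg(\Lie(\cG'))=c_0+c_\infty$, where $c_x\in\bZ$ is the signed relative index of the $\cO_x$-lattice $\Lie(\cG'(\cO_x))\subset\fg\otimes F_x$ measured against the reference lattice $\fg\otimes\cO_x$.

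At $x=0$, the Iwahori Lie lattice has codimension $|\Phi_G^+|$ in $\fg\otimes\cO_0$, giving $c_0=-|\Phi_G^+|$. At $x=\infty$ I would compute $c_\infty$ in two steps. First, using the Moy-Prasad description $\Lie(P)=\bigoplus_\alpha\fg_\alpha\otimes t_\infty^{\lceil-\Ht(\alpha)/d\rceil}\cO_\infty$, a count of the affine root groups in $P\setminus G(\cO_\infty)$ versus $G(\cO_\infty)\setminus P$, telescoped over height intervals, yields
\[
c_P=-|\Phi_G^+|+\sum_{n\geq1}r_{nd}=-|\Phi_G^+|+\dim U_0,
\]
where $r_h$ is the number of positive $G$-roots of height $h$ and the second equality uses that the positive roots of $G_0$ are exactly those of $G$ of height in $d\bZ_{>0}$. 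Second, since $J=B_\phi\ltimes P(1)$, the $\cO_\infty$-lattice $\Lie(J)$ is the preimage of $\Lie(B_\phi)\subset\fg_0$ under the projection $\Lie(P)\twoheadrightarrow\fg_0\simeq\Lie(L)$, so $\Lie(J)$ has codimension $\dim G_0-\dim B_\phi$ inside $\Lie(P)$; therefore $c_\infty=c_P-(\dim G_0-\dim B_\phi)$.

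Substituting into the dimension formula and using $\dim G=\dim T+2|\Phi_G^+|$ together with $\dim G_0=\dim T+2\dim U_0$, the terms involving $\dim T$ and $|\Phi_G^+|$ cancel to give $\dim\Bun_{\cG'}=\dim U_0-\dim B_\phi$. The toric condition (Definition \ref{d:toric}) supplies the identity $\dim T_\phi=\dim U_0-\dim U_\phi+\dim Z_G$; combined with $\dim B_\phi=\dim T_\phi+\dim U_\phi$ this yields $\dim B_\phi=\dim U_0+\dim Z_G$, whence $\dim\Bun_{\cG'}=-\dim Z_G=0$ in the semisimple setting (which covers the almost simple classical groups treated here).

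The hard part will be the computation of $c_P$: when $d<h_G$ the parahoric $P$ is not contained in $G(\cO_\infty)$ (one has affine root vectors of negative valuation in $\Lie(P)$), so the relative index must be extracted as a signed affine root count on both sides of the comparison. The telescoping identity $\sum_h r_h(\lceil h/d\rceil-\lfloor h/d\rfloor)=\sum_{n\geq1}r_{nd}$ is the crucial input linking the Bruhat-Tits computation of $c_P$ to the Vinberg-theoretic quantity $\dim U_0$ appearing on the toric side, and is what ultimately makes the numerical cancellation above a consequence of precisely the defining property of a toric pair.
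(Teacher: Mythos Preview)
Your argument is correct and takes a genuinely different route from the paper's. The paper does not compute the degree of $\Lie(\cG')$ from scratch: it invokes \cite[Corollary 1.3]{HNY} to get $\dim\Bun=0$ for the pair $(I^\opp,I(1))$, and then uses the numerical identity \eqref{eq:numerical} from the toric hypothesis to see that $\Lie(J)$ and $\Lie(I(1))$ have the same relative index in $\fg(F_\infty)$. You instead run Riemann--Roch directly, and the substantive step is your formula $c_P=-|\Phi_G^+|+\dim U_0$ for the lattice index of the parahoric at $\check\rho/d$; this follows from
\[
c_P=\sum_{\alpha\in\Phi_G^+}\bigl(\lfloor\Ht(\alpha)/d\rfloor-\lceil\Ht(\alpha)/d\rceil\bigr)=-\#\{\alpha\in\Phi_G^+:d\nmid\Ht(\alpha)\},
\]
which is exactly $-|\Phi_G^+|+\dim U_0$. (A small slip: the ``telescoping identity'' as you wrote it in the final paragraph is not literally correct---the left side is $|\Phi_G^+|-\dim U_0$, not $\dim U_0$---but the formula for $c_P$ you actually use is right.) The paper's approach is shorter given the HNY reference; yours is self-contained and shows transparently that the toric condition is precisely what makes the final cancellation $\dim U_0-\dim B_\phi=-\dim Z_G$ work. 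Your observation that the answer is $-\dim Z_G$ rather than $0$ for groups with positive-dimensional center is accurate; the paper's proof hides the same discrepancy in its identification $I(1)=Z_GU_0P(1)$, and in any case the lemma is only applied later (\S\ref{ss:good stack}) under the standing hypothesis that $G$ is simply connected.
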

\begin{proof}
Let $[\fg(\cO_x):\Lie(K_x)]$ denotes relative dimension of $\cO_x$-lattices in $\fg(F_x)$. It suffices to show 
$$
\dim\Bun_{\cG'}=\dim\Bun_G-[\fg(\cO_0):\Lie(I^\opp)]-[\fg(\cO_{\infty}):\Lie(J)]=0.
$$
When $(K_0,K_{\infty})=(I^{\opp},I(1))$ (i.e. replacing $J$ by $I(1)$ in the above formula), it is proved in \cite[Corollary 1.3]{HNY}. 
By \eqref{eq:numerical}, the relative dimension between $J=B_\phi P(1)$ and $I(1)=Z_GU_0 P(1)$ is zero. Then the lemma follows. 
\end{proof}

If the stabilizers of relevant orbits in an automorphic data are finite, then above fact is an necessary condition for the data to be rigid, cf. \cite[Lemma 2.7.12.(2)]{YunCDM}. This suggests that hypergeometric automorphic data should be ``generically" rigid. In the following subsection, we make this intuition precise.

\subsection{Rigidity} 
Let $(G,M)$ be an admissible Levi as in Definition \ref{d:admissible}. Recall from Theorem \ref{t:toric} that such Levi is toric. We specify a condition on the characters $\delta$, $\rho$ in the associated hypergeometric automorphic data \eqref{eq:auto data} under which we prove that the data is rigid.

\subsubsection{General position between $\delta$ and $\rho$}
Note that since $U_0/U_\phi$ is a toric variety for the action of $T_\phi$, there are finitely many $T_\phi$-orbits on it. 
Define a finite set $\mathcal{S}$ consisting of subtori of $T_\phi$ by 
\begin{equation}\label{eq:S subtori}
\mathcal{S}:=\{(\mathrm{Stab}_{T_\phi}(x))^\circ\, | \, x\in T_\phi \backslash (U_0/U_\phi)\}. 
\end{equation} 

Let $W$ be the Weyl group of $G$. We denote by $\delta^w$ the conjugation of $\delta:T\ra\bQlt$ by a Weyl group element $w\in W$.

\begin{defe}\label{d:general position}
We say $\delta$ and $\rho$ are \emph{in general position} if $\rho|_{S}\neq \delta^w|_S$ for all $S\in \mathcal{S}$ and $w\in W$. 
\end{defe} 

The above condition is the same as requiring the character sheaves are different: $\cL_{\rho}|_S\neq \cL_{\delta^{w}}|_S$.

For $G=\GL_{n+1}, \Sp_{2n}, \SO_{2n+1}, \SO_{2n}$, we can make above condition more explicit. Let $n$ be the rank of the derived group of $G$, $Z_G$ the center of $G$. We take the standard splitting of the diagonal $T$ into diagonal entries $T=\prod_i T_i$ (see \S\ref{ss:classical}). 

Recall from Definition \ref{d:admissible} that $M$ has $m$ simple roots. We obtain from Proposition \ref{p:stabiliser}(ii) that
\begin{equation}\label{eq:T_phi}
T_\phi=
\begin{cases}
Z\prod_{i=1}^{n-m}T_i, \quad G=\GL_{n+1};\\
\prod_{i=1}^{n-m} T_i, \quad G=\SO_{2n+1}, \Sp_{2n}, \SO_{2n}.
\end{cases}
\end{equation}
In this case, the condition that $\delta$, $\rho$ are in general position can be simplified to the following:
\begin{equation}\label{eq:general position by T_i}
\rho|_{T_i}\neq \delta^w|_{T_i},\quad\quad \forall w\in W,\ 1\leq i\leq n-m.
\end{equation}
For $G=\GL_{n+1}$, above becomes the assumption $\chi_i\neq\rho_j$ in the statement of \cite[Theorem 10]{KY}.

\begin{thm} \label{t:main rigid} 
Suppose $M$ is an admissible Levi of $G$, and $\delta$, $\rho$ are in general position. Then the hypergeometric automorphic data is rigid. 
\end{thm}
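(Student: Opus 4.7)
The plan is to enumerate the $L_S$-orbits in $\Bun_{\cG'}$, describe the neutral components of their stabilizers, and then use the general position hypothesis to show that exactly one orbit per component is relevant. By Lemma \ref{r:numerical criteria}, $\dim \Bun_{\cG'} = 0$, so the orbit set should be combinatorially tractable. First I would use Weil uniformization on $\bP^1$ together with the affine Bruhat decomposition to partition $\Bun_{\cG'}$ into strata indexed by a subset of the extended affine Weyl group $\tW$, with the Kottwitz map $\kappa$ distributing these strata among the components $\Bun_{\cG'}^\alpha$.

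Next, for each stratum indexed by some $\tw \in \tW$, I would compute the identity component of the stabilizer together with the pullback of $\gamma_S = \cL_\delta \boxtimes \cL_\mu$ to it. Toricity of $(G,M)$ from Theorem \ref{t:toric} guarantees that the $T_\phi$-orbits on $U_0/U_\phi$ are classified by the finite set $\mathcal{S}$ of subtori from \eqref{eq:S subtori}, and I expect the neutral stabilizer of a stratum to be isomorphic to some $S \in \mathcal{S}$, embedded in $L_S = T \times T_\phi$ so that its image in $T_\phi$ is $S$ itself while its image in $T$ is a Weyl conjugate $w \cdot S$ (the twist coming from $\tw$). Pulling back $\gamma_S$ then yields the character $\rho|_S \cdot (\delta^w|_S)^{-1}$ on $S$, which vanishes precisely when $\rho|_S = \delta^w|_S$. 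The general position hypothesis (Definition \ref{d:general position}) rules out this equality for every nontrivial $S \in \mathcal{S}$ and every $w \in W$, so only strata with trivial stabilizer torus can be relevant. A dimension and Kottwitz count then shows that such a stratum exists and is unique on each component $\Bun_{\cG'}^\alpha$, namely the distinguished orbit coming from the standard trivialization on $\bP^1 \setminus \{0,\infty\}$.

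The technical core, and the step I expect to be the main obstacle, is matching the stabilizers of affine Bruhat strata with the subtori in $\mathcal{S}$ coming from the toric action. This requires a careful analysis of how conjugation by an affine Weyl element $\tw$ intertwines the embedding $T_\phi \hookrightarrow T$ with a Weyl-conjugate embedding, and of how the unipotent part of the stabilizer is controlled by the combinatorics of $\Phi_M$ inside $\Phi_G$. The explicit description of $T_\phi$ in \eqref{eq:T_phi} together with the admissibility of $M$ (Definition \ref{d:admissible}) should make this feasible via a case-by-case study over types A, B, C, D in \S \ref{s:rigid}. Once this matching is secured, the general position condition \eqref{eq:general position by T_i} acts as a clean finisher to conclude rigidity.
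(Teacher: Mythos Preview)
Your overall architecture matches the paper's: stratify $\Bun_{\cG'}$ via Birkhoff decomposition (the strata are indexed by pairs $(\tw,A)$ with $\tw\in\tW$ and $A$ a $T_\phi$-orbit on $U_0/U_\phi$, as in \eqref{eq:Bun_cG' decomp}), compute stabilizers, and use general position to eliminate all but the orbit indexed by $(\tw,\ru)$ with $\tw\in\Omega$. However, there is a genuine gap in your expectation about stabilizers.

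You assume that the neutral component of every stabilizer is a subtorus $S\in\mathcal{S}$, so that irrelevance follows purely from the condition $\rho|_S\neq\delta^w|_S$. This is not the case. In the paper's Proposition~\ref{p:wildAut}, when the orbit is the dense one ($A=\Phi_{T_\phi}(\fu^c)$) but $\tw\notin\Omega$, two distinct phenomena occur. First, if some lowest weight $\alpha\in\wt^-(V)$ satisfies $(\tw\alpha)(x_I)<0$, the stabilizer contains a one-parameter \emph{unipotent} subgroup $\Ad_{\ru^{-1}}U_\alpha\subset P(1)$. Irrelevance here has nothing to do with $\delta$ or $\rho$: it is forced by the additive character $\psi\phi$, and the argument requires the nontrivial fact (Proposition~\ref{p:Levi nonzero intersects subrepns}) that every irreducible $G_0$-summand of $\fg_1$ meets $\fm_1$, so that genericity of $\phi$ propagates to all lowest weight spaces. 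Second, if instead some $\alpha\in\Delta(G_0)$ satisfies $(\tw\alpha)(x_I)<0$, the stabilizer contains not a subtorus of $T_\phi$ but a curve $H_{\tw}=\{tu(t)\mid t\in T_{\bgamma}\}\subset B_\phi$ with a nontrivial unipotent tail $u(t)\in U_\phi$; constructing this $u(t)$ explicitly (case (iii) of Proposition~\ref{p:wildAut}) is the most delicate part of the proof and uses the detailed structure of $\fu^c$ and $\fu_\phi$ from \S\ref{ss:structure Uphi}. Your framework, which only tracks the torus part of the character, would not detect or dispatch the first of these cases, and underestimates the work in the second.
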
 

For toric pairs $(G,G)$, this theorem is proved in \cite{HNY}. When $G=\GL_n$, this theorem is obtained in \cite{KY}. We first give a refinement of the above theorem in the following, which we will prove later.

\subsubsection{The relevant orbits} 
We give an explicit description of the relevant orbits of $\Bun_{\cG'}$ \eqref{eq:gp schemes} for the hypergeometric automorphic data in Theorem \ref{t:main rigid}. Let $\tW=N_{G(F_\infty)}(T(F_\infty))/T(\cO_\infty)$ be the Iwahori-Weyl group and $\Omega\subset \tW$  the normalizer of the Iwahori $I$. Let $s$ be a coordinate around $\infty$ and 
$I^-:=I^\opp\cap G(k[s,s^{-1}])$. Using Birkhoff decomposition, we have (see \eqref{eq:Bun_cG' decomp} for details)  
\begin{equation}\label{eq:Bun_cG' stratification}
\Bun_{\cG'}(\overline{k})=\bigsqcup_{\tw\in\tW}I^-\backslash I^-\tw U_0 J/J = \bigsqcup_{\tw\in\tW}\bigcup_{[u]\in T_\phi\backslash (U_0/U_\phi)} I^-\backslash I^-\tw u J/J.
\end{equation} 
Let $\ru\in U_0/U_\phi$ be an element whose $T_\phi$-orbit is dense and open. 
We deduce Theorem \ref{t:main rigid} from the following theorem, which is proved in \S \ref{s:rigid}.  

\begin{thm}\label{t:rigid}
Let $\cG'$ be the group scheme associated to a hypergeometric automorphic data for an admissible Levi $M$ \eqref{sss:def of BuncG}.   

\textnormal{(i)} The automorphism group of a $\cG'$-bundle is finite if and only if it is associated to the double coset $I^-\backslash I^-\tw \ru J/J$, for some $\tw\in \Omega$. 

\textnormal{(ii)} Suppose $\delta$ and $\rho$ are in general position. Then 
the relevant bundles are
$
\{I^-\tw\ru J\, | \, \tw \in \Omega\}$. 
Moreover, the automorphism group of a relevant bundle is isomorphic to $Z_{G,\phi}$. 
\end{thm}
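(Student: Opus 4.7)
The plan is to exploit the Birkhoff-type stratification \eqref{eq:Bun_cG' stratification} to reduce both parts to a computation of automorphism groups on each cell, and then either to check finiteness (for (i)) or to check the relevance condition on $\gamma_S|_{\Stab^\circ}$ using the general position assumption (for (ii)). First, for a $\cG'$-bundle $\cE'$ represented by the double coset $I^- \tw u J$ with $\tw\in\tW$ and $u\in U_0$, I would identify
\[
\Aut_{\cG'}(\cE') \;\simeq\; I^- \cap \tw u J u^{-1} \tw^{-1}.
\]
Since $u \in U_0 \subset L$ and $u P(1) u^{-1} = P(1)$, together with the decomposition $J = B_\phi \ltimes P(1)$, this intersection splits (up to filtration) into an affine-unipotent piece coming from $I^- \cap \tw P(1) \tw^{-1}$ and a finite-type piece coming from $I^- \cap \tw (u B_\phi u^{-1}) \tw^{-1}$.

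For part (i), I would prove two things separately. (a) Using that $\Omega\subset\tW$ is precisely the subgroup of length-zero elements of the Iwahori--Weyl group, an element $\tw$ normalizes the Iwahori pair $(I,I^\opp)$ exactly when $\tw\in\Omega$; for $\tw\notin\Omega$, the intersection of $I^-$ with the $\tw$-conjugate of the full Iwahori containing $J$ contains nontrivial affine root subgroups and is therefore positive-dimensional. (b) Assuming $\tw\in\Omega$, projecting via $J \twoheadrightarrow J/(U_\phi P(1)) \simeq T_\phi$ identifies the remaining piece with the stabilizer $\Stab_{T_\phi}([u]) \subset T_\phi$ of the class of $u$ in $U_0/U_\phi$. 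By the toricity of $(G,M)$ (Theorem \ref{t:toric}), this stabilizer is finite (modulo $Z_G$) exactly when $[u]$ lies in the dense open $T_\phi$-orbit, namely the orbit of $\ru$. Combining (a) and (b) proves (i); moreover, unwinding this identification in the unique finite case yields $\Aut_{\cG'}(I^-\tw\ru J) \simeq \Stab_{T_\phi}(\ru) = Z_{G,\phi}$, which gives the last clause of (ii).

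For part (ii), the bundles $\{I^-\tw\ru J \mid \tw\in\Omega\}$ have finite automorphism group $Z_{G,\phi}$ by the above, so their neutral component is trivial and the relevance condition of Definition \ref{d:relevant} is automatic. For every other bundle in $\Bun_{\cG'}$, the neutral component $\Stab^\circ$ is positive-dimensional, and I must show that $\cL_\delta\boxtimes\cL_\mu$ restricts non-trivially to it. Any such $\Stab^\circ$ contains either a nontrivial subtorus $S$ or a nontrivial unipotent subgroup; in the first case, tracing through the decomposition above shows that $S$ is either a member of the family $\mathcal{S}$ of \eqref{eq:S subtori} (arising from a non-generic $T_\phi$-stabilizer on $U_0/U_\phi$) or a $\tw$-twist of such, and its image in $L_S = T \times (T_\phi \ltimes V)$ under the natural projection is of the form $(\delta^w|_S, \rho|_S)$ for a Weyl element $w$ determined by $\tw$; the general position hypothesis $\rho|_S \neq \delta^w|_S$ (equivalently \eqref{eq:general position by T_i}) then forces $\gamma_S|_S$ to be non-trivial. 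In the second case, the unipotent contribution maps to a root subspace of $V = P(1)/P(2) \simeq \fg_1$, on which the genericity of $\phi$ (Definition \ref{d:phi generic}) guarantees that $\psi\phi$ is non-trivial.

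The main obstacle I expect is the bookkeeping in the non-finite case: one must track the image of each possible $\Stab^\circ$ inside both $L_0 = T$ and $L_\infty = T_\phi\ltimes V$, identifying its torus part precisely with a subtorus from $\mathcal{S}$ (possibly after a Weyl twist by $\tw$) so that the general position condition can be applied. This coupling between the affine-Weyl contribution at infinity and the $W$-conjugation of $\delta$ at zero is the delicate point, and likely forces the case-by-case analysis by classical type postponed to Section \ref{s:rigid}, building on the structural results of Section \ref{s:toric}.
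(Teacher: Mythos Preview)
Your overall architecture is right, but there are two genuine gaps where the argument as written would fail.

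\textbf{Gap in (i)(a).} You deduce positive-dimensionality of $\Aut(\cE')$ for $\tw\notin\Omega$ from the fact that $I^-\cap\tw I\tw^{-1}$ contains affine root subgroups. But the automorphism group is $I^-\cap\tw(\ru J\ru^{-1})\tw^{-1}$, and $J=B_\phi P(1)$ is strictly smaller than $I=B_0 P(1)$; an affine root subgroup of $I^-\cap\tw I\tw^{-1}$ may well lie in $\tw U_0\tw^{-1}$ and miss $\tw(\ru B_\phi\ru^{-1})\tw^{-1}$ entirely. The paper resolves this by splitting $\tw\notin\Omega$ according to whether the offending simple affine root $\alpha$ (with $(\tw\alpha)(x_I)<0$) lies in $\wt^-(V)$ or in $\Delta(G_0)$. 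In the first case one gets $\Ad_{\ru^{-1}}U_\alpha\subset P(1)\subset J$ directly. In the second case there is no obvious subtorus of $T_\phi$ inside $\Aut(\cE')$; instead the paper constructs a one-parameter subscheme $H_{\tw}=\{t\,u(t)\mid t\in T_{\bgamma}\}\subset B_\phi$ with $u(t)\in U_\phi$ solving an explicit functional equation $t^{-1}\ru t=u_\alpha\ru u(t)^{-1}$. This construction, which uses the commutation relations of $\fu^c$ established in \S\ref{s:toric}, is the technical heart of the argument and is absent from your sketch.

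\textbf{Gap in (ii), unipotent case.} Your claim that genericity of $\phi$ forces $\psi\phi$ to be nontrivial on the relevant root space is incorrect: $\phi$ is extended by \emph{zero} from $\fm_1^*$ to $\fg_1^*$, and the lowest weight $\alpha\in\wt^-(V)$ need not lie in $\Phi(\fm_1)$, so $\phi|_{\fg_\alpha}$ may vanish. What one must show is that $\phi(\Ad_{\ru^{-1}}\fg_\alpha)\neq 0$. The paper argues that if this vanished, then by density of the $T_\phi$-orbit of $\ru$ in $U_0/U_\phi$ it would vanish for all $u\in U_0$, forcing $\phi$ to vanish on the entire irreducible $G_0$-submodule $V_i\subset\fg_1$ generated by $\fg_\alpha$. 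The contradiction then comes from Proposition~\ref{p:Levi nonzero intersects subrepns}, which guarantees $V_i\cap\fm_1\neq 0$ for every irreducible summand; genericity alone is not enough. Similarly, in the torus case your identification of the one-dimensional piece with a member of $\mathcal S$ is correct in the paper's case~(i) (non-generic $u$), but in case~(iii) the piece is $H_{\tw}$ above, and one must observe that under the restriction maps to $K_S$ the $U_\phi$-component $u(t)$ contributes trivially to both $\delta$ and $\rho$, so the character reduces to $\delta^{\tw}|_{T_{\bgamma}}\cdot\rho|_{T_{\bgamma}}$ and the general-position hypothesis applies.
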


\begin{rem}
We expect that above rigidity theorem holds for any toric pairs $(G,M)$. However, we do not know if there exist toric pairs other than the admissible Levi subgroups. 
\end{rem}

\subsection{Hecke eigensheaves and eigenvalues}
We now discuss the eigensheaves and eigenvalues arising from these rigid data. Let $D(\delta,\mu)$ be the derived category of $\overline{\mathbb{Q}}_{\ell}$-complexes on $\Bun_{\mathcal{G}}$ with $(T\times J/P(2), \mathcal{L}_\delta\boxtimes \mathcal{L}_{\mu})$-equivariant structures, and  $P(\delta,\mu)$ the full subcategory of perverse sheaves.
By applying the argument of \cite[\S 4.1-4.2]{JY}, we deduce from Theorem \ref{t:rigid} the following result about Hecke eigenvalues associated to rigid hypergeometric automorphic data. 

\begin{prop} \label{p:eigenvalue} 
We keep the assumption of Theorem \ref{t:rigid}. 
Let $Z_{G,\phi}^*$ be the set of characters of $Z_{G,\phi}=T_\phi\cap Z_G$. Then there exists a decomposition
\[
P(\delta,\mu)=\bigoplus_{\sigma\in Z_{G,\phi}^*} P_\sigma,
\]
such that each $P_{\sigma}$ contains a unique simple perverse sheaf $\mathcal{A}(\delta,\mu)_{\sigma}$ on $\Bun_{\cG}$, which is a Hecke eigensheaf with eigenvalue $\cE_{\hG}(\delta,\mu)_\sigma$.  Moreover, $\cE_{\hG}(\delta,\mu)_\sigma$ is a semisimple $\hG$-local system on $\bGm$. 
\end{prop}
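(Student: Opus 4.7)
The plan is to follow the general framework of Jakob--Yun \cite[\S 4.1--4.2]{JY}, adapted to the present setup using the explicit description of relevant orbits from Theorem \ref{t:rigid}. Setting $L_S = T \times J/P(2)$, the forgetful map $\Bun_\cG \to \Bun_{\cG'}$ is an $L_S$-torsor, so by Theorem \ref{t:rigid}(ii) the set of relevant $L_S$-orbits in $\Bun_\cG$ maps bijectively onto the relevant bundles $\{I^-\tw \ru J \mid \tw \in \Omega\}$ in $\Bun_{\cG'}$, with each stabilizer isomorphic to the finite central group $Z_{G,\phi}$. Since rigidity gives exactly one relevant orbit on each connected component of $\Bun_\cG$, the category $P(\delta,\mu)$ of $(T\times J/P(2),\cL_\delta \boxtimes \cL_\mu)$-equivariant perverse sheaves is controlled, component by component, by the representation theory of $Z_{G,\phi}$.

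Concretely, for each relevant bundle $\cE_\tw = I^-\tw \ru J$ the character sheaf $\cL_\delta \boxtimes \cL_\mu$ pulls back trivially along the neutral component of $\Stab(\cE_\tw)$ (this is what relevance means) and therefore restricts to a genuine character sheaf on the finite group of components $Z_{G,\phi}$. Decomposing by isotypic components gives the decomposition
\[
P(\delta,\mu) = \bigoplus_{\sigma \in Z_{G,\phi}^*} P_\sigma,
\]
and inside each $P_\sigma$ the simple objects are the $!*$-extensions from the relevant orbits of the unique rank-one $\sigma$-equivariant local system. Rigidity and the indexing of relevant orbits by $\Omega$ (which is the set of connected components) then imply that each $P_\sigma$ contains, up to isomorphism and shift, a unique simple perverse sheaf, which we denote $\cA(\delta,\mu)_\sigma$.

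Next I would verify that $\cA(\delta,\mu)_\sigma$ is a Hecke eigensheaf. For any $V \in \Rep(\hG)$ and any $x \in \bGm$, the Hecke functor $\Hk_V$ preserves $(I^\opp,\cL_\delta)$ and $(J,\cL_\mu)$ equivariance, so it carries $P_\sigma$ into $P_\sigma \otimes \Db(\bGm)$. Because rigidity pins down $\cA(\delta,\mu)_\sigma$ as the unique simple object of $P_\sigma$ (up to shift), the result $\Hk_V(\cA(\delta,\mu)_\sigma)$ must, on each $\Bun_\cG$-slice $\{x\}$, be a finite direct sum of shifts of $\cA(\delta,\mu)_\sigma$; this forces $\Hk_V(\cA(\delta,\mu)_\sigma) \simeq \cA(\delta,\mu)_\sigma \boxtimes \cE_{V,\sigma}$ for some complex $\cE_{V,\sigma}$ on $\bGm$, functorial in $V$ and compatible with tensor products. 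Standard arguments (e.g.\ using the convolution structure and factorizability) upgrade $V \mapsto \cE_{V,\sigma}$ to a tensor functor $\Rep(\hG) \to \mathrm{LocSys}(\bGm)$, whence a $\hG$-local system $\cE_{\hG}(\delta,\mu)_\sigma$.

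The main point requiring care is semisimplicity of $\cE_{\hG}(\delta,\mu)_\sigma$. Here I would invoke the general principle from Yun's framework \cite{YunCDM}: since $\cA(\delta,\mu)_\sigma$ is a simple perverse sheaf attached to rigid automorphic data, the Hecke eigenvalue is pure, and by Beilinson--Bernstein--Deligne--Gabber this purity forces the associated perverse cohomology sheaves of $\Hk_V(\cA(\delta,\mu)_\sigma)$ over $\bGm$ to be semisimple; applied to the standard representation and its tensor powers, this yields semisimplicity of $\cE_{\hG}(\delta,\mu)_\sigma$ as a $\hG$-local system. The potential obstacle in transplanting Jakob--Yun's argument is verifying the cleanness properties and checking that the stabilizer along the non-relevant orbits has character sheaf pulling back non-trivially so that those orbits contribute zero; both reduce, via the explicit Birkhoff stratification \eqref{eq:Bun_cG' stratification} and the general position hypothesis \eqref{eq:general position by T_i}, to computations already effectively carried out in the proof of Theorem \ref{t:rigid}.
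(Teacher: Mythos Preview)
Your approach is essentially the same as the paper's: the paper proves Proposition~\ref{p:eigenvalue} simply by invoking \cite[\S 4.1--4.2]{JY} with Theorem~\ref{t:rigid} as input, and your proposal does exactly this, filling in some of the details of that framework. One minor imprecision in your elaboration is the relationship between the $\Omega$-indexing of relevant orbits and the uniqueness of the simple object in each $P_\sigma$: these are reconciled in the Jakob--Yun framework via the action of $\Omega$ on components (so the Hecke eigensheaf is determined by its restriction to one component together with $\sigma$), but since you are correctly deferring to \cite{JY} for this, it is not a gap.
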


When $\sigma$ is the trivial character $1$, we ignore the subscript $1$ from $\cE(\delta,\mu)_1, \cA(\delta,\mu)_1$. 
When $G=\PGL_{n}$, $\Sp_{2n}$, or $\SO_n$, the group $Z_{G,\phi}$ is trivial, and we obtain a unique Hecke eigenvalue $\cE(\delta,\mu)=\cE(\delta,\mu)_{1}$. 
However, when $G=\SL_{n}$, the group $Z_{G,\phi}$ equals the center $Z_G$ (c.f. \S \ref{ss:subtori}).

\subsubsection{Trivial functoriality} \label{sss:trivial functoriality}
Let $H\to G$ be a central isogeny of classical groups. 
Then it induces an isomorphism $\fm_{H,1}\xrightarrow{\sim} \fm_{G,1}$, and we can abusively use the notation $\phi$ to denote the ``same'' generic linear functional on these spaces. 
Let $\delta$, $\rho$, and $\sigma$ be characters of $T_G$, $T_{G,\phi}$, and $\sigma\in Z_{G,\phi}^*$, respectively.   
We denote the resulting characters of $T_H$, $T_{H,\phi}$,  $Z_{H,\phi}$  by the same notations. 
Then the $\check{H}$-eigenvalue $\cE_{\check{H}}(\delta,\mu)_{\sigma}$ is the pushout of $\cE_{\hG}(\delta,\mu)_{\sigma}$ along $\hG \to \check{H}$. For instance, if $G$ is adjoint, then the unique Hecke eigenvalue $\cE_{\check{G}}(\delta,\mu)$ maps to $\cE_{\check{H}}(\delta,\mu)$.

\subsubsection{Relationship to Katz's hypergeometric sheaves} In \cite{KY}, a variant of Theorem \ref{t:rigid} is proved for $G=\GL_n$, and it is shown that the resulting Hecke eigenvalues are precisely the $\ell$-adic hypergeometric local systems introduced in \cite{KatzBook}. 
We expect that the Hecke eigenvalues $\cE_{\hG}(\delta, \mu)$ are related to those $\ell$-adic hypergeometric local systems that admit an $\hG$-symmetry (c.f. Conjecture \ref{c:conjecture opers} for a precise statement in the de Rham setting). 
We will study this relationship in the de Rham setting (see subsection below) in the next section.

\subsubsection{De Rham variant}\label{r:dR setting}
As explained in \cite[\S 2.6]{HNY}, there is a variant of the above construction using $\cD$-modules in stead of $\ell$-adic sheaves. 
We take the base field to be $k=\bC$. 
We choose a generic $\bC$-linear function $\phi: \fm_1\to \bC$ and $\bC$-linear functions $\rho:\Lie(T_\phi)\to \bC$ and $\delta:\Lie(T)\to \bC$. 
We replace the Artin-Scherier sheaf by the exponential $\cD$-module $\bC\langle x,\partial_x\rangle/(\partial_x-1)$ on $\bA^1_\bC$, and the Kummer sheaves $\mathcal{L}_{\rho}, \mathcal{L}_{\delta}$ by $\cD$-modules
\begin{displaymath}
\cL_{\rho}^{\dR}=\boxtimes_{i=1}^{n-m} \bigl(\bC \langle x,x^{-1},\partial_x\rangle/(x\partial_x -\rho(\mathbf{1}_{i})) \bigr),\quad 
\cL_{\delta}^{\dR}=\boxtimes_{i=1}^{n} \bigl(\bC \langle x,x^{-1},\partial_x\rangle/(x\partial_x -\delta(\mathbf{1}_{i})) \bigr),
\end{displaymath}
via isomorphisms $T\simeq \mathbb{G}_{m,\bC}^n$, $T_\phi\simeq \mathbb{G}_{m,\bC}^{n-m}$.
Moreover, these $\cD$-modules are independent of the choices of decompositions. 

If $(\delta,\mu)$ are in general positions (defined in a similar way as in Definition \ref{d:general position} by $\cL_{\rho}^{\dR}$ and $\cL_{\delta}^{\dR}$), then the same construction in the de Rham setting allows us to define Hecke eigensheaves $\cA^{\dR}(\delta,\mu)_\sigma$ on $\Bun_{\cG}$, parametrized by $\sigma\in Z_{G,\phi}^*$ and associated Hecke eigenvalues:
\begin{equation}\label{eq:dR Hecke eigenvalue}
\cE^{\dR}_{\hG}(\delta,\mu)_\sigma: \Rep(\hG) \to \Conn(\bGm). 
\end{equation}

\section{Hecke eigenvalues via a quantization of Hitchin map}\label{s:eigen via quant}
Let $G$ be a simply-connected classical group of rank $n$ over the field of complex numbers $\mathbb{C}$, and $M$ an admissible Levi-subgroup of $G$ \eqref{d:admissible}. 
We keep the notations from the previous section. 
When $G$ is of type B or D, we will make certain restrictions on $d=h_M$. 

The goal of this section is to study the Hecke eigenvalues $\cE^{\dR}_{\hG}(0,\mu)$ in the de Rham setting, where we take $\delta,\sigma$ to be trivial characters.

We first define a group scheme $\cG$ over $\bP^1$ with generic fiber $G$ associated to the hypergeometric data (\S~\ref{ss:J+}), then study the associated local and global Hitchin maps (\S~\ref{ss:local Hitchin}-\ref{ss:global Hitchin}). 
The Hitchin base is quantized by a space of opers $\Op_{\cg}(\bP^1)_{\cG}$ (\S~\ref{ss:local opers}). 
Then we use a quantization of the global Hitchin map to construct the Hecke eigensheaves, with eigenvalues in $\Op_{\cg}(\bP^1)_{\cG}$, which allows us to compare these opers with $\cE^{\dR}_{\hG}(0,\mu)$ (\S~\ref{ss:globalopers} Theorem \ref{th:oper Hecke eigensheaf}). 
Finally, we compare opers of $\Op_{\cg}(\bP^1)_{\cG}$ with hypergeometric local systems (\S~\ref{ss:hyp opers} Corollary \ref{c:opers hyp}).

\subsection{An abelian quotient of $J$} \label{ss:J+}
In subsections \ref{ss:J+}-\ref{ss:local opers}
, we suppose moreover that $d>\frac{2}{3}n$ (resp. $d>n$) when $G$ is of type B (resp. type D). 
To start with, we specify some subgroups $K_x^+$ in the definition of hypergeometric automorphic data \eqref{d:auto data}. 
We set
\[
\fk:=\bigoplus_{\alpha\in\Phi(\fg_1)-\Phi(\fm_1)}\fg_\alpha\subseteq \fg_1.
\]
The functional $\phi\in \fg_1^*$ (which is extended by zero from a functional on $\fm_1\subset \fg_1$) vanishes on $\fk$. Recall that we have an identification of vector spaces $\fg_1\simeq V=P(1)/P(2)$. Recall \eqref{eq:defJ} that
\[
J/P(2) \simeq B_\phi \ltimes V. 
\]
Let $J^+$ be the preimage of $U_\phi \cdot\fk\subset B_\phi\ltimes V$ in $J$.

We prove the following proposition in \S \ref{ss:J}. 

\begin{prop} \label{l:J}
\textnormal{(i)} $J^+\subseteq J$ is a normal subgroup, $J^+/P(2)\simeq U_\phi\ltimes\fk$. 

\textnormal{(ii)} The quotient $\overline{J}:=J/J^+$ is isomorphic to $T_\phi\times \fm_1$ and is commutative.

\textnormal{(iii)} The character $\mu$ \eqref{eq:defmu} is trivial on $J^+$ and factors through the quotient $\overline{J}$. 
\end{prop}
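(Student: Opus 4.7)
The plan is to work modulo $P(2)$, where $J/P(2) \simeq B_\phi \ltimes V$ with $V \simeq \fg_1$ an abelian normal subgroup. The main task is (i); parts (ii) and (iii) will then follow readily.

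For (i), the crucial structural fact is that $U_\phi$ preserves $\fk \subseteq \fg_1$ under the adjoint action, so that $U_\phi \ltimes \fk$ is a well-defined subgroup of $B_\phi \ltimes V$ coinciding with $J^+/P(2)$. I would deduce this stability from the inclusion $U_\phi \subseteq M$: the subspace $\fk$ is exactly the degree-$1$ component (in the $\check{\rho}_G/d$-grading) of $\fn^+ \oplus \fn^-$ in the Levi decomposition $\fg = \fm \oplus \fn^+ \oplus \fn^-$, and $M$ normalizes $\fn^\pm$. By Proposition \ref{p:stabiliser}(iii) we have $M = C_G(T_\phi)$, so $U_\phi \subseteq M$ amounts to $T_\phi$ being central in $L_\phi$; this follows from an explicit case-by-case inspection of $L_\phi$ in the admissible setting (Section \ref{s:toric}), aided by the numerical hypotheses $d > 2n/3$ (type B) and $d > n$ (type D) imposed at the start of \S~\ref{ss:J+}. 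For normality of $J^+$ in $J$ it suffices to check mod $P(2)$: conjugation by $T_\phi$ preserves $U_\phi \ltimes \fk$ because $T_\phi$ normalizes $U_\phi$ (Proposition \ref{p:stabiliser}(iv)) and stabilizes $\fk$ as a sum of $T$-weight spaces, while conjugation by $w \in V$ yields the correction $(\Ad(u^{-1})-1)w$, which lies in $\fk$ once one knows $U_\phi$ acts trivially on the quotient $\fg_1/\fk \simeq \fm_1$; this is a consequence of $U_\phi$ being a unipotent subgroup of the stabilizer of the generic functional $\phi \in \fm_1^*$ in the Vinberg representation of $M_0 \subseteq M$, whose reductive stabilizer is forced to be a torus by genericness.

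For (ii), the short exact sequence $1 \to U_\phi \ltimes \fk \to B_\phi \ltimes V \to \overline{J} \to 1$ gives $\overline{J} \simeq T_\phi \ltimes \fm_1$. By Proposition \ref{p:stabiliser}(i), $T_\phi \subseteq \bigcap_{\alpha \in \Delta_M} \ker \alpha$, so $T_\phi$ acts trivially on every root space of $\fm$, and in particular on $\fm_1$. Hence the semidirect product is in fact direct: $\overline{J} \simeq T_\phi \times \fm_1$, which is abelian.

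Part (iii) follows directly from the formula $\mu(bp) = \rho(b)\psi\phi(p)$ in \eqref{eq:defmu}: $\rho$ factors through $B_\phi/U_\phi \simeq T_\phi$, so vanishes on $U_\phi$; and $\phi$ vanishes on $\fk$ by construction (extension by zero from $\fm_1^*$ to $\fg_1^*$). Thus $\mu|_{J^+} \equiv 1$ and $\mu$ descends to $\overline{J}$. The main obstacle throughout is the inclusion $L_\phi \subseteq M$ used in (i), which ties back to the case-by-case structure of admissible Levi subgroups from Section \ref{s:toric} and the numerical conditions on $d$.
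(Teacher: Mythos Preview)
Your argument for (i) rests on the inclusion $U_\phi \subseteq M$, which you reduce to $T_\phi$ being central in $L_\phi$. But Proposition~\ref{p:stabiliser}(ii) says $T_\phi$ is a \emph{maximal} torus of $L_\phi$, so it is central only when $L_\phi$ is itself a torus, i.e.\ when $U_\phi = 1$. This fails in many cases covered by the proposition: for instance in type~$A_5$ with $d=2$ one finds $\chi_1-\chi_3,\ \chi_2-\chi_4 \in \Phi(\fu_\phi)$ by \eqref{eq:A wts in fu_phi and fu^c}, and in type~$B_5$ with $d=4$ (which satisfies $d>2n/3$) one has $\chi_1+\chi_3 \in \Phi(\fu_\phi)$ by \eqref{eq:B (I)-wts in fu_phi and fu^c}. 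In each case $\fu_\phi \not\subseteq \fm$, so your deduction that $U_\phi$ preserves $\fk$ via the Levi decomposition breaks down. Your fallback argument, that $U_\phi$ lies in the stabilizer of $\phi$ for the Vinberg action of $M_0$ on $\fm_1$, is circular: $M_0 = M\cap G_0 = T$, so that stabilizer is just $T_\phi$ and contains no unipotent elements; to place $U_\phi$ inside any group acting on $\fm_1$ you would already need $U_\phi\subseteq M$.

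The paper proves the key fact $[\fu_\phi,\fg_1]\subseteq\fk$ directly at the level of roots. The numerical hypotheses on $d$ in types~B and~D ensure (Proposition~\ref{p:G type v.s. weight type}) that only type~(I) $T_\phi$-weights occur, so $\fu_\phi$ is a sum of genuine root spaces $\fg_\gamma$. Then Lemma~\ref{l:roots in U_phi} characterizes such $\gamma$ by the condition that $\gamma+\alpha\notin\Phi_G$ for every $\alpha\in\Phi(\fm_{-1})$; if $[E_\gamma,E_\beta]$ landed in $\fm_1$ for some $\beta\in\Phi(\fg_1)$, say $\gamma+\beta=\alpha'\in\Phi(\fm_1)$, then $\gamma+(-\alpha')=-\beta$ would be a root with $-\alpha'\in\Phi(\fm_{-1})$, a contradiction. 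Your treatments of (ii) and (iii) are correct and match the paper's.
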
 

Hence, the character sheaf $\cL_\mu$ is trivial on $J^+$. On the other hand, the trivial character $\cL_0$ at $0\in |\bP^1|$ \eqref{d:hyp auto data}. 
In summary, we can define subgroups $K_x^+$ of $K_x$ \eqref{d:auto data}: 
\begin{equation} 
K_x^+:=
\begin{cases}
I^\opp, \quad &x=0;\\
J^+, \quad &x=\infty.
\end{cases}
\end{equation}

Let $\cG$ be the group scheme over $X=\bP^1$ associated to these level structures; i.e. 
\begin{equation} \label{eq:group cG} 
\cG|_{\bP^1-\{0,\infty\}} =G\times(\bP^1-\{0,\infty\}),\qquad \cG(\cO_0)=I^\opp,\qquad \cG(\cO_\infty)=J^+. 
\end{equation} 
Let $\Bun_\cG$ denote the moduli stack of $\cG$-torsors on $X$. The following proposition is proved in \S \ref{ss:good stack}: 
\begin{prop}\label{p:good stack}
The stack $\Bun_\cG$ is good; i.e. $\dim(T^*\Bun_\cG)=2\dim(\Bun_\cG)$ \cite[\S1.1.1]{BD}.
\end{prop}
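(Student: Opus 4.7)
The plan is to follow the approach of Beilinson--Drinfeld \cite{BD} and Zhu \cite{Zhu}, exploiting the Hitchin map whose construction is carried out in the next section of the paper. Since $\Bun_\cG$ is smooth, the lower bound $\dim T^*\Bun_\cG \geq 2\dim \Bun_\cG$ is automatic, so the content is the matching upper bound.

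First I would compute $\dim \Bun_\cG$ explicitly. As $\cG \subset \cG'$ differs only at $\infty$ by the replacement $J \rightsquigarrow J^+$, the forgetful map $\Bun_\cG \to \Bun_{\cG'}$ is a $\bar J = J/J^+$-torsor. Combining Lemma \ref{r:numerical criteria} (which gives $\dim \Bun_{\cG'} = 0$) with Proposition \ref{l:J}(ii) (which identifies $\bar J \simeq T_\phi \times \fm_1$) yields
\[
\dim \Bun_\cG = \dim T_\phi + \dim \fm_1.
\]
I would then identify $T^*\Bun_\cG$ with a moduli of Higgs-type pairs: over a $\cG$-bundle $\cE$, cotangent vectors are global sections of $\ad(\cE) \otimes \omega_{\bP^1}$ allowed to be meromorphic at $\{0,\infty\}$ with poles controlled, under the Killing form pairing, by the annihilators of $\Lie(I^\opp)$ and $\Lie(J^+)$ respectively. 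This realizes $T^*\Bun_\cG$ as the domain of the classical Hitchin map $H^{cl} \colon T^*\Bun_\cG \to \Hitch(\bP^1)_\cG$ to be studied in Section 4.

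The dimension count then proceeds in two steps. First, one shows $\dim \Hitch(\bP^1)_\cG = \dim \Bun_\cG$ by the explicit computation of invariant sections with prescribed pole data carried out in Section 4 (this is where Propositions \ref{p:j^+ image} and \ref{p:global Hitchin diagram} enter). Second, one shows that every fiber of $H^{cl}$ has dimension at most $\dim \Bun_\cG$; equivalently, it suffices to verify that the global nilpotent cone $N := (H^{cl})^{-1}(0)$ has $\dim N \leq \dim \Bun_\cG$. Combining the two gives $\dim T^* \Bun_\cG \leq 2 \dim \Bun_\cG$, hence equality.

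The main obstacle is the nilpotent cone estimate. The standard route is a Harder--Narasimhan stratification of $\Bun_\cG$ together with, on each stratum, a Lie-algebra calculation bounding the dimension of nilpotent Higgs fields satisfying the prescribed singularity conditions at $0$ and $\infty$; the polarity imposed by $I^\opp$ and $J^+$ must be strong enough to cut down the contribution of the unstable strata. The numerical hypotheses $d > 2n/3$ (type B) and $d > n$ (type D) assumed at the beginning of this section are expected to be exactly what guarantees that the restrictions imposed by $J^+$ on Higgs fields at $\infty$ dominate the potential blow-up from unstable bundles, completing the argument.
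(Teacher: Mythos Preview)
Your approach has a fatal dimension mismatch. You claim that $\dim \Hit(\bP^1)_\cG = \dim \Bun_\cG$, but this is false. By Proposition \ref{p:global Hitchin diagram}, $\Hit(\bP^1)_\cG \simeq Z$, and $\dim Z = \sharp\{d_i : d_i \geq d\} = n-m+1$. On the other hand, $\dim \Bun_\cG = \dim \overline{J} = \dim T_\phi + \dim \fm_1 = (n-m) + (m+1) = n+1$. So $\dim \Hit(\bP^1)_\cG < \dim \Bun_\cG$ whenever $m \geq 1$, which always holds since $M \neq T$. Consequently the nilpotent cone $(H^{cl})^{-1}(0)$ cannot have dimension $\leq \dim \Bun_\cG$: if it did, you would conclude $\dim T^*\Bun_\cG \leq (n-m+1) + (n+1) < 2(n+1)$, contradicting the lower bound. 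The Hitchin fibres here are genuinely larger than Lagrangian, so the standard Beilinson--Drinfeld argument via the Hitchin map does not apply. (The map that \emph{does} have half-dimensional fibres is the moment map $T^*\Bun_\cG \to \overline{\fj}^*$ for the $\overline{J}$-action, through which $H^{cl}$ factors; but establishing this flatness already presupposes goodness, so it cannot be used here without circularity.)

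The paper instead uses the codimension criterion from \cite[\S 1.1.1]{BD}: a stack $\mathcal{Y}$ is good if $\mathrm{codim}\{y : \dim \Aut(y) = n\} \geq n$ for all $n > 0$. Since $\Bun_\cG \to \Bun_{\cG'}$ is an $\overline{J}$-torsor, it suffices to check this for $\Bun_{\cG'}$, which has dimension $0$; hence one only needs that for each $n > 0$ there are finitely many isomorphism classes with $\dim \Aut = n$. This is proved by a direct count on the Birkhoff decomposition \eqref{eq:Bun_cG' decomp}: for a bundle indexed by $(\tw, A)$ one shows $\ell(\tw) - \dim U_0 \leq \dim \Aut \leq \ell(\tw) + \dim T$, so fixing $\dim \Aut = n$ bounds $\ell(\tw)$ and hence leaves only finitely many choices of $\tw$, each contributing at most $2^{|\Phi_{T_\phi}(\fu^c)|}$ classes. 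No Hitchin-map input is required.
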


Next, let $\omega_{\Bun_{\cG}}$ be the canonical line bundle on $\Bun_{\cG}$ and set 
\[
\omega_{\Bun_{\cG}}^\sharp:= \omega_{\Bun_\cG} \otimes \det(\mathrm{Lie}(\cG))^{-1}. 
\]
\begin{lem} There exists a square root $\omega_{\Bun_{\cG}}^{1/2}$ of $\omega_{\Bun_{\cG}}^{\sharp}$.
\end{lem}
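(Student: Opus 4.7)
The plan is to construct the square root by reducing to the familiar case of $\Bun_G$ (no level structure) via the forgetful map, and then showing that the extra twist introduced by the parahoric level structures combines with the correction factor $\det(\mathrm{Lie}(\cG))^{-1}$ to again give a square. Concretely, let $q: \Bun_\cG \to \Bun_G$ be the forgetful map, and consider the factorization
\[
q = q_\infty \circ q_0, \qquad \Bun_\cG \xrightarrow{q_0} \Bun_{\cG_0} \xrightarrow{q_\infty} \Bun_G,
\]
where the intermediate stack only remembers the level structure at $0$ (resp.\ forgetting the level at $\infty$). Since $G$ is simply connected and semisimple, $\mathrm{Pic}(\Bun_G)\simeq \mathbb{Z}$ is generated by the basic line bundle $\cL_G$, and $\omega_{\Bun_G}\simeq \cL_G^{-2h^\vee}$ has the canonical square root $\cL_G^{-h^\vee}$.

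First I would compute the relative dualizing sheaves of the tower. The map $q_0$ is a smooth gerbe/fibration whose relative tangent bundle is the vector bundle whose fiber at a $\cG$-torsor is $\mathrm{Lie}(I^\opp/I^\opp\cap I^-_\text{opp-uniformized})$, and similarly for $q_\infty$ whose relative tangent involves $J^+$. Standard calculation yields
\[
\omega_{\Bun_\cG} \simeq q^*\omega_{\Bun_G}\otimes \mathcal{N}_0\otimes \mathcal{N}_\infty,
\]
where $\mathcal{N}_0,\mathcal{N}_\infty$ are line bundles determined by the $T$-character $\det\mathrm{Ad}$ on the Levi quotients of $I^\opp$ and $J^+$. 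Twisting by $\det(\mathrm{Lie}(\cG))^{-1}$ to obtain $\omega_{\Bun_\cG}^{\sharp}$ cancels the parts of $\mathcal{N}_0, \mathcal{N}_\infty$ coming from the pro-unipotent radicals (these are already accounted for in $\det(\mathrm{Lie}(\cG))$), so that only contributions from the reductive quotients remain.

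Next I would verify that what remains is a square. At $0$, the reductive quotient of $I^\opp$ is $T$; the adjoint action of $T$ on $\mathfrak{g}/\mathfrak{b}^{-}$ contributes $2\check{\rho}$, which is manifestly even, so the associated line bundle on $\Bun_\cG$ is a square. At $\infty$, the reductive quotient of $J^+$ is $T_\phi$ (a subtorus of $T$), acting on $\mathfrak{l}_\phi/\mathfrak{t}_\phi \oplus (\fg_1/\fk) \oplus \cdots$; using Proposition \ref{p:stabiliser} together with the explicit description of $\fg_1$ via the Coxeter grading, one checks that the resulting $T_\phi$-character is an even multiple of $\check{\rho}_{L_\phi}$ (restricted to $T_\phi$) plus even contributions from weight spaces paired by the $\mathbb{Z}/d\mathbb{Z}$-grading, hence is a sum of doubles of $T_\phi$-characters. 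Combining with the canonical square root $\cL_G^{-h^\vee}$ on $\Bun_G$, one obtains the desired square root $\omega_{\Bun_\cG}^{1/2}$.

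The main obstacle will be the parity check at $\infty$: writing the $T_\phi$-character on the reductive quotient's adjoint representation precisely and verifying that each weight space either pairs with a weight space of opposite sign (contributing a square) or appears with even multiplicity. This ultimately relies on the symmetry of the Coxeter grading on $\fm$ (namely $\fm_i\simeq \fm_{-i}^*$ as $T_\phi$-modules) together with the hypothesis that $d>\tfrac{2}{3}n$ or $d>n$ in types B and D respectively, which controls how $\mathfrak{g}/\mathfrak{m}$ contributes; alternatively, one may bypass the case analysis by invoking the general square root construction of Beilinson--Drinfeld for parahoric Bun stacks and verifying that the cocycle obstruction class in $H^2(\Bun_\cG, \mathbb{Z}/2)$ vanishes, as $\Bun_\cG$ is simply connected in the relevant sense.
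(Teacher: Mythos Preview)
Your overall strategy---reduce along the forgetful map to $\Bun_G$, use the known square root $\cL_G^{-h^\vee}$ there, and then check that the extra contributions from the level structures are squares---is sound and is essentially how such statements are proved in the literature. However, you have misidentified the level group at $\infty$, and this leads you to manufacture an obstacle that is not there.

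The group scheme $\cG$ satisfies $\cG(\cO_\infty)=J^+$, not $J$. By Proposition~\ref{l:J}(i) we have $J^+/P(2)\simeq U_\phi\ltimes\fk$, which is unipotent (a semidirect product of the unipotent radical $U_\phi$ with a vector group), and $P(2)$ is pro-unipotent; hence $J^+$ is pro-unipotent and its reductive quotient is \emph{trivial}, not $T_\phi$. The torus $T_\phi$ lives in the quotient $\overline{J}=J/J^+$, not in $J^+$. Consequently your ``main obstacle''---the parity check of a $T_\phi$-character at $\infty$---evaporates: a pro-unipotent group has no nontrivial characters, so the determinant/modular contribution at $\infty$ is automatically trivial (or, in the Pfaffian language, the obstruction class is zero for pro-unipotent level). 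What remains is the Iwahori level at $0$, which is exactly the situation already handled by Zhu.

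This is precisely the paper's one-line proof: observe that $J^+$ is pro-unipotent, and invoke \cite[\S 4.1]{Zhu14} together with \cite[Proposition 6]{Zhu}, which construct the square root of $\omega^\sharp$ for group schemes with Iwahori level at one point and pro-unipotent level at another. Your more elaborate computation is unnecessary once the pro-unipotence of $J^+$ is recognised; the grading symmetry and the type B/D numerical hypotheses you invoke play no role here.
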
 
\begin{proof} As $J^+$ is pro-unipotent, this follows from  \cite[\S 4.1]{Zhu14} and \cite[Proposition 6]{Zhu}.
\end{proof}

\subsection{Local Hitchin map} \label{ss:local Hitchin}
Let $\fc=\fg/\!\!/G$ be the GIT quotient of $\fg$ by adjoint action, and similarly $\fc^*=\fg^*/\!\!/G$. 
We denote by $\chi:\fg\to \fc$ the Chevalley morphism. 
For an $N\times N$ matrix $X$, we write
\[
\det(\lambda I+X)=\lambda^N+c_1(X)\lambda^{N-1}+\dots+c_N(X),
\]
and denote by $p(X)$ the Pfaffian of $X$ if $X$ moreover belongs to $\so_{2n}$. 
In practice, there exists an isomorphism $\fc\simeq \bA^n$ and the map $\chi:\fg \to \fc\simeq \bA^n$ sends a matrix $X\in \fg$ to the coefficients $\{c_{d_i}(X)\}_{i=1}^n$ (and the Pfaffian $p(X)$ in the type D case).

Let $s=t^{-1}$ be a coordinate around $\infty\in\bP^1$, $K=\bC(\!(s)\!)$ and $\cO=\bC[\![s]\!]$. 
We denote by $D^{\times}$ (resp. $D$) the punctured disc $\Spec(K)$ (resp. the disc $\Spec (\cO)$). 
We denote by $\Hit(D^{\times})$ the local Hitchin base $\fc^*\times^{\bGm}\omega_K^\times$ over $D^{\times}$, which is an ind-scheme over $\bC$. 
In the following, we fix a basis $\td s$ of $\omega_K$ and take an isomorphism $\omega_K\simeq K\td s$.
The Chevalley morphism $\chi:\fg^*\to \fc^*$ induces the \textit{local Hitchin map} of ind-schemes \cite[\S 4.2]{Zhu}:
\[
h^{cl}: \fg^*\otimes\omega_K  \ra \fc^*\times^{\bGm}\omega_K^\times\simeq\bigoplus_{i=1}^n\omega_K^{d_i}.  
\]
Via the isomorphism $\fg\simeq\fg^*$ given by Killing form, it sends $X\td s$ to $(c_{d_i}(X)(\td s)^{d_i})_{i=1}^n$ in the case of type A, B, C, and additionally to $p(X)(\td s)^n$ in type D.

\subsubsection{} 
Given an $\cO$-lattice $\mathfrak{l}\subset\fg_K$, we define another $\cO$-lattice of $\fg^*\otimes\omega_K$ by 
\begin{equation} 
\fl^\perp=\{\xi \in\fg^*\otimes\omega_K \,|\,\Res_K(\xi,X)=0,\ \forall\  X\in\fl \},
\end{equation} 
where $\Res_K:\fg^*\otimes\omega_K\times\fg\ra\bC$ denotes the residue pairing, i.e. coefficient of $\xi(X)$ at $\frac{\td s}{s}$. We denote by $\cA_{\fl}$ the Zariski closure of image $h^{cl}(\fl^\perp)$ in $\bigoplus_{i}\omega_K^{d_i}$. 
\begin{exam}\label{Hitchin map I}
(i) For the hyperspecial subalgebra $\fg(\cO)$, we set $\Hit_{\fg(\cO)}:=\bigoplus_{i=1}^n \cO (ds)^{d_i}$. 
The image of unramified local Hitchin map is surjective, i.e. we have $\cA_{\fg(\cO)}=\Hit_{\fg(\cO)}$. 
This space is isomorphic to $\fc^*\times^{\bGm}\omega^{\times}_{\cO}$ and is also denoted by $\Hit(D)$. 

(ii) Let $\fI$ denote the Iwahori subalgebra and set $\Hit_{\fI}:=\bigoplus_{i=1}^n s^{-d_i+1} \cO(\td s)^{d_i}$. 
Then an easy explicit computation shows that $\cA_\fI=\Hit_\fI$.
We refer the reader to \cite{BK, BKV} for more examples. 
\end{exam} 

\subsubsection{} 
Now let us consider the hypergeometric data. 
We denote by $\fj^+\subseteq \fj \subseteq \fg(\cO)$ the Lie algebras of $J^+\subset J$ and by $\overline{\fj}$ the Lie algebra of $\overline{J}$. Our main aim is to understand $\cA_{\fj^+}$. We set 
\begin{equation} \label{eq:def of Z}
\Hit_{\fj^+}:=\bigoplus_{d_i<d} s^{-d_i} \cO(\td s)^{d_i} \oplus\bigoplus_{d_i\geq d} s^{-d_{i}-1} \cO(\td s)^{d_i}, 
\quad \textnormal{and} \quad 
Z:=\bigoplus_{d_i\geq d}s^{-d_i-1} \bC(\td s)^{d_i}.  
\end{equation}

We can now formulate an analogue of \cite[Proposition 14]{Zhu}, which will be proved in \S \ref{ss:ProofHitchin}:
\begin{prop}\label{p:j^+ image}
\textnormal{(i)} 
We have $\cA_{\fj^+}=\Hit_{\fj^+}$ in $\Hit(D^{\times})$. 

\textnormal{(ii)} We have a commutative diagram
\begin{equation}\label{eq:local Hitchin diagram}
\begin{tikzcd}
(\fj^{+})^{\perp} \arrow{r} \arrow{d}{h^{cl}} &  \overline{\fj}^* \arrow{d}\\
\Hit_{\fj^+} \arrow{r}{p} &Z
\end{tikzcd}
\end{equation}
where $p$ is the natural projection and every morphism in this diagram is dominant. 
\end{prop}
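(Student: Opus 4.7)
The proof proceeds in two stages, where part (ii) provides the structural backbone and feeds directly into part (i).

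\textbf{Stage 1: the diagram in (ii).} First I would construct the two vertical maps. The left vertical map $(\fj^+)^\perp \to \overline{\fj}^*$ sends $X \td s$ to the linear functional $Y + \fj^+ \mapsto \Res_s \langle X, Y\rangle \td s$ on $\overline{\fj} = \fj/\fj^+$; it is well-defined precisely by the perpendicularity condition, and surjective because its kernel is exactly $\fj^\perp$, reducing to the tautological nondegenerate residue pairing between $(\fj^+)^\perp / \fj^\perp$ and $\fj/\fj^+$. The right vertical map $p$ is the tautological projection extracting the most singular pole coefficients in degrees $d_i \geq d$, and is surjective by construction. Commutativity then follows by direct computation: decompose $X\td s$ via the $\check{\rho}/d$-grading of $\fg(K)$, identify the $\fm_1$-component of the image in $\overline{\fj}^*$ with the coefficient at the highest pole $s^{-d_i - 1}$ of $c_{d_i}(X)(\td s)^{d_i}$ for $d_i \geq d$, and observe that the $\Lie(T_\phi)$-component contributes only to lower-order poles.

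\textbf{Stage 2: proof of (i).} For the containment $\cA_{\fj^+} \subseteq \Hit_{\fj^+}$, I would estimate valuations directly. By Proposition~\ref{l:J}, $\fj^+ = \Lie(U_\phi) + \fk + \Lie(P(2))$; combined with the $\check{\rho}/d$-grading, this gives an explicit description of $(\fj^+)^\perp$ as an $\cO$-submodule of $\fg(K)\td s$ in which only specific root-vector components may appear in each negative graded piece. Since the fundamental invariant $c_{d_i}$ is homogeneous of degree $d_i$ in matrix entries, the worst pole produced by applying $c_{d_i}$ to an element of $(\fj^+)^\perp$ is $s^{-d_i}$ when $d_i < d$ and $s^{-d_i - 1}$ when $d_i \geq d$, matching $\Hit_{\fj^+}$. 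The reverse inclusion, and hence dominance of $h^{cl}$, follows by exhibiting explicit elements: using the principal nilpotent $p_{-1}$ together with the distinguished homogeneous basis $\{p_1,\ldots,p_n\}$ of $\fn^{p_1}$ from \S~\ref{ss:EdRquantization}, any prescribed element of $Z$ lifts to an element of $(\fj^+)^\perp$, and Cartan-valued corrections fill out the less-singular part of $\Hit_{\fj^+}$.

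\textbf{Main obstacle.} The hardest step will be the $\supseteq$ direction for classical types B, C, and D. Unlike the $d = h_G$ case of \cite{Zhu}, where every $c_{d_i}$ corresponds to a single free parameter governed by the Kostant slice transverse to the principal nilpotent, the general $d$ case mixes ``low-degree'' invariants constrained by the full $\cO$-structure with ``high-degree'' ones free only at the leading order. The assumptions $d > \tfrac{2}{3} n$ in type B and $d > n$ in type D are precisely what ensures the high-degree fundamental invariants decouple; without such restrictions, additional algebraic relations among the $c_{d_i}$ (and the Pfaffian invariant in type D) would obstruct the surjectivity onto $Z$. This case analysis, driven by the explicit form~\eqref{eq:generic phi} of $\phi$ and the admissible Levi structure, is what I expect to consume most of the technical work.
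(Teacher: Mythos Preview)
Your Stage~1 and the containment $\cA_{\fj^+}\subseteq\Hit_{\fj^+}$ are roughly on track, though the paper organizes the valuation estimate by first passing to a degree-$d$ extension $K'/K$ via the map $DK^*$ (as in \cite[Proposition~10]{Zhu}), which makes the $\check{\rho}/d$-grading literally into a $\bZ$-grading and cleans up the bookkeeping considerably.

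There is, however, a genuine gap in your $\supseteq$ argument. First, the Kostant slice $p_{-1}+\fn^{p_1}$ you invoke is the one for $\fg$, and its principal nilpotent $p_{-1}$ involves \emph{all} negative simple root vectors---but in the $DK^*$-picture one has $DK^*((\fj^+)^\perp)=(\fm_{-1}u^{-1}\oplus\fu_\phi^\perp\oplus\prod_{j\ge1}\fg_j u^j)\frac{\td u}{u}$, so only the $\fm_{-1}$-part of the principal nilpotent is available at pole order $u^{-1}$. Lifting elements of $Z$ into this lattice is a genuine computation: the paper does it (\S\ref{l:Z dense}) by taking $X_1\in\fm_{-1}u^{-1}$, then choosing $X_2\in\fb_0$ and $X_3\in\fu^c_-$ block-by-block along the decomposition~\eqref{eq:G_0 block decomp} so that $\det(\lambda I+X_2+X_3)=\lambda^N$; this requires the explicit linear-algebra Lemma~\ref{l:image of type A block}. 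Second, and more seriously, ``Cartan-valued corrections fill out the less-singular part'' does not work: the complement of $Z$ in $\Hit_{\fj^+}$ is infinite-dimensional (all of $\bigoplus_i s^{-d_i}\cO(\td s)^{d_i}$), and there is no evident local mechanism to hit it. The paper's key idea here is \emph{global}: once $Z\subseteq\cA_{\fj^+}$ is known, one introduces auxiliary group schemes $\cG_\ell$ over $\bP^1$ with Iwahori level at $\ell$ additional points, computes $\dim\Hit(\bP^1)_{\cG_\ell}$ and $\dim\ker(\Hit(\bP^1)_{\cG_\ell}\to\Hit_{\fj^+}/s^N\Hit(D))$ by Riemann--Roch, and lets $\ell\to\infty$ to force surjectivity modulo $s^N$ for every $N$. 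This local-to-global trick (following \cite{BK}) is what you are missing.

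Your diagnosis of where the type-B/D restrictions enter is also slightly off: they are used earlier, in Proposition~\ref{l:J}, to guarantee that only type~(I) $T_\phi$-weights occur (Proposition~\ref{p:G type v.s. weight type}), which in turn controls the structure of $\fj^+$; the Hitchin image computation itself then runs uniformly.
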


\subsection{Global Hitchin map}\label{ss:global Hitchin} 
We denote the global Hitchin base by $\Hit(\bGm)$:
\[
\Hit(\bGm):= \Gamma(\bGm,\fc^*\times^{\bGm}\omega_{\bGm}^\times),
\]
and the global Hitchin map associated to $\cG$ (cf. \cite[\S 2.2]{BD} and \cite{Zhu}) by
\begin{equation} \label{eq:global Hitchin}
H^{cl}:T^*\Bun_{\cG}\ra\Hit(\bGm). 
\end{equation}

Let $D_0^\times$ and $D_\infty^\times$ be the punctured formal disks around $0$ and $\infty$, respectively and set
\begin{equation}\label{eq:HitchinBase}
\Hit(\bP^1)_{\cG}:=\Hit(\bGm)\times_{\Hit(D_\infty^{\times})} \Hit_{\fj^+}\times_{\Hit(D_0^{\times})} \Hit_{\mathfrak{i}}.
\end{equation}
The following is a global version of the previous proposition (and an analogue of \cite[Lemma19]{Zhu}), which will be proved in \S \ref{ss:pf global Hitchin}:
\begin{prop} \label{p:global Hitchin diagram}
\textnormal{(i)} 
The Zariski closure of the image of $H^{cl}$ is $\Hit(\bP^1)_{\cG}$. 

\textnormal{(ii)} We have a commutative diagram
\begin{equation}\label{eq:global Hitchin diagram}
\begin{tikzcd}
T^*\Bun_\cG \arrow{r} \arrow{d}{H^{cl}} &{\overline{\fj}}^* \arrow{d}\\
\Hit(\bP^1)_\cG \arrow{r}{\sim} &Z 
\end{tikzcd}
\end{equation}
Every morphism in this diagram is dominant and the bottom arrow is an isomorphism.
\end{prop}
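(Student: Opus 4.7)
The plan is to reduce the global statement to the local analysis of Proposition \ref{p:j^+ image} combined with an explicit description of the fiber product $\Hit(\bP^1)_\cG$. I will first identify $\Hit(\bP^1)_\cG$ with $Z$ by a direct Riemann--Roch-type computation on $\bP^1$, then verify that $H^{cl}$ factors through this fiber product by applying the local Hitchin analysis at the two punctures, and finally establish dominance and the commutativity of \eqref{eq:global Hitchin diagram} using the local diagram \eqref{eq:local Hitchin diagram}.

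\textbf{Identification $\Hit(\bP^1)_\cG \simeq Z$.} A global section of $\omega_{\bP^1}^{d_i}$ regular on $\bGm$ has the form $f(t)(dt)^{d_i}$ with $f \in \bC[t,t^{-1}]$. The constraint at $0$ coming from $\Hit_{\mathfrak{i}}$ forces the lowest exponent of $t$ in $f$ to be at least $-d_i + 1$. Passing to $s = 1/t$, one has $t^m(dt)^{d_i} = \pm s^{-m-2d_i}(ds)^{d_i}$, so the constraint from $\Hit_{\fj^+}$ at $\infty$ forces $m \le -d_i$ when $d_i < d$ and $m \le -d_i + 1$ when $d_i \ge d$. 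Intersecting with the constraint $m \ge -d_i + 1$ gives the empty set in the first case and the one-dimensional space $\bC \cdot t^{-d_i + 1}(dt)^{d_i}$ in the second. The projection to $Z$ sends this generator to a nonzero element of the corresponding summand, establishing the isomorphism.

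\textbf{Containment and commutativity.} Given $(\cE,\varphi) \in T^*\Bun_\cG$, local trivializations of $\cE$ near $0$ and $\infty$ present $\varphi$ as elements of $\mathfrak{i}^\perp$ and $(\fj^+)^\perp$ respectively, since cotangent vectors at $\cE$ pair trivially with $\Lie(\cG)$. Example \ref{Hitchin map I}(ii) and Proposition \ref{p:j^+ image}(i) then show that the image of $H^{cl}$ lies in $\Hit(\bP^1)_\cG$. The top horizontal arrow in \eqref{eq:global Hitchin diagram} is defined as the restriction of $\varphi$ to the formal neighborhood of $\infty$ composed with the projection $(\fj^+)^\perp \to \overline{\fj}^*$. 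This is well-defined independently of the local trivialization because $\overline{J}$ is abelian (Proposition \ref{l:J}(ii)) and hence the coadjoint action of $J^+$ on $\overline{\fj}^*$ is trivial. Commutativity of \eqref{eq:global Hitchin diagram} is then the global incarnation of the local diagram \eqref{eq:local Hitchin diagram}.

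\textbf{Dominance and the main obstacle.} The right vertical arrow $\overline{\fj}^* \to Z$ is dominant by Proposition \ref{p:j^+ image}(ii), so by the identification above it suffices to show that the top horizontal map $T^*\Bun_\cG \to \overline{\fj}^*$ is dominant; part (i) then follows. This dominance is the main obstacle. My plan is to produce, on an open stratum of $\Bun_\cG$ containing the relevant orbit supplied by Theorem \ref{t:rigid}, a family of Higgs fields whose principal parts at $\infty$ sweep out a Zariski-dense subset of $\overline{\fj}^*$. The construction proceeds by lifting generic local Higgs data at $\infty$ across the global-to-local obstruction, with the good-stack property (Proposition \ref{p:good stack}) and the equality $\dim \Bun_{\cG'} = 0$ of Lemma \ref{r:numerical criteria} providing the dimensional control needed to force density of the image.
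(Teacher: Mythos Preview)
Your identification $\Hit(\bP^1)_\cG \simeq Z$ and the commutativity argument agree with the paper's proof. The substantive difference is in how part (i) is obtained. The paper does not argue via dominance of the moment map: it simply observes that $\cG = \cG_1$ in the notation of \S\ref{sss:global Hitchin Gm}, so (i) is the case $\ell=1$ of Lemma~\ref{eq:global Hitcin image}, which was already established en route to Proposition~\ref{p:j^+ image}(i). That lemma combines the explicit inclusion $Z \subseteq \cA_{\fj^+}$ of \S\ref{l:Z dense}, the factorization $\cA_{\cG_1} \simeq \Hit(\bGm) \times_{D_\infty^\times} \cA_{\fj^+} \times_{D_0^\times} \Hit_\fI$, and a dimension count. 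So in the paper's logic, (i) is already available once the local analysis is done, and nothing further is needed.

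Your route---deduce (i) from dominance of $T^*\Bun_\cG \to \overline{\fj}^*$ together with the local dominance $\overline{\fj}^* \to Z$ from Proposition~\ref{p:j^+ image}(ii)---is correct and has the virtue of not relying on the factorization of $\cA_{\cG_1}$. But you are overcomplicating the step you flag as the ``main obstacle''. The map $T^*\Bun_\cG \to \overline{\fj}^*$ is precisely the moment map for the $\overline{J}$-action, and since $\Bun_\cG \to \Bun_{\cG'}$ is an $\overline{J}$-torsor the action is free; hence the infinitesimal action $\overline{\fj} \hookrightarrow T_\cE\Bun_\cG$ is injective at every point and the moment map is fiberwise surjective. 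There is no need to lift Higgs data across a global-to-local obstruction, and Theorem~\ref{t:rigid} plays no role here. Indeed, the paper uses exactly this torsor observation later, in the proof of Theorem~\ref{th:oper Hecke eigensheaf}(i), to conclude that the moment map is flat.
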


Our next goal is to quantize the above propositions using the center of affine Kac--Moody algebra at the critical level, following the framework of \cite{BD, Zhu}.

\subsection{Local quantization}
\subsubsection{}
Let $\hg=\fg(K)\bigoplus \bC\cdot\mathbf{1}$ be the affine Kac-Moody algebra\footnote{We ignore the derivation usually present in affine Kac--Moody algebras.} at the  critical level associated to $\fg$. Let $\Ug$ its completed universal enveloping algebra at the critical level. 
Let \[
 \Vac:= \Ind^{\hg}_{\fg(\cO)+\bC\bone}\bC
 \]
 be the universal affine vertex algebra. 
As a vector space, $\Vac$ is isomorphic to the universal enveloping algebra
$U(\fg_-)$, where $\fg_-=\fg\otimes s^{-1}\mathbb{C}[ [s^{-1}]]$. We have a canonical filtration on $\Ug$ wich induces a filtration on $U(\fg_-)$. The associated graded algebra $\gr(\Vac)$ identifies with $\Sym (\fg_-) = \Fun \fg^*(\cO)$.

Let $\fz$ and $\fZ$ denote the center of the vertex algebra $\Vac$ and the associative algebra $\Ug$ respectively. 
The algebras $\fZ$ and $\fz$ are equipped with canonical filtrations coming from that of $\hat{U}(\fg)$. 
By \cite[Theorem 3.7.8]{BD}, there exists canonical isomorphisms 
\begin{equation} 
\gr\fz\simeq\Fun\Hit(D),\qquad\quad \gr\fZ\simeq\Fun\Hit(D^\times).
\end{equation} 
Thus, $\mathrm{Spec}(\fz)$ and $\mathrm{Spec}(\fZ)$ quantize $\Hit(D)$ and $\Hit(D^\times)$, respectively. Given $S\in \fZ$, we write $\overline{S}$ for its image in $\gr\fZ$. 
Recall that we have an isomorphism 
\[
\Hit(D^\times)\simeq\bigoplus_{i=1}^n\omega^{d_i}_K.
\] 
Let $h_{ij}$ denote the coefficient of $s^{-j-1}(\td s)^{d_i}$ in $\bC s^{-j-1}(\td s)^{d_i}$. 
Then $\{h_{ij}\,|\, 1\leq i\leq n, j\in\bZ\}$ provide a set of topological generators of $\Fun\Hit(D^\times)$.

\subsubsection{}
A complete set of \textit{Segal-Sugawara vectors} is a set of elements
\[
S_1,S_2,\dots,S_n\in\fz,\quad n=\rank \fg,
\]
such that images of $S_1,\dots,S_n$ in $\gr \fz$ coincide with the images of some algebraically independent generators of the algebra of invariants $\Sym(\fg)^{\fg}$ under the embedding $\Sym(\fg)\to \Sym(\fg_-)$, cf. \cite{Frenkel,MolevBook}.	

Let $\{S_1,\dots,S_n\}$ be a complete set of Segal--Sugawara vectors. A fundamental theorem of Feigin and Frenkel \cite{FF, Frenkel} states that the center $\fz$ is isomorphic to the polynomial algebra generated by $S_i$ and their derivations $S_{i,j}, j\ge 1$. 
Moreover, the coefficients $S_{i,j}, j\in\bZ$, obtained under state-field correspondence of the vertex algebra $\Vac$, are called \textit{Segal--Sugawara operators} and form a set of topological generators of the center $\fZ$. Note that a complete set of Segal--Sugawara vectors (and therefore operators) is by no means unique.

\subsubsection{} 
Now we apply the above considerations to our hypergeometric setting. Define the vacuum module associated to $\fj^+$ by 
\[
\Vac_{\fj^+}:=\Ind^{\hg}_{\fj^+\oplus \bC\bone}(\bC)=\Ug/\Ug(\fj^+\oplus \bC\bone).
\]
We have a canonical isomorphism $\End\Vac_{\fj^+} \simeq \Vac_{\fj^+}^{J^+}$. 
Let $\fZ_{\fj^+}$ denote the image of $\fZ$ in $\End\Vac_{\fj^+}$: 
\[
\fZ\tra\fZ_{\fj^+}\hra\End\Vac_{\fj^+} \simeq \Vac_{\fj^+}^{J^+}\subset\Vac_{\fj^+}.
\]
Since $\Vac_{\fj^+}=\Ind_{\fj\oplus \bC \bone}^{\hg}(U(\overline{\fj}))$, we obtain an injective homomorphism
\[
U(\overline{\fj})\simeq \End_{\fj}~ U(\fj/\fj^+)\subset\End\Vac_{\fj^+}.
\]
Taking the associated graded with respect to the filtration induced from $\Ug$, we have
$$
\gr U(\overline{\fj})\simeq\Fun(\overline{\fj})^*,\qquad \gr\Vac_{\fj^+}\simeq\Fun(\fj^{+})^{\perp}.
$$
By Proposition \ref{p:j^+ image}, we have an isomorphism
\begin{equation} \label{eq:gr Zj+}
\gr\fZ_{\fj^+}\simeq\Fun\Hit_{\fj^+}.\footnote{We give an alternative proof of this isomorphism in the proof of Lemma \ref{l:ker(fZ to fZ_j^+)}.}
\end{equation}
Thus we see that $\Vac_{\fj^+}$ and $\fZ_{\fj^+}$ quantize $(\fj^+)^\perp$ and $\Hit_{\fj^+}$, respectively.

\subsubsection{} \label{sss:SS Molev}
Now we use a particular set of Segal-Sugawara vectors to define a subalgebra $A\subset\fZ$. In the case of type A, we let $S_i$ be those constructed in \cite[Theorem 3.1]{MolevA}. 
In the case of type C, we let $S_i$ be those constructed in \cite[Theorem 4.4]{Yakimova}. As explained at end of \cite[\S2.3]{MolevBCDnew}, these Segal-Sugawara vectors coincide with the ones constructed earlier in \cite{MolevBCD}. 
Since these $S_i$ are constructed using characteristic polynomials, then symbols of associated Segal-Sugawara operators $\overline{S_{i,j}}$ would correspond to $h_{ij}$ under the isomorphism $\gr\fZ\simeq\Fun\Hit(D^\times)$, c.f. \cite[Chapter 6]{MolevBook}.

In the case of type B or D, we take a set of Segal-Sugawara vectors $\{S_i\}$ such that $\overline{S_{i,j}}$ correspond to $h_{ij}$ under the isomorphism $\gr\fZ\simeq\Fun\Hit(D^\times)$. 

Recall that $d=h_M$ is a fundamental degree \eqref{sss:generic functional}. We consider the following subalgebra of $\fZ$: 
\begin{equation} \label{eq:A} 
A:=\bC[S_{i,d_i}\,|\,d_i\geq d]\subset \fZ
\end{equation} 
Since $\overline{S_{i,d_i}}$ corresponds to $h_{i,d_i}$ in \eqref{eq:gr Zj+},
we have $\gr A\simeq\Fun Z$ \eqref{eq:def of Z}.
The associated graded of the composition $A\to \fZ\twoheadrightarrow \fZ_{\fj^+}$ is given by 
\[
\gr A\simeq \Fun Z\ra\gr \fZ_{\fj^+}\simeq \Fun\Hit_{\fj^+}, 
\]
and is injective by Proposition \ref{p:j^+ image}. Thus, $A\to \fZ_{\fj^+}$ is also injective. We can now state a quantum analogue of Proposition \ref{p:j^+ image}, which is proved in \S \ref{ss:pf local quant}.  

\begin{prop}\label{p:local quant}
Assume moreover that $d=h_G$ or $h_G-2$ when $G$ is of type B or D. 

\textnormal{(i)} The image of $A\to \fZ_{\fj^+}$ is contained in $U(\overline{\fj})$ and we have the following commutative diagram:
\begin{equation}\label{eq:local quant diagram}
\begin{tikzcd}
A \arrow[r,hook] \arrow[d,hook] &\fZ_{\fj^+} \arrow[d,hook]\\
U(\overline{\fj}) \arrow[r,hook] &\End(\Vac_{\fj^+}) \arrow[r,hook] &\Vac_{\fj^+}
\end{tikzcd}
\end{equation}

\textnormal{(ii)} The morphisms in above diagram are strictly compatible with filtrations.
Taking the associated graded, we obtain diagram \eqref{eq:local Hitchin diagram}. 
\end{prop}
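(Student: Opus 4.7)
The plan is to establish (i) by explicit computation with the chosen Segal--Sugawara operators, and then to deduce (ii) via a standard filtered-algebra sandwich argument using Proposition~\ref{p:j^+ image}.

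For part (i), fix $i$ with $d_i \geq d$ and aim to show that the image of $S_{i,d_i}$ in $\End(\Vac_{\fj^+}) \simeq \Vac_{\fj^+}^{J^+}$ lies in $U(\overline{\fj})$, viewed as a subalgebra via the realization $\Vac_{\fj^+} \simeq \Ind_{\fj\oplus \bC\bone}^{\hg} U(\overline{\fj})$. I would fix a PBW-adapted decomposition $\hg = \fn_- \oplus \overline{\fj} \oplus \fj^+ \oplus \bC\bone$, where $\fn_-$ is a complement spanned by modes of strictly negative loop degree not already in $\fj^+$, and expand $S_{i,d_i}$ as a sum of normally ordered monomials. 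This uses Molev's explicit formulas \cite{MolevA} in type A, Yakimova's \cite{Yakimova} in type C, and, in types B and D, the short explicit list of vectors permitted by our assumption $d \in \{h_G, h_G-2\}$ (where only a handful of $S_{i,d_i}$ with $d_i \geq d$ actually occur). Upon reduction modulo the left ideal $\Ug \cdot (\fj^+ \oplus \bC\bone)$, only monomials in $U(\overline{\fj}) \cdot U(\fn_-)$ survive in $\Vac_{\fj^+}$. The task then reduces to showing the $U(\fn_-)$-factor of every surviving term is trivial; this is forced by weight considerations under the bigrading by $(s\partial_s, \check{\rho}_G/d)$, since $S_{i,d_i}$ has a fixed bidegree which is incompatible with any nontrivial $U(\fn_-)$-component precisely under the inequality $d_i \geq d$.

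For part (ii), once (i) produces the commutative square of filtered algebras \eqref{eq:local quant diagram}, I invoke Proposition~\ref{p:j^+ image} together with the choice of Segal--Sugawara vectors in \S\ref{sss:SS Molev}: these identify the associated graded of each arrow with the corresponding classical arrow in \eqref{eq:local Hitchin diagram}. The classical diagram has dominant maps, hence the algebra maps are injective on the graded level, so a standard sandwich argument forces all four filtered maps to be strict and injective. Commutativity of the filtered diagram then follows from commutativity at the graded level combined with the injectivity of the vertical maps $U(\overline{\fj}) \hookrightarrow \Vac_{\fj^+}$ and $\fZ_{\fj^+} \hookrightarrow \Vac_{\fj^+}$.

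The main obstacle lies in part (i): proving that the potentially dangerous ``negative'' components of $S_{i,d_i}$ die upon reduction modulo $\fj^+$. In types A and C this is a clean consequence of the (anti)symmetrizer identities underlying the Molev--Yakimova formulas, and the bigrading bookkeeping is automatic. In types B and D no equally uniform generating function is available for all Segal--Sugawara vectors, which is precisely why we restrict to $d \in \{h_G, h_G-2\}$: in this narrow range the handful of relevant operators can be treated individually by ad hoc Capelli-type computations, whereas extending the result to more general $d$ in types B and D would require a new explicit construction of higher Segal--Sugawara vectors adapted to the Coxeter grading.
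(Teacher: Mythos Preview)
Your overall architecture is close to the paper's, but the heart of part~(i) has a genuine gap: the bigrading by $(s\partial_s,\check\rho_G/d)$ alone does \emph{not} force the $\fn_-$-components to vanish. Those constraints give only $\sum_b\beta_b=0$, $\sum_b n_b=1$, and $n_b\le r_{\beta_b}-1$; via the inequality~\eqref{eq:m_1 roots bound} this yields $|\{b:\beta_b\in\Phi(\fm_1)\}|\ge d$, but nothing prevents extra factors in $\fn_-$ from appearing with compensating bidegrees. The paper closes this gap by exploiting two structural facts about the \emph{specific} Segal--Sugawara vectors chosen, not just their bidegree. First, because the Molev/Yakimova $S_i$ in types A and C are built from coefficients of a characteristic polynomial, each contributing monomial uses matrix entries from pairwise distinct rows and columns; since $\fm_{-1}$ has exactly $d$ nonzero entries occupying distinct rows and columns, this forces $|\{b:\beta_b\in\Phi(\fm_1)\}|\le d$, hence equality, hence $n_b=r_{\beta_b}-1$ for all~$b$. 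Second, the remaining factors $\beta_b\notin\Phi(\fm_1)$ are then pinned to $\ft_\phi$ by the row/column observation (equivalent to Lemma~\ref{l:roots in U_phi}): every root in $\Phi(\fu^c)$ shares a row or column with some entry of $\fm_{-1}$, so it cannot coexist with all $d$ of those entries in a single characteristic-polynomial term. Neither of these steps is a bigrading argument, and neither follows from your PBW setup as stated.

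In types B and D the paper's treatment is different from what you describe: it does not rely on an explicit short list of vectors, but rather squeezes the case analysis $k\in\{d,d+1,d+2\}$ directly out of the saturated inequality~\eqref{equality Ebeta}. For $k\le d+1$ the single non-$\fm_1$ factor is shown to lie in $\ft_\phi$ using the $J^+$-invariance of $S_{i,d_i}\cdot\bone$ (triviality of the $E_{\alpha_i}$-action), and the $k=d+2$ case (only $S_{n,d_n}$) is handled by passing to the top symbol and invoking the characteristic-polynomial argument there. Your proposal does not supply either of these mechanisms. Your part~(ii) is essentially correct, though the final sentence is backwards: commutativity of the filtered diagram is an immediate consequence of~(i), and strictness (hence the identification of the associated graded with~\eqref{eq:local Hitchin diagram}) follows by the sandwich argument you indicate.
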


\subsection{Quantizing the Hitchin base via opers}\label{ss:local opers}
\subsubsection{} 
\label{sss:SS general}
We consider another set of Segal-Sugawara operators defined via the Feigin-Frenkel isomorphism. Let $\Op_{\cg}(D^{\times})$ be the space of $\cg$-opers (aka $\check{G}$-opers) over the punctured disc $D^{\times}$. Then the Feigin-Frenkel isomorphism \cite{FF} can be rephrased as a canonical isomorphism
\begin{equation} 
\Fun\Op_{\cg}(D^{\times}) \simeq \fZ. 
\end{equation} 
This formulation has the advantage of being coordinate independent. 

Let $n$ be the rank of $\hG$, $\cg=\fn^-\oplus \ft \oplus \fn^+$ the Cartan decomposition and $p_{-1}\in \fn^-$ a principal nilpotent element. 
Consider the unique principal $\mathfrak{sl}_2$-triple $\{p_{-1},2\check{\rho},p_1\}$ and the $\ad_{p_1}$-invariant subspace $\fn^{p_1}\subset \fn$. 
The adjoint action of $\check{\rho}$ gives rise to a grading on $\fn^{p_1}=\bigoplus_{i=1}^n \fn^{p_1}_{d_i}$, where $d_1\le d_2 \le\dots \le d_n$ are the fundamental degrees of $\hG$. 
Let $p_i$ be a homogeneous basis of $\fn^{p_1}_{d_i}$ with $\deg(p_i)=d_i-1$. We also denote $\fn^{p_1}_{d_i}$ by $V_{\cg,d_i}$ to distinguish these spaces for different groups.  

If we choose a local coordinator $s$ of $D^{\times}$, then 
\[
\Op_{\cg}(D^\times)\simeq \{\nabla=\partial_s+p_{-1}+\sum_{i=1}^n v_i(s)p_i,\ v_i(s)\in\bC(\!(s)\!)\}, \quad v_i(s)=\sum_{j}v_{ij}s^{-j-1}. 
\]

Now let $S_{i,j}\in\fZ$ be the image of $v_{ij}$ via the above isomorphism. This gives a set of Segal-Sugawara operators, c.f. \cite[\S4.3.1., \S4.3.2.]{Frenkel}. Consider
$$
A_1:=\{S_{i,j}\,|\,j\geq d_i+\lfloor\frac{d_i}{d}\rfloor,\forall d_i\}, \quad A_2:=\{S_{i,j}\,|\,j\geq d_i+1,\forall d_i\}.
$$

The following lemma follows from the discussions in \S~\ref{l:S_ij vanish} and \S~\ref{sss:proof local quant}:
\begin{lem}\label{l:ker(fZ to fZ_j^+)}
\textnormal{(i)} For Segal-Sugawara operators in \S~\ref{sss:SS general}, we have $A_1\subset\ker(\fZ\tra\fZ_{\fj^+})$.

\textnormal{(ii)} In the case of type A or C with Segal-Sugawara operators considered in \S~\ref{sss:SS Molev}, we have $A_2\subset \ker(\fZ\tra\fZ_{\fj^+}) $.  
\end{lem}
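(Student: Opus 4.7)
The plan is to show that each element $S_{i,j}$ in the specified set annihilates the vacuum vector $|0\rangle_{\fj^+}\in\Vac_{\fj^+}$: by centrality of $\fZ$ in $\Ug$ and cyclicity of $\Vac_{\fj^+}$, this is equivalent to $S_{i,j}\in\ker(\fZ\to\fZ_{\fj^+})$. The starting point will be the symbol calculation under the PBW filtration. We have $\gr\fZ\simeq\Fun\Hit(D^\times)$, and by \eqref{eq:gr Zj+}, $\gr\fZ_{\fj^+}\simeq\Fun\Hit_{\fj^+}$, with the induced graded map identified with the restriction of functions. In both \S~\ref{sss:SS Molev} and \S~\ref{sss:SS general} the symbol of $S_{i,j}$ is the coordinate $h_{ij}$ (picking off the coefficient of $s^{-j-1}(\td s)^{d_i}$). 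From the description of $\Hit_{\fj^+}$ in \eqref{eq:def of Z}, an element in the $d_i$-th summand has pole order at most $d_i$ when $d_i<d$ and at most $d_i+1$ when $d_i\ge d$; hence $h_{ij}|_{\Hit_{\fj^+}}=0$ as soon as $j\ge d_i$ (respectively $j\ge d_i+1$).

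This symbol analysis already handles $A_2$ at the associated-graded level (since $j\ge d_i+1\ge d_i$ for any $S_{i,j}\in A_2$) and it handles $A_1$ whenever $d_i<2d$, where $d_i+\lfloor d_i/d\rfloor$ coincides with the symbolic threshold. To upgrade symbol vanishing to actual vanishing of $S_{i,j}|_{\fZ_{\fj^+}}$, I would proceed along two complementary routes. For part (ii), I would use Molev's explicit construction in types A and C, which expresses $S_i$ as a normally ordered characteristic-polynomial determinant in the matrix currents $E_{ab}(z)$; the mode $S_{i,j}$ then expands as a sum of normal-ordered products $E_{a_1 b_1}(n_1)\cdots E_{a_k b_k}(n_k)$. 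Acting on $|0\rangle_{\fj^+}$ and reorganising using Proposition \ref{l:J} together with the affine grading induced by $\check\rho/d$, each such product should either contain a right-most factor in $\fj^+$ (which kills the vacuum) or reduce, via commutation and centrality, to a strictly lower PBW-degree expression of the same form, closing an induction. For part (i), I would use the Feigin--Frenkel isomorphism $\fZ\simeq\Fun\Op_{\cg}(D^\times)$: the kernel of $\fZ\to\fZ_{\fj^+}$ corresponds to the ideal of functions vanishing on the subvariety of opers compatible with the level structure at $s=0$. A slope estimate on this subvariety should show it is contained in the locus of opers of slope $\le 1/d$ at $s=0$, which is cut out precisely by the vanishing of the coordinates $v_{ij}$ for $j\ge d_i+\lfloor d_i/d\rfloor$.

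The main obstacle will be the regime $d_i\ge 2d$ in part (i), where the claim is strictly stronger than what the symbol alone delivers. Here the slope-theoretic interpretation should be essential: the sharper bound $\lfloor d_i/d\rfloor$ arises from the Newton polygon of an oper of slope $\le 1/d$, a feature not visible at a single PBW degree. Making the slope description and its comparison with the algebraic kernel precise is the delicate part, and it is where the detailed computations deferred to \S~\ref{l:S_ij vanish} and \S~\ref{sss:proof local quant} enter.
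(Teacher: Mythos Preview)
Your symbol analysis is a reasonable sanity check, but as you yourself note, it cannot by itself yield vanishing of $S_{i,j}$ in $\fZ_{\fj^+}$: knowing the symbol dies only tells you the image drops in PBW degree, and there is no induction to close (the lower-degree remainder is not another Segal--Sugawara operator). More seriously, your proposed route for (i) through opers is circular as stated. The scheme $\Op_{\fj^+}(D^\times)$ is \emph{defined} to be $\Spec(\fZ_{\fj^+})$, so asserting that it lies in the locus of opers of slope $\le 1/d$ is exactly the content of (i) after unwinding the Feigin--Frenkel identification of coordinates---indeed this is precisely how the paper \emph{uses} the lemma to deduce \eqref{eq:Op_j^+}. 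You would need an independent argument for the slope bound on the module $\Vac_{\fj^+}$, and you do not supply one.

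The paper avoids both symbols and opers: it is a direct estimate in $\Vac_{\fj^+}$. For each root $\alpha$ set $r_\alpha=\min\{r\in\bZ:s^r\fg_\alpha\subset\fj^+\}$ and compute it explicitly in terms of $\Ht(\alpha)$ and whether $\alpha$ lies in $\Phi(\fm_1)$, $\Phi(\fu_\phi)$, or neither. Any PBW monomial $E_{\beta_1}[n_1]\cdots E_{\beta_k}[n_k]$ surviving in $S_{i,j}\cdot\bone$ has $n_b\le r_{\beta_b}-1$, $\sum_b n_b=j-d_i+1$, and $\sum_b\beta_b=0$; summing the explicit formulas for $r_{\beta_b}-1$ and using $\sum_b\Ht(\beta_b)=0$ gives in one line
\[
j-d_i+1\ \le\ \frac{|\{b:\beta_b\in\Phi(\fm_1)\}|}{d}\ \le\ \frac{d_i}{d},
\]
which is (i). For (ii) the same inequality is used, but with the sharper bound $|\{b:\beta_b\in\Phi(\fm_1)\}|\le d$: Molev's $S_i$ in types A and C are built from column determinants, and the entries lying in $\fm_1$ occupy distinct rows and columns, so at most $d$ of them occur in any single product. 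The displayed inequality then forces $j\le d_i$, giving (ii). No oper-side reasoning and no induction on PBW degree enter.
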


\subsubsection{} 
Let $\Op_{\fj^+}(D^{\times}):=\Spec(\fZ_{\fj^+})$, which is a closed subscheme of $\Op_{\cg}(D^{\times})$.
Lemma \ref{l:ker(fZ to fZ_j^+)}(i) implies 
\begin{equation}\label{eq:Op_j^+}
\Op_{\fj^+}(D^{\times})\subset
\{\nabla=\partial_s+p_{-1}+\sum_{d_i<d}s^{-d_i}\bC[\![s]\!]p_i+\sum_{d_i\geq d}s^{-d_i-\lfloor d_i/d\rfloor}\bC[\![s]\!]p_i\}.
\end{equation}
In addition, when $d>\frac{h_G}{2}$, the description of this space simplifies:
\begin{equation}\label{eq:Op_j^+ d>h/2}
\Op_{\fj^+}(D^{\times})\simeq \{\nabla=\partial_s+p_{-1}+\sum_{d_i<d}s^{-d_i}\bC[\![s]\!]p_i+\sum_{d_i\geq d}s^{-d_i-1}\bC[\![s]\!]p_i\}.
\end{equation}
In this case, $\Op_{\fj^+}(D^{\times})$ equals to the space of $\cg$-opers  of slope $\le \frac{1}{d}$, cf. \cite[Proposition 3]{CK}.

\subsubsection{} 
Let $\Op_{\cg,\fI}(D^{\times})=\Spec(\fZ_{\fI})$ denotes the scheme classifying local $\cg$-opers on $D^{\times}$ with regular singularity and  principal unipotent monodromy (c.f. \cite[\S 9]{Frenkel}). 
We define a closed subscheme $\Op_{\cg}(\bP^1)_{\cG}$ of $\Op_{\cg}(\bGm)$ classifying $\cg$-opers on $\bP^1$ associated to $\cG$ as in \cite[\S2]{Zhu}. 
By \cite[Lemma 5]{Zhu}, we have an isomorphism:
$$
\Op_{\cg}(\bP^1)_{\cG}\simeq
\Op_{\cg}(\bGm)\times_{\Op_{\cg}(D_0^\times)}\Op_{\cg,\fI}(D_0^\times)\times_{\Op_{\cg}(D_\infty^\times)}\Op_{\cg,\fj^+}(D_\infty^\times).
$$
Let $t=s^{-1}$ be a coordinate around $0\in\bP^1$. 
We have the following explicit description of $\Op_{\cg}(\bP^1)_{\cG}$: 

The followin proposition is proved in \S \ref{ss:global oper}. 

\begin{prop}\label{l:global oper}
An oper of $\Op_{\cg}(\bP^1)_{\cG}$ can be written in terms of coordinates $\lambda_m,\dots,\lambda_n \in \bC$ as: 
\begin{equation}\label{eq:global oper}
\nabla=
\partial_t+t^{-1}p_{-1}+
\sum_{d_i\geq d}(\lambda_i+\sum_{j=1}^{\lfloor d_i/d\rfloor-1}t^j P_{ij})p_i,
\end{equation}
where $P_{ij}$ are polynomials in $\lambda_m,\dots,\lambda_n$ with fixed coefficients.
In particular, we obtain an (non-canonical) isomorphism  
\begin{equation} 
\Op_{\cg}(\bP^1)_{\cG}\simeq \bA^{n-m+1},\qquad m:=\sharp\{d_i~|~ d_i\le d\}.
\end{equation}  
\end{prop}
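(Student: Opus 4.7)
The plan is to start from the fiber-product description
\[
\Op_{\cg}(\bP^1)_{\cG}\simeq \Op_{\cg}(\bGm)\times_{\Op_{\cg}(D_0^\times)}\Op_{\cg,\fI}(D_0^\times)\times_{\Op_{\cg}(D_\infty^\times)}\Op_{\cg,\fj^+}(D_\infty^\times)
\]
recalled just above, and to describe each factor in the coordinate $t$. The Iwahori condition at $0$ places any oper on $\bGm$ into a canonical representative of the form
\[
\nabla = \partial_t + t^{-1} p_{-1} + \sum_{i=1}^n v_i(t)\, p_i,\qquad v_i(t)\in \bC[\![t]\!],
\]
and together with regularity on $\bGm$ this forces $v_i(t)\in \bC[t, t^{-1}]\cap \bC[\![t]\!] = \bC[t]$.

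Next, I would rewrite $\nabla$ in the coordinate $s = 1/t$ around $\infty$ using the standard change-of-coordinate rule for $\cg$-opers (rescaling by $dt/ds = -s^{-2}$, followed by a $B$-gauge transformation to restore the canonical oper form). This yields an expression $\partial_s + p_{-1} + \sum_i w_i(s)\, p_i$, where each $w_i(s)$ is a universal differential polynomial in the $v_j(t)$ whose leading part at $s=0$ is controlled by the fundamental degrees $d_i$. The condition $\nabla\in \Op_{\cg,\fj^+}(D_\infty^\times)$ then imposes, via \eqref{eq:Op_j^+}, pole-order bounds on each $w_i(s)$ at $s=0$. Translating these bounds back into conditions on $v_i(t)$ forces first $v_i(t)\equiv 0$ whenever $d_i<d$, and then $\deg v_i(t)\le \lfloor d_i/d\rfloor -1$ when $d_i\ge d$, so that
\[
v_i(t) = \lambda_i + \sum_{j=1}^{\lfloor d_i/d\rfloor -1} t^j v_{ij}.
\]

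The key point is that the higher-order coefficients $v_{ij}$ with $j\ge 1$ are \emph{not} free parameters: the precise $\bC[\![s]\!]$-submodule cut out by \eqref{eq:Op_j^+} supplies a system of equations for the $v_{ij}$ whose coefficients are polynomial in the leading terms $\lambda_{i'}$ with $d_{i'}\ge d$. I would show inductively in $j$ that this system is triangular, so that each $v_{ij}$ is expressed as a universal polynomial $P_{ij}(\lambda_m,\dots,\lambda_n)$ with fixed coefficients in the free parameters $\{\lambda_i : d_i\ge d\}$. These free parameters are $n-m+1$ in number under the convention $m = \sharp\{d_i\le d\}$, yielding the announced form \eqref{eq:global oper} and the noncanonical isomorphism $\Op_{\cg}(\bP^1)_{\cG}\simeq \bA^{n-m+1}$.

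The main technical obstacle is the explicit analysis of the oper change-of-coordinate formula and of the resulting triangular system determining the $v_{ij}$: the derivative corrections in the gauge formula mix contributions coming from different $p_i$'s, and one must verify that this mixing respects the filtration by fundamental degrees, so that the recursive elimination of the $v_{ij}$ never imposes spurious algebraic relations among the $\lambda_i$'s. A homogeneity argument using the $\check{\rho}$-grading on $\fn^{p_1}$ should control the bookkeeping, but making this explicit — especially when several $d_i$ exceed $d$ by different multiples — constitutes the computational heart of the proof.
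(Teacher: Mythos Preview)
Your first four steps are fine and essentially reproduce Lemma~\ref{l:oper le 1/d}: the Iwahori condition at $0$ together with the pole bounds from \eqref{eq:Op_j^+} at $\infty$ put any global oper in the form
\[
\nabla=\partial_t+t^{-1}p_{-1}+\sum_{d_i\ge d}\Bigl(\lambda_i+\sum_{j=1}^{\lfloor d_i/d\rfloor-1}t^j\lambda_{ij}\Bigr)p_i,
\]
with all the $\lambda_{ij}$ a priori free. The gap is in step~5. You assert that ``the precise $\bC[\![s]\!]$-submodule cut out by \eqref{eq:Op_j^+}'' furnishes a triangular system for the higher $\lambda_{ij}$, but \eqref{eq:Op_j^+} is only an \emph{inclusion}, not an equality. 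The right-hand side of \eqref{eq:Op_j^+} is exactly the slope-$\le 1/d$ space, and Lemma~\ref{l:oper le 1/d} shows that this condition is \emph{equivalent} to $\deg v_i\le\lfloor d_i/d\rfloor-1$. So the pole bounds you invoke impose no further relations among the $\lambda_{ij}$: your ``triangular system'' is empty, and what you have described is the larger space $\Op_{\cg}(\bGm)_{(0,\varpi(0)),(\infty,1/d)}$, of dimension $\sum_{d_i\ge d}\lfloor d_i/d\rfloor$, not $\Op_{\cg}(\bP^1)_{\cG}$. The missing equations live in the kernel of $\fZ\twoheadrightarrow\fZ_{\fj^+}$ beyond what Lemma~\ref{l:ker(fZ to fZ_j^+)}(i) records, and there is no direct description of them in the paper.

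The paper avoids this obstacle entirely by a filtered/classical-limit argument. One considers the filtered composition
\[
\bC[\lambda_{i0}]_{d_i\ge d}\hookrightarrow\bC[\lambda_{ij}]\simeq\bC\bigl[\Op_{\cg}(\bGm)_{(0,\varpi(0)),(\infty,1/d)}\bigr]\twoheadrightarrow\bC\bigl[\Op_{\cg}(\bP^1)_{\cG}\bigr],
\]
and computes its associated graded. The associated graded of the middle space is the corresponding Hitchin space (computed via characteristic polynomials in Lemma~\ref{l:classical limit oper}), and the associated graded of $\bC[\Op_{\cg}(\bP^1)_{\cG}]$ is $\Fun\Hit(\bP^1)_{\cG}$, already identified with $\Fun Z\simeq\bC[\lambda_{i0}]$ in Proposition~\ref{p:global Hitchin diagram}. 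Since the associated graded of the composite is an isomorphism, so is the composite itself; this simultaneously shows $\Op_{\cg}(\bP^1)_{\cG}\simeq\bA^{n-m+1}$ and that each $\lambda_{ij}$ with $j\ge1$ is a polynomial $P_{ij}$ in the $\lambda_{i0}$'s, without ever writing down the equations explicitly. To repair your approach you would need an independent source for those equations --- in type~A one can extract them from the companion-form comparison of Proposition~\ref{p:Hyp opers}, but in general the classical-limit route is what makes the argument go through.
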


\subsection{Quantizing the Hitchin map and geometric Langlands correspondence} \label{ss:globalopers} 
In this subsection, we keep the assumption of Proposition \ref{p:local quant}. 
We first quantize the global Hitchin map 
\[
H^{cl}: T^*\Bun_{\cG}\ra\Hit(\bP^1)_\cG
\]
introduced in \S \ref{ss:global Hitchin}. We have already seen that $\Op_{\check{\fg}}(\bP^1)_\cG$ quantizes the base $\Hit(\bP^1)_\cG$. On the other hand, it is well-known that differential operators, or their twisted variants,  quantize the cotangent bundle. More precisely, let $\cD'_{\Bun_{\cG}}$ be the sheaf of $\omega_{\Bun_{\cG}}^{1/2}$-twisted differential operators on the smooth site $(\Bun_{\cG})_{sm}$. Let $D'=(\underline{\mathrm{End}} \cD'_{\Bun_{\cG}})^{op}$ be the sheaf of endomorphisms of $\cD'_{\Bun_{\cG}}$. Then we have a natural filtration on $\Gamma(\Bun_{\cG}, D')$ whose associated graded is the cotangent sheaf, cf. \cite[\S 1]{BD} for more details. By  \cite[(3.3)]{Zhu} (which relies heavily on \cite{BD}), there exists a canonical homomorphism
\[
H^q: \Fun \Op_{\cg}(\bP^1)_{\cG} \to \Gamma(\Bun_{\cG}, D'), 
\]
which is a quantization of the map $H^{cl}$. We are now in a position to quantize Proposition \ref{p:global Hitchin diagram}. 
	
\begin{prop}
We have a commutative diagram
\begin{equation}\label{eq:global quant diagram}
\begin{tikzcd}
A \arrow{r}{\sim} \arrow[d,hook] &\Fun\Op_{\cg}(\bP^1)_\cG \arrow{d}\\
U({\overline{\fj}}) \arrow{r} &\Gamma(\Bun_\cG,D')
\end{tikzcd}
\end{equation}
whose associated graded coincides with diagram \eqref{eq:global Hitchin diagram}. 
Moreover, the top arrow is an isomorphism. 
\end{prop}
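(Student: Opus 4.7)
The plan is to construct the four arrows, verify commutativity of the square by reducing to the local quantum diagram \eqref{eq:local quant diagram}, and then deduce that the top arrow is an isomorphism via a filtration argument whose classical limit recovers Proposition \ref{p:global Hitchin diagram}. First I would assemble the arrows. The top arrow $A \to \Fun \Op_{\cg}(\bP^1)_\cG$ factors as $A \hookrightarrow \fZ_{\fj^+} \simeq \Fun \Op_{\fj^+}(D_\infty^\times)$ (using Proposition \ref{p:local quant}(i) and the Feigin--Frenkel isomorphism) composed with the restriction $\Fun\Op_{\fj^+}(D_\infty^\times) \to \Fun \Op_{\cg}(\bP^1)_\cG$ dual to the projection from the fiber-product description of $\Op_{\cg}(\bP^1)_\cG$ to its $\infty$-component. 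The left arrow is the inclusion already produced in Proposition \ref{p:local quant}. The right arrow is the quantized global Hitchin map $H^q$ of \cite[\S 3]{Zhu}, built via the Beilinson--Drinfeld machinery from the center of $\hg$ at the critical level, applied to the wildly ramified group scheme $\cG$. The bottom arrow is induced by the residual action of $\overline{J} = J/J^+$ on $\Bun_\cG$ (since $\cG(\cO_\infty) = J^+$ and the quotient acts on formal neighborhoods), yielding a Harish--Chandra action by twisted differential operators.

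Next, commutativity of the square. Both routes arise from the same local-to-global principle in BD: an element $a \in A \subset \fZ$ acts on any smooth $\hg$-module at the critical level supported at $\infty$, and in particular on the induced module $\Vac_{\fj^+}$. The quantized Hitchin construction interprets this action, after localization to $\Bun_\cG$, as a $\omega^{1/2}$-twisted differential operator, namely $H^q$ applied to the image of $a$ in $\Fun\Op_{\cg}(\bP^1)_\cG$. Proposition \ref{p:local quant} says that locally the same element lies in $U(\overline{\fj})$ inside $\End \Vac_{\fj^+}$, and the action of $U(\overline{\fj})$ on $\Bun_\cG$ is precisely the one induced by the Harish-Chandra pair $(\overline{\fj}, \overline{J})$. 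Thus both compositions $A \rightrightarrows \Gamma(\Bun_\cG, D')$ coincide, and the square commutes.

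Third, passage to the associated graded. Equip each of the four algebras with the canonical PBW-type filtration inherited from $\Ug$; all four arrows are strictly compatible with these filtrations by the compatibility statement of Proposition \ref{p:local quant}(ii) together with the standard filtered nature of $H^q$. Taking $\gr$ yields a square in which the left vertical is the inclusion $\Fun Z \hookrightarrow \Fun \overline{\fj}^*$, the right vertical is the classical Hitchin map \eqref{eq:global Hitchin}, the bottom map is the identification $\Fun \Hit(\bP^1)_\cG \simeq \Fun Z$, and the top map is the resulting composition. By Proposition \ref{p:global Hitchin diagram}(ii) this square is exactly the diagram \eqref{eq:global Hitchin diagram}; in particular the top associated graded map is an isomorphism onto $\Fun Z$. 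Finally, to conclude that the top quantum arrow is itself an isomorphism, note that $A$ is a polynomial algebra on the $n-m+1$ generators $S_{i,d_i}$ with $d_i \geq d$, while $\Fun\Op_{\cg}(\bP^1)_\cG$ is a polynomial algebra on $n-m+1$ generators $\lambda_m,\dots,\lambda_n$ by Proposition \ref{l:global oper}. A filtered map between non-negatively filtered algebras whose associated graded is an isomorphism is itself an isomorphism, which closes the argument.

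The step I expect to be the main obstacle is verifying that the quantized Hitchin map $H^q$ is strictly compatible with filtrations and, together with the local compatibility \eqref{eq:local quant diagram}, yields precisely the classical diagram \eqref{eq:global Hitchin diagram} on $\gr$. This requires carefully tracing how the BD/Zhu construction of $H^q$ behaves in the presence of the wild level structure at $\infty$ and the Iwahori structure at $0$, and in particular that restricting $H^q$ to the subalgebra $A$ lands in the image of $U(\overline{\fj})$ inside $\Gamma(\Bun_\cG, D')$. A secondary, but more routine, task is to pin down the identification $\gr \Fun\Op_{\cg}(\bP^1)_\cG \simeq \Fun\Hit(\bP^1)_\cG$ coherently with the presentations given by \eqref{eq:global oper} and by Proposition \ref{p:global Hitchin diagram}.
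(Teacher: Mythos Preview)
Your proposal is correct and follows essentially the same approach as the paper: construct the square by embedding global opers into local opers via the local quantum diagram \eqref{eq:local quant diagram}, observe that the associated graded recovers the classical diagram \eqref{eq:global Hitchin diagram}, and conclude that the top arrow is an isomorphism by the filtered-to-graded argument. The paper's proof is far more terse (three sentences), but your more explicit treatment of the four arrows and of the filtration compatibility is simply unpacking what the paper leaves implicit.
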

\begin{proof}
The diagram is obtained by embedding global opers into local opers using diagram \eqref{eq:local quant diagram}. It follows from construction that taking the associated graded gives diagram \eqref{eq:global Hitchin diagram}. Thus, we conclude that the top arrow is an isomorphism.
\end{proof}

\subsubsection{Hecke eigensheaves associated to $\Op_{\cg}(\bP^1)_{\cG}$ and comparison with $\cA^{\dR}(0,\mu)$}

\begin{thm} \label{th:oper Hecke eigensheaf}
\textnormal{(i)} Let $\mu:U({\overline{\fj}})\to \bC$ be a closed point of ${\overline{\fj}}^{*}$ and $\nabla_{\cg,\mu}\in \Op_{\cg}(\bP^1)_{\cG}(\bC)$ the associated $\cg$-oper via \eqref{eq:global quant diagram}. 
Then $\nabla_{\cg,\mu}$ is the Hecke eigenvalue of the following Hecke eigensheaf on $\Bun_{\cG}$: 
\begin{equation}\label{eq:Aut_cE}
\Aut_{\nabla_{\cg,\mu}}=\omega_{\Bun_\cG}^{-1/2}\otimes(\cD'_{\Bun_\cG}\otimes_{U({\overline{\fj}}),\mu}\bC). 
\end{equation}

\textnormal{(ii)} Suppose $\delta=0$ and $\rho$ are in general position. 
Then $\Aut_{\nabla_{\cg,\mu}}$ is isomorphic to the associated Hecke eigensheaf $\cA^{\dR}(0,\mu)$. Thus the underlying connection of the $\hG$-oper $\nabla_{\cg,\mu}$ is isomorphic to the Hecke eigenvalue $\cE^{\dR}_{\hG}(0,\mu)$ \eqref{r:dR setting}.  
\end{thm}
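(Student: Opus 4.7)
The plan is to follow the Beilinson--Drinfeld framework in the form adapted by Zhu \cite{Zhu} to the ramified setting. For part (i), I would invoke the commutative diagram \eqref{eq:global quant diagram} directly: a closed point $\mu : U(\overline{\fj}) \to \bC$ of $\overline{\fj}^{*}$ determines a quotient module $\cD'_{\Bun_\cG} \otimes_{U(\overline{\fj}),\mu} \bC$ via the bottom arrow $U(\overline{\fj}) \to \Gamma(\Bun_\cG, D')$; simultaneously, restricting $\mu$ along the left vertical $A \hookrightarrow U(\overline{\fj})$ and transporting across the top isomorphism $A \xrightarrow{\sim} \Fun \Op_{\cg}(\bP^1)_\cG$ yields the $\cg$-oper $\nabla_{\cg,\mu} \in \Op_{\cg}(\bP^1)_\cG(\bC)$. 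The general principle of \cite[Corollary~9]{Zhu} --- the natural extension of the argument in \cite{BD} accommodating the level structures encoded by $\cG$ --- then asserts that, once the key quantization inputs are in place (namely Proposition \ref{p:good stack} ensuring $\Bun_\cG$ is good, the existence of $\omega_{\Bun_\cG}^{1/2}$, and the strict compatibility of the global quantization $H^q$ with filtrations coming from \eqref{eq:global quant diagram}), the twisted $\cD$-module defined in \eqref{eq:Aut_cE} is a Hecke eigensheaf with eigenvalue the $\hG$-local system underlying $\nabla_{\cg,\mu}$.

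For part (ii), the idea is to place $\Aut_{\nabla_{\cg,\mu}}$ inside the category of $(T \times J/P(2), \cL_0^{\dR} \boxtimes \cL_\mu^{\dR})$-equivariant perverse sheaves on $\Bun_\cG$, and to conclude by invoking the rigidity statement. The $I^\opp$-equivariance at $0$ and the $J^+$-equivariance at $\infty$ are built into the very definition of $\cG$ in \eqref{eq:group cG}; the residual $\overline{J} = J/J^+$-equivariance with character $\mu$ is manifest on $\cD'_{\Bun_\cG} \otimes_{U(\overline{\fj}),\mu} \bC$ through the tensor factor, once $\overline{\fj}$ is identified with $\Lie(\overline{J})$ via Proposition \ref{l:J}. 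By the de~Rham analogue of Proposition \ref{p:eigenvalue}, under the assumption that $\rho$ is in general position the category $P(0,\mu)$ decomposes into blocks indexed by $Z_{G,\phi}^{*}$, each containing a unique simple Hecke eigensheaf $\cA^{\dR}(0,\mu)_\sigma$; since the construction \eqref{eq:Aut_cE} involves no nontrivial twist by a character of the central group $Z_{G,\phi}$, $\Aut_{\nabla_{\cg,\mu}}$ sits in the trivial block $\sigma = 1$, whose unique simple object is $\cA^{\dR}(0,\mu)$.

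The main obstacle will be showing that $\Aut_{\nabla_{\cg,\mu}}$ is literally the simple object $\cA^{\dR}(0,\mu)$ rather than a strictly larger $\cD$-module with the same eigenvalue. I expect to handle this by exploiting the strict compatibility of $H^q$ with filtrations: passage to the associated graded turns the size question into one about the classical Hitchin diagram \eqref{eq:global Hitchin diagram}, whose bottom arrow is an isomorphism by Proposition \ref{p:global Hitchin diagram}. This bounds the support and characteristic cycle of $\Aut_{\nabla_{\cg,\mu}}$ so as to match what is expected of a single simple Hecke eigensheaf supported on the closure of the unique relevant orbit produced by Theorem \ref{t:rigid}(ii), forcing $\Aut_{\nabla_{\cg,\mu}} \simeq \cA^{\dR}(0,\mu)$. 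Once this isomorphism of $\cD$-modules is secured, the corresponding isomorphism of Hecke eigenvalues --- identifying the underlying connection of $\nabla_{\cg,\mu}$ with $\cE^{\dR}_{\hG}(0,\mu)$ --- is automatic, since the Hecke eigenvalue of an eigensheaf is determined up to isomorphism.
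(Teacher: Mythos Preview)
For part (i) your overall shape matches the paper's, but you omit the step that actually licenses the appeal to \cite[Corollary~9]{Zhu}: one must know that $\Aut_{\nabla_{\cg,\mu}}$ is holonomic with singular support in the global nilpotent cone. The paper obtains this by observing that $\Bun_\cG\to\Bun_{\cG'}$ is a $\overline{J}$-torsor with $\Bun_{\cG'}$ good of dimension zero, so the moment map $\pi:T^*\Bun_\cG\to\overline{\fj}^*$ is flat (by the argument of \cite[Lemma~18]{Zhu}); hence $\cD'_{\Bun_\cG}$ is flat over $U(\overline{\fj})$, and since $\pi^{-1}(0)$ is Lagrangian the quotient \eqref{eq:Aut_cE} is holonomic with nilpotent singular support. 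Goodness of $\Bun_\cG$ alone, as you cite it, is not quite the input used.

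For part (ii) your route is genuinely different from the paper's and, as written, does not close. The paper does not invoke the block decomposition of Proposition~\ref{p:eigenvalue} or any characteristic-cycle estimate to pin down simplicity. Instead it argues directly: the map $U(\overline{\fj})\to\Gamma(\Bun_\cG,D')$ comes from the $\overline{J}$-action, so $\Aut_{\nabla_{\cg,\mu}}$ is $(\overline{J},\mu)$-equivariant; by Theorem~\ref{t:rigid}(ii) the unique relevant orbit is $\overline{J}/Z_{G,\phi}$, on which $\omega_{\Bun_\cG}^{1/2}$ trivializes, so the restriction of $\Aut_{\nabla_{\cg,\mu}}$ to that orbit is the character sheaf $\cL_\mu^{\dR}$; then \cite[Lemma~2.7.9]{YunCDM} (the clean-extension lemma for rigid data) forces $\Aut_{\nabla_{\cg,\mu}}$ to be the clean extension of $\cL_\mu^{\dR}$, which is exactly $\cA^{\dR}(0,\mu)$. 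Your proposed argument --- that the Hitchin diagram bounds the characteristic cycle and thereby forces simplicity --- has a gap: nilpotent singular support does not by itself bound the length of an equivariant $\cD$-module, and you have no mechanism to exclude additional constituents supported on the (many) irrelevant strata. You would ultimately need something equivalent to the clean-extension lemma anyway, so it is cleaner to use it from the start.
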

\begin{proof}
(i) Recall that $\Bun_{\cG}\to \Bun_{\cG'}$ is an $\overline{J}$-torsor. 
Since $\Bun_{\cG'}$ is good of dimension zero, the argument as in \cite[Lemma 18]{Zhu} shows that the moment map 
\[
\pi: T^*\Bun_{\cG} \ra {\overline{\fj}}^*
\]
defined by the action of $\overline{J}$ on $\Bun_{\cG}$ is flat. 
Therefore, $\cD'_{\Bun_\cG}$ is flat over $U({\overline{\fj}})$. 
Moreover, since $\pi^{-1}(0)$ is Lagrangian, $\Aut_{\nabla_{\cg,\mu}}$ is holonomic and its singular support is contained in the nilpotent cone. Assertion (i) then follows from \cite[Corollary 9]{Zhu}. 

(ii) Since the homomorphism $U({\overline{\fj}})\to \Gamma(\Bun_{\cG},D')$ comes from the action of $\overline{J}$ on $\Bun_{\cG}$, the $\cD$-module $\Aut_{\nabla_{\cg,\mu}}$ is $(\overline{J},\mu)$-equivariant.
By Theorem \ref{t:rigid}(ii), the relevant orbit of $\Bun_{\cG}$ is isomorphic to $\overline{J}/Z_{G,\phi}$. Hence the line bundle $\omega_{\Bun_{\cG}}^{1/2}$ is trivialized on this relevant orbit.
Thus, the restriction of $\Aut_{\nabla_{\cg,\mu}}$ to this orbit is isomorphic to the character sheaf $\cL_{\mu}^{\dR}$ defined by $\mu$.
By \cite[Lemma 2.7.9]{YunCDM}, $\Aut_{\nabla_{\cg,\mu}}$ is a clean extension, establishing the claim. 
\end{proof}

\begin{rem}
By repeating the argument of \cite[\S 4.3]{XuZhu}, we can remove the simple connectedness assumption on $G$ in the above result.
\end{rem}

\begin{cor}
Suppose moreover that $G$ is a classical group group and keep the assumption of Theorem \ref{th:oper Hecke eigensheaf}(ii).  
Then the $\hG$-oper $\nabla_{\cg,\mu}$ is isomorphic to the Hecke eigenvalue $\cE^{\dR}_{\hG}(0,\mu)$.  
\end{cor}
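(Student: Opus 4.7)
The corollary is essentially a repackaging of Theorem~\ref{th:oper Hecke eigensheaf}(ii), promoted from the simply-connected setting that was fixed at the start of Section~\ref{s:eigen via quant} to the class of all classical groups. I would organize the argument in two short steps.

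First, I would unpack what Theorem~\ref{th:oper Hecke eigensheaf}(ii) actually provides: it identifies the flat $\hG$-bundle underlying the oper $\nabla_{\cg,\mu}$ with the Hecke eigenvalue $\cE^{\dR}_{\hG}(0,\mu)$. Since $\cE^{\dR}_{\hG}(0,\mu)$ is defined in \eqref{eq:dR Hecke eigenvalue} as a $\hG$-local system (equivalently a flat $\hG$-bundle on $\bGm$) and not as a $\hG$-oper, the only possible meaning of ``isomorphic'' in the corollary is at the level of flat $\hG$-bundles, which is precisely what the theorem delivers. Hence, when $G$ is a simply connected classical group, there is literally nothing to prove.

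Second, to remove the simply-connectedness assumption and obtain the statement for a general classical $G$, I would invoke the remark immediately preceding the corollary and reproduce the argument of \cite[\S~4.3]{XuZhu}. The idea is to pass to the simply connected cover $G^{\mathrm{sc}}\to G$: the hypergeometric automorphic data pulls back along this central isogeny, and both sides of the identification transfer compatibly along the dual homomorphism $\hG\to \check{G}^{\mathrm{sc}}$. On the automorphic side, the trivial functoriality recalled in \S~\ref{sss:trivial functoriality} describes $\cE^{\dR}_{\hG}(0,\mu)$ as the pushout of the known $\check{G}^{\mathrm{sc}}$-eigenvalue, while on the Galois side the oper equation \eqref{eq:global oper} has the same form for $G$ and $G^{\mathrm{sc}}$, so $\nabla_{\cg,\mu}$ is the corresponding pushout of $\nabla_{\check{\fg}^{\mathrm{sc}},\mu}$ at the level of flat bundles. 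Matching the two pushouts yields the corollary.

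Because the hard work (the identification of the oper $\nabla_{\cg,\mu}$ with $\cE^{\dR}_{\hG}(0,\mu)$ for simply connected $G$) is already carried out inside Theorem~\ref{th:oper Hecke eigensheaf}(ii), there is no serious obstacle to surmount. The only modest verification is the compatibility of the oper construction and the Hecke eigenvalue with the dual central isogeny, and this is the precise content of the \cite{XuZhu} argument that the remark tells us to reuse.
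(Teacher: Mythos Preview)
Your proposal is essentially correct and matches the paper's approach: the corollary is stated without proof, immediately after the remark that the simple-connectedness assumption can be removed by repeating the argument of \cite[\S~4.3]{XuZhu}. So the paper's implicit proof is exactly ``Theorem~\ref{th:oper Hecke eigensheaf}(ii) plus the remark,'' which is what you identify.

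One caution about your sketch of the isogeny step: the direction of the pushout is reversed. Trivial functoriality (\S~\ref{sss:trivial functoriality}) applied to the central isogeny $G^{\mathrm{sc}}\to G$ says that $\cE^{\dR}_{\widehat{G^{\mathrm{sc}}}}(0,\mu)$ is the pushout of $\cE^{\dR}_{\hG}(0,\mu)$ along $\hG\to\widehat{G^{\mathrm{sc}}}=\hG_{\mathrm{ad}}$, not the other way around. So knowing the identification over $\hG_{\mathrm{ad}}$ does not formally yield the identification over $\hG$ by a pushout: lifts of a $\hG_{\mathrm{ad}}$-local system to $\hG$ are a torsor, and one must argue that the two specific lifts agree. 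The actual argument in \cite[\S~4.3]{XuZhu} handles this more carefully (working with the Hecke eigensheaf and the moduli stack for general $G$ rather than naively pushing out local systems). Since you explicitly defer to that reference for the details, this does not undermine your plan, but your one-line summary of what that argument does is not quite right.
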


\subsection{Functoriality of oper spaces}\label{sss:functoriality}
Let $d$ be a positive integer.
For an adjoint type almost simple group $\hG$, we denote by $\Op_{\cg}(\bGm)_{(0,\varpi(0)),(\infty,\frac{1}{d})}$ the space of $\cg$-opers on $\bGm$ that
\begin{itemize}
\item has a regular singularity at $0$ with residue equals to $0$;
		
\item has a possibly irregular singularity of maximal formal slope $\le 1/d$ at $\infty$. 
\end{itemize}

When $d>\frac{h_G}{2}$ \eqref{eq:d and m}, the oper \eqref{eq:global oper} simplifies to 
\begin{equation} \label{eq:global oper d}
\nabla_{\cg}(\lambda_m,\dots,\lambda_n)=\partial_t+t^{-1}p_{-1}+\sum_{d_i\geq d} \lambda_i p_i, \quad \lambda_i\in \bC. 
\end{equation}
In this case, the space $\Op_{\cg}(\bP^1)_{\cG}$ identifies with $\Op_{\cg}(\bGm)_{(0,\varpi(0)),(\infty,\frac{1}{d})}$ in view of the following lemma. 

\begin{lem} \label{l:oper le 1/d}
An oper of $\Op_{\cg}(\bGm)_{(0,\varpi(0)),(\infty,\frac{1}{d})}$ can be written written as 
\begin{equation} \label{eq:oper le 1/d}
\nabla_{\cg}(\lambda_i)=
\partial_t+t^{-1}p_{-1}+
\sum_{d_i\geq d}\lambda_i(t)p_i,\quad \lambda_i(t)=\sum_{j=0}^{\lfloor d_i/d\rfloor-1}t^j \lambda_{ij},\quad \lambda_{ij}\in\bC.
\end{equation}
\end{lem}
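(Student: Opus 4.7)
The plan is to combine the normal form of an oper at a regular singularity with residue $\varpi(0)$ at $0$ with the local characterization of slope $\le 1/d$ at $\infty$, and to transfer between the two via the coordinate change $s = t^{-1}$.

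First, I would put the oper into the canonical form adapted to $0$. For a $\cg$-oper with regular singularity at $0$ and residue $\varpi(0)$, a standard $\check{B}$-gauge transformation absorbs the singularity into the principal nilpotent, yielding
\[
\nabla = \partial_t + t^{-1} p_{-1} + \sum_i u_i(t)\, p_i
\]
with $u_i$ regular at $0$. Since $\nabla$ also has no singularity away from $0$ and $\infty$, each $u_i$ is a global function on $\bGm$ that is regular at $0$, hence $u_i \in \bC[t]$ is a polynomial.

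Second, I would rewrite the oper in the coordinate $s = t^{-1}$ at $\infty$. Using $\partial_t = -s^2 \partial_s$ and conjugating by $s^{\check{\rho}}$ (which is $\check{B}$-valued since $\hG$ is of adjoint type), a direct computation — exploiting $[\check{\rho}, p_{-1}] = -p_{-1}$ and $[\check{\rho}, p_i] = (d_i-1) p_i$ — returns the connection to the canonical oper slice at $\infty$, with the coefficient of $p_i$ being a nonzero scalar multiple of $s^{-d_i - 1} u_i(s^{-1})$. Thus if $u_i$ is a polynomial of degree $N_i$, this coefficient has a pole of order exactly $d_i + 1 + N_i$ at $s = 0$ (or vanishes identically when $u_i = 0$).

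Third, I would apply the local characterization \eqref{eq:Op_j^+} of opers of formal slope $\le 1/d$ at $\infty$: the coefficient of $p_i$ must lie in $s^{-d_i}\bC[\![s]\!]$ when $d_i < d$ and in $s^{-d_i - \lfloor d_i/d \rfloor}\bC[\![s]\!]$ when $d_i \ge d$. Comparing pole orders forces $u_i = 0$ for $d_i < d$ and $\deg u_i \le \lfloor d_i/d \rfloor - 1$ for $d_i \ge d$, yielding the asserted form after relabeling $u_i$ as $\lambda_i$.

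The main technical obstacle is the sign bookkeeping in the second step: one has to track carefully the factor $\partial_t = -s^2 \partial_s$ and the conjugation by $s^{\check{\rho}}$ to correctly identify the coefficient of $p_i$ in the canonical slice at $\infty$. The remaining inputs — the canonical form at a regular singularity with residue $\varpi(0)$ and the local description \eqref{eq:Op_j^+} — are already established earlier, so the argument ultimately reduces to this explicit computation.
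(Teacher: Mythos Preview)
Your overall strategy matches the paper's exactly, but there is a concrete error in step two. Conjugating by $s^{\check{\rho}}$ alone does \emph{not} return the connection to the canonical oper slice $p_{-1}+\fn^{p_1}$: the derivative of the gauge element contributes an additional $\pm\check{\rho}\,s^{-1}$ term, so after this gauge you are looking at
\[
\partial_s + p_{-1} + \check{\rho}\,s^{-1} + \sum_i s^{-d_i-1}u_i(s^{-1})\,p_i,
\]
which is not in the slice. The paper performs a second, unipotent gauge by $\exp\bigl(-\tfrac{1}{2s}p_1\bigr)$ to absorb $\check{\rho}\,s^{-1}$; using $[p_1,p_{-1}]=2\check{\rho}$ and $[p_1,p_i]=0$ (since $p_i\in\fn^{p_1}$), this produces the extra term $-\tfrac{1}{4}s^{-2}p_1$ and leaves the other $p_i$-coefficients untouched. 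That extra term lies in $s^{-d_1}\bC[\![s]\!]$ (with $d_1=2$), so it does not disturb the pole-order comparison you make in step three, and your final constraints $u_i=0$ for $d_i<d$ and $\deg u_i\le\lfloor d_i/d\rfloor-1$ for $d_i\ge d$ remain valid. But the claim that $s^{\check{\rho}}$ alone suffices is false, and this is exactly the ``sign bookkeeping'' you flagged as delicate.

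A secondary point: you invoke \eqref{eq:Op_j^+} as the characterization of slope $\le 1/d$, but that inclusion is about $\Op_{\fj^+}(D^\times)$, defined via the center $\fZ_{\fj^+}$, not directly via slope. The relevant input here is \cite[Proposition~3]{CK}, which the paper cites; the numerical bounds happen to agree, but the logical source is different.
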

\begin{proof}
As $\nabla|_{D^\times_0}\in\Op_{\cg,\fI}$, an oper $\nabla\in\Op_{\cg}(\bGm)_{(0,\varpi(0)),(\infty,\frac{1}{d})}$ can be written as
$$
\nabla=\partial_t+t^{-1}p_{-1}+\sum_{i=1}^n \lambda_i(t)p_i,\quad \lambda_i(t)\in \bC[t].
$$
Changing the coordinate to $s=t^{-1}$, $\nabla$ becomes
$$
\nabla=\partial_s+s^{-1}p_{-1}+s^{-2}\sum_{i=1}^n \lambda_i(s^{-1})p_i.
$$
After applying gauge transformation by $s^{\rho}$ and $\exp(\frac{-p_1}{2t})$, the above connection is equivalent to
$$
\nabla=\partial_s+p_{-1}-s^{-2}\frac{p_1}{4}+\sum_{i=1}^n s^{-d_i-1}\lambda_i(s^{-1})p_i.
$$ 
Then we conclude the expression \eqref{eq:oper le 1/d} by \cite[Proposition 3]{CK}. 	
\end{proof}

We fix a basis $p_i$ of $V_{\Sl_{2n},i}$ (resp. $V_{\Sl_{2n+1},i}$) as in \cite[\S 4.2.4]{Frenkel}. 
Then, $p_{2i}$ is also a basis of $V_{\spp_{2n},2i}$ (resp. $V_{\so_{2n+1},2i}$) for $1\le i\le n$. 
Moreover, via the inclusion $\so_{2n+1}\to \so_{2n+2}$, one can identify $V_{\so_{2n+1},2i}$ with $V_{\so_{2n+2},2i}$ for $i>n$. 

Then, we obtain the following functoriality between oper spaces given by inclusion:
\begin{prop}\label{p:oper functoriality}
\textnormal{(i)} Via inclusion $\spp_{2n}\to \Sl_{2n}$, we have
\begin{eqnarray*}
\Op_{\spp_{2n}}(\bGm)_{(0,\varpi(0)),(\infty,\frac{1}{2m})} &\hookrightarrow&
\Op_{\Sl_{2n}}(\bGm)_{(0,\varpi(0)),(\infty,\frac{1}{2m})} \\
\nabla_{\spp_{2n}}(\lambda_{m},\lambda_{m+1},\dots,\lambda_{n}) &\mapsto &
\nabla_{\Sl_{2n}}(\lambda_{m},0,\lambda_{m+1},\dots,0,\lambda_{n}).
\end{eqnarray*}

\textnormal{(ii)} Via inclusion $\so_{2n+1}\to \Sl_{2n+1}$, we have
\begin{eqnarray*}
\Op_{\so_{2n+1}}(\bGm)_{(0,\varpi(0)),(\infty,\frac{1}{2m})} &\hookrightarrow&
\Op_{\Sl_{2n+1}}(\bGm)_{(0,\varpi(0)),(\infty,\frac{1}{2m})} \\
\nabla_{\so_{2n+1}}(\lambda_{m},\lambda_{m+1},\dots,\lambda_{n}) &\mapsto &
\nabla_{\Sl_{2n+1}}(\lambda_{m},0,\lambda_{m+1},\dots,\lambda_{n},0).
\end{eqnarray*}

\textnormal{(iii)} Assume $d=2m>n+1$. Via inclusion $\so_{2n+1}\to \so_{2n+2}$, we have
\begin{eqnarray*}
\Op_{\so_{2n+1}}(\bGm)_{(0,\varpi(0)),(\infty,\frac{1}{2m})} &\hookrightarrow&
\Op_{\so_{2n+2}}(\bGm)_{(0,\varpi(0)),(\infty,\frac{1}{2m})} \\
\nabla_{\so_{2n+1}}(\lambda_{m},\lambda_{m+1},\dots,\lambda_{n}) &\mapsto &
\nabla_{\so_{2n+2}}(\lambda_{m},\lambda_{m+1},\dots,\lambda_{n}).
\end{eqnarray*}
\end{prop}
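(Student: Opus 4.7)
The strategy is to compare the explicit form of opers given by Lemma~\ref{l:oper le 1/d} for the two groups involved in each inclusion. In all three cases, the constraint on $d$ (implicit $d > h_{G}/2$ for (i) and (ii), explicit $d > n+1$ for (iii)) guarantees that the simplified expression \eqref{eq:global oper d} applies to both oper spaces in the statement, so every element of the domain may be written as $\partial_t + t^{-1} p_{-1} + \sum_{d_i \geq d} \lambda_i p_i$.

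The key ingredient is that each of the three Lie algebra embeddings $\cg_1 \hookrightarrow \cg_2$ in question is a \emph{principal embedding}: the principal $\mathfrak{sl}_2$-triple $\{p_{-1}, 2\check\rho, p_1\}$ of $\cg_1$ is also a principal $\mathfrak{sl}_2$-triple of $\cg_2$. I would verify this by examining the restriction of the standard representation of $\cg_2$ to $\cg_1$: for $\spp_{2n} \hookrightarrow \Sl_{2n}$ and $\so_{2n+1} \hookrightarrow \Sl_{2n+1}$ this restriction is again the standard (irreducible) representation, which is the unique irreducible module of its dimension under the principal $\mathfrak{sl}_2$, forcing the two principal $\mathfrak{sl}_2$-triples to coincide; for $\so_{2n+1} \hookrightarrow \so_{2n+2}$ the standard representation of $\so_{2n+2}$ decomposes under its principal $\mathfrak{sl}_2$ as an irreducible of dimension $2n+1$ plus a trivial line, which matches the decomposition $\bC^{2n+2} = \bC^{2n+1} \oplus \bC$ seen by the principal $\mathfrak{sl}_2$ of $\so_{2n+1}$. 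Combined with the basis compatibilities fixed just before the proposition ($p_{2i}$ from the $\Sl_{N}$-basis gives a basis of $V_{\spp_{2n},2i}$ or $V_{\so_{2n+1},2i}$, and $V_{\so_{2n+1},2i}$ identifies with $V_{\so_{2n+2},2i}$ beyond the Pfaffian degree), this yields canonical identifications of the relevant graded components of $\fn^{p_1}$ inside $\cg_1$ and $\cg_2$.

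Given this setup, an oper $\nabla_{\cg_1}(\lambda_i) = \partial_t + t^{-1} p_{-1} + \sum_{d_i \geq d} \lambda_i p_i$ of $\cg_1$ is mapped by the Lie algebra inclusion to a connection of exactly the same shape, now viewed inside $\cg_2$. For cases (i) and (ii), the fundamental degrees $\geq d$ of $\Sl_N$ form a consecutive integer sequence $d, d+1, \dots, N$ interlacing even and odd values, while those of $\spp_{2n}$ or $\so_{2n+1}$ consist only of the even ones; the remaining odd-degree coefficients in the $\Sl_N$-oper are therefore zero, producing the formulas in (i)--(ii). For case (iii), the condition $2m > n+1$ excludes the Pfaffian degree, so the fundamental degrees $\geq d$ of $\so_{2n+1}$ and $\so_{2n+2}$ coincide and the coefficients transfer identically.

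Finally, it remains to check that the image connection lies in the target oper space. The oper form is preserved because a principal embedding sends a distinguished Borel pair to a distinguished Borel pair; the regular singularity at $0$ with residue $\varpi(0)$ is visible from the $t^{-1} p_{-1}$ summand (unchanged by the embedding); and the slope $\leq 1/d$ at $\infty$ is manifest from the polynomial form, as one sees by repeating the gauge transformation argument in the proof of Lemma~\ref{l:oper le 1/d}. The main obstacle is the clean verification of the principal-embedding property together with the compatibility of the normalized bases $\{p_i\}$ across the three classical inclusions; once these identifications are established using the conventions already fixed in the paper, the rest of the proposition reduces to a direct matching of coefficients.
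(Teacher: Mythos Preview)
Your proposal is correct and follows the same route as the paper. The paper does not give a separate proof: it simply records the basis compatibilities for $\fn^{p_1}$ under the three inclusions (using Frenkel's conventions, so that $p_{2i}$ for $\Sl_N$ serves as the basis of $V_{\spp_{2n},2i}$ or $V_{\so_{2n+1},2i}$, and $V_{\so_{2n+1},2i}\simeq V_{\so_{2n+2},2i}$ for $2i>n+1$) and then states the proposition as an immediate consequence of the explicit oper form \eqref{eq:global oper d}; your write-up supplies exactly the details (principality of the embeddings, matching of fundamental degrees, verification that the image lies in the target oper space) that the paper leaves implicit.
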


\subsection{Hypergeometric differential equations and $\Op_{\cg}(\bP^1)_{\cG}$} \label{ss:hyp opers}
We present a description of opers in $\Op_{\Sl_n}(\bP^1)_{\cG}$ in terms of certain hypergeometric differential equations \eqref{eq:hyp}. 

When $\hG=\SL_n$, we have $d=m$. We consider the following hypergeometric equation of type $(n,n-m)$ defined by $\underline{\beta}\in \mathbb{C}^{n-m}$, $\lambda\in \mathbb{C}$, $\delta=t\frac{d}{dt}$:
\begin{equation}
	\Hyp_{\lambda}(0;\underline{\beta})=\delta^n-(-1)^{m}\lambda t \prod_{j=1}^{n-m}(\delta-\beta_j)=\delta^n+t( u_m\delta^{n-m}+\dots+u_n),
\end{equation}
where $u_m,\dots,u_n\in \mathbb{C}$ are expressed in terms of $\underline{\beta}, \lambda$.
The above equation defines a $\SL_n$-oper on the trivial $\SL_n$-bundle on $\mathbb{G}_m$. 

For $\hG=\Sp_{2n}$ (resp. $\hG=\SO_{2n+1}$) and type $(2n,2(n-m))$ (resp. $(2n+1,2(n-m)+1)$), we take $\underline{\beta}=(\beta_1,-\beta_1,\dots,\beta_{n-m},-\beta_{n-m})$ (resp. $(\beta_1,-\beta_1,\dots,\beta_{n-m},-\beta_{n-m},\frac{1}{2})$). 
Then $u_{i}=0$ if $i$ is odd (resp. $u_{2i+1}=2u_{2i}$) and the connection associated to the above equation has a $\hG$-symmetry. 

In the following, we consider differential equations parametrised by $u_m,\dots,u_n\in \mathbb{C}$:
\begin{equation}
\Hyp(u_m,\dots,u_n)=
\delta^n+t\biggl( u_m\delta^{n-m}+\dots+u_n\biggr). 
\label{eq:Hyp eq}
\end{equation}
We denote by $\Hyp^{(n,n-m)}$ the affine space $\Spec(\mathbb{C}[u_m,\dots,u_n])$ parametrizing coefficients of the above equation.
The following proposition is proved in \S~\ref{ss:pf Hyp opers}. 

\begin{prop} \label{p:Hyp opers}
\textnormal{(i)} There exists an invertible lower triangular matrix A of rank $n-m+1$ such that under the linear transformation 
\[(u_m,\dots,u_n)=(\lambda_m,\dots,\lambda_n)A,\]
$\Hyp(u_m,\dots,u_n)$ is isomorphic to the following $\SL_n$-oper on the trivial $\SL_n$-bundle over $\bGm$:
\begin{equation}\label{eq:Hyp oper form}
\nabla=
\partial_t+t^{-1}p_{-1}+
\sum_{d_i\geq d}(\lambda_i+\sum_{j=1}^{\lfloor d_i/d\rfloor-1}t^j P_{ij})p_i,
\end{equation}
where $P_{ij}$ are polynomials in $\lambda_m,\dots,\lambda_n$. 

\textnormal{(ii)} There exists an isomorphism 
\[
\psi: \Op_{\Sl_n}(\bP^1)_{\cG} \xrightarrow{\sim} \Hyp^{(n,n-m)},
\]
such that an oper $\nabla$ of $\Op_{\Sl_n}(\bP^1)_{\cG}(\mathbb{C})$ is isomorphic to the oper \eqref{eq:Hyp oper form} corresponding to $\psi(\nabla)$ in (i). 
\end{prop}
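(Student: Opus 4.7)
The plan is to prove part (i) by converting the scalar hypergeometric equation $\Hyp(u_m,\dots,u_n)$ into a first-order matrix connection and then bringing it into canonical oper form via gauge transformations, and then to derive part (ii) as a formal consequence combined with Proposition \ref{l:global oper}.

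For part (i), starting from $\Hyp(u) f = 0$, I will introduce the cyclic vector $y = (f, \delta f, \dots, \delta^{n-1} f)^T$ with $\delta = t\partial_t$, producing a first-order matrix system $\delta y = B(t) y$ whose companion matrix $B$ has $1$'s on the superdiagonal and the entries $-t u_n, -t u_{n-1}, \dots, -t u_m, 0, \dots, 0$ in the last row. Rewriting as a $\partial_t$-connection $\nabla = \partial_t - t^{-1} B$, conjugating by the antidiagonal matrix $J$ (to interchange superdiagonal with subdiagonal and move the bottom row to the first row), and fixing signs, one arrives at a connection of the shape $\partial_t + t^{-1} p_{-1} + \sum_{k=m}^n u_k E_{1,k}$, where $p_{-1}\in \fn^-$ is the principal nilpotent. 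Since $E_{1,k}$ for $k<n$ do not lie in $\fn^{p_1}$, the final step is to apply the unique unipotent gauge transformation by an element of the unipotent radical $N$ of the standard Borel that brings this connection into the canonical oper form $\partial_t + t^{-1} p_{-1} + \sum_{d_i\geq d} \widetilde{C}_i(t) p_i$ with $\widetilde{C}_i(t) = \lambda_i + \sum_j t^j P_{ij}$ matching \eqref{eq:Hyp oper form}. Reading off the resulting coefficients yields the map $(u_m,\dots,u_n)\mapsto(\lambda_m,\dots,\lambda_n)$, and a grading analysis under $\check\rho/d$ (\S \ref{sss:gradingG}) will show that $u_k$ contributes linearly to $\lambda_k$ with a nonzero leading constant, and only to $\lambda_j$ with $j<k$ through the gauge-induced corrections. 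Hence the matrix $A$ defined by $u=\lambda A$ is lower triangular with nonzero diagonal entries, and in particular invertible.

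Part (ii) will then follow immediately from (i) combined with Proposition \ref{l:global oper}: the latter parametrizes $\Op_{\Sl_n}(\bP^1)_{\cG}\simeq \bA^{n-m+1}$ by $(\lambda_m,\dots,\lambda_n)$, and composing with the invertible linear map $A$ yields the desired $\psi: \Op_{\Sl_n}(\bP^1)_{\cG} \xrightarrow{\sim} \Hyp^{(n,n-m)}$, with the compatibility of oper isomorphism classes built into the construction. The main technical obstacle will be verifying the linearity of $u\mapsto \lambda$: the unipotent gauge transformation putting the connection into oper form has coefficients that are generally polynomial (not linear) in the original connection entries, and the polynomial corrections $P_{ij}$ in \eqref{eq:Hyp oper form} are themselves genuinely nonlinear in the $\lambda$'s. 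Establishing that these two sources of nonlinearity cancel to leave a linear relation between the $u$'s and $\lambda$'s will require either a careful direct computation of the gauge transformation together with the explicit form of $P_{ij}$ determined in Proposition \ref{l:global oper}, or a structural argument exploiting the homogeneity of the whole construction under the $\check\rho/d$-grading, which forces any resulting map of parameter spaces to be a homogeneous polynomial of degree one in the weighted sense, hence linear on the affine coordinates.
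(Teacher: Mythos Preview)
Your approach to (i) is essentially the paper's, run in reverse: the paper passes from the canonical oper form to the companion form via the cyclic vector $(e_1,\delta e_1,\dots,\delta^{n-1}e_1)$ (Lemma \ref{l:isomorphism companion}), while you gauge the companion form back to canonical form. Your concern about linearity is the right one, and your proposed resolution via homogeneity is exactly what the paper does: setting $\deg u_{ij}=\deg\lambda_{ij}=j+1$ forces the degree-$1$ relation $u_i=c_i\lambda_i+Q_{i0}(\lambda_\mu:\mu<i)$ to be linear, with all nonlinearity absorbed into the $P_{ij}$ for $j\ge 1$; the bound on the $t$-degree of these higher terms then follows from the slope condition and \cite[Proposition 3]{CK}.

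Your argument for (ii), however, has a genuine gap. Proposition \ref{l:global oper} parametrises $\Op_{\Sl_n}(\bP^1)_{\cG}$ by $(\lambda_m,\dots,\lambda_n)$ with polynomials $P_{ij}$ implicitly determined by the representation-theoretic definition $\Op_{\fj^+}(D_\infty^\times)=\Spec(\fZ_{\fj^+})$; part (i) produces a \emph{second} $(n-m+1)$-parameter family of opers of the same shape, with its own $P_{ij}$ coming from the gauge process. Both families sit inside $\Op_{\cg}(\bGm)_{(0,\varpi(0)),(\infty,1/d)}$, which for $d\le h_G/2$ has strictly larger dimension $\sum_{d_i\ge d}\lfloor d_i/d\rfloor$. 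Composing the two $\bA^{n-m+1}$-parametrisations gives an abstract bijection of affine spaces, but the oper-isomorphism compatibility you describe as ``built into the construction'' is precisely the statement that the two sets of $P_{ij}$ agree---and that is the content of (ii), not a formality. The paper proves this by a different mechanism: it identifies the companion-form coordinates $v_{i,j}$ at $\infty$ with Molev's Segal--Sugawara operators $S_{i,j}$ (Lemma \ref{l:compare companions} together with \cite[Corollary 4.9]{MasoudConductor}), then invokes Lemma \ref{l:ker(fZ to fZ_j^+)}(ii) to show these operators lie in $\ker(\fZ\to\fZ_{\fj^+})$ for $j\ge d_i+1$. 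Tracing back through the triangular relations of Lemma \ref{l:compare companions} forces $u_{i,j}=0$ for $j\ge 1$ on the image of $\Op_{\Sl_n}(\bP^1)_{\cG}$, so the embedding into $\Hyp$ lands in $\Hyp^{(n,n-m)}$, and a dimension count finishes.
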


\begin{rem}
The above proposition allows us to identify opers of $\Op_{\Sl_n}(\bP^1)_{\cG}$ with opers produced by hypergeometric differential equations. 
In particular, we can determine the polynomial $P_{ij}$ in Proposition \ref{l:global oper} for $\cg=\Sl_n$. 
We call $\psi(\nabla)$ \textit{hypergeometric coordinate} of the $\Sl_n$-oper $\nabla$. 
\end{rem}

Combined with results in \S~\ref{sss:functoriality}, we conclude the following corollary. 

\begin{cor} \label{c:opers hyp}
Let $\hG$ be $\SO_{2n+1}$ and $d>\frac{h_G}{2}$ (resp. $\Sp_{2n}$ and $d=h_G$ or $h_G-2$) and $\nabla$ an oper of $\Op_{\cg}(\bGm)_{\cG}$ \eqref{eq:global oper d}. 
By considering $\nabla$ as a $\hG$-local system, the connection $\nabla(\Std)$ associated to the standard representation of $\hG$ is isomorphic to a hypergeometric differential equation. 
\end{cor}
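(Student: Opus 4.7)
\textbf{Proof strategy for Corollary \ref{c:opers hyp}.}

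The plan is to reduce the claim about $\hG$-opers to the already-established identification between $\SL_N$-opers in $\Op_{\Sl_N}(\bP^1)_{\cG}$ and hypergeometric differential equations. Recall that, under the hypothesis $d > h_G/2$ (which holds in both cases, since $d = h_G$ or $h_G - 2$ and $h_G = 2n$ for $\Sp_{2n}$, $h_G = 2n$ for $\SO_{2n+1}$), Lemma \ref{l:oper le 1/d} identifies $\Op_{\cg}(\bP^1)_{\cG}$ with $\Op_{\cg}(\bGm)_{(0,\varpi(0)),(\infty,\frac{1}{d})}$. Thus, $\nabla$ can be written in the normal form \eqref{eq:global oper d}.

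The key ingredient is Proposition \ref{p:oper functoriality}: for $\hG = \Sp_{2n}$ (resp.\ $\hG = \SO_{2n+1}$), the inclusion into the standard representation yields an explicit embedding
\[
\Op_{\cg}(\bGm)_{(0,\varpi(0)),(\infty,\frac{1}{2m})}\hookrightarrow \Op_{\Sl_N}(\bGm)_{(0,\varpi(0)),(\infty,\frac{1}{2m})}, \quad N = 2n \textnormal{ or } 2n+1,
\]
which exactly describes the $\SL_N$-oper structure on the pushout connection $\nabla(\Std)$. Since this embedding preserves both the regular singularity with residue $0$ at $0$ and the slope bound at $\infty$, the $\SL_N$-oper $\nabla(\Std)$ lies in the target space, which is isomorphic (again by Lemma \ref{l:oper le 1/d}) to $\Op_{\Sl_N}(\bP^1)_{\cG'}$ for the group scheme $\cG'$ of type $A$ associated to the admissible Levi of $\SL_N$ with Coxeter number $d = 2m$.

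Once $\nabla(\Std)$ is realized as an element of $\Op_{\Sl_N}(\bP^1)_{\cG'}$, one applies Proposition \ref{p:Hyp opers}(ii): the isomorphism $\psi$ sends $\Op_{\Sl_N}(\bP^1)_{\cG'}$ onto the affine space $\Hyp^{(N, N-m)}$ of hypergeometric equations \eqref{eq:Hyp eq}, and the corresponding oper is gauge-equivalent to the hypergeometric differential equation whose coefficients are recorded by $\psi(\nabla(\Std))$. This yields the desired isomorphism with a hypergeometric equation.

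\textbf{Main obstacle.} The computationally delicate part of the argument is already absorbed into Propositions \ref{p:oper functoriality} and \ref{p:Hyp opers}; what remains is essentially bookkeeping. The most subtle point to check is that the specific Levi subgroup used to form $\cG'$ on the $\SL_N$-side matches the one needed for Proposition \ref{p:Hyp opers} to apply, i.e.\ that the admissible Levi of $\SL_N$ has Coxeter number $2m$, so that the ambient slope-$1/(2m)$ oper space on the $\SL_N$-side is precisely the one studied via hypergeometric coordinates. This matching is automatic from the normal forms \eqref{eq:global oper d} for $\cg = \spp_{2n}, \so_{2n+1}$ and the target $\cg = \Sl_N$, but the verification relies on carefully comparing the two sets of fundamental degrees $d_i \ge d$ on both sides of the embedding in Proposition \ref{p:oper functoriality}.
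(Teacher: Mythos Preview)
Your proposal is correct and follows exactly the paper's approach: the paper simply states that the corollary follows by combining Proposition~\ref{p:Hyp opers} with the functoriality results of \S\ref{sss:functoriality} (Proposition~\ref{p:oper functoriality}), which is precisely your chain of reductions. One small inaccuracy: for $\hG=\SO_{2n+1}$ the hypothesis is $d>h_G/2$ directly, not $d=h_G$ or $h_G-2$; but this does not affect the argument.
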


In particular, this allows us to deduce Proposition \ref{p:opers hyp}.

\section{Toric pairs for classical groups: Proof of Theorem \ref{t:toric}} \label{s:toric}

Except in \S\ref{ss:wt sp decomp}, we assume $G$ is a classical group and $M$ is an admissible Levi subgroup of $G$ (Definition \ref{d:admissible}).

\subsection{Root system of $G_0$}\label{ss:G_0 simple roots}
We now study the decomposition of the root system of $G_0$ (\S~\ref{sss:gradingG}) into simple root systems. 
Recall that we have a grading of $\fg$ defined by $\check{\rho}_G/d$, $d=h_M$. The relationship between $d$ and $m=|\Delta_M|$ is given in \eqref{eq:d and m}. Let $\Phi(G_0)$ be the set of roots of $G_0$ (with respect to $T$). Let \begin{equation}\label{eq:Phi(G_0) decomp}
\Phi(G_0)=\bigsqcup_i\Phi_i(G_0), \quad \Delta(G_0)=\bigsqcup_i\Delta_i(G_0)
\end{equation}
denote the decomposition of $\Phi(G_0)$ into irreducible root systems and the corresponding decomposition of simple roots. The aim of this subsection is to study the above decompositions. As a first step, we give an explicit realization of the root system of $G_0$.

\subsubsection{Type A} 
Suppose $G$ is of type $A_n$, $n\in \bZ_{\geq 1}$. Then $d\in \{2,...,n+1\}$. In this case, we have 
\begin{equation}\label{eq:A L-simple roots}
\Delta(G_0)=\{\alpha\in\Phi_G|\Ht(\alpha)=d\}=\{\chi_i-\chi_{i+d}=\sum_{j=i}^{i+d-1}\alpha_j|1\leq i\leq n+1-d \}.
\end{equation}
Thus, $|\Delta(G_0)|=n+1-d=n-m$.
For each $1\leq i\leq d$, the root subsystem $\Phi_i(G_0)$  is of Type A. More precisely, let $n+1=\tau d+\sigma$ where $0\leq \sigma\leq d-1$. Then we have: 
\begin{equation}
\Phi_i(G_0)=
\begin{cases}
A_\tau ,\quad 1\leq i\leq \sigma;\\
A_{\tau-1},\quad \sigma+1\leq i\leq d.
\end{cases}
\end{equation}

\subsubsection{Type B} 
Suppose $G$ is  of type $B_n$, $n\in \bZ_{\geq 2}$. Then $d=2m \in \{4,6,...,2n\}$. In this case, we have
\begin{equation}\label{eq:B L-simple roots}
\Delta(G_0)=
\begin{cases}
\chi_i-\chi_{i+d},\ \Ht=d, \quad 1\leq i\leq n-d,\ \text{when}\  n\geq d+1;\\
\chi_{n+1-d},\ \Ht=d, \quad \text{when}\  n\geq d;\\
\chi_i+\chi_{\bar{i}},\ \Ht=d, \quad n+2-d\leq i\leq n-m,\ \bar{i}:=2n+2-d-i>i;\\
\chi_{n+1-d-m}+\chi_{n+1-m},\ \Ht=2d,  \quad \text{when}\ n\geq \frac{3}{2}d.
\end{cases}
\end{equation}
It follows that 
\[
|\Delta(G_0)|=
\begin{cases}
n-m, \quad & n<\frac{3}{2}d;\\
n-m+1, \quad & n\geq\frac{3}{2}d.
\end{cases}
\]
We now consider three cases: 
\begin{enumerate} 
\item  $n<d$: 
For each $1\leq i\leq n-m$, let $\Delta_i(G_0):=\{\chi_i+\chi_{\bar{i}}\}$, where $\bar{i}$ is as defined in \eqref{eq:B L-simple roots}. Note that for all $i$, the root system $\Phi_i(G_0)$ is of type $A_1$. Thus, $\Phi(G_0)$ is direct sum of $n-m$ copies of $A_1$.\\

\item $d\leq n<\frac{3}{2}d$:
For each $1\leq i\leq m-1$, let $i^*:=i+n+1-d$. Then 
\begin{align*}
\Delta_i(G_0)=&\{\chi_{i^*}+\chi_{\bar{i^*}}\}
\sqcup\{\chi_{\bar{i^*}-kd}-\chi_{\bar{i^*}-kd+d}\,|\, 1\leq k\leq\lfloor\frac{\bar{i^*}-1}{d}\rfloor\}\\
&\sqcup\{\chi_{i^*-kd}-\chi_{i^*-kd+d}\,|\, 1\leq k\leq\lfloor\frac{i^*-1}{d}\rfloor\}.
\end{align*}
Moreover, $\Delta_m(G_0):=\{\chi_{n+1-d}\}$.
One readily verifies that the for all $i$, the root system $\Phi_i(G_0)$ is of type A.\\

\item $n\geq\frac{3}{2}d$: 
For $1\leq i\leq m-1$,  $\Delta_i(G_0)$ is as in the previous case. For $i=m$, we have 
$$
\Delta_m(G_0)=\{\chi_{n+1-d}\}\sqcup\{\chi_{n+1-d-kd}-\chi_{n+1-kd}\,|\, 1\leq k\leq\lfloor\frac{n}{d}-1\rfloor \}.
$$
For $i=m+1$, we have
$$
\Delta_{m+1}(G_0):=\{\chi_{n+1-m-d}+\chi_{n+1-m}\}\sqcup\{\chi_{n+1-m-kd}-\chi_{n+1-m-kd+d}\,|\,1\leq k\leq\lfloor\frac{n-m}{d}\rfloor\}.
$$
The types of irreducible subsystems  $\Phi_i(G_0)$ generated by $\Delta_i(G_0)$ is as follows:
\begin{equation}
\Phi_i(G_0)=
\begin{cases}
\text{Type A},\quad 1\leq i\leq m-1;\\
\text{Type B},\quad i=m;\\
\text{Type D},\quad i=m+1.
\end{cases}
\end{equation}
\end{enumerate}

\subsubsection{Type C} 
Suppose $G$ is  of type $C_n$, $n\in \bZ_{\geq 3}$. Then $d=2m \in \{4,6,...,2n\}$. In this case, we have
\begin{equation}\label{eq:C L-simple roots}
\Delta(G_0)=\{\alpha\in\Phi(G)|\Ht(\alpha)=d \}=
\begin{cases}
\chi_i-\chi_{i+d}, \quad 1\leq i\leq n-d;\\
\chi_i+\chi_{\bar{i}}, \quad n+1-d\leq i\leq n-m, \ \bar{i}=2n+1-d-i>i.
\end{cases}
\end{equation}
Thus, 
$|\Delta(G_0)|=n-m$. 

To study the decomposition of $\Phi(G_0)$ into simple systems, we need to consider two cases: 
\begin{enumerate} 
\item  Case $n<d$:  
For each $1\leq i\leq n-m$, $\Delta_i(G_0):=\{\chi_i+\chi_{\bar{i}}\}$, where $\bar{i}$ is as defined in \eqref{eq:C L-simple roots}. Moreover, $\Phi_i(G_0)\simeq\Phi(\GL_2)$; thus, $\Phi(G_0)$ is direct sum of $n-m$ copies of $A_1$. 

\item
Case $n\geq d$:
For each $1\leq i\leq m$, denote $i^*:=n-d+i$. Then 
\begin{align*}
\Delta_i(G_0)=&\{\chi_{i^*}+\chi_{\bar{i^*}}\}
\sqcup\{\chi_{\bar{i^*}-kd}-\chi_{\bar{i^*}-kd+d}\,|\, 1\leq k\leq\lfloor\frac{\bar{i^*}-1}{d}\rfloor\}\\
&\sqcup\{\chi_{i^*-kd}-\chi_{i^*-kd+d}\,|\, 1\leq k\leq\lfloor\frac{i^*-1}{d}\rfloor\}.
\end{align*}
For each $1\leq i\leq m$, the sub root system $\Phi_i(G_0)$ spanned by above $\Delta_i(G_0)$ is a type A root system. More precisely, let $n=\tau m+\sigma$ where $0\leq\sigma\leq m-1$, there are $\sigma$ terms of $\Phi_i(G_0)$, which are isomorphic to $A_\tau$, while $m-\sigma$ terms of $\Phi_i(G_0)$ are isomorphic to $A_{\tau-1}$.
\end{enumerate}

\subsubsection{Type D} 
Suppose $G$ is of type $D_n$, $n\in \bZ_{\geq 4}$.  Then $d=2m-2\in \{6,8,...,2(n-1)\}$. In this case, we have 
\begin{equation}\label{eq:D L-simple roots}
\Delta(G_0)=
\begin{cases}
\chi_i-\chi_{i+d},\ \Ht=d, \quad 1\leq i\leq n-d,\ \text{when}\ n\geq d+1;\\
\chi_i+\chi_{\bar{i}},\ \Ht=d, \quad n+1-d\leq i\leq n-m,\ \bar{i}=2n-d-i>i;\\
\chi_{n-d}+\chi_n,\ \Ht=d, \quad \text{when}\  n\geq d+1;\\
\chi_{n+1-d-m}+\chi_{n+1-m},\ \Ht=2d,  \quad \text{when}\ n\geq \frac{3}{2}d+1.
\end{cases}
\end{equation}
Thus, 
$|\Delta(G_0)|=
\begin{cases}
n-m, \quad & n\leq d;\\
n-m+1, \quad & \frac{3}{2}d\geq n\geq d+1;\\
n-m+2, \quad & n\geq\frac{3}{2}d+1.
\end{cases}$
We consider three cases: 
\begin{enumerate} 
\item 
 $n\leq d$: 
For each $1\leq i\leq n-m$, let $\Delta_i(G_0):=\{\chi_i+\chi_{\bar{i}}\}$, where $\bar{i}$ is as defined in \eqref{eq:D L-simple roots}. Moreover, $\Phi_i(G_0)\simeq A_1$; thus, $\Phi(G_0)$ is direct sum of $n-m$ copies of $A_1$.

\item 
 $d+1\leq n\leq\frac{3}{2}d$: 
For each $1\leq i\leq m-2$, denote $i^*:=i+n-d$, we define
\begin{align*}
\Delta_i(G_0):=&\{\chi_{i^*}+\chi_{\bar{i^*}}\}
\sqcup\{\chi_{\bar{i^*}-kd}-\chi_{\bar{i^*}-kd+d}\,|\, 1\leq k\leq\lfloor\frac{\bar{i^*}-1}{d}\rfloor\}\\
&\sqcup\{\chi_{i^*-kd}-\chi_{i^*-kd+d}\,|\, 1\leq k\leq\lfloor\frac{i^*-1}{d}\rfloor\}.
\end{align*}
Note that here last part is empty, but will become nonempty when $n\geq\frac{3}{2}d+1$. We also define
$$
\Delta_{m-1}(G_0):=\{\chi_{n-d}+\chi_n\},\quad \Delta_m(G_0):=\{\chi_{n-d}-\chi_n\}.
$$
 The type of sub root system $\Phi_i(G_0)$ generated by $\Delta_i(G_0)$ is as follows:
\begin{equation}
\Phi_i(G_0)=\text{Type A},\quad 1\leq i\leq m.
\end{equation}

\item
 $n\geq\frac{3}{2}d+1$: 
For $1\leq i\leq m-2$, we define $\Delta_i(G_0)$ as in the $d+1\leq n\leq\frac{3}{2}d$ case. For $i=m-1$, we define
$$
\Delta_{m-1}(G_0):=\{\chi_{n-d}+\chi_n\}\sqcup\{\chi_{n-kd}-\chi_{n-kd+d}\,|\,1\leq k\leq\lfloor\frac{n-1}{d}\rfloor\}
$$
For $i=m$, we define
\begin{align*}
\Delta_m(G_0):=&\{\chi_{n+1-m-d}+\chi_{n+1-m}\}\sqcup\{\chi_{n+1-m-d}+\chi_{n+1-m}\}\\
&\sqcup\{\chi_{n+1-m-kd}-\chi_{n+1-m-kd+d}\,|\,1\leq k\leq\lfloor\frac{n+1-m}{d}\rfloor\}.
\end{align*}
The type of sub root system $\Phi_i(G_0)$ generated by $\Delta_i(G_0)$ is as follows:
\begin{equation}
\Phi_i(G_0)=
\begin{cases}
\text{Type A},\quad 1\leq i\leq m-2;\\
\text{Type D},\quad i=m-1,m.
\end{cases}
\end{equation}
\end{enumerate}

\subsubsection{Decomposition of $\fg_0$ into Lie subalgebras}
With above notations, for $\alpha\in\Phi(G)$, we set 
$$
\chi(\alpha):=
\begin{cases}
\{\chi_i,\chi_j\},\quad \alpha\in\{\pm\chi_i\pm\chi_j\},\ i\neq j;\\
\{\chi_i\},\ \qquad \alpha\in\{\pm\chi_i, \pm2\chi_i\}.
\end{cases}
$$
Moreover, for a set of roots $S$, we set 
$$
\chi(S)=\bigcup_{\alpha\in S}\chi(\alpha).
$$
Then we have $\chi(\Phi_i(G_0))=\chi(\Delta_i(G_0))$ and $\chi(\Phi_i(G_0))\cap\chi(\Phi_j(G_0))=\varnothing$, $\forall i\neq j$. 

Now we define a Lie subalgebra $\fl_i$ of $\fg_0$ by
$$
\fl_i=\bigoplus_{\chi_j\in \chi(\Phi_i(G_0))} \Lie(T_j) \bigoplus_{\alpha\in \Phi_i(G_0)} \fg_{\alpha}. 
$$
Then we obtain a decomposition 
\begin{equation}\label{eq:G_0 block decomp}
\fg_0=\bigoplus_{i} \fl_i \bigoplus_{\chi_j \not\in \chi(\Phi(G_0))} \Lie(T_j). 
\end{equation}

\subsubsection{}\label{s:Levi nonzero intersects subrepns}
With above preparations, we obtain the following result. 
\begin{prop}\label{p:Levi nonzero intersects subrepns}
Let $G$ be a classical group and $M\subseteq G$ an admissible Levi. Then for all irreducible $G_0$-submodules $V_i\subseteq \fg_1$, we have $V_i\cap \fm_1\neq 0$. 
\end{prop}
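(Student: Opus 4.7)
The plan is to identify $\fm_1$ explicitly and then match, case by case, the irreducible $G_0$-submodules of $\fg_1$ with the $T$-weights that lie in $\fm_1$. First I would note that since $d$ equals the Coxeter number $h_M$ of $M$, the heights of roots in $\Phi_M$ lie in $\{-(d-1),\ldots,-1,1,\ldots,d-1\}$, so the ones with height $\equiv 1 \pmod d$ are exactly the simple roots of $M$ together with $-\theta_M$. Hence
\[
\fm_1 \;=\; \fg_{-\theta_M} \;\oplus\; \bigoplus_{\alpha \in \Delta_M} \fg_\alpha,
\]
a subspace of $\fg_1$ of dimension $m+1$.

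Next I would observe that since $\fg_1$ is multiplicity-free as a $T$-module, every irreducible $G_0$-submodule $V \subseteq \fg_1$ is generated by a single $T$-weight vector $E_\beta$ with $\beta \in \Phi(\fg_1)$, and two root vectors $E_\beta, E_{\beta'}$ generate the same $V$ precisely when one can be obtained from the other by repeatedly adding roots of $\Phi(G_0)$ while staying inside $\Phi(\fg_1)$. So the irreducible $G_0$-summands of $\fg_1$ correspond bijectively to the orbits on $\Phi(\fg_1)$ under the equivalence relation generated by $\beta \sim \beta + \gamma$ for $\gamma \in \Phi(G_0)$ with $\beta + \gamma \in \Phi(\fg_1)$. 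It therefore suffices to show that every such orbit meets $\Delta_M \cup \{-\theta_M\}$.

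The core of the argument will be a case-by-case verification based on the explicit lists of $\Delta(G_0) = \bigsqcup_i \Delta_i(G_0)$ compiled in \S\ref{ss:G_0 simple roots}. For type A with $G = \GL_{n+1}$ I would use the block partition $\{1,\ldots,n+1\} = \bigsqcup_{r=1}^d W_r$ with $W_r = \{i : i \equiv r \pmod d\}$: then $\fg_1 = \bigoplus_{r=1}^d \Hom(W_{r+1 \bmod d}, W_r)$ decomposes into $d$ irreducible $G_0$-submodules, and the $m = d-1$ simple roots of $M$ hit $d-1$ of these components while $-\theta_M = \chi_{n+1} - \chi_{n-d+2}$ lands in the remaining cyclic one. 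For types B, C and D the same strategy applies: for each irreducible $\Phi_i(G_0)$ one locates the associated irreducible subspace of $\fg_1$ by applying the simple roots in $\Delta_i(G_0)$ to an extremal root vector, and uses the explicit formulas \eqref{eq:B L-simple roots}, \eqref{eq:C L-simple roots}, \eqref{eq:D L-simple roots} to exhibit a chain of $\pm\Phi_i(G_0)$-roots from an element of the orbit to either a simple root of $M$ or to $-\theta_M$.

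The hard part will be the sub-cases of types B and D in which some $\Phi_i(G_0)$ is itself of type B or D rather than type A, namely $\Phi_m(G_0)$ and $\Phi_{m+1}(G_0)$ in type B when $n \geq 3d/2$, and the analogues in type D when $n \geq 3d/2 + 1$. In these cases the corresponding $G_0$-irreducible involves roots of the form $\chi_i \pm \chi_j$ rather than forming a single $\Hom$-space, so locating an element of $\fm_1$ inside the orbit requires a more delicate chain argument; here I would exploit that the highest root $\theta_M$ has the explicit form $\chi_{n-m+1} + \chi_{n-m+2}$ (or $\chi_{n-m+1}$ in the type-B case with $m=1$) and connect $-\theta_M$ to the distinguished generators $\chi_{n+1-d-m} + \chi_{n+1-m}$ or $\chi_{n-d} + \chi_n$ appearing in the simple-root lists, following the $\Phi_i(G_0)$-root chains explicitly recorded there.
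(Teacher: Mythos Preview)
Your overall strategy---show that every $G_0$-orbit on $\Phi(\fg_1)$ meets $\Delta_M\cup\{-\theta_M\}$---is exactly equivalent to what the paper proves, and your type~A argument via the block decomposition $\{1,\dots,n+1\}=\bigsqcup_r W_r$ is essentially the paper's argument in different clothing. The paper, however, does \emph{not} try to enumerate the irreducible $G_0$-submodules of $\fg_1$ directly. Instead it invokes a counting formula (\cite[Proposition~27]{KY}): the number of irreducible $G_0$-components of $\fg_1$ equals $n+1-|\Delta(G_0)|$. Then it turns the problem around: for each $\alpha\in\Phi(\fm_1)$ it computes the highest weight $\mathrm{hw}(\alpha)$ of the component containing $\fg_\alpha$, and simply checks that the set $\{\mathrm{hw}(\alpha):\alpha\in\Phi(\fm_1)\}$ has size $n+1-|\Delta(G_0)|$. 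This forces surjectivity without ever listing the components.

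Your plan for types B, C, D has a real gap: you propose to locate ``the associated irreducible subspace of $\fg_1$'' for each simple factor $\Phi_i(G_0)$, but there is no such bijection. For instance in type~C with $n\ge d$ there are $m$ simple factors of $G_0$ but $m+1$ irreducible components of $\fg_1$; in type~B with $n\ge\tfrac32 d$ there are $m+1$ simple factors but only $m$ components (so two roots of $\fm_1$ land in the same component---the paper notes this non-injectivity explicitly). So walking through the simple factors $\Phi_i(G_0)$ will neither exhaust nor faithfully index the $G_0$-irreducibles of $\fg_1$. You could repair this by first obtaining the correct list of highest weights of $\fg_1$ and then running your chain argument, but at that point you have reinvented the paper's counting and might as well use it directly.
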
 
\begin{proof}
For $\alpha\in\Phi(\fm_1)$, let $\mathrm{hw}(\alpha)$ denote the highest weight of the subrepresentation $V_i$ that contains $\fg_\alpha$. In order to verify Proposition \ref{p:Levi nonzero intersects subrepns}, it suffices to show that $\{\mathrm{hw}(\alpha)|\alpha\in\Phi(\fm_1)\}=\mathrm{wt}^+(\fg_1)$ give all the highest weights of $\fg_1$ as $G_0$-representations. 
In view of \cite[Proposition 27]{KY}, the number of highest weights is $n+1-|\Delta(G_0)|$. Thus it suffices to verify that there are $n+1-|\Delta(G_0)|$ distinct $\mathrm{hw}(\alpha)$'s. Recall $\Phi(\fm_1)=\{\alpha_{n-m+1},...,\alpha_n,-\theta_M \}$.

(i) Type A. We have $\mathrm{hw}(\alpha_i)=\sum_{j=i'}^i\alpha_i$, where $1\leq i'\leq d$, $n-m+1\leq i\le n$ and $d|(i-i')$; $\mathrm{hw}(-\theta_M)=\sum_{j=i'}^i\alpha_{n-m}$, where $1\leq i'\leq d$ and $d|(n-m-i')$. Thus these $i'$'s are all distinct module $d$, giving $n+1-(n-m)=m+1=d$ many highest weights.

(ii) Type B, C, D. The highest weight $\mathrm{hw}(\alpha)$ for $\alpha\in\Phi(\fm_1)$ is of the form $\chi_{i_{\alpha}}+\chi_{j_{\alpha}}$ for some integers $i_{\alpha}\leq j_{\alpha}$.
Similar to type A, we can check the map from $\Phi(\fm_1)$ to the set of highest weights of $\fg_1$ is surjective is still surjective. Although, it may fail to be injective in some situations for type B and D. 
\end{proof}

\subsection{Weight space decomposition of $\fu_{0}$ and $\fu_\phi$}\label{ss:wt sp decomp}
In this subsection, we keep the assumptions and notations of \S~\ref{ss:toric}. 
To prove theorem \ref{t:toric}, we need to study the $T_\phi$-action on the unipotent group $U_0$. 
We first introduce some general notations about this action and prove Proposition \ref{p:stabiliser}(4).

We denote the Lie algebras of $U_\phi\subset U_0$ by $\fu_\phi\subset\fu_0$, and the weights of the adjoint action of $T_\phi$ on $\fu_0$ by $\Phi_{T_\phi}(\fu_0)$. We have the weight subspace decomposition: 
\begin{equation} 
\fu_0=\bigoplus_{\bgamma\in\Phi_{T_\phi}(\fu_0)}\fu_{\bgamma}.\footnote{Note that, a priori, the action of $T_\phi$ on $\fu_0$ may have $0$ as an eigenvalue, though we prove that this cannot happen.}
\end{equation} 
On the other hand, we have the weight decomposition with respect to maximal torus $T$: 
\begin{equation}\label{eq:T_phi decomp of u_L}
\fu_0=\bigoplus_{\gamma\in\Phi(\fu_0)}\fg_\gamma,
\end{equation}

The inclusion $T_{\phi}\to T$ induces a restriction map 
\begin{equation} \label{eq:Tphi weight}
\pi:\Phi_G\rightarrow X^*(T_\phi).
\end{equation}
More explicitly, if we set $\Delta_M^c:=\Delta_G-\Delta_M$, then for $\gamma=\sum_{\alpha\in\Delta_G}n_\alpha\alpha\in\Phi_G$ we have
\begin{equation}\label{eq:pi explicit}
\pi(\sum_{\alpha\in\Delta_G}n_\alpha\alpha)=\sum_{\alpha\in\Delta_M^c}n_\alpha\alpha.
\end{equation}
Moreover, we have
\begin{equation}\label{eq:T_phi wt subspace}
\fu_{\bgamma}=\bigoplus_{\gamma\in\pi^{-1}(\bgamma)}\fg_\gamma, \quad \forall \bgamma\in\Phi_{T_\phi}(\fu_0).
\end{equation}

Note that $\fu_\phi$ is a $T_\phi$-invariant subspace of $\fu_0$. We have a decomposition
\begin{equation}\label{eq:Tphi decomp of uphi}
\fu_\phi=\bigoplus_{\bgamma\in\Phi_{T_\phi}(\fu_\phi)}\fu_{\phi,\bgamma},\quad \textnormal{where } \fu_{\phi,\bgamma}=\fu_{\bgamma}\cap \fu_{\phi}.
\end{equation}

In view of Proposition \ref{p:stabiliser}(i-ii), for every $\bgamma\in\Phi_{T_\phi}(\fu_\phi)$, we have $\dim(\fu_{\phi,\bgamma})=1$.

\begin{lem}\label{l:nonzero initial}
If $\displaystyle \gamma=\sum_{\alpha\in\Delta_M^c}n_\alpha\alpha+\sum_{\alpha\in\Delta_M}m_\alpha\alpha\in\Phi(\fu_0)$, then $\displaystyle \sum_{\alpha\in\Delta_M^c}n_\alpha\alpha\neq0$. 
In particular,  $0\not\in\Phi_{T_\phi}(\fu_0)$ and $\pi$ induces a surjection 
$$\pi:\Phi(\fu_0)\tra\Phi_{T_\phi}(\fu_0).$$
\end{lem}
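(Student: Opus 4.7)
The plan is to derive everything from a single contradiction argument using the Coxeter number of $M$. Suppose, for contradiction, that $\sum_{\alpha \in \Delta_M^c} n_\alpha \alpha = 0$. Since the simple roots $\Delta_G = \Delta_M \sqcup \Delta_M^c$ are linearly independent in $X^*(T)_{\bQ}$, this forces $n_\alpha = 0$ for every $\alpha \in \Delta_M^c$, so $\gamma = \sum_{\alpha \in \Delta_M} m_\alpha \alpha$ lies in $\Phi_M$. Because $\gamma \in \Phi(\fu_0) \subseteq \Phi^+$, all $m_\alpha \ge 0$, so in fact $\gamma \in \Phi_M^+$.

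Next I would invoke the identity $\langle \check{\rho}_G, \beta \rangle = \langle \check{\rho}_M, \beta \rangle$ for $\beta \in \Phi_M$ (noted in \S\ref{sss:generic functional}, since $M$ is a standard Levi). This gives $\Ht_G(\gamma) = \Ht_M(\gamma)$, which is bounded above by $h_M - 1 = d - 1$ (the height of the highest root of $M$ in its own root system). On the other hand, membership $\gamma \in \Phi(\fu_0)$ means $\Ht_G(\gamma) \equiv 0 \pmod{d}$ and $\Ht_G(\gamma) \ge 1$, hence $\Ht_G(\gamma) \ge d$. The two inequalities contradict each other, proving the first assertion.

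For the two consequences: $0 \notin \Phi_{T_\phi}(\fu_0)$ is immediate, because in view of \eqref{eq:pi explicit} a zero $T_\phi$-weight in $\fu_0$ would force the existence of some $\gamma \in \Phi(\fu_0)$ with $\sum_{\alpha \in \Delta_M^c} n_\alpha \alpha = 0$, which we just excluded. Surjectivity of $\pi\colon \Phi(\fu_0) \twoheadrightarrow \Phi_{T_\phi}(\fu_0)$ is then essentially tautological from \eqref{eq:T_phi wt subspace}: for each $\bgamma \in \Phi_{T_\phi}(\fu_0)$ the weight space $\fu_{\bgamma}$ is nonzero and decomposes into $T$-weight spaces $\fg_\gamma$ with $\pi(\gamma) = \bgamma$, so at least one such $\gamma$ belongs to $\Phi(\fu_0)$.

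The entire argument is short and the only non-trivial input is the Coxeter-number bound $\Ht_M(\gamma) \le d-1$ together with the compatibility of heights between $G$ and a standard Levi $M$; I do not anticipate any real obstacle, as both facts are standard and already recalled in the paper.
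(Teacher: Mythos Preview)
Your argument is correct and is essentially identical to the paper's proof: both show that $\pi(\gamma)=0$ forces $\gamma\in\Phi_M$, then use the bound $\Ht(\gamma)\le h_M-1<d$ together with $d\mid\Ht(\gamma)>0$ to reach a contradiction. Your write-up is simply a more detailed version of the paper's two-line proof.
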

\begin{proof}
Recall that $\gamma\in\Phi(\fu_0)$ satisfies $d\mid\Ht(\gamma)$.
If $\pi(\gamma)=0$, then we have $\gamma\in\Phi_M$ and $\Ht(\gamma)<h_M=d$, which implies $\gamma=0$.
\end{proof}

\subsubsection{Proof of Proposition \ref{p:stabiliser}(iv)}\label{ss:stabiliser} 
Let $\fl_\phi$, $\ft$, and $\ft_\phi$ denote the Lie algebras of $L_\phi$, $T$ and $T_\phi$ respectively. 
Consider the $T_\phi$-eigenspace decomposition:
$$
\fg_0=\ft\oplus\bigoplus_{\bgamma\in\Phi_{T_\phi}(\fg_0)}\fg_{0,\bgamma}, \quad \fl_\phi=\ft_{\phi}\oplus\bigoplus_{\bgamma\in\Phi_{T_\phi}(\fl_\phi)}\fl_{\phi,\bgamma}.
$$
Recall $U_\phi=U_0\cap L_\phi$. 
It suffices to show that $B_1=T_\phi U_\phi$ is a Borel of $L_\phi$ as $B_\phi\supset B_1$. Then we reduce to show that
\begin{equation}\label{eq:1}
\Phi_{T_\phi}(\fl_\phi)=\Phi_{T_\phi}(U_\phi)\sqcup(-\Phi_{T_\phi}(U_\phi))
\end{equation}
The projection $\pi:\Phi(\fg_0)\ra\Phi_{T_\phi}(\fg_0)$ is surjective (Lemma \ref{l:nonzero initial}). 
Given any $\bgamma\in\Phi_{T_\phi}(\fl_\phi)$, we have
$$
\fl_{\phi,\bgamma}\subset\fl_{\bgamma}=\bigoplus_{\gamma\in\pi^{-1}(\bgamma)}\fg_\gamma.
$$

If a weight $\gamma=\sum_{\alpha\in\Delta_G}n_\alpha\alpha\in\Phi_G\in\pi^{-1}(\bgamma)$ is positive, 
then by Lemma \ref{l:nonzero initial}, $n_\alpha>0$ for some $\alpha\in\Delta_M^c$. 
By \eqref{eq:pi explicit}, this implies that $\gamma'>0$ for every $\gamma'\in\pi^{-1}(\bgamma)$. 
Therefore, we have $\fl_{\phi,\bgamma}\subset\fu_0 \cap\fl_\phi$ and $\bgamma\in\Phi_{T_\phi}(U_\phi)$. 
Similarly, when $\gamma<0$ for some $\gamma\in\pi^{-1}(\bgamma)$, we deduce $\bgamma\in -\Phi_{T_\phi}(U_\phi)$. 
This gives the inclusion $\subset$ of \eqref{eq:1}.

On the other hand, as $\pi(-\gamma)=-\pi(\gamma)$, the right hand side of \eqref{eq:1} is a disjoint union. 
Moreover, as $L_\phi$ is reductive, $\bgamma\in\Phi_{T_\phi}(\fl_\phi)$ if and only if $-\bgamma\in\Phi_{T_\phi}(\fl_\phi)$. This provides the opposite inclusion and completes the proof. \hfill\qed

\subsection{Classification of $\Phi_{T_\phi}(\fu_0)$ via $\pi$}
\label{sss:gamma_1 and gamma_2}
Let $\bgamma \in \Phi_{T_\phi}(\fu_0)$ and write 
$\bgamma=\sum_{\alpha\in \Delta_M^c } n_\alpha \alpha$. 


\begin{lem} \label{l:fibresOfpi}
	There are three possibilities for $\pi^{-1}(\bgamma)$:
\begin{itemize}
\item [(I)] $\pi^{-1}(\bgamma)=\{\gamma\}$ is a singleton;
\item [(II)] $|\pi^{-1}(\bgamma)|\ge 2$ and for any $\gamma_1,\gamma_2\in\pi^{-1}(\bgamma)$, $\Ht(\gamma_1)=\Ht(\gamma_2)$;
\item [(III)] $G$ is of type B or D and $\pi^{-1}(\bgamma)$ contains roots of unequal heights; more precisely, \[
\{\gamma_1=\sum_{\alpha\in\Delta_M^c} n_\alpha\alpha,\gamma_2=\gamma_1+\alpha_{n-m+1}+\theta_M\}\subset \pi^{-1}(\bgamma).
\] 
\end{itemize}
\end{lem}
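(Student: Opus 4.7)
The plan is to classify $\pi^{-1}(\bgamma)$ by direct case analysis on the classical type of $G$, using the explicit root descriptions from \S\ref{ss:classical}. For distinct $\gamma_1,\gamma_2\in\pi^{-1}(\bgamma)$ the difference $v:=\gamma_2-\gamma_1$ lies in $\bZ\Delta_M$, and since both $\Ht(\gamma_i)$ are positive multiples of $d=h_M$, so is $\Ht(v)$; cases (II) and (III) correspond respectively to $\Ht(v)=0$ and $\Ht(v)\neq 0$.

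For type A, writing $\gamma_i=\chi_{a_i}-\chi_{b_i}$, Lemma \ref{l:nonzero initial} forces $a_1=a_2$, while $b_1,b_2$ lie in the range $\{n-m+1,\ldots,n+1\}$ of length $m=d-1$; congruence modulo $d$ then forces $b_1=b_2$, so every fiber is a singleton. For type C a parity argument rules out mixing sign patterns among $\{\chi_i\pm\chi_j,\,2\chi_i\}$: the pairwise differences of their heights ($j-i$, $2n-i-j+1$, $2n-2i+1$) are odd, contradicting divisibility by the even number $d=2m$. Within a single sign, the same bounded-range argument as in type A again produces a singleton.

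The heart of the proof is the analysis for types B and D. For each $\bgamma$ I would enumerate candidate preimages --- namely $\chi_i\pm\chi_j$, plus $\chi_i$ in type B and the fork pair $\chi_i\pm\chi_n$ in type D --- and impose $d\mid\Ht$. Two arithmetic observations suffice: (a) for each root shape the range of admissible $j$'s has length less than $d$, giving at most one representative per shape; (b) in type B the three candidate heights $j-i$, $2n-i-j+2$, $n-i+1$ are pairwise distinct (any equality forces $j=n+1$, out of range), so any multi-element fiber automatically has unequal heights, and a direct computation gives $\gamma_2-\gamma_1=2\chi_{n-m+1}=\alpha_{n-m+1}+\theta_M$, i.e. case (III). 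In type D the fork structure produces the unique same-height pair $\{\chi_i-\chi_n,\chi_i+\chi_n\}$ of common height $n-i$, yielding case (II); every multi-element fiber with distinct heights again reduces to the pair $\chi_i\mp\chi_{n-m+1}$ with difference $2\chi_{n-m+1}$, giving case (III).

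The principal obstacle is the bookkeeping for types B and D, particularly ruling out ``long'' displacements $v\in\bZ\Delta_M$ with $|\Ht(v)|\geq 2d$ connecting two roots of $G$ in $\Phi(\fu_0)$, so that case (III) is realised by the single step $\alpha_{n-m+1}+\theta_M$ and not by an iterate. This will follow from the inequality $d>m$ for admissible classical Levi subgroups, combined with the bounded range of indices participating in any candidate root.
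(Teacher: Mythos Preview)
Your plan works in outline but takes a much longer route than the paper, and the type~C parity step is not right as written.

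The paper's proof is a three-line, type-independent argument: writing $\gamma_i=\sum_{\alpha\in\Delta_M^c}n_\alpha\alpha+\sum_{\alpha\in\Delta_M}m^i_\alpha\alpha$, one has
\[
|\Ht(\gamma_1)-\Ht(\gamma_2)|=\Bigl|\sum_{\alpha\in\Delta_M}(m^1_\alpha-m^2_\alpha)\Bigr|\le\max_i\sum_{\alpha\in\Delta_M}m^i_\alpha.
\]
The right side is at most $\Ht(\theta_M)=d-1$ in types A, C and at most $\Ht(\theta_M+\alpha_{n-m+1})=d$ in types B, D (just compare with the coefficients of the highest root of $G$). Divisibility by $d$ then forces the height difference to be $0$ in types A, C, and $0$ or $d$ in types B, D; in the latter case one $\gamma_i$ has all $m^i_\alpha=0$ and the other has every $m^i_\alpha$ maximal, which pins down $\gamma_2-\gamma_1=\alpha_{n-m+1}+\theta_M$ and shows the fibre has no third element. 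In particular your ``principal obstacle'' (ruling out $|\Ht(v)|\ge 2d$) evaporates: the bound $\le d$ is immediate and needs no appeal to $d>m$.

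Your parity claim for type C fails because two roots $\chi_i-\chi_j$ and $\chi_i+\chi_{j'}$ in the same $\pi$-fibre need not share the second index, and the height difference $(2n-i-j'+1)-(j-i)=2n-j-j'+1$ is in fact always \emph{even}: from $d=2m$ even, $j-i\equiv 0$ gives $j\equiv i\pmod 2$, while $2n-i-j'+1\equiv 0$ gives $j'\not\equiv i\pmod 2$, so $j+j'$ is odd. What does work is exactly the bounded-range observation you already use elsewhere: $j,j'\in\{n-m+1,\ldots,n\}$ forces $2n-j-j'+1\in[1,2m-1]$, not divisible by $d=2m$. With that fix your case analysis goes through and in fact establishes Proposition~\ref{p:G type v.s. weight type} simultaneously; the paper separates the two because the coefficient-bound argument dispatches the lemma without ever enumerating root shapes.
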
 

\begin{proof}
Let $\gamma_1$ and $\gamma_2$ be elements of $\pi^{-1}(\bgamma)$ and write 
\[
\gamma_i=\sum_{\alpha\in\Delta_M^c}n_\alpha\alpha+\sum_{\alpha\in\Delta_M}m^i_\alpha\alpha, \quad m^i_\alpha\in\mathbb{N},\ i=1,2.
\]
Note that $\Ht(\gamma_i)$ is divisible by $d$; thus, $d=h_M$ divides 
$$
|\Ht(\gamma_1)-\Ht(\gamma_2)|=|\sum_{\alpha\in\Delta_M}m^1_\alpha-\sum_{\alpha\in\Delta_M}m^2_\alpha|\leq\max_{i\in\{1,2\}}\sum_{\alpha\in\Delta_M}m^i_\alpha.
$$
There are two possibilities of the maximal value of $\sum_{\alpha\in\Delta_M}m^i_\alpha$:

(i) If $G$ is of type A and type C, then $\sum_{\alpha\in\Delta_M}m^i_\alpha\leq \Ht(\theta_M)=h_M-1<h_M$, so $\Ht(\gamma_1)=\Ht(\gamma_2)$.

(ii) If $G$ is of type B and type D, then $\sum_{\alpha\in\Delta_M}m^i_\alpha\leq \Ht(\theta_M+\alpha_{n-m+1})=h_M$, where $\Delta_M=\{\alpha_{n-m+1},...,\alpha_n\}$. 
Therefore, we have $\Ht(\gamma_1)=\Ht(\gamma_2)$ or $|\Ht(\gamma_1)-\Ht(\gamma_2)|=h_M$. 
The second case happens only if one of $\gamma_i$ satisfies $m_\alpha^i=0$ for all $i$, while the other one satisfies all of $m_\alpha^i$ achieve the maximal possible value. Note that in this case, there is no weight other than $\gamma_1, \gamma_2$ in $\pi^{-1}(\bgamma)$.
\end{proof}

We say a root $\gamma\in\Phi(\fu_0)$ or a $T_\phi$-weight $\bgamma=\pi(\gamma)$ is of type (I) (resp. (II), (III)) if $\bgamma=\pi(\gamma)$ satisfies above condition (I) (resp. (II), (III)). The above lemma allows us to give an explicit description of the weight spaces $\Phi_{T_\phi}(\fu_0)$: 

\begin{prop}\label{p:G type v.s. weight type}
\textnormal{(i)} Suppose $G$ is of type A or C. Then only type (I) roots appear. In other words, $\pi$ defines a bijection $\Phi(\fu_0)\xrightarrow{\sim}\Phi_{T_\phi}(\fu_0)$.
	
\textnormal{(ii)} Suppose $G$ is of type B. 
\begin{itemize} 
\item If $n<\frac{3}{2}d$, only type (I) roots appear; 
\item if $n\geq\frac{3}{2}d$, both type (I) and (III) roots appear. 
\end{itemize} 
Moreover, for any type (III) weight $\bgamma\in \Phi_{T_\phi}(\fu_0)$, we have $|\pi^{-1}(\bgamma)|=2$. 

\textnormal{(iii)} Suppose $G$ is of type D.
\begin{itemize} 
\item If $n\leq d$, only type (I) roots appear; 
\item if $d+1\leq n\leq\frac{3}{2}d$, both type (I) and (II) roots appear; 
\item if $n\geq\frac{3}{2}d+1$, all of type (I),(II),(III) roots appear. 
\end{itemize} 
Moreover, for any type (II), (III) weight $\bgamma \in \Phi_{T_\phi}(\fu_0)$, we have $|\pi^{-1}(\bgamma)|=2$.
\end{prop}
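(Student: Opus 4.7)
The plan is to use Lemma \ref{l:fibresOfpi} as the starting point: it tells us that, outside the exceptional type (III) situation in types B and D, any two distinct roots in the same fiber of $\pi: \Phi(\fu_0) \to \Phi_{T_\phi}(\fu_0)$ must share the same height. The classification then reduces to an explicit enumeration of lifts of each $\bgamma \in \Phi_{T_\phi}(\fu_0)$ using the $\chi_i$-coordinate descriptions of \S \ref{ss:classical}.

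For any $\bgamma \in \Phi_{T_\phi}(\fu_0)$, let $i \le n - m$ be the minimal index appearing in its $\Delta_M^c$-support; then every lift of $\bgamma$ to $\Phi(\fu_0)$ has the shape $\chi_i \pm \chi_j$ (or $\chi_i$ in type B, which lives in a distinct residue class modulo $d$), where $j$ ranges over the $M$-block $\{n-m+1,\ldots,n\}$ (or $\{n-m+1,\ldots,n+1\}$ in type A). Within each $\pm$-category the heights vary consecutively over an interval of length $\le d$, so at most one $j$ yields a height divisible by $d$. For assertion (i) it remains to rule out the coexistence of a $+$-lift and a $-$-lift: in type A there is no $+$-category, while in type C a short computation shows that same-height $+/-$ lifts would force $j_1 + j_2 = 2n + 1$, impossible since $j_1, j_2 \le n$; thus fibers in types A and C are singletons.

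In types B and D, non-singleton fibers arise because the height of $\alpha_{n-m+1} + \theta_M = 2\chi_{n-m+1}$ is precisely $d$, which allows two structural pairs:
\begin{itemize}
\item[(III)] $\{\chi_i - \chi_{n-m+1},\ \chi_i + \chi_{n-m+1}\}$, with heights differing by $d$; both lie in $\Phi(\fu_0)$ iff $n - m + 1 - i$ is a positive multiple of $d$, giving the threshold $n \ge 3d/2$ in type B ($d = 2m$) and $n \ge 3d/2 + 1$ in type D ($d = 2m - 2$).
\item[(II), type D only] $\{\chi_i - \chi_n,\ \chi_i + \chi_n\}$, both of height $n - i$, whose difference $2\chi_n = \alpha_n - \alpha_{n-1} \in \bZ\Phi_M$ has height $0$; both lie in $\Phi(\fu_0)$ iff $n - i$ is a positive multiple of $d$, giving the threshold $n \ge d + 1$.
\end{itemize}
The absence of type (II) in type B is checked directly: the height-matching condition for a $+/-$ pair is $j_1 + j_2 = 2n + 2$, unrealizable with $j_1, j_2 \le n$; two lifts within the same $\pm$-category differ in height by less than $d$, forcing them to coincide.

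The final size check --- that fibers of type (II) or (III) have exactly two elements --- follows by noting that any hypothetical third lift would have its $j$-index constrained by a congruence mod $d$ within an interval of length $\le m - 1 < d$, which determines $j$ uniquely. The most delicate point, which I expect to be the main obstacle, is the type D bookkeeping for $n \ge 3d/2 + 1$, where types (II) and (III) both exist: I would show they live in disjoint fibers via the congruences $n \equiv i \pmod d$ (type (II)) versus $n - m + 1 \equiv i \pmod d$ (type (III)), whose simultaneous solvability would force $m - 1 \equiv 0 \pmod{d}$, contradicted by $d = 2m - 2 > m - 1$ for $m \ge 3$.
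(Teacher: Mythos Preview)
Your argument is correct and follows the same route as the paper's: both start from the observation that all lifts of a given $\bgamma$ share the leading index $i$ (the paper records this as ``a simple observation is that $i_1=i_2$''), then parametrize the candidate lifts as $\chi_i \pm \chi_j$ (or $\chi_i$ in type B) and compare heights modulo $d$. The paper carries this out as an explicit enumeration of $\Phi(\fu_0)$ and its decomposition $\Phi_{T_\phi}^{\I}\sqcup\Phi_{T_\phi}^{\II}\sqcup\Phi_{T_\phi}^{\III}$ type by type, while you run the congruence argument directly; the underlying content is the same, and your identification of the type (II) and (III) pairs matches the paper's lists \eqref{eq:B T_phi-wts in fu_L} and \eqref{eq:D T_phi-wts in fu_L} exactly.

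Two minor points. First, your assertion that every lift has $j$ in the $M$-block is not literally true: roots such as $\chi_i - \chi_{i+d}$ with $i+d\le n-m$, or $\chi_i+\chi_j$ with $j\le n-m$, lie outside. But any such root is pinned down uniquely by its $\Delta_M^c$-coefficients (the pattern of $0$'s, $1$'s, and $2$'s already determines both indices), so its fiber is a singleton and it is automatically type (I); inserting this one-line remark closes the gap. Second, your closing disjointness check between type (II) and type (III) fibers in type D is redundant: once you have shown that each fiber contains at most one $-$-lift and at most one $+$-lift, the bound $|\pi^{-1}(\bgamma)|\le 2$ is already in hand, regardless of which pair $(\chi_i\mp\chi_n$ or $\chi_i\mp\chi_{n-m+1})$ realizes it.
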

\begin{proof}
We denote the set of type (I) (resp. (II), resp. (III)) roots of $\fu_0$ by $\Phi_{T_\phi}^{\I}(\fu_0)$ (resp. $\Phi_{T_\phi}^{\II}(\fu_0)$, resp. $\Phi_{T_\phi}^{\III}(\fu_0)$) and explicitly study the decomposition 
$$
\Phi(\fu_0)=\Phi_{T_\phi}^{\I}(\fu_0)\sqcup \Phi_{T_\phi}^{\II}(\fu_0)\sqcup \Phi_{T_\phi}^{\III}(\fu_0).
$$ 

With the notations of \S~\ref{ss:classical}, we have
$$\Delta_G=\Delta_M^c\sqcup\Delta_M=\{\alpha_1,\dots,\alpha_{n-m}\}\sqcup\{\alpha_{n-m+1},...,\alpha_n\}.$$ 
Suppose $\gamma_1=\sum_{j=i_1}^n n_j\alpha_j,\gamma_2=\sum_{j=i_2}^n m_j\alpha_j\in\Phi(\fu_0)$ satisfy $\pi(\gamma_1)=\pi(\gamma_2)$ and $n_{i_1} > 0, m_{i_2}> 0$. 
A simple observation is that $i_1=i_2$. Indeed, from Lemma \ref{l:nonzero initial}, we have $i_1,i_2\leq n-m$. Then we conclude the claim from \eqref{eq:pi explicit}.
\subsubsection{Type A} 
Given $1\leq d=m+1 \leq n+1$,  we have
\begin{equation}
\Phi(\fu_0)=\{\sum_{j=i}^{i+td-1}\alpha_j|1\leq i\leq i+td-1\leq n\}.
\end{equation} 

It follows from the observation that type (II) roots do not exist. 
We have 
\begin{equation}\label{eq:A T_phi-wts in fu_L}
\Phi(\fu_0)=\Phi_{T_\phi}^{\I}(\fu_0), \quad
\pi:\Phi(\fu_0)\xrightarrow{\sim}\Phi_{T_\phi}(\fu_0)~ \textnormal{is a bijection}.
\end{equation}

\subsubsection{Type B} \label{sss:type B uL}  
Given $2\leq d=2m\leq 2n$, we have
\begin{equation}
\begin{split}
\Phi(\fu_0)=&
\{\sum_{j=i}^{i+td-1}\alpha_j|1\leq i<i+td-1\leq n\}\\
&\sqcup\{\sum_{j=i}^{i+k-1}\alpha_j+2\sum_{j=i+k}^n\alpha_j|i+k=2n-i-td+2>i,\ 1\leq t\leq\lfloor\frac{2n-1}{d}\rfloor\}.
\end{split}
\end{equation}

We first show that type (II) roots do not exist. 
Suppose $\gamma_1,\gamma_2\in\Phi(\fu_0)$ satisfy $\pi(\gamma_1)=\pi(\gamma_2)$ and $\Ht(\gamma_1)=\Ht(\gamma_2)$.
If $\gamma_1$ and $\gamma_2$ are both of the form $\sum_{j=i}^{i+td-1}\alpha_j$ or $\sum_{j=i}^{i+k-1}\alpha_j+2\sum_{j=i+k}^n\alpha_j$, then $\Ht(\gamma_1)=\Ht(\gamma_2)$ implies $\gamma_1=\gamma_2$. 
If $\gamma_1=\sum_{j=i}^{i+td-1}\alpha_j$ while $\gamma_2=\sum_{j=i}^{i+k-1}\alpha_j+2\sum_{j=i+k}^n\alpha_j$, $2n-2i-k+2=qd$, then $\Ht(\gamma_1)=td\leq n+1-i$ while $\Ht(\gamma_2)=2n-i-(i+k)+2\geq n+2-i>\Ht(\gamma_1)$. Therefore type (II) roots would not appear.

We have $\Phi(\fu_0)=\Phi_{T_\phi}^{\I}(\fu_0)\sqcup \Phi_{T_\phi}^{\III}(\fu_0)$. These roots are explicitly given by:
\begin{equation}\label{eq:B T_phi-wts in fu_L}
\begin{split}
\Phi_{T_\phi}^{\I}(\fu_0)=&\{\sum_{j=i}^{i+k-1}\alpha_j\in\Phi(\fu_0)|i+k-1\neq n-m\}\sqcup\{\sum_{j=i}^{i+k-1}\alpha_j+2\sum_{j=i+k}^n\alpha_j\in\Phi(\fu_0)|i+k-1\neq n-m\};\\
\Phi_{T_\phi}^{\III}(\fu_0)=&\bigsqcup_{1\leq t\leq\lfloor\frac{n-m}{d}\rfloor}\{\sum_{j=i}^{n-m}\alpha_j, \sum_{j=i}^{n-m}\alpha_j+2\sum_{j=n-m+1}^n\alpha_j|n-m-i+1=td\}.
\end{split}
\end{equation}

\subsubsection{Type C} \label{sss:type C uL}
Given $2\leq d=2m\leq 2n$, we have
\begin{equation}
\begin{split}
\Phi(\fu_0)=&
\{\sum_{j=i}^{i+td-1}\alpha_j|1\leq i<i+td-1\leq n\}\\
&\sqcup\{\sum_{j=i}^{i+k-1}\alpha_j+2\sum_{j=i+k}^{n-1}\alpha_j+\alpha_n|i+k=2n-i-td+1\geq i,\ 1\leq t\leq\lfloor\frac{2n-1}{d}\rfloor\}.
\end{split}
\end{equation}

One can check that type (II) roots do not exist as in \S\ref{sss:type B uL}. 
We have 
\begin{equation}\label{eq:C T_phi-wts in fu_L}
\Phi(\fu_0)=\Phi_{T_\phi}^{\I}(\fu_0), \quad
\pi:\Phi(\fu_0)\xrightarrow{\sim}\Phi_{T_\phi}(\fu_0)~ \textnormal{is a bijection}.
\end{equation}

\subsubsection{Type D} \label{sss:type D uL} 
Given $2\leq d=2m\leq 2n-2$, we have
\[
\begin{split}
\Phi(\fu_0)=&
\{\sum_{j=i}^{i+td-1}\alpha_j|1\leq i<i+td-1\leq n\}
\sqcup\{\sum_{j=i}^{n-2}\alpha_j+\alpha_n|i=n-td, 1\leq t\leq\lfloor\frac{n-1}{d}\rfloor \}\\
&\sqcup\{\sum_{j=i}^{i+k-1}\alpha_j+2\sum_{j=i+k}^{n-2}\alpha_j+\alpha_{n-1}+\alpha_n|i+k=2n-i-td>i,\ 1\leq t\leq\lfloor\frac{2n-3}{d}\rfloor\}.
\end{split}
\]

Both type (I), (II) and (III) roots exist. We have $
\Phi(\fu_0)=\Phi_{T_\phi}^{\I}(\fu_0)\sqcup\Phi_{T_\phi}^{\II}(\fu_0)\sqcup\Phi_{T_\phi}^{\III}(\fu_0)$. 
By a similar argument of \S\ref{sss:type B uL}, we deduce that these roots are given by:
\begin{equation}\label{eq:D T_phi-wts in fu_L}
\begin{split}
\Phi_{T_\phi}^{\II}(\fu_0)=&\{\sum_{j=i}^{i+k-1}\alpha_j\in\Phi(\fu_0)|i+k-1\neq n-m,n-1\}\\
&\sqcup\{\sum_{j=i}^{i+k-1}\alpha_j+2\sum_{j=i+k}^{n-2}\alpha_j+\alpha_{n-1}+\alpha_n\in\Phi(\fu_0)|i+k-1\neq n-m\};\\
\Phi_{T_\phi}^{\II}(\fu_0)=&\bigsqcup_{1\leq t\leq\lfloor\frac{n-1}{d}\rfloor}\{\sum_{j=i}^{n-1}\alpha_j, \sum_{j=i}^{n-2}\alpha_j+\alpha_n|n-i=td\};\\
\Phi_{T_\phi}^{\III}(\fu_0)=&\bigsqcup_{1\leq t\leq\lfloor\frac{n-m}{d}\rfloor}\{\sum_{j=i}^{n-m}\alpha_j, \sum_{j=i}^{n-m}\alpha_j+2\sum_{j=n-m+1}^{n-2}\alpha_j+\alpha_{n-1}+\alpha_n|n-m-i+1=td\}.
\end{split}
\end{equation}	
\end{proof}

\subsection{Structure of $\fu_\phi$ and a complement of $\fu_\phi$ in $\fu_0$} \label{ss:structure Uphi}
The goal of this subsection is to study $\fu_{\phi}$ and define an explicit $T_\phi$-invariant complement $\fu^c$ of $\fu_\phi$ in $\fu_0$, i.e., a decomposition of $T_\phi$-modules 
\[
\fu_0= \fu_\phi \oplus \fu^c.
\] 
We start with an observation: 
\begin{lem} \label{l:fu_phi=z(phi)} 
$\fu_\phi=\{v\in\fu_0\,|\,[v,\phi]=0\}$.
\end{lem}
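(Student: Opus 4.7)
The plan is to prove both sides equal by recognizing them as the infinitesimal version of the same stabilizer condition. By construction, $L_\phi$ is the identity component of the stabilizer of $\phi \in \fg_1^*$ under the coadjoint action of $G_0$, and $U_\phi = U_0 \cap L_\phi$. The Lie algebra of a stabilizer agrees with its identity component, so $\fu_\phi = \fu_0 \cap \fl_\phi$. Under the running hypothesis $p > h_G$, together with the semisimplicity of $\phi$ (which is what makes $L_\phi$ reductive in Proposition \ref{p:stabiliser}), the stabilizer of $\phi$ in $G_0$ is smooth, hence
\[
\fl_\phi = \{v \in \fg_0 : v \cdot \phi = 0\},
\]
where $v \cdot \phi$ denotes the differential of the coadjoint action of $\fg_0$ on $\fg_1^*$. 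Intersecting with $\fu_0$ gives $\fu_\phi = \{v \in \fu_0 : v \cdot \phi = 0\}$.

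It remains to identify $v \cdot \phi$ with $[v,\phi]$. By definition, the coadjoint action is dual to the bracket $\ad_v : \fg_1 \to \fg_1$, so $(v \cdot \phi)(x) = -\phi([v,x])$ for $x \in \fg_1$; equivalently, using the Killing form to identify $\fg_1^* \simeq \fg_{-1}$ (the pairing $\fg_i \times \fg_{-i} \to \bC$ is non-degenerate under the Coxeter grading), the coadjoint action of $\fg_0$ on $\fg_1^*$ transports to the adjoint action of $\fg_0$ on $\fg_{-1}$. Under either interpretation, the condition $v\cdot\phi = 0$ is exactly the expression $[v,\phi]=0$ appearing in the statement. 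Combining the two paragraphs yields the claim.

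I do not foresee a real obstacle here: the lemma is essentially a bookkeeping identification. The only nontrivial ingredient is smoothness of the stabilizer of the semisimple element $\phi$, and this is guaranteed by the standing assumption $p > h_G$ in force throughout \S\ref{ss:toric}.
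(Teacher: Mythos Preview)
Your argument is correct in outline, but it takes a detour through $L_\phi$ and a smoothness claim that is not quite justified as stated. The paper's proof is more direct: since $p>h_G$, the exponential map gives an isomorphism of varieties $\exp:\fu_0\xrightarrow{\sim}U_0$, and for nilpotent $v\in\fu_0$ one checks that $\exp(v)\cdot\phi=\phi$ if and only if $\ad^*(v)\phi=0$ (because $\exp(N)-I=N\cdot(\text{unipotent})$ for nilpotent $N$). Thus $\mathrm{Stab}_{U_0}(\phi)=\exp(\{v\in\fu_0:[v,\phi]=0\})$ is already connected, so it equals $U_\phi$, and taking Lie algebras gives the lemma.

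Your route instead passes through $\fl_\phi=\{v\in\fg_0:v\cdot\phi=0\}$, which requires smoothness of $\mathrm{Stab}_{G_0}(\phi)$. You attribute this to ``$p>h_G$ together with semisimplicity of $\phi$'', but semisimplicity in Vinberg's sense (closed orbit) does not by itself imply smoothness of stabilizers in positive characteristic. One can in fact justify smoothness here---e.g.\ by identifying $\phi\in\fg_{-1}$ via the Killing form, using that centralizers in $G$ are smooth in very good characteristic, and that fixed points under the order-$d$ automorphism (with $d<p$) preserve smoothness---but this is considerably more work than the paper's two-line argument. In characteristic zero your proof is unimpeachable; over $k$ of characteristic $p$, the exponential argument on the unipotent group is both simpler and self-contained.
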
 
\begin{proof} 
Note that the exponential map on $\fu_0$ is well-defined and induces an isomorphism $\fu_0\simeq U_0$. 
Then we conclude \eqref{l:fu_phi=z(phi)} from the fact that $U_\phi$ is the stabilizer of $\phi$ in $U_0$. 	
\end{proof} 

Recall the root space decomposition of $\fu_\phi$ \eqref{eq:Tphi decomp of uphi}:
\begin{equation}
\fu_\phi=\bigoplus_{\bgamma\in\Phi_{T_\phi}(\fu_\phi)}\fu_{\phi,\bgamma},\qquad \textnormal{where} \qquad \fu_{\phi,\bgamma}:=\fu_{\bgamma}\cap \fu_{\phi},\quad \dim(\fu_{\phi,\bgamma})=1.
\end{equation}

Proposition \ref{p:G type v.s. weight type} then implies that  $\fu_{\bgamma}$ is either a root subspace $\fg_\gamma$ for some $\gamma\in\Phi(\fu_0)\subseteq\Phi(G)$, or the direct sum of two root subspaces. 
The following result provides a criterion for whether a root subspace is contained in $\fu_\phi$. 
\begin{lem}\label{l:roots in U_phi}
For $\gamma\in\Phi(\fu_0)$, the inclusion $\fu_\gamma\subseteq \fu_\phi$ holds if and only if for every root $\alpha\in\Phi(\fm_{-1})$, we have $\gamma+\alpha\not\in\Phi_G$.
\end{lem}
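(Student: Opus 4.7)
The plan is to reformulate Lemma \ref{l:fu_phi=z(phi)} via root-space decomposition. Using the Killing form, which makes $\fg_1$ and $\fg_{-1}$ dual as $\fg_0$-modules, the functional $\phi$ (extended by zero from $\fm_1^*$) corresponds to an element $\phi^\vee \in \fm_{-1} \subset \fg_{-1}$. By genericity of $\phi$ (Definition \ref{d:phi generic}), the root decomposition
\[
\phi^\vee = \sum_{\alpha \in \Phi(\fm_{-1})} c_\alpha E_\alpha
\]
has $c_\alpha \neq 0$ for every $\alpha \in \Phi(\fm_{-1})$. The $\fg_0$-equivariance of this identification means $[v,\phi] = 0$ translates to $[v,\phi^\vee] = 0$ in $\fg_{-1}$.

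Substituting $v = E_\gamma$ for $\gamma \in \Phi(\fu_0)$, I would expand
\[
[E_\gamma, \phi^\vee] = \sum_{\alpha \in \Phi(\fm_{-1})} c_\alpha \, [E_\gamma, E_\alpha],
\]
a sum whose summands lie in pairwise distinct root spaces $\fg_{\gamma + \alpha}$ (distinct because the map $\alpha \mapsto \gamma + \alpha$ is injective). Hence the sum vanishes if and only if $[E_\gamma, E_\alpha] = 0$ for each $\alpha \in \Phi(\fm_{-1})$. A short height check ($\Ht(\gamma) \geq d$ while $|\Ht(\alpha)| \leq d-1$) rules out $\gamma + \alpha = 0$, so $[E_\gamma, E_\alpha] = 0$ is equivalent to $\gamma + \alpha \notin \Phi_G$. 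Since $\fu_\gamma = \fg_\gamma$ is one-dimensional, this is precisely the condition in the statement.

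The argument is essentially bookkeeping: no serious obstacle is anticipated. The one point that genuinely requires care is invoking the genericity of $\phi$ to ensure that every $c_\alpha$ is nonzero, since without this the sum $\sum c_\alpha [E_\gamma, E_\alpha]$ could vanish while individual brackets remain nonzero, and the equivalence would fail. The height computation excluding $\gamma + \alpha = 0$ relies on $\gamma$ being a positive root of $\fg_0$ with $\Ht(\gamma) \equiv 0 \pmod d$, together with the fact that $\Phi(\fm_{-1}) = \{-\alpha : \alpha \in \Delta_M\} \cup \{\theta_M\}$ consists of roots of small absolute height.
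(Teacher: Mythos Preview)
Your proof is correct and follows essentially the same route as the paper's: both reduce to Lemma~\ref{l:fu_phi=z(phi)}, identify $\phi$ with an element of $\fm_{-1}$ having all root coefficients nonzero by genericity, and conclude by expanding $[E_\gamma,\phi]$ into root-space components. Your write-up is merely more explicit about the Killing-form identification and the height check ruling out $\gamma+\alpha=0$, points the paper leaves implicit.
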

\begin{proof}
Recall that $\displaystyle \phi=\sum_{\alpha\in\Phi(\fm_{-1})} \lambda_{\alpha} E_{\alpha}$. By \eqref{l:fu_phi=z(phi)}, $E_\gamma$ belongs to $\fu_\phi$ if and only if 
$$
[E_\gamma,\phi]=\sum_{\alpha\in\Phi(\fm_{-1})}\lambda_\alpha [E_\gamma,E_\alpha]=0.
$$
The latter is, in turn, equivalent to $[E_\gamma,E_\alpha]=0$, i.e. $\gamma+\alpha\not\in\Phi_G$, for every $\alpha\in\Phi(\fm_{-1})$. 
\end{proof}

We now describe the subspace $\fu_{\phi,\bgamma}$ for every $T_\phi$-weight $\bgamma\in \Phi_{T_\phi}(\fu_0)$ and define a complement $\fu^c_{\bgamma}$ of $\fu_{\phi,\bgamma}$ in $\fu_{\bgamma}$. When $\bgamma$ is of type (I) (i.e. $\pi^{-1}(\bgamma)=\{\gamma\}$ is a singleton), the above corollary provides a criterion for $\fu_{\phi,\bgamma}=\fg_{\bgamma}$ or $\fu_{\phi,\bgamma}=0$ and we define $\fu^c_{\bgamma}$ in an obvious way. Namely, denote by $\Phi_{T_\phi}^{\I}(\fu_\phi)$ (resp. $\Phi_{T_\phi}^{\I}(\fu^c)$) the $T_\phi$-weights in $\fu_\phi$ (resp. $\fu^c$) of type (I), which can be uniquely lifted to roots in $\Phi(\fu_0)$. Then we have 
\[
\Phi^{\I}(\fu_0)=\Phi_{T_\phi}^{\I}(\fu_\phi)\sqcup\Phi_{T_\phi}^{\I}(\fu^c).
\]
When $\bgamma$ is of type (II) or (III) (which can appear only in types B and D), $\fu_{\phi,\bgamma}$ is no longer a root subspace $\fg_{\gamma}$. 
We will describe the set of $T_\phi$-weights $\Phi_{T_\phi}^{\II}(\fu_\phi)$ and $\Phi_{T_\phi}^{\III}(\fu_\phi)$ of type (II) and (III),
and define $\fu^c_{\bgamma}$. Below, we give an explicit description of these root spaces  in each classical type.

\subsubsection{Type $A_n$} \label{sss:A wts in fu_phi and fu^c} 
Given $\displaystyle \gamma=\sum_{j=i-td+1}^i\alpha_j\in\Phi(\fu_0)$, by Lemma \ref{l:roots in U_phi}, we have
\begin{align*}
\gamma\in\Phi_{T_\phi}^{\I}(\fu^c)&\Leftrightarrow\exists\alpha\in\Phi(\fm_{-1})=\{-\alpha_{n-m+1},-\alpha_{n-m+2},...,-\alpha_n,\sum_{j=n-m+1}^n\alpha_j \}, \gamma+\alpha\in\Phi_G.\\
&\Leftrightarrow i\in\{n-m,n-m+1,...,n\}.
\end{align*}

Then we deduce
\begin{equation}\label{eq:A wts in fu_phi and fu^c}
\begin{split}
\Phi_{T_\phi}^{\I}(\fu^c)=&\{\sum_{j=i-td+1}^i\alpha_j|n-m\leq i\leq n,1\leq i-td+1\leq i\};\\
\Phi_{T_\phi}^{\I}(\fu_\phi)=&\{\sum_{j=i-td+1}^i\alpha_j|1\leq i-td+1\leq i<n-m\}.
\end{split}
\end{equation}

\subsubsection{Type $B_n$}
We have
\begin{equation}\label{eq:B (I)-wts in fu_phi and fu^c}
\begin{split}
\Phi_{T_\phi}^{\I}(\fu^c)=&\{\sum_{j=i-td+1}^i\alpha_j| n-m+1\leq i\leq n\}\\
&\sqcup\{\sum_{j=i-k}^{i-1}\alpha_j+2\sum_{j=i}^n\alpha_j| n-m+2\leq i\leq n, i-k=2n-i-td+2\};\\
\Phi_{T_\phi}^{\I}(\fu_\phi)=&\{\sum_{j=i-td+1}^i\alpha_j| 1\leq i\leq n-m-1\}\\
&\sqcup\{\sum_{j=i-k}^{i-1}\alpha_j+2\sum_{j=i}^n\alpha_j| 2\leq i\leq n-m, i-k=2n-i-td+2\}.
\end{split}
\end{equation}

It remains to study $\fu_{\phi,\bgamma}$ for a type (III) weight $\bgamma\in \Phi_{T_\phi}(\fu_0)$. 
Recall (cf. \eqref{eq:T_phi wt subspace} and \eqref{eq:B T_phi-wts in fu_L}) that
$$
\fu_{\bgamma}=\fg_{\gamma_1}\oplus\fg_{\gamma_2},\ \gamma_1=\sum_{j=i}^{n-m}\alpha_j, \ \gamma_2=\sum_{j=i}^{n-m}\alpha_j+2\sum_{j=n-m+1}^n\alpha_j,\ n-m-i+1=td.
$$
A vector $v=\lambda_1 E_{\gamma_1}+\lambda_2 E_{\gamma_2}$ belongs to $\fu_{\phi,\bgamma}$ if and only if
$$
0=[v,\phi]
=[\lambda_1 E_{\gamma_1}+\lambda_2 E_{\gamma_2},\sum_{\alpha\in\Delta_M} E_{-\alpha}+E_{\theta_M}]
=\lambda_1 E_{\gamma_1+\theta_M}+\lambda_2 E_{\gamma_2-\alpha_{n-m+1}}=(\lambda_1+\lambda_2)E_{\gamma_1+\theta_M}.
$$

Thus $E_{\phi,\bgamma}=E_{\gamma_1}-E_{\gamma_2}$ is a basis of $\fu_{\phi,\bgamma}$ and we define $\fu^c_{\bgamma}$ to be the subspace of $\fu_{\bgamma}$ generated by $E_{c,\bgamma}=E_{\gamma_1}+E_{\gamma_2}$:
\begin{equation}\label{eq:B (III)-wts in fu_phi and fu^c}
\begin{split}
&\Phi_{T_\phi}^{\III}(\fu_\phi)=\Phi_{T_\phi}^{\III}(\fu^c)=\pi(\Phi_{T_\phi}^{\III}(\fu_0));\\
&\fu_{\phi,\bgamma}=k(E_{\gamma_1}-E_{\gamma_2}), \ \fu^c_{\bgamma}=k(E_{\gamma_1}+E_{\gamma_2}),\ \pi^{-1}(\bgamma)=\{\gamma_1,\gamma_2\}, \ \forall \bgamma\in\Phi_{T_\phi}^{\III}(\fu_0).
\end{split}
\end{equation}

\subsubsection{Type C} 
We have
\begin{equation}\label{eq:C wts in fu_phi and fu^c}
\begin{split}
\Phi_{T_\phi}^{\I}(\fu^c)=&\{\sum_{j=i-td+1}^i\alpha_j| n-m\leq i\leq n\}\\
&\sqcup\{\sum_{j=i-k}^{i-1}\alpha_j+2\sum_{j=i}^{n-1}\alpha_j+\alpha_n| n-m+1\leq i\leq n-1, i-k=2n-i-td+1\};\\
\Phi_{T_\phi}^{\I}(\fu_\phi)=&\{\sum_{j=i-td+1}^i\alpha_j| 1\leq i\leq n-m-1\}\\
&\sqcup\{\sum_{j=i-k}^{i-1}\alpha_j+2\sum_{j=i}^{n-1}\alpha_j+\alpha_n| 1\leq i\leq n-m, i-k=2n-i-td+1\}.
\end{split}
\end{equation}

\subsubsection{Type D} 
 \label{sss:D wts in fu_phi and fu^c} 	We have	
\begin{equation}\label{eq:D (I)-wts in fu_phi and fu^c}
\begin{split}
\Phi_{T_\phi}^{\I}(\fu^c)=&\{\sum_{j=i-td+1}^i\alpha_j| n-m+1\leq i\leq n-2\ \&\ i=n\}\\
&\sqcup\{\sum_{j=i-k}^{i-1}\alpha_j+2\sum_{j=i}^{n-2}\alpha_j+\alpha_{n-1}+\alpha_n| n-m+2\leq i\leq n-2, i-k=2n-i-td\};\\
\Phi_{T_\phi}^{\I}(\fu_\phi)=&\{\sum_{j=i-td+1}^i\alpha_j| 1\leq i\leq n-m-1\}\\
&\sqcup\{\sum_{j=i-k}^{i-1}\alpha_j+2\sum_{j=i}^{n-2}\alpha_j+\alpha_{n-1}+\alpha_n| 2\leq i\leq n-m, i-k=2n-i-td\}.
\end{split}
\end{equation}

It remains to study $\fu_{\phi,\bgamma}$ for a type (II) or (III) weight $\bgamma\in \Phi_{T_\phi}(\fu_0)$. 
Recall that  
$$
\fu_{\bgamma}=\fg_{\gamma_1}\oplus\fg_{\gamma_2},\quad
(\gamma_1,\gamma_2)=
\begin{cases} 
\displaystyle 
(\sum_{j=i}^{n-1}\alpha_j,\sum_{j=i}^{n-2}\alpha_j+\alpha_n),& \textnormal{type (II)},\\
\displaystyle (\sum_{j=i}^{n-m}\alpha_j,\sum_{j=i}^{n-m}\alpha_j+2\sum_{j=n-m+1}^{n-2}\alpha_j+\alpha_{n-1}+\alpha_n), & \textnormal{type (III)}.
\end{cases}
$$

By a similar computation as in type B, $E_{\phi,\bgamma}=E_{\gamma_1}-E_{\gamma_2}$ is a basis of $\fu_{\phi,\bgamma}$. 
We define $\fu^c_{\bgamma}$ to be the subspace of $\fu_{\bgamma}$ generated by $E_{c,\bgamma}=E_{\gamma_1}+E_{\gamma_2}$:
\begin{equation}\label{eq:D (II),(III)-wts in fu_phi and fu^c}
\begin{split}
&\Phi_{T_\phi}^{\II}(\fu_\phi)=\Phi_{T_\phi}^{\II}(\fu^c)=\pi(\Phi_{T_\phi}^{\II}(\fu_0)),\quad 
\Phi_{T_\phi}^{\III}(\fu_\phi)=\Phi_{T_\phi}^{\III}(\fu^c)=\pi(\Phi_{T_\phi}^{\III}(\fu_0));\\
&\fu_{\phi,\bgamma}=k(E_{\gamma_1}-E_{\gamma_2}), \ \fu^c_{\bgamma}=k(E_{\gamma_1}+E_{\gamma_2}),\ \pi^{-1}(\bgamma)=\{\gamma_1,\gamma_2\}, \ \forall \bgamma\in \pi(\Phi_{T_\phi}^{\II}(\fu_0)\sqcup\Phi_{T_\phi}^{\III}(\fu_0)).
\end{split}
\end{equation}

\subsubsection{} 
We define $\fu^c \subseteq \fu_0$ by 
\begin{equation} \fu^c:=\bigoplus_{\bgamma\in \Phi_{T_\phi}(\fu_\phi)} \fu^c_{\bgamma}.
\end{equation} 
Thus, we have a decomposition of $T_\phi$-modules $\fu_0=\fu_\phi\oplus \fu^c$.

We collect an observation for future use. We consider roots of $\fg$ as characters of $T_\phi$ by restriction. From discussions on $\Phi_{T_\phi}(\fu^c)$ and Proposition \ref{p:G type v.s. weight type}, we can see:

\begin{prop}\label{p:U^c wts=T_phi basis} 
We can label elements of $\Phi_{T_\phi}(\fu^c)$ by $\Phi_{T_\phi}(\fu^c)=\{\overline{\gamma}_i\}_{i=1}^{n-m}$ such that for $1\le i\le n-m$, $\gamma_i\in \pi^{-1}(\overline{\gamma}_i)\subset \Phi(\fu_0)$ can be written as 
\[
\gamma_i=\chi_i +u_i\chi_{r_i}
\]
for an integer $r_i\in [n-m+1,n+1]$ and $u_i\in \{-1,0,1\}$.  
\end{prop}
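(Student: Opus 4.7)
The plan is to prove the proposition by unpacking the explicit descriptions of $\Phi_{T_\phi}(\fu^c)$ given in \S\ref{sss:A wts in fu_phi and fu^c}--\ref{sss:D wts in fu_phi and fu^c}. First I would record the uniform feature: every root $\gamma\in\Phi(\fu_0)$ of a classical group can be written as $\chi_a+u\chi_b$ with $1\le a\le b$ and $u\in\{-1,0,1\}$ (with $u=0$ only in type B, yielding $\chi_a$, and the case $a=b$, $u=+1$ giving $2\chi_a$ in type C never arising in $\fu_0$ when $a\le n-m$). This reduces the statement to showing (a) the larger index $b$ of any element of $\Phi_{T_\phi}(\fu^c)$ satisfies $b\in[n-m+1,n+1]$, and (b) for each $a\in[1,n-m]$ there is exactly one $\overline{\gamma}\in\Phi_{T_\phi}(\fu^c)$ admitting a lift with smaller index $a$.

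Claim (a) follows at once from Lemma \ref{l:roots in U_phi}: the set $\Phi(\fm_{-1})$ is built from $-\alpha_{n-m+1},\dots,-\alpha_n,\theta_M$, each supported in indices $\{n-m+1,\dots,n+1\}$, so $\gamma+\alpha\in\Phi_G$ for some $\alpha\in\Phi(\fm_{-1})$ can only happen if $\gamma$ has an index in this range; hence any type-(I) root of $\fu^c$ has $b\in[n-m+1,n+1]$. For the type-(II) and (III) weights, which only occur in types B and D, the explicit formulas \eqref{eq:B (III)-wts in fu_phi and fu^c} and \eqref{eq:D (II),(III)-wts in fu_phi and fu^c} show the two lifts are of the form $\{\chi_a-\chi_r,\chi_a+\chi_r\}$ with $r\in\{n-m+1,n\}\subset[n-m+1,n+1]$, so either representative fits the required shape.

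Claim (b) is then a counting argument carried out type by type. In type A, $\Phi_{T_\phi}^{\I}(\fu^c)=\{\chi_a-\chi_b:1\le a,\ b\in[n-m+1,n+1],\ d\mid(b-a)\}$; for fixed $a$ the interval $[a+1,a+d]$ of length $d=m+1$ contains a unique multiple-of-$d$ translate landing in $[n-m+1,n+1]$, yielding exactly one $\gamma_a$. In types B, C, D, the same counting is performed using \eqref{eq:B (I)-wts in fu_phi and fu^c}, \eqref{eq:C wts in fu_phi and fu^c}, \eqref{eq:D (I)-wts in fu_phi and fu^c} together with \eqref{eq:B (III)-wts in fu_phi and fu^c} and \eqref{eq:D (II),(III)-wts in fu_phi and fu^c}; for each $a$ one checks that among the short roots $\chi_a-\chi_b$, the long roots $\chi_a+\chi_b$, and (in type B) the short root $\chi_a$, exactly one $T_\phi$-weight class contributes. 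The total count $|\Phi_{T_\phi}(\fu^c)|=n-m$ is already guaranteed by the numerical identity \eqref{eq:numerical} together with \eqref{eq:T_phi}, so it suffices to verify that for each $a$ at least one such weight exists; the uniqueness then follows by cardinality.

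The main obstacle is the bookkeeping in types B and D, where Proposition \ref{p:G type v.s. weight type} forces a split into the sub-cases $n<d$, $d\le n<\tfrac{3}{2}d$, $n\ge\tfrac{3}{2}d$ (with a further split at $n\ge\tfrac{3}{2}d+1$ in type D). In each sub-case, however, the formulas already displayed in \S\ref{ss:structure Uphi} explicitly parametrize the elements of $\Phi_{T_\phi}(\fu^c)$ by their smaller index $a$, so the verification reduces to inspection.
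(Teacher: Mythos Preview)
Your proposal is correct and follows the same approach as the paper, which simply says the proposition follows ``from discussions on $\Phi_{T_\phi}(\fu^c)$ and Proposition~\ref{p:G type v.s. weight type}'' without further detail. Your write-up supplies the inspection the paper leaves implicit, and the cardinality trick (using $|\Phi_{T_\phi}(\fu^c)|=n-m$ from \eqref{eq:numerical} and \eqref{eq:T_phi} to reduce bijectivity to surjectivity of $a\mapsto\bgamma_a$) is a clean way to organize the case check.
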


Note that in the case of type B,C,D, $\chi_{r_i}$ is trivial on $T_{\phi}$ and that when $G=\SL_{n+1}$, $\chi_{r_i}|_{T_\phi}$ is nontrivial, because $T_\phi=\{(a_1,...,a_{n-m},a,...,a)\in T\}$.

\subsection{A complement for $U_\phi$ inside $U_0$} In this section, we give a group theoretic analogue of the above result and construct a complement $U^c$ for $U_\phi$ inside $U_0$. The main challenge is that $U_\phi$ is not, in general, normal; thus, $U_0/U_\phi$ is not a group.

\subsubsection{Root space decomposition of $U_\phi$}	  For each weight $\bgamma \in \Phi_{T_\phi}(\fu_\phi)$, let $U_{\bgamma}=\exp(\fu_{\bgamma})$ and $U_{\phi,\bgamma}=\exp(\fu_{\phi,\bgamma})$.
Then, we have an inclusion $U_{\phi,\bgamma}\subseteq U_{\bgamma}$. Let $U_{\phi,\bgamma}:=U_\phi\cap U_{\bgamma}$.  In view of root space decomposition of $\fu_\phi$  \eqref{eq:Tphi decomp of uphi}, we obtain an isomorphism of schemes
\begin{equation} 
\displaystyle U_\phi\simeq \prod_{\bgamma\in\Phi_{T_\phi}(\fu_\phi)}U_{\phi,\bgamma},
\end{equation}
where the product is defined with respect to some order on $\Phi_{T_\phi}(\fu_\phi)$.
In particular, we see that the resulting scheme is, up to isomorphism, independent of the chosen order.

\subsubsection{}  
Let $U^c_{\bgamma}$ the subscheme $\exp(\fu^c_{\bgamma})$ of $U_0$. Note that $T_\phi$ acts on $\fu^c_{\bgamma}$; thus, it also acts on $U^c_{\bgamma}$. 
Let $\prec$ be an order on $\Phi_{T_\phi}(\fu^c)$ and 
define
\[
U^c:=\prod_{\bgamma \in \Phi_{T_\phi}(\fu_c)} U^c_{\bgamma} \subset U_0,
\]
where the product is taken with respect to the order $\prec$. 
In general, $U^c$ is not a subgroup of $U_0$, but only a subscheme (depending on $\prec$), equipped with an action of $T_\phi$. 

\begin{prop}\label{l:U^cU_phi=U_L}\mbox{}
\textnormal{(i)}
The multiplication map on $G$ induces an isomorphism of schemes 
$U^c\times U_\phi\xrightarrow{\sim} U_0$. 

\textnormal{(ii)} 
We have a $T_\phi$-equivariant isomorphism of schemes $U^c\simeq U_0/U_\phi$. Note that $U_0/U_{\phi}$ is independent of the choice of $\prec$.  
\end{prop}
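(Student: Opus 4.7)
The plan is to reduce Part (i) to a linear statement by passing to the Lie algebra and filtering by $T$-height, and then derive Part (ii) as a formal consequence. Since $p>h_G$, the Springer isomorphism gives a $T_\phi$-equivariant isomorphism of varieties $\exp:\fu_0\xrightarrow{\sim} U_0$ which restricts to $\fu_\phi\xrightarrow{\sim} U_\phi$, and under this identification the multiplication map $m:U^c\times U_\phi\to U_0$ becomes a polynomial map (via Baker--Campbell--Hausdorff) whose differential at the origin is the direct-sum isomorphism $\fu^c\oplus\fu_\phi\xrightarrow{\sim}\fu_0$ established in Section \ref{ss:structure Uphi}.

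To upgrade this to a global isomorphism I would introduce the $T$-height filtration $\fu_0^{\ge h}:=\bigoplus_{\mathrm{Ht}(\gamma)\ge h}\fg_\gamma$ and set $U_0^{\ge h}:=\exp(\fu_0^{\ge h})$; this is a normal $T_\phi$-stable subgroup of $U_0$ because brackets strictly increase height. Intersecting with $U_\phi$ gives a filtration of $U_\phi$ by normal subgroups, and choosing the order $\prec$ on $\Phi_{T_\phi}(\fu^c)$ compatibly with non-strict height gives a compatible filtration of $U^c$ by closed subschemes. The graded pieces at height $h$ are products of weight blocks $U_{\bgamma}$, $\mathrm{ht}(\bgamma)=h$, and for each $\bgamma$ the decomposition $\fu_{\bgamma}=\fu^c_{\bgamma}\oplus\fu_{\phi,\bgamma}$ of Section \ref{ss:structure Uphi} lifts to an isomorphism of schemes $U_{\bgamma}\simeq U^c_{\bgamma}\times U_{\phi,\bgamma}$. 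The only delicate case is that of a type (II) or (III) weight with $\pi^{-1}(\bgamma)=\{\gamma_1,\gamma_2\}$: here one computes that $\gamma_1+\gamma_2=2\chi_i$, which is not a root of $\fg$ in types B or D, so $[E_{\gamma_1},E_{\gamma_2}]=0$, the group $U_{\bgamma}$ is abelian, and the change of basis from $(E_{\gamma_1},E_{\gamma_2})$ to $(E_{\gamma_1}+E_{\gamma_2},\,E_{\gamma_1}-E_{\gamma_2})$ automatically lifts to a product decomposition. An induction on the (finitely many) heights then gives that $m$ is an isomorphism. The main obstacle to keep in mind is that $U^c$ is only a subscheme of $U_0$ depending on $\prec$, not a subgroup; accordingly the induction must be carried out at the level of schemes by comparing graded pieces, rather than as an extension of algebraic groups.

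Part (ii) is then formal: $U_\phi$ is a closed subgroup of $U_0$, so the right quotient $U_0/U_\phi$ exists as a smooth variety equipped with a natural $T_\phi$-action, and is manifestly independent of $\prec$. The composite $U^c\hookrightarrow U_0\twoheadrightarrow U_0/U_\phi$ is $T_\phi$-equivariant, and Part (i) identifies it with the first-projection isomorphism under $m^{-1}$, hence is itself an isomorphism of $T_\phi$-schemes. Consequently different choices of $\prec$ give canonically isomorphic subschemes $U^c\subset U_0$ via their common identification with $U_0/U_\phi$.
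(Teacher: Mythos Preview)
Your overall strategy is sound, and for types A and C (where only type (I) weights occur) the $T$-height filtration does the job: each $T_\phi$-weight block $\fu_{\bgamma}$ is a single root space, so the decomposition $\fu^c\oplus\fu_\phi$ is a sum of root spaces and is graded by $T$-height. The same works for type (II) weights, since there $\Ht(\gamma_1)=\Ht(\gamma_2)$ and the block sits at a single height.

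The gap is with type (III) weights in types B and D. There $\pi^{-1}(\bgamma)=\{\gamma_1,\gamma_2\}$ with $\Ht(\gamma_2)=\Ht(\gamma_1)+d$, so the block $\fu_{\bgamma}=\fg_{\gamma_1}\oplus\fg_{\gamma_2}$ straddles two height layers. The basis vectors $E_{\gamma_1}\pm E_{\gamma_2}$ of $\fu^c_{\bgamma}$ and $\fu_{\phi,\bgamma}$ are not $T$-height homogeneous, so neither $\fu^c$ nor $\fu_\phi$ is compatible with your filtration $\fu_0^{\ge h}$. Concretely, in the graded piece at height $\Ht(\gamma_1)$ both $U^c$ and $U_\phi$ project onto the same line $\fg_{\gamma_1}$ rather than onto complementary lines, so the induction step collapses exactly at the layers where type (III) blocks first appear. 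Your sentence ``the graded pieces at height $h$ are products of weight blocks $U_{\bgamma}$'' is simply false for these $\bgamma$.

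The paper handles this by a different mechanism: a Luna-type linearization via a $\bGm$-action through a cocharacter with positive weights on $\fu_0$ that normalizes all three of $U^c,U_\phi,U_0$. In types A, C one can take $\check\rho_G$, but in types B, D only $T_\phi$ (not $T$) normalizes $U^c$ and $U_\phi$, so the paper uses instead the projected cocharacter $\lambda=\check\rho_G\circ p\in X_*(T_\phi)$ and checks that its weights on $\fu_0$ remain positive. If you want to salvage your inductive approach, replace the $T$-height filtration by the grading via this $\lambda$: since $\lambda$ factors through $T_\phi$, it assigns the same weight to $\gamma_1$ and $\gamma_2$ in each type (III) block, making $\fu^c$ and $\fu_\phi$ graded, and brackets still strictly increase $\lambda$-degree because all $\lambda$-weights on $\fu_0$ are positive. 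With that one change your argument goes through.
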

\begin{proof}
Let $V=U^c\times U_\phi$. Let $e$ be the unit in $U_0$, and by abuse of notation also the product of units $(e,e)\in U^c\times U_\phi$. 
In view of their definition, the differential map
$$
d_e\mathrm{m}:T_e V\rightarrow T_e U_0
$$
is an isomorphism. To prove the assertion, we use Luna map to show there are isomorphisms $V\simeq T_e V$ and $U_0\simeq T_e U_0$, whose composition with $d_e\mathrm{m}$ coincides with $\mathrm{m}$. By \cite[Theorem 13.41]{Milne}, it suffices to find an action of torus $\bGm$ on $U^c,U_\phi$ and $U_0$,  compatible with the multiplication map $\mathrm{m}$, fixing $e$, such that all of its weights on $\fu^c,\fu_\phi$ and $\fu_0$ are positive.

When $G$ is of type A or C, only type (I) $T_\phi$-weights appear \eqref{eq:A T_phi-wts in fu_L} and \eqref{eq:C T_phi-wts in fu_L} and $U^c,U_\phi$ and $U_0$ are all products of root subgroups for positive roots of $G$. 
In particular, they are normalized by $T$. Consider the adjoint action given by $\check{\rho}_G(\bGm)$, which has positive weights on $\fu^c,\fu_\phi$ and $\fu_0$. This provides the desired $\bGm$-action.

When $G$ is of type B and D, we have seen from \eqref{eq:B T_phi-wts in fu_L} and \eqref{eq:D T_phi-wts in fu_L} that type (I), (II), and (III) $T_\phi$-weights all appear, and $U^c,U_\phi$ and $U_0$ are only normalized by $T_\phi$ rather than $T$. However we can modify $\check{\rho}_G$ into a $T_\phi$-cocharacter that satisfies our requirement. 
Since we only concern properties on subgroups of $U_0$, we may assume $G$ is either $\SO_{2n+1}$ or $\SO_{2n}$. Observe that $T=\prod_{i=1}^n T_i$ and $T_\phi=\prod_{i=1}^{n-m}T_i$ in notations of \S\ref{ss:classical}, which allow us to define a projection $p:T\tra T_\phi$. 
We set $\lambda=\check{\rho}_G\circ p\in X_*(T_\phi)$. If $G=\SO_{2n+1}$, then $\lambda=\sum_{i=1}^{n-m}(n-i+1)\lambda_i$. Any $\gamma\in\Phi(\fu_0)$ is either $\chi_i$, or $\chi_i\pm\chi_{i+k}$. From Lemma \ref{l:nonzero initial}, we always have $i\leq n-m$ and therefore $\gamma(\lambda)>0$. 
Similarly, if $G=\SO_{2n}$, then $\lambda=\sum_{i=1}^{n-m}(n-i)\lambda_i$. Any $\gamma\in\Phi(\fu_0)$ is of the form $\chi_i\pm\chi_{i+k}$, where $i\leq n-m$. Thus we have $\gamma(\lambda)>0$ as well. Thus $\lambda$ provides the desired $\bGm$-action, which completes the proof.	
\end{proof}

\subsection{Orbits of the action of $T_\phi$}
Recall that we have a basis $E_{\bgamma}^c$ for the one-dimensional vector space $\fu^c_{\bgamma}$. 
For each subset $A\subseteq \Phi_{T_\phi}(\fu^c)$, define an element 
\[
u_A=\prod_{\bgamma\in A}\exp(E_{\bgamma}^c) \in U^c,
\] 
where the product is taken with respect to the order $\prec$. Let $U_A\subseteq U^c$ denote the conjugacy class of $u_A$ in $U^c$. 

\begin{prop} \label{p:T_phi conjugacy class in U^c}
\textnormal{(i)}
The images of $u_{A}$ and $U_A$ in $U_0/U_\phi$ are independent of $\prec$. 

\textnormal{(ii)} The map $A\mapsto U_{A}U_\phi$ induces a bijection 
\[
\textrm{\{subsets of $\Phi_{T_\phi}(\fu^c)$\}} \longleftrightarrow \textrm{\{$T_\phi$-orbits on $U^c$\}}. 
\]
\end{prop}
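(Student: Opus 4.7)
The strategy is to identify $U^c$ as a $T_\phi$-equivariant toric variety and reduce both claims to a standard orbit analysis on a product of affine lines. The key numerical input, verified by inspection from Proposition \ref{p:U^c wts=T_phi basis} and \S\ref{ss:classical}, is that the characters $\{\bgamma_i|_{T_\phi}\}_{i=1}^{n-m}$ are pairwise distinct and $\bZ$-linearly independent in $X^*(T_\phi)$. Indeed, in types B, C, and D, $T_\phi = \prod_{i=1}^{n-m} T_i$ and $\chi_{r_i}|_{T_\phi}=0$ for $r_i > n-m$, so $\bgamma_i|_{T_\phi} = \chi_i|_{T_\phi}$ forms a basis of $X^*(T_\phi) \simeq \bZ^{n-m}$; in type A, where $T_\phi = Z \cdot \prod_{i=1}^{n-m}T_i$ has rank $n-m+1$, one has $\bgamma_i|_{T_\phi} = \chi_i|_{T_\phi} - \chi_{r_i}|_{T_\phi}$ with $r_i > n-m$, and linear independence follows by comparing coordinates.

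For part (ii), fix an order $\prec$ on $\Phi_{T_\phi}(\fu^c)$ and consider the scheme morphism
\[
\iota_\prec : \prod_{\bgamma \in \Phi_{T_\phi}(\fu^c)} \bA^1_{\bgamma} \to U^c, \qquad (c_{\bgamma}) \mapsto \prod_{\bgamma}^{\prec} \exp(c_{\bgamma} E^c_{\bgamma}),
\]
where $T_\phi$ acts on $\bA^1_{\bgamma}$ through the character $\bgamma$. This map is $T_\phi$-equivariant (since $\Ad(t)E^c_{\bgamma} = \bgamma(t) E^c_{\bgamma}$) and a scheme isomorphism by Proposition \ref{l:U^cU_phi=U_L}(i). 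Composing with the isomorphism $U^c \xrightarrow{\sim} U_0/U_\phi$ of Proposition \ref{l:U^cU_phi=U_L}(ii) yields a $T_\phi$-equivariant isomorphism $\bar{\iota}_\prec$ of schemes. The $T_\phi$-orbits on the source form the standard stratification
\[
S_A := \{(c_{\bgamma}) : c_{\bgamma} \neq 0 \iff \bgamma \in A\}, \qquad A \subseteq \Phi_{T_\phi}(\fu^c);
\]
each $S_A$ is a single orbit, since the character map $T_\phi \to (\bGm)^A$, $t \mapsto (\bgamma(t))_{\bgamma \in A}$, is surjective by the linear independence above. In these coordinates, $u_A$ corresponds to the tuple with $c_{\bgamma}=1$ for $\bgamma \in A$ and $c_{\bgamma}=0$ otherwise, which lies in $S_A$. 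Hence $U_A$ maps to $S_A$, giving the claimed bijection.

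For part (i), given two orders $\prec_1, \prec_2$, the composition $\nu := \bar{\iota}_{\prec_2}^{-1} \circ \bar{\iota}_{\prec_1}$ is a $T_\phi$-equivariant automorphism of $\prod_{\bgamma} \bA^1_{\bgamma}$. Both $\bar{\iota}_{\prec_i}$ share the same differential at the origin (each sending $\partial_{c_{\bgamma}}$ to the class of $E^c_{\bgamma}$ in $\fu_0/\fu_\phi$), so $d\nu|_0 = \mathrm{id}$. Any $T_\phi$-equivariant endomorphism of $\Spec k[c_{\bgamma}]$ must preserve the weight grading, and the $\bZ$-linear independence of $\{\bgamma|_{T_\phi}\}$ combined with the non-negativity of monomial exponents forces each $c_{\bgamma}$ to be mapped to $\lambda_{\bgamma} c_{\bgamma}$ for a scalar $\lambda_{\bgamma}$ (the unique monomial of weight $\bgamma|_{T_\phi}$ in the polynomial ring). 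The condition $d\nu|_0 = \mathrm{id}$ then forces $\lambda_\bgamma = 1$ for all $\bgamma$, so $\nu = \mathrm{id}$ and $\bar{\iota}_\prec$ is independent of $\prec$. Consequently the coset $u_A U_\phi$ and the subset $U_A U_\phi$ of $U_0$ are both $\prec$-independent, proving (i). The main subtlety lies in this rigidity step, where both distinctness and linear independence of the weights are essential to exclude nontrivial weight-preserving automorphisms.
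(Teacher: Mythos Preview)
Your proof is correct and takes a genuinely different route from the paper. The paper proves (i) by a direct commutator computation using Lemma \ref{l:u^c semicomm}: for $u_i = \exp(v_i)$ with $v_i \in \fu^c_{\bgamma_i}$, one has $u_i u_j = u_j u_i \exp([v_i,v_j])$, and Lemma \ref{l:u^c semicomm}(ii) guarantees that $[v_i,v_j]$ lies in $\fu_\phi$ and commutes with all of $\fu^c$; an induction on transpositions then shows that any two orderings of $u_A$ differ by an element of $U_\phi$. The paper then treats (ii) as an immediate consequence of (i) together with Proposition \ref{l:U^cU_phi=U_L}. Your approach instead bypasses the commutator analysis entirely: you extract from Proposition \ref{p:U^c wts=T_phi basis} the $\bZ$-linear independence of the $T_\phi$-weights on $\fu^c$, which both pins down the orbit stratification on $\prod_{\bgamma} \bA^1_{\bgamma}$ and forces any $T_\phi$-equivariant automorphism with identity differential at the origin to be the identity. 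This rigidity argument is cleaner and makes the independence of $\prec$ a purely formal consequence of the weight structure; the paper's hands-on computation, on the other hand, yields the finer information (used later in the proof of Proposition \ref{p:wildAut}(iii)) that the correction term $u_\phi \in U_\phi$ arising from a reordering actually commutes with all of $U^c$.
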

To prove the proposition, we need a lemma first. 
\begin{lem}\label{l:u^c semicomm} 
\textnormal{(i)} For any type (II), (III) weight $\bgamma\in\Phi_{T_\phi}(\fu_0)$ where $\pi^{-1}(\bgamma)=\{\gamma_1,\gamma_2\}$, $\gamma_1+\gamma_2$ is not a root. In particular, $U_{\gamma_1}$ and $U_{\gamma_2}$ commute, and $U_{\bgamma}=U_{\gamma_1}U_{\gamma_2}$.

\textnormal{(ii)} For any $\bgamma_1,\bgamma_2\in\Phi_{T_\phi}(\fu^c)$, if $\bgamma_1+\bgamma_2\in\Phi_{T_\phi}(\fu_0)$, then $\bgamma_1+\bgamma_2\in\Phi_{T_\phi}(\fu_\phi)-\Phi_{T_\phi}(\fu^c)$. In this case, $\bgamma_1+\bgamma_2+\bgamma\not\in\Phi_{T_\phi}(\fu_0)$ for any $\bgamma\in\Phi_{T_\phi}(\fu^c)$.
\end{lem}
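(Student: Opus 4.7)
My plan is to exploit the detailed weight computations of \S\ref{ss:wt sp decomp}--\ref{ss:structure Uphi} together with the basis description in Proposition~\ref{p:U^c wts=T_phi basis}.

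For part~(i), by Proposition~\ref{p:G type v.s. weight type} types (II) and (III) occur only for $G$ of type $B_n$ or $D_n$. Reading off the explicit pairs $(\gamma_1,\gamma_2)$ in \eqref{eq:B T_phi-wts in fu_L} and \eqref{eq:D T_phi-wts in fu_L} and translating to the $\chi$-basis of \S\ref{ss:classical}, in each of the three cases (B type (III), D type (II), D type (III)) a direct computation gives $\gamma_1+\gamma_2=2\chi_i$ for some $i\le n-m$. Neither $B_n$ nor $D_n$ has roots of the form $\pm 2\chi_i$, and similarly no positive combination $a\gamma_1+b\gamma_2$ with $a+b\ge 2$ is a root; the Chevalley commutator formula then yields $[U_{\gamma_1},U_{\gamma_2}]=1$. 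Since $p>h_G$, the exponential restricts to an isomorphism from the abelian Lie subalgebra $\fu_{\bgamma}=\fg_{\gamma_1}\oplus\fg_{\gamma_2}$ onto $U_{\bgamma}$, giving $U_{\bgamma}=U_{\gamma_1}U_{\gamma_2}$.

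For part~(ii), I write $\overline{\gamma_i}=(\chi_i+u_i\chi_{r_i})|_{T_\phi}$ with $r_i\in[n-m+1,n+1]$ using Proposition~\ref{p:U^c wts=T_phi basis}. In types B, C, D, \eqref{eq:T_phi} gives $T_\phi=\prod_{k=1}^{n-m}T_k$, so $\chi_{r_i}|_{T_\phi}=0$ and $\overline{\gamma_i}=\chi_i|_{T_\phi}$. In type A, $T_\phi\simeq\bGm^{n-m+1}$ has an extra central coordinate $a$, and a direct monomial calculation shows $\overline{\gamma_i}+\overline{\gamma_j}$ has $a$-degree $-2$ while the restriction of any root $\chi_a-\chi_b$ of $\GL_{n+1}$ has $a$-degree in $\{-1,0,1\}$; hence the hypothesis of part~(ii) is vacuous in type~A. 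Now assume $G$ is of type B, C, or D and $\overline{\gamma_i}+\overline{\gamma_j}=(\chi_i+\chi_j)|_{T_\phi}\in\Phi_{T_\phi}(\fu_0)$. Since $\{\chi_k|_{T_\phi}\}_{k\le n-m}$ is a basis of $X^*(T_\phi)$ and $\chi_k|_{T_\phi}=0$ for $k>n-m$, any lift $\gamma\in\Phi(\fu_0)$ must have $\chi_i,\chi_j$-coefficients equal to $1$ with further contributions only from indices $>n-m$; the classical root bound (at most two indices, absolute coefficient $\le 2$) forces $\gamma=\chi_i+\chi_j$ when $i\ne j$, or $\gamma=2\chi_i$ in type C when $i=j$. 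The latter has odd height $2(n-i)+1$, not divisible by $d=2m$, so $2\chi_i\notin\Phi(\fu_0)$; thus $\gamma=\chi_i+\chi_j$ with $i\ne j$. Applying Lemma~\ref{l:roots in U_phi}, one sees $\gamma\in\fu_\phi$ because each $\gamma+\alpha$ with $\alpha\in\Phi(\fm_{-1})=\{-\alpha_{n-m+1},\dots,-\alpha_n,\theta_M\}$ mixes the two indices $i,j\le n-m$ with additional indices $\ge n-m+1$, producing three distinct $\chi$-indices and hence no classical root. So $\overline{\gamma_i}+\overline{\gamma_j}\in\Phi_{T_\phi}(\fu_\phi)$. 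Every element of $\Phi_{T_\phi}(\fu^c)$ is of the form $\chi_k|_{T_\phi}$ for a single $k\le n-m$ by Proposition~\ref{p:U^c wts=T_phi basis}, whereas $\overline{\gamma_i}+\overline{\gamma_j}$ involves two distinct basis vectors; therefore $\overline{\gamma_i}+\overline{\gamma_j}\notin\Phi_{T_\phi}(\fu^c)$.

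For the last assertion of (ii), $\overline{\gamma_1}+\overline{\gamma_2}+\overline{\gamma}$ restricts on $T_\phi$ to $(\chi_i+\chi_j+\chi_l)|_{T_\phi}$ with $i,j,l\le n-m$; expanded in $\{\chi_k|_{T_\phi}\}_{k\le n-m}$ it has either three distinct indices, or two indices one of coefficient $2$, or a single index with coefficient $3$, each of which violates the classical root bound, so it is not the $T_\phi$-restriction of any root in $\fu_0$. The main bookkeeping obstacle is the claim that $\Phi_{T_\phi}(\fu^c)$ is uniformly single-index in types B, C, D, i.e.\ that no element of $\Phi_{T_\phi}(\fu^c)$ is of the form $(\chi_a+\chi_b)|_{T_\phi}$ with both $a,b\le n-m$; this is read off from the explicit lists \eqref{eq:A wts in fu_phi and fu^c}, \eqref{eq:B (I)-wts in fu_phi and fu^c}, \eqref{eq:B (III)-wts in fu_phi and fu^c}, \eqref{eq:C wts in fu_phi and fu^c}, \eqref{eq:D (I)-wts in fu_phi and fu^c}, \eqref{eq:D (II),(III)-wts in fu_phi and fu^c}, in which every $\fu^c$-root has its ``outer'' $\chi$-index in $\{n-m+1,\dots,n\}$.
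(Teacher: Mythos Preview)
Your proof is correct and follows essentially the same route as the paper: both invoke Proposition~\ref{p:U^c wts=T_phi basis} to identify each $\bgamma_i$ with a single basis character $\chi_i$ on $T_\phi$, and then observe that a sum of two or three such characters cannot match the shape of any classical root restriction. The paper's argument for (ii) is marginally slicker in one place: rather than lifting $\bgamma_1+\bgamma_2$ to the root $\chi_i+\chi_j$ and invoking Lemma~\ref{l:roots in U_phi}, it simply notes that $\chi_p+\chi_q\notin\Phi_{T_\phi}(\fu^c)$ (since every element of $\Phi_{T_\phi}(\fu^c)$ is a single $\chi_i$) and then uses the decomposition $\fu_0=\fu_\phi\oplus\fu^c$ to conclude membership in $\Phi_{T_\phi}(\fu_\phi)$; conversely, your explicit handling of type~A as a vacuous case is more careful than the paper, which glosses over the fact that $\chi_{r_i}|_{T_\phi}$ is nontrivial there.
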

\begin{proof}
(i) It follows from Proposition \ref{p:G type v.s. weight type} that type (II),(III) weights only occur in type B, D. 
Moreover, if $\pi(\gamma_1)=\pi(\gamma_2)$, then $\gamma_i=\alpha_k+\sum_{j=k+1}^n n_j^i\alpha_j$ for some $1\leq k\leq n-m$. Thus $\gamma_1+\gamma_2=2\alpha_k+\sum_{j=k+1}^n (n_j^1+n_j^2)\alpha_j$ is not a root in type B or D root system.

(ii) Since here we essentially only concern properties of root systems, we may assume $G$ is either $\SL_{n+1}, \SO_{2n+1}, \Sp_{2n}, \SO_{2n}$. From Proposition \ref{p:U^c wts=T_phi basis}, we may assume $\bgamma_1=\chi_p$, $\bgamma_2=\chi_q$ for $1\leq p,q\leq n-m$. Thus $\bgamma_1+\bgamma_2=\chi_p+\chi_q\not\in\Phi_{T_\phi}(\fu^c)$ as a character of $T_\phi$. Moreover, since any root is of the form $\chi_i$ or $\chi_i\pm\chi_j$, so is any weight in $\Phi_{T_\phi}(\fu_0)$. Thus for any $\bgamma=\chi_r$, $\bgamma_1+\bgamma_2+\bgamma=\chi_p+\chi_q+\chi_r\not\in\Phi_{T_\phi}(\fu_0)$.
\end{proof}

\begin{proof}
We are now ready to prove the proposition. It suffices to prove (i). For any $\bgamma_i\in\Phi_{T_\phi}(\fu^c)$ and $v_i\in\fu^c_{\chi_i}$, we set $u_i=\exp(v_i)$.
If $\bgamma_i+ \bgamma_j \not\in\Phi_{T_\phi}(\fu_0)$, then $u_i$ and $u_j$ already commute. 
Otherwise, from Lemma \ref{l:u^c semicomm}(ii) we have $\bgamma=\bgamma_i+\bgamma_j\in\Phi_{T_\phi}(\fu_\phi)$. Thus $v:=[v_i,v_j]\in\fu_{\phi,\bgamma}$. 
Moreover, we have $[v',v]=0$ for any $v'\in\fu^c$ and that $\exp(v)$ commutes with any $u\in U^c$. 
We obtain
$$
u_iu_j=(u_iu_ju_i^{-1})u_i=\exp(v_j+[v_i,v_j])u_i=u_j\exp(v)u_i=u_ju_i\exp(v)
$$
where $\exp(v)$ commutes with any element of $U^c$. 
By induction, if $\{j_1,\dots,j_k\}$ is a permutation of $\{i_1,\dots,i_k\}$, we deduce that 
$$
u_{i_1}u_{i_2}\cdots u_{i_k}=u_{j_1}u_{j_2}\cdots u_{j_k}u_\phi
$$
for some $u_\phi\in U_\phi$ that commutes with any element of $U^c$. This finishes the proof. 	
\end{proof}

\subsection{Proof of Theorem \ref{t:toric}} 
Our goal is to show that the action of $T_\phi$ on $U_0/U_\phi$ has an open dense orbit with finite stabilizer. In the previous subsections, we constructed a subscheme $U^c\subseteq U_0$ and a $T_\phi$-equivariant isomorphism of schemes $U^c\simeq U_0/U_\phi$. Moreover, we constructed a bijection between subsets $A\subseteq \Phi_{T_\phi}(\fu^c)$ and $T_\phi$-orbits on $U^c$. 

Let $\mathring{U}^c\subseteq U^c$ denote the $T_\phi$-orbit corresponding to the the maximal subset (i.e. $A=\Phi_{T_\phi}(\fu^c)$). We claim that $\mathring{U}^c$ is open dense with finite stabilizer. Indeed, as noted above, $U^c$ is a product of one dimensional subschemes $U^c_{\bgamma}$. Thus, $\mathring{U}^c$ then identifies with the subvariety of $U^c$ consisting of elements whose restriction to each of these one-dimensional subschemes is nontrivial. In other words, 
$\mathring{U}^c$ consists of elements of the form $ \prod_{\bgamma\in \Phi_{T_\phi}(\fu_\phi)} \exp(c_{\bgamma} E_{\bgamma}^c)$, where $c_{\bgamma}$ ranges over elements of $k^\times$. 
This is a dense open subset with trivial stabilizer. 
\hfill \qed

\section{Rigidity of hypergeometric data for classical groups: Proof of Theorem \ref{t:rigid}} 
\label{s:rigid}

\subsection{Stratification of the moduli stack $\Bun_{\cG'}$}
We keep the notation and assumption of Theorem \ref{t:rigid}. 
Recall that $\Bun_{\cG'}$ denotes the moduli stack of $\cG'$-bundles, defined by hypergeometric data \eqref{eq:gp schemes}.
Let $\tW=N_{G(F_\infty)}(T(F_\infty))/T(\cO_\infty)$ be the Iwahori-Weyl group and $\Omega$ the normalizer of $I$ which is a subgroup of $\tW$. 
The group $J=B_\phi P(1)\subset P$ is contained in the Iwahori $I=B_0 P(1)$. Let $s$ be a coordinate around $\infty$ and set $I^-=I^\opp\cap G(\overline{k}[s,s^{-1}])$. 
By \cite[Proposition 1.1]{HNY} and Birkhoff decomposition, we have a decomposition
$$
\Bun_{\cG'}(\overline{k})=I^-\backslash G(\overline{k}(\!(s)\!))/J=\bigsqcup_{\tw\in\tW}I^-\backslash I^-\tw I/J.
$$
As $I=B_0 P(1)=T U_0 P(1)$, $J=B_\phi P(1)=T_\phi U_\phi P(1)$, above decomposition can be written as
$$
\Bun_{\cG'}(\overline{k})=\bigsqcup_{\tw\in\tW}I^-\backslash I^-\tw U_0 J/J.
$$

Note that $T_\phi$ is contained in both $I^-$ and $J$. Thus $I^-\backslash I^-\tw uJ/J=I^-\backslash I^-\tw (\Ad_t u)J/J$ for any $u\in U_0$, $t\in T_\phi$. 
In view of Proposition \ref{p:T_phi conjugacy class in U^c}, we obtain
\begin{equation}\label{eq:Bun_cG' decomp}
\Bun_{\cG'}(\overline{k})=\bigsqcup_{\tw\in\tW}\bigcup_{A\subset\Phi_{T_\phi}(\fu^c)}
I^-\backslash I^-\tw u_{A} J/J.
\end{equation}
We obtain a surjection $\tW\times 2^{\Phi_{T_\phi}(\fu^c)}\tra\Bun_{\cG'}(\overline{k})$ given by $(\tw,A)\mapsto I^-\tw u_AJ$.
In particular, $\Bun_{\cG'}(\overline{k})$ has countably many objects up to isomorphism.

\subsubsection{A family of subtori} \label{ss:subtori}
We introduce a family of subtori in $T_\phi$, which would be contained in the automorphism group of certain $\cG'$-bundles. 

Given $\bgamma\in\Phi_{T_\phi}(\fu^c)$, we define
\begin{equation}\label{eq:T_bgamma}
T_{\bgamma}:=\biggl(\bigcap_{\alpha\in\Phi_{T_\phi}(\fu^c)-\{\bgamma\}}\ker\alpha\biggr)^\circ\subset T_\phi.
\end{equation}
It is clear that these subtori belong to the family $\mathcal{S}$ defined in \eqref{eq:S subtori}.

From Proposition \ref{p:U^c wts=T_phi basis}, we can see that $T_{\bgamma}\simeq\bGm$. More specifically, if $G=\SO_{2n+1}, \Sp_{2n}, \SO_{2n}$ and $\bgamma=\chi_i+u_i\chi_{r_i} \in \Phi_{T_\phi}(\fu^c)$, then $T_{\bgamma}=T_i$; if $G=\SL_{n+1}$ and $\bgamma=\chi_i-\chi_{r_i}\in\Phi_{T_\phi}(\fu^c)$, then $T_{\bgamma}=\{(a,...,a,a^{-n},a,...,a)|a\in k^\times\}$, where $a^{-n}$ is the $i$-th diagonal entry.

\subsection{Automorphisms of $\cG'$-bundles} \label{ss:AutE}
We first prove Theorem \ref{t:rigid}(i). 
To do so, let $x_I$ be the barycenter of alcove $C_I$ associated to the Iwahori $I=B_0P(1)$. By \cite[Proposition 27]{KY}, the set of simple affine roots $\Delta^{\aff}(I)$ associated to $C_I$ is a disjoint union of the set $\Delta(G_0)$ of simple roots of $L$ and the set $\wt^-(V)$ of lowest weights in $L$-representation $V$ with respect to $B_I$. Thus $\tw\in\Omega$ if and only if $(\tw\alpha)(x_I)>0$ for any $\alpha\in\Delta(G_0)\sqcup\wt^-(V)$.

Note that the subgroup $Z_{G,\phi}$ \eqref{sss:stabiliser} of the center is contained in the automorphism group of each $\mathcal{G}'$-bundle. Theorem \ref{t:rigid}(i) follows from the following proposition. 

\begin{prop}\label{p:wildAut}
For $\cG'$-bundle $\cE=I^-\tw u_A J$ where $A\subset\Phi_{T_\phi}(\fu^c)$, its automorphism group $\Aut(\cE)=J\cap\Ad_{(\tw u_A)^{-1}}I^-$ contains following subschemes:

\textnormal{(i)} If $A\neq\Phi_{T_\phi}(\fu^c)$, then for any $\bgamma\in\Phi_{T_\phi}(\fu^c)-A$, $T_{\bgamma}\subset T_\phi\cap\Aut(\cE)$.

\textnormal{(ii)} If $A=\Phi_{T_\phi}(\fu^c)$, $\tw\not\in\Omega$ and there exists $\alpha\in\wt^-(V)$ such that $(\tw\alpha)(x_I)<0$, then $\Ad_{\ru^{-1}}U_\alpha\subset P(1)\cap\Aut(\cE)$.

\textnormal{(iii)} If $A=\Phi_{T_\phi}(\fu^c)$, $\tw\not\in\Omega$ and there exists $\alpha\in\Delta(G_0)$ such that $(\tw\alpha)(x_I)<0$, then there exists $\bgamma\in\Phi_{T_\phi}(\fu^c)$ and a connected subscheme of the form $H_{\tw}=\{tu(t)|t\in T_{\bgamma}, u(t)\in U_\phi, u(1)=1\}\simeq T_{\bgamma}$ such that $H_{\tw}\subset B_\phi\cap\Aut(\cE)$.

\textnormal{(iv)} If $A=\Phi_{T_\phi}(\fu^c)$ and $\tw\in\Omega$, then $\Aut(\cE)=Z_{G,\phi}$.
\end{prop}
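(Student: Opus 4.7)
The plan is to verify each case by directly analyzing the identity
\[
\Aut(\cE) = J \cap \Ad_{(\tw u_A)^{-1}} I^-,
\]
i.e., determining which elements of $J$ have $\tw u_A$-conjugate lying in $I^-$. The main tools will be the decomposition $U_0 = U^c \cdot U_\phi$ from Proposition \ref{l:U^cU_phi=U_L}, the $T_\phi$-equivariant description of $U^c$ in Proposition \ref{p:T_phi conjugacy class in U^c}, and the affine Bruhat decomposition, with its reading of affine root subgroups as lying in $I$ or $I^-$ according to the sign of the pairing with $x_I$. For (i) I would observe that if $\bgamma\notin A$ then $T_\bgamma$, by its very definition, lies in the kernel of every $\bgamma'\in A$, so $T_\bgamma$ commutes factorwise with $u_A=\prod_{\bgamma'\in A}\exp(E^c_{\bgamma'})$. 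For $t\in T_\bgamma$ this reduces the condition to $\tw t\tw^{-1}\in I^-$, which holds since $\tw$ normalizes the (constant) torus $T$ and $T\subset I^\opp$ at $0$.

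For (ii), the element $\Ad_{\ru^{-1}}(v)$ already lies in $P(1)\subset J$ because $U_\alpha\subset P(1)$ for $\alpha\in\wt^-(V)$ and $P(1)$ is normal in $P$; then the conjugation collapses to $\tw v\tw^{-1}\in U_{\tw\alpha}$, and the hypothesis $(\tw\alpha)(x_I)<0$ places this subgroup inside $I^-$. Case (iii) is more delicate because $\alpha\in\Delta(G_0)$ yields a classical root subgroup $U_\alpha\subset U_0$ which is not itself contained in $J=B_\phi P(1)$. My strategy will be to choose $\bgamma\in\Phi_{T_\phi}(\fu^c)$ whose $T_\phi$-stratum picks up $\alpha$ as a constituent (via the explicit root-by-root analysis of \S\ref{ss:structure Uphi}, done type by type), then note that conjugating $\ru$ by $t\in T_\bgamma$ scales only the $\bgamma$-component of $\ru$, producing a residual $U_\alpha$-type discrepancy. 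One then solves for $u(t)\in U_\phi$ with $u(1)=1$ inductively along the Moy--Prasad filtration, eliminating obstructions at each depth, to ensure $\tw\ru(tu(t))\ru^{-1}\tw^{-1}$ lands in $U_{\tw\alpha}\cdot I$ and hence in $I^-$ by hypothesis.

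For (iv), with $\tw\in\Omega$ preserving $I$ and permuting the simple affine roots, the inclusion $Z_{G,\phi}\subseteq\Aut(\cE)$ is immediate since $Z_{G,\phi}$ is central, so the real content is the reverse inclusion. Writing $j=tup\in J$ with $t\in T_\phi$, $u\in U_\phi$, $p\in P(1)$, and using $\ru\cdot tu\cdot\ru^{-1}=t\cdot(t^{-1}\ru t\cdot\ru^{-1})\cdot\ru u\ru^{-1}$, the $U^c$-component modulo $U_\phi$ equals $\prod_{\bgamma'}\exp((\bgamma'(t)^{-1}-1)E^c_{\bgamma'})$; since $\tw\in\Omega$ sends this into a product of affine-positive root subgroups and not into $I^-$, it must vanish, forcing $\bgamma'(t)=1$ for every $\bgamma'\in\Phi_{T_\phi}(\fu^c)$, i.e., $t\in Z_{G,\phi}$. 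A layer-by-layer argument along the Moy--Prasad filtration, combined with the affine-positivity of $\tw$, then forces $u=1$ and $p=1$. The main obstacle will be (iv): the rigidity statement that \emph{no} nontrivial element survives requires carefully matching each Moy--Prasad layer of $J$ against $I^-$ using the action of $\tw$ on affine roots, and depends on the structure theorems of \S\ref{s:toric} to handle the type (II) and (III) weights on equal footing with type (I).
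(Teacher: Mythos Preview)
Your treatment of (i) and (ii) matches the paper's argument. The issues are in (iii) and (iv).

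For (iii), your plan ``choose $\bgamma\in\Phi_{T_\phi}(\fu^c)$ whose $T_\phi$-stratum picks up $\alpha$ as a constituent'' only covers the case $\pi(\alpha)\in\Phi_{T_\phi}(\fu^c)$. But a simple root $\alpha\in\Delta(G_0)$ can just as well have $\pi(\alpha)\in\Phi_{T_\phi}(\fu_\phi)\setminus\Phi_{T_\phi}(\fu^c)$ (e.g.\ $\alpha=\chi_i-\chi_{i+d}$ with $i+d\le n-m$), and then there is \emph{no} $\bgamma\in\Phi_{T_\phi}(\fu^c)$ whose weight space contains $\fg_\alpha$. The paper handles this second case by a different mechanism: it locates the unique $\bar\beta\in\Phi_{T_\phi}(\fu^c)$ with $\alpha+\bar\beta\in\Phi_{T_\phi}(\fu_0)$, uses the commutator $[E_\alpha,E_{c,\bar\beta}]=E_{c,\bgamma}$ (with $\bgamma=\pi(\alpha+\bar\beta)$) and the Baker--Campbell--Hausdorff formula to solve \eqref{eq:u(t)} explicitly. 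Your proposed ``inductive elimination along the Moy--Prasad filtration'' is also the wrong framework: the correction $u(t)$ lives entirely in $U_\phi\subset U_0=L$, the reductive quotient of $P$, so the Moy--Prasad filtration of $P$ plays no role; what is actually used is the $T_\phi$-weight structure on $\fu_0$ together with Lemma~\ref{l:u^c semicomm}.

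For (iv), you are making life much harder than necessary. The paper's key observation is that $\tw\in\Omega$ normalizes $I^-$, so $\Aut(\cE)=\Ad_{\ru}J\cap I^-$; since $\ru\in I$ and $J\subset I$, this is contained in $I\cap I^-=T$. That single line disposes of the $P(1)$-part and the $U_\phi$-part simultaneously, leaving only a short computation inside $T$ using Proposition~\ref{l:U^cU_phi=U_L}. Your proposed layer-by-layer argument through the Moy--Prasad filtration is not wrong in spirit, but it is both more laborious and, as written, incomplete: you would still need to explain why no cancellation between the $U^c$-discrepancy, the $U_\phi$-part, and the $P(1)$-part can occur in $I^-$, which is exactly what $I\cap I^-=T$ gives you for free.
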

\begin{proof}
(i): Note that 
$$
T_\phi\cap\Ad_{u_A^{-1}}T=T_\phi\cap\Ad_{(\tw u_A)^{-1}}T\subset J\cap\Ad_{(\tw u_A)^{-1}}I^-=\Aut(\cE).
$$
	
If $A\neq\Phi_{T_\phi}(\fu^c)$, let $\bgamma\in\Phi_{T_\phi}(\fu^c)-A$.
From \eqref{eq:T_bgamma}, any $t\in T_{\bgamma}\subset T_\phi$ satisfies $\alpha(t)=1$, $\forall\alpha\in A$, i.e. $u_A t u_A^{-1}=t$. We obtain
$$
T_{\bgamma}\subset T_\phi\cap\Ad_{u_A^{-1}}T\subset\Aut(\cE).
$$
	
(ii): In this case $u_A=\ru$. Since $(\tw\alpha)(x_I)<0$ for $\alpha\in\wt^-(V)$, we have $\Ad_{\tw}U_\alpha\subset\Ad_{\tw}P(1)\cap I^-=\Ad_{\tw\ru}P(1)\cap I^-$. Thus
$$
\Ad_{\ru^{-1}}U_\alpha\subset P(1)\cap\Ad_{(\tw\ru)^{-1}}I^-\subset\Aut(\cE).
$$
	
(iii): For the $\alpha\in\Delta(G_0)$ that satisfies $(\tw\alpha)(x_I)<0$, we have $U_\alpha\subset\Ad_{\tw^{-1}}I^-$. Thus
\begin{equation} \label{Bphicap Ad Aut}
B_\phi\cap\Ad_{\ru^{-1}}(TU_\alpha)\subset J\cap\Ad_{(\tw\ru)^{-1}}I^-=\Aut(\cE).
\end{equation}

On the other hand, in view of \ref{ss:G_0 simple roots}, we can see that $\alpha\in\Delta(G_0)$ is of the form $\alpha=\chi_i$ or $\chi_i\pm\chi_j$ where $i<j$, $1\leq i\leq n-m$. From Proposition \ref{p:U^c wts=T_phi basis}, there is a unique $\bgamma\in\Phi_{T_\phi}(\fu^c)$ such that $\bgamma=\chi_i$ or $\chi_i-\chi_{r_i}$. 
In the following, we construct a morphism $u:T_{\bgamma}\ra U_\phi$ and define $H_{\tw}$ by
$$H_{\tw}=\{tu(t)|t\in T_{\bgamma}, u(t)\in U_\phi, u(1)=1\}\subset B_\phi\cap\Aut(\cE).$$
By \eqref{Bphicap Ad Aut}, it suffices to require $H_{\tw}\subset\Ad_{\ru^{-1}}(TU_\alpha)$, i.e. for $\forall t\in T_{\bgamma}$, $\exists u_\alpha\in U_\alpha$ such that
\begin{equation}\label{eq:u(t)}
t^{-1}\ru t=u_\alpha\ru u(t)^{-1}.
\end{equation}
The construction of $u(t)$ is based on a discussion on the restriction of $\alpha$ to $T_\phi$, i.e. the $T_\phi$-weight $\pi(\alpha)\in\Phi_{T_\phi}(\fu_0)$ \eqref{eq:Tphi weight}.
	
Case a): when $\pi(\alpha)\in\Phi_{T_\phi}(\fu^c)$.
	
From Proposition \ref{p:U^c wts=T_phi basis}, we have $\pi(\alpha)=\alpha|_{T_\phi}=\bgamma$ for some $\bgamma\in\Phi_{T_\phi}(\fu^c)$. 	
We choose an order $\prec$ on $\Phi_{T_\phi}(\fu^c)$ such that $\bgamma$ is the maximal one. Then the LHS of \eqref{eq:u(t)} is
$$
t^{-1}\ru t=\biggl(\prod_{\bgamma'\in\Phi_{T_\phi}(\fu^c)-\{\bgamma\}}\exp(E_{c,\bgamma'})\biggr)\exp(\bgamma(t^{-1})E_{c,\bgamma}).
$$
	
For the RHS of \eqref{eq:u(t)}, we set $u_\alpha=\exp(\lambda E_\alpha)$ for some $\lambda$ to be determined. 
From Lemma \ref{l:u^c semicomm}(ii), we know that for any
$\bgamma'\in\Phi_{T_\phi}(\fu^c)$ and $u\in U_{\bgamma'}$, we have $u_\alpha u=uu_\alpha u'$ for some $u'\in U_\phi$ that commutes with $U_{\bar{\alpha}}$ for every $\bar{\alpha}\in\Phi_{T_\phi}(\fu^c)$. We obtain
$$
u_\alpha \ru=\ru u_\alpha u_\phi=\biggl(\prod_{\bgamma'\in\Phi_{T_\phi}(\fu^c)-\{\bgamma\}}\exp(E_{c,\bgamma'})\biggr)\exp(E_{c,\bgamma})u_\alpha u_\phi,
$$
for some $u_\phi=u_\phi(t)\in U_\phi$ that commutes with $U_{\bar{\alpha}}$ for any $\bar{\alpha}\in\Phi_{T_\phi}(\fu^c)$.
	
Comparing above two formulas, the equality \eqref{eq:u(t)} amounts to requiring
$$
\exp(\bgamma(t^{-1})E_{c,\bgamma})=\exp(E_{c,\bgamma})u_\alpha u_\phi u(t)^{-1},
$$
for some $u(t)\in U_\phi$. This is the same as 
$$
\exp((1-\bgamma(t^{-1}))E_{c,\bgamma})\exp(\lambda E_\alpha)=u(t)u_\phi^{-1}\in U_\phi.
$$
Moreover, if $\alpha$ is type (I) root then $E_{c,\bgamma}=E_\alpha$; if $\alpha$ is type (II) or (III) root, Lemma \eqref{l:u^c semicomm}.(1) implies that $[E_{c,\bgamma},E_\alpha]=0$. Thus we always have $[E_{c,\bgamma},E_\alpha]=0$. The question reduces to find $\lambda=\lambda(t)$ such that
\begin{equation}\label{eq:mu}
(1-\bgamma(t^{-1}))E_{c,\bgamma}+\lambda E_\alpha\in\fu_\phi.
\end{equation}
	
When $\alpha$ is a type (I) root, $E_{c,\bgamma}=E_\alpha$. We set $\lambda=\bgamma(t^{-1})-1$ and 
$u(t)=u_\phi(t)$.
Then the equation \eqref{eq:u(t)} is satisfied, and $H_{\tw}$ defined by $T_{\bgamma}$ and $u(t)$ gives the desired subscheme of $\Aut(\cE)$.
	
When $\alpha$ is of type (II) or (III), assume $\pi^{-1}(\bgamma)=\{\alpha,\alpha'\}$. Recall from \eqref{eq:B (III)-wts in fu_phi and fu^c} and \eqref{eq:D (II),(III)-wts in fu_phi and fu^c} that $E_{c,\bgamma}=E_{\alpha}+E_{\alpha'}$ and $E_{\phi,\bgamma}=E_{\alpha}-E_{\alpha'}\in\fu_\phi$. We let $\lambda=2(\bgamma(t^{-1})-1)$, then \eqref{eq:mu} becomes
$$
(1-\bgamma(t^{-1}))E_{c,\bgamma}+\lambda E_\alpha=(\bgamma(t^{-1})-1)(E_\alpha-E_{\alpha'})=(\bgamma(t^{-1})-1)E_{\phi,\bgamma}\in\fu_\phi.
$$
If we set
$u(t)=\exp((\chi_i(t^{-1})-1)E_{\phi,\bgamma})u_\phi(t)$, then equation \eqref{eq:u(t)} is satisfied.
Thus $H_{\tw}$ defined from $T_{\bgamma}$ and $u(t)$ gives the desired subscheme of $\Aut(\cE)$.
	
Case b): when $\pi(\alpha)\in\Phi_{T_\phi}(\fu_\phi)-\Phi_{T_\phi}(\fu^c)$.
	
In this case, $\alpha\in\Phi_{T_\phi}(\fu^c)$ is a type (I) root.
In view of \ref{sss:A wts in fu_phi and fu^c}-\ref{sss:D wts in fu_phi and fu^c} and \ref{ss:G_0 simple roots}, any root $\alpha\in\Phi(\fu_\phi)\cap\Delta(G_0)$ is of the form $\alpha=\sum_{j=i}^{i+d-1}\alpha_j=\chi_i-\chi_{i+d}$ where $i+d\leq n-m$.
From Proposition \ref{p:U^c wts=T_phi basis}, $\chi_{i+d}$ (resp. $\chi_{i+d}-\chi_{r_{i+d}}$ if $G=\SL_{n+1}$) is the unique $T_\phi$-weight $\bar{\beta}\in\Phi_{T_\phi}(\fu^c)$ satisfying $\alpha+\bar{\beta}\in\Phi_{T_\phi}(\fu_0)$. Let $u_\alpha=\exp(\lambda E_\alpha)$, above discussion implies that
$$
u_\alpha \ru u_\alpha^{-1}=(\prod_{\bgamma'\neq \bgamma,\bar{\beta}}\exp(E_{c,\bgamma'}))\exp(E_{c,\bgamma})\exp(E_{c,\bar{\beta}}+\lambda[E_\alpha,E_{c,\bar{\beta}}]).
$$
Here we choose an order on $\Phi_{T_\phi}(\fu^c)$ such that $\bgamma,\bar{\beta}$ are the last two of them. 
Any $\beta\in\pi^{-1}(\bar{\beta})$ is either $\beta=\chi_{i+d}\pm\chi_j$ for some $j>n-m$, or $\chi_{i+d}$. Thus $\alpha+\beta\in\Phi(\fu_0)$ is still a $T$-root, and $[E_\alpha,E_{c,\bar{\beta}}]=E_{c,\bgamma}$.
By Lemma \ref{l:u^c semicomm}.(2), $[E_{c,\bgamma},E_{c,\bar{\beta}}]$ is either $0$, or an element of $\fu_\phi$ that commutes with $\fu^c$. 
Thus we can use Baker–Campbell–Hausdorff formula\footnote{Here all the Lie algebra elements appear are nilpotent, contained in the same Borel subalgebra of $\fg_0$. Thus exponential map is a regular morphism, and the Baker-Campbell-Hausdorff formula is an equality of regular functions, thus applicable.} to get
\begin{align*}
\exp(\lambda E_{c,\bgamma})\exp(E_{c,\bar{\beta}})
&=\exp(\lambda E_{c,\bgamma}+E_{c,\bar{\beta}}+\frac{1}{2}[\lambda E_{c,\bgamma},E_{c,\bar{\beta}}])\\
&=\exp(E_{c,\bar{\beta}}+\lambda E_{c,\bgamma})\exp(\frac{\lambda}{2}[E_{c,\bgamma},E_{c,\bar{\beta}}]),\\
\Rightarrow\exp(E_{c,\bar{\beta}}+\lambda E_{c,\bgamma})=&\exp(\lambda E_{c,\bgamma})\exp(E_{c,\bar{\beta}})\exp(-\frac{\lambda}{2}[E_{c,\bgamma},E_{c,\bar{\beta}}]).
\end{align*}
We set $u_\phi=\exp(-\frac{\lambda}{2}[E_{c,\bgamma},\lambda E_{c,\bar{\beta}}])\in U_\phi$. In summary, we have 
$$
u_\alpha \ru u_\alpha^{-1}=(\prod_{\bgamma'\neq \bgamma,\bar{\beta}}\exp(E_{c,\bgamma'}))\exp((1+\lambda)E_{c,\bgamma})\exp(E_{c,\bar{\beta}})u_\phi.
$$
	
Recall that our goal is to construct $u(t)$ satisfying \eqref{eq:u(t)}. We set $\lambda=\bgamma(t^{-1})-1$ for $t\in T_{\bgamma}$, and 
$$
u(t)=u_\phi u_\alpha=\exp(-\frac{\lambda}{2}[E_{c,\bgamma},\lambda E_{c,\chi_{i+d}}])\exp(\lambda E_\alpha).
$$
In this way, we obtain identity \eqref{eq:u(t)}:
$$
u_\alpha\ru u(t)^{-1}=(\prod_{\bgamma'\neq \bgamma,\bar{\beta}}\exp(E_{c,\bgamma'}))\exp(\bgamma(t^{-1})E_{c,\bgamma})\exp(E_{c,\bar{\beta}})=t^{-1}\ru t.
$$
Thus $H_{\tw}$, defined by $T_{\bgamma}$ and $u(t)$, gives rise to the desired subscheme of $\Aut(\cE)$. This completes the proof of Proposition \ref{p:wildAut}(iii). 
	
(iv): Since $\tw\in\Omega$ normalize $I^-$, it suffices to show $\Ad_{\ru}J\cap I^-=Z_{G,\phi}$. Note that $J=B_\phi P(1)\subset I$ and $\ru\in U^c\subset I$. We have
$$
\Ad_{\ru}J\cap I^-\subset I\cap I^-=I\cap I^-=T.
$$
	
Then, we deduce
$$
\Ad_{\ru}J\cap I^-=(\Ad_{\ru}B_\phi)P(1)\cap T=\Ad_{\ru}B_\phi\cap T.
$$
	
For any $\Ad_{\ru}tu=t((t^{-1}\ru t)u\ru^{-1})\in \Ad_{\ru}B_\phi\cap T$ with $t\in T_\phi, u\in U_\phi$, we deduce $(t^{-1}\ru t)u=\ru$. 
Since $U^c$ is normalized by $T_\phi$, we deduce from Proposition \ref{l:U^cU_phi=U_L} that $u=1$, $t^{-1}\ru t=\ru$. From the definition of $\ru$, this tells us that $\bar{\gamma}(t)=1$ for all $\bar{\gamma}\in\Phi_{T_\phi}(\fu^c)$. Thus Proposition \ref{p:U^c wts=T_phi basis} implies $t\in Z_{G,\phi}$. 
This shows $\Aut(\cE)=\Ad_{\ru}B_\phi\cap T=Z_{G,\phi}$.
\end{proof}

\subsection{Proof of Theorem \ref{t:rigid}(ii)}
We now conclude the proof by showing that a $\cG'$-bundle is relevant if and only if its stabilizer is finite.
Let $\cE=I^-\tw u_A J$ be any $\cG'$-bundle. The automorphism group of $\cE$ is given by 
$$
\Aut(\cE)\simeq \Ad_g J\cap I^-\simeq J\cap \Ad_{g^{-1}}I^-.
$$ 
Its restrictions to $\cO_0$ and $\cO_\infty$ define an embedding 
\begin{equation}\label{eq:res auto grp}
\Aut(\cE)\simeq J\cap \Ad_{g^{-1}}I^-\hra I^\opp\times J=K_S,\ a\mapsto (\Ad_g a,a).
\end{equation}

Recall that $\cE$ is called \textit{relevant} (Definition \ref{d:relevant}) if the pullback of $\gamma_S=\cL_\delta\boxtimes\cL_\mu$ to $\Aut(\cE)^\circ$ via the above map is a constant sheaf.

\subsubsection{If $A\neq\Phi_{T_\phi}(\fu^c)$}	
Suppose $\cE$ is relevant. From Proposition \ref{p:wildAut}(i), the Frobenius trace of $\gamma_S$ has to be constant on $T_{\bgamma}\subset T_\phi\cap\Aut(\cE)$ via \eqref{eq:res auto grp}:
$$
\delta(\tw t\tw^{-1})\rho(t)=1, \quad \forall\ t\in T_{\bgamma}.
$$
This contradicts to the assumption that $\delta$ and $\rho$ are in general position (Definition \ref{d:general position}).
Therefore the bundle $\cE$ is irrelevant.

\subsubsection{If $A=\Phi_{T_\phi}(\fu^c)$ and $(\tw\alpha)(x_I)<0$ for some $\alpha\in\wt^-(V)$} 
Suppose $\cE$ is relevant. From Proposition \ref{p:wildAut}(ii), the Frobenius trace of $\gamma_S$ on $\Ad_{\ru^{-1}}U_\alpha\subset P(1)\cap\Aut(\cE)$ is constant, i.e.
$$
\psi(\phi(\ru^{-1}v\ru))=1,\quad \forall v\in U_\alpha
$$

Since $\psi$ is nontrivial, we deduce that $\phi(\ru^{-1} v\ru)=0$, $\forall v\in\fg_\alpha =\Lie(U_\alpha) \subset \fg_1$. The same argument implies that for any $u$ in the $T_\phi$-orbit of $\ru$, we have $\phi(u^{-1} v u)=0$. Recall that the $T_\phi$-orbits $\mathring{U}:=\Ad_{T_\phi}\ru$ is open in $U_\prec$.
Moreover, since $\phi$ is stabilized by $U_\phi$, we have $\phi(u^{-1} v u)=0$, $\forall v\in\fg_\alpha,\ u\in \mathring{U}U_\phi$. 
Since $\mathring{U}U_\phi$ is an open dense subgroup of $U_0=U^c U_\phi$ (Proposition \ref{l:U^cU_phi=U_L}), we have 
$$
\phi(u^{-1}vu)=0,\quad \forall v\in\fg_\alpha, u\in U_0,\quad \textnormal{and}\quad  \phi(\ad_u v)=0, \quad \forall u\in\fu_0, v\in\fg_\alpha.
$$ 
Suppose this $\alpha$ is the lowest weight of subrepresentation $V_i\subset V$. 
Then we have $\phi(v)=0$, $\forall v\in V_i$. 
Since $\phi$ is generic on $\fm_1$, we obtain a contradiction with Proposition \ref{p:Levi nonzero intersects subrepns}. Therefore the bundle $\cE$ is irrelevant.

\subsubsection{If $A=\Phi_{T_\phi}(\fu^c)$ and $(\tw\alpha)(x_I)<0$ for some $\alpha\in\Delta(G_0)$} 
Suppose $\cE$ is relevant. From Proposition \ref{p:wildAut}(iii), the Frobenius trace of the restriction of $\gamma_S$ to $H_{\tw}$ is constant:
$$
\delta(\tw\ru tu(t)\ru^{-1}\tw^{-1})\rho(tu(t))=\delta(\tw t\tw^{-1})\rho(t)=1,\quad \forall t\in T_{\bgamma}.
$$
This contradicts to the assumption that $\delta$ and $\rho$ are in general position. Therefore the bundle $\cE$ is irrelevant.

\subsubsection{If $A=\Phi_{T_\phi}(\fu^c)$ and $\tw\in\Omega$}
From Proposition \ref{p:wildAut}, $\Aut(\cE)^{\circ}=1$. Thus $\gamma_S$ is constant on $\Aut(\cE)^{\circ}$ and $\cE$ is relevant. This completes the proof of Theorem \ref{t:rigid}. \hfill \qed

\section{Study of local and global Hitchin maps: Proof of Propositions \ref{p:j^+ image} and \ref{p:global Hitchin diagram}}\label{s:pfs II}
\subsection{Proof of Proposition \ref{l:J}} \label{ss:J}
(i) Note that $\fg_1=\fm_1\oplus \fk$ and that the $T_\phi$-action preserves $\fk$ and is trivial on $\fm_1$. 
It suffices to show that the action of $U_\phi\subset L$ (resp. $\fu_\phi$) on $V$ preserves $\fk\subset\fg_1\simeq V$. 

Recall that $\fu_\phi$ has a decomposition \eqref{eq:Tphi decomp of uphi} into $T_\phi$-weight subspaces. 
By assumption, only type (I) roots exist (Proposition \ref{p:G type v.s. weight type}). 
For $\bgamma\in\Phi_{T_\phi}(\fu_\phi)$, $\bgamma$ is the restriction of of a unique root $\gamma\in\Phi(\fu_0)$ to $T_\phi$ and the space $\fu_{\phi,\bgamma}=\fg_{\gamma}$ is generated by $E_{\phi,\bgamma}=E_{\gamma}$. 
For any $\beta\in\Phi(\fg_1)$, if $\gamma+\beta\in\Phi_G$, then $\gamma+\beta\in\Phi(\fg_1)=\Phi(\fk)\sqcup\Phi(\fm_1)$. 
Therefore, if $\gamma+\beta=\alpha\in\Phi(\fm_1)$, then $\gamma-\alpha=-\beta$ is a root. However, this fact contradicts to Lemma \ref{l:roots in U_phi}.
Therefore the action of $E_{\gamma}$ sends $\fg_1$ into $\fk$ and preserves $\fk$. Assertion (i) follows.

Assertion (ii) follows from (i) and the definition. 
For $\mu=\rho\times \psi \phi$ \eqref{eq:defmu}, $\rho$ (resp. $\phi$) is trivial on $U_\phi$ (resp. $\fk$). Then assertion (iii) follows. \hfill \qed

\subsection{Proof of Proposition \ref{p:good stack}}\label{ss:good stack}
Recall \cite[\S~1.1.1]{BD} that a stack $\mathcal{Y}$ is good if
\begin{equation}\label{eq:good stack}
\mathrm{codim}\{y\in\mathcal{Y}|\dim\Aut(y)=n\}\geq n,\quad \forall n>0.
\end{equation}

Since $\Bun_\cG$ is $\overline{J}$-torsor over $\Bun_{\cG'}$, it suffices to show the assertion for $\Bun_{\cG'}$. 
Since $\dim\Bun_{\cG'}=0$ \eqref{r:numerical criteria}, it suffices to show the following fact:

Claim: for any $n>0$, there are finitely many isomorphism classes of $\cG'$-bundles $\cE\in\Bun_{\cG'}$ satisfying $\dim\Aut(\cE)=n$. 

Let $y\in\Bun_{\cG'}(\overline{k})$ be a $\cG'$-bundles and $I^-\backslash I^-\tw u_A J/J$ the associated double quotient in \eqref{eq:Bun_cG' decomp}. Its automorphism group is
$$
\Aut(y)\simeq I^-\cap \tw u_A J u_A^{-1}\tw^{-1}.
$$
Since $u_A\in U^c\subset U_0$ and $J=B_\phi P(1)$, we have $P(1)\subset u_A J u_A^{-1}\subset I=B_0 P(1)$. Thus we have
$$
I^-\cap \tw P(1)\tw^{-1}\subset \Aut(y)\subset I^-\cap \tw I\tw^{-1}=T(I^-\cap\tw I(1)\tw^{-1}),
$$

If $\ell(\tw)$ denotes the length of $\tw$ in $\tW$, then $\dim(I^-\cap \tw I(1)\tw^{-1})=\ell(\tw)$ \cite[ Theorem 7]{Fal05}. We deduce from above relation that 
$$
\dim\Aut(y)\leq \dim T+\ell(\tw).
$$

On the other hand, $I^-\cap\tw I(1)\tw^{-1}=I^-\cap\tw U_0P(1)\tw^{-1}$, $I^-\cap\tw U_0\tw^{-1}$ and $I^-\cap\tw P(1)\tw^{-1}$ are finite dimensional groups normalized by $T$. Thus they are products of affine root subgroups, and we have
$$
I^-\cap\tw I(1)\tw^{-1}=(I^-\cap\tw U_0\tw^{-1})(I^-\cap\tw P(1)\tw^{-1})\subset U_0(I^-\cap\tw P(1)\tw^{-1}),
$$
which implies
$
\dim(I^-\cap\tw P(1)\tw^{-1})\geq \dim(I^-\cap\tw I(1)\tw^{-1})-\dim U_0=\ell(\tw)-\dim U_0.
$

From the above discussion, we obtain
$$
\ell(\tw)-\dim U_0\leq \dim\Aut(y)\leq \ell(\tw)+\dim T.
$$

Recall \eqref{eq:Bun_cG' decomp} that for a fixed $\tw$, there are at most $2^{|\Phi_{T_\phi}(\fu^c)|}$ isomorphism classes of $\cG'$-bundles of the form $I^-\backslash I^-\tw u_A J/J$. Together with the above inequality, we obtain the claim and finishes the proof. \hfill \qed

\subsection{Proof of Propositions \ref{p:j^+ image}}
\label{ss:ProofHitchin}
We first prove the inclusion in one direction of assertion (i). 

\begin{lem}\label{l:j^+ image range}
We have $\cA_{\fj^+}\subseteq\Hit_{\fj^+}$. 
\end{lem}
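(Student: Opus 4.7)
The strategy is an explicit computation based on the Moy--Prasad affine root decomposition of $\fj^+$ followed by a valuation bound on the fundamental Chevalley invariants $c_{d_i}$.

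First, applying Proposition~\ref{l:J}(i), I decompose $\fj^+$ as a direct sum of affine root subspaces $s^n\fg_\alpha$ (together with a Cartan part) with respect to the grading of $\fg(K)$ defined by $\check{\rho}_G/d$. For each root $\alpha$ of $\fg$, let $n_{\min}(\alpha)$ denote the smallest integer $n$ with $s^n\fg_\alpha\subset\fj^+$. Combining the description $\fj^+/\mathrm{Lie}(P(2))=\fu_\phi\oplus\fk$ with $\mathrm{Lie}(P(i))=\bigoplus_{nd+\Ht(\alpha)\geq i}s^n\fg_\alpha$, one obtains
\[
n_{\min}(\alpha)=\begin{cases}0 & \alpha\in\Phi(\fu_\phi)\cup\Phi(\fk),\\ \lceil(2-\Ht(\alpha))/d\rceil & \text{otherwise.}\end{cases}
\]
A direct computation with the residue pairing then yields
\[
(\fj^+)^\perp=\bigoplus_{\beta\in\Phi_G}\bigl(s^{m_\beta}\cO\cdot\fg_\beta\bigr)\td s\oplus(\text{Cartan part}),\qquad m_\beta:=-n_{\min}(-\beta).
\]

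Given $Y\td s\in(\fj^+)^\perp$ with root decomposition $Y=\sum_\beta Y_\beta+Y_T$, I expand $c_{d_i}(Y)$ into a finite sum of $T$-invariant monomials, each a product $\prod_{k=1}^{d_i}Y_{\beta_k}$ (with possible Cartan factors) satisfying the weight-zero condition $\sum_k\beta_k=0$. The $s$-valuation of each such monomial is bounded below by $\sum_k m_{\beta_k}$, so the lemma reduces to the combinatorial bound
\[
\sum_{k=1}^{d_i}m_{\beta_k}\geq\begin{cases}-d_i & \text{if } d_i<d,\\ -d_i-1 & \text{if }d_i\geq d,\end{cases}\qquad\text{whenever } \sum_k\beta_k=0.
\]

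The main obstacle, and the crux of the proof, is verifying this combinatorial bound. In the generic regime $m_\beta\approx -(2+\Ht(\beta))/d$, so the weight-zero condition gives $\sum m_{\beta_k}\approx -2d_i/d$, which is not sharp. Sharpening it requires tracking the exceptional values where $n_{\min}$ drops to $0$ on roots in $\Phi(\fu_\phi)\cup\Phi(\fk)$: each of the $d_i$ factors can ``save'' at most one unit of pole through such an exception, and the number of simultaneous savings is controlled by $\sum_k\Ht(\beta_k)=0$ together with the explicit description of $\Phi(\fu_\phi)$ and $\Phi(\fk)$ obtained in Section~\ref{s:toric}. I expect the argument to reduce to a case analysis on the heights and exceptional memberships of the $\beta_k$, with the dichotomy $d_i<d$ versus $d_i\geq d$ reflecting the fact that the monomial can accommodate at most one additional ``exceptional'' pair of roots whose heights cancel only when the total degree is at least $d$.
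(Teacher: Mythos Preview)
Your proposed combinatorial bound is false as stated: there exist weight-zero tuples $(\beta_1,\ldots,\beta_{d_i})$ violating it. In type $A_n$ with $2d\le n+1$ and $d_i=2d$, take two copies of $\theta_M$ together with two copies of $-\alpha$ for each $\alpha\in\Delta_M$. Then $\sum_k\beta_k=0$ and the tuple has $2+2(d-1)=2d$ entries, but $m_{\theta_M}=-2$ and $m_{-\alpha}=-1$ (with your formula or the correct one), so $\sum_k m_{\beta_k}=-2d-2<-2d-1$. This does not contradict the lemma because such a monomial never appears in $c_{2d}$: each $c_{d_i}$ is a sum of determinants of principal minors, so no matrix position is used twice in a single term. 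Your reduction keeps only the $T$-invariance of $c_{d_i}$ and discards this determinantal constraint, which is exactly what forces the inequality. (There is also a smaller error: the embedding $L\hookrightarrow P$ places $\fg_\alpha$ at level $s^{-\Ht(\alpha)/d}$, not $s^0$, so $n_{\min}(\alpha)\neq0$ in general for $\alpha\in\Phi(\fu_\phi)\cup\Phi(\fk)$; compare the formula for $r_\alpha$ in \S\ref{l:S_ij vanish}.)

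The paper bypasses the root combinatorics by base change to the degree-$d$ extension $K'$ with $s=u^d$. Applying Zhu's map $DK^*$ yields
\[
DK^*\bigl((\fj^+)^\perp\bigr)=\Bigl(\fm_{-1}u^{-1}\oplus\fu_\phi^\perp\oplus\textstyle\prod_{j\ge1}\fg_j\,u^j\Bigr)\dfrac{\td u}{u},
\]
so the only source of poles is $\fm_{-1}u^{-1}$. Since any element of $\fm_{-1}$ has at most $d$ nonzero matrix entries in the standard representation, each term of $c_{d_i}$ acquires a $u$-pole of order at most $\min(d_i,d)$, and the lattice identity $\omega_{\cO_{K'}}^{d_i}(n)\cap\omega_K^{d_i}=\omega_{\cO_K}^{d_i}(d_i-\lceil(d_i-n)/d\rceil)$ converts this to the required bound over $K$. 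The base change is the missing idea.
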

\begin{proof}
Let $K'/K$ be a finite extension of degree $d$ and $u$ a uniformizer of $K'$ such that $s=u^d$. 
We consider the morphism $DK:\fg^*\otimes\omega_K\ra\fg^*\otimes\omega_{K'}$ defined in \cite[Proposition 10]{Zhu} and denote by $DK^*$ the composition of $DK$ with the isomorphism $\fg^*\otimes\omega_{K'}\xrightarrow{\sim} \fg\otimes\omega_{K'}$ induced by the Killing form.
We have a decomposition:
\begin{equation}\label{eq:decomposition j+}
\fj^+=\fu_\phi\oplus\fk\oplus\fp(2),
\end{equation}
where $\fk=\oplus_{\alpha\in \Phi(\fg_1)-\Phi(\fm_1)} \fg_{\alpha}$ satisfies $\fm_1\oplus\fk=V$. 
Then we deduce
\begin{equation}\label{eq:DK* of j+perp}
DK^*((\fj^{+})^{\perp})=(\fm_{-1}u^{-1}\oplus\fu_\phi^\perp\oplus\prod_{j\geq 1}\fg_j u^j)\frac{\td u}{u},
\end{equation}
where $\fu_\phi^\perp$ is the orthogonal complement of $\fu_\phi$ in $\fg_0$ with respect to Killing form.

Inside $\omega_{K'}^{d_i}$, for every $n\in \mathbb{Z}$ we know from \cite[Lemma 11]{Zhu} \footnote{There is a typo in \textit{loc.cit}.} that
\[
\omega_{\cO_{K'}}^{d_i}(n)\bigcap \omega_{K}^{d_i}=\omega_{\cO_K}^{d_i}\biggl(d_i- \lceil \frac{d_i-n}{d}\rceil\biggr).
\]
We reduce to show that for $X\frac{\td u}{u}\in DK^*((\fj^{+})^{\perp})$,
\begin{equation} \label{eq:local Hitchin in K'}
h^{cl}(X\frac{\td u}{u})\in \bigoplus_{d_i<d}\omega_{\cO_{K'}}^{d_i}(d_i+d-1)\oplus\bigoplus_{d_i\geq d}\omega_{\cO_{K'}}^{d_i}(d_i+d). 
\end{equation}

Recall that $h^{cl}(X\frac{\td u}{u})$ is calculated by coefficients of the characteristic polynomial, and that $d_i$-th component of $h^{cl}(X\frac{\td u}{u})$ is a sum of products of $d_i$ matrix entries in $X\frac{\td u}{u}$. 

For such a $d_i$ product, entries of $\fm_{-1}u^{-1}$ would contribute at most $d$ terms in the product. 
Then we deduce that the degree of $u$ in this product is $\le -2d_i$ if $d_i<d$ and $\le -d_i-d$ if $d_i\ge d$. The inclusion \eqref{eq:local Hitchin in K'} follows. 
\end{proof}

\subsubsection{} \label{sss:global Hitchin Gm}
Following \cite[\S6]{BK}, we use the global Hitchin map to prove Proposition \ref{p:j^+ image}. 
Let $\ell$ be an integer and consider a set $S$ of $\ell+1$ distinct points on the projective line: $x:=\infty$, $y_1:=0$, $y_2,...,y_\ell\in \bP^1-\{0,\infty\}$. Let $I$ be an Iwahoric subgroup. We consider a group scheme $\cG_\ell$ over $\bP^1$ defined by
$$
\cG_\ell|_{\bP^1-S}=G\times(\bP^1-S);\ \cG_\ell(\cO_x)=J^+;\ \cG_\ell(\cO_{y_i})=I,\ 1\leq i\leq \ell.
$$ 
We denote the global Hitchin base over $\bP^1-S$ by $\Hit(\bP^1-S)$ and the global Hitchin map by 
$$
H^{cl}:T^*\Bun_{\cG_\ell}\ra\Hit(\bP^1-S).
$$
Let $\cA_{\cG_\ell}$ be the Zariski closure of the image $H^{cl}(T^*\Bun_{\cG_\ell})$ in $\Hit(\bP^1-S)$. 
We define a closed subscheme $\Hit(\bP^1)_{\cG_\ell}$ of $\Hit(\bP^1-S)$ by
\begin{align*}
&\Hit(\bP^1)_{\cG_\ell}= 
\Hit(\bP^1-S)\times_{D_\infty^\times}\Hit_{\fj^+}\times_{D_{y_1}^\times}\Hit_\fI\times\cdots\times_{D_{y_\ell}^\times}\Hit_\fI & \\
&\simeq\bigoplus_{d_i<d}\Gamma(\bP^1,\Omega^{d_i}(d_i x+(d_i-1)(y_1+\cdots+y_\ell)))\oplus\bigoplus_{d_i\geq d}\Gamma(\bP^1,\Omega^{d_i}((d_i+1) x+(d_i-1)(y_1+\cdots+y_\ell))). &
\end{align*}

\begin{lem} \label{eq:global Hitcin image}
For every $\ell\geq 1$, we have $\cA_{\cG_\ell}=\Hit(\bP^1)_{\cG_\ell}$.
\end{lem}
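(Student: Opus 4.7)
The inclusion $\cA_{\cG_\ell}\subseteq \Hit(\bP^1)_{\cG_\ell}$ is essentially formal. The global Hitchin map restricts at $\infty$ to the local map studied in Lemma \ref{l:j^+ image range}, which gives $\cA_{\fj^+}\subseteq \Hit_{\fj^+}$, and at each $y_i$ to the local map of Example \ref{Hitchin map I}(ii), which gives the equality $\cA_{\fI}=\Hit_{\fI}$. Passing to Zariski closures, $\cA_{\cG_\ell}$ factors through the fiber product defining $\Hit(\bP^1)_{\cG_\ell}$.

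For the reverse inclusion the plan is a dimension count in the spirit of \cite[\S6]{BK}. First I would verify that $\Bun_{\cG_\ell}$ is a good stack by adapting Proposition \ref{p:good stack}: adding Iwahori level structures at $y_1,\dots,y_\ell$ only modifies the automorphism group by a finite-codimension quotient and leaves the codimension estimate \eqref{eq:good stack} intact. Goodness then yields $\dim T^*\Bun_{\cG_\ell}=2\dim\Bun_{\cG_\ell}$, and the standard Lagrangian property of the Hitchin map on a good stack ensures that the generic fiber of $H^{cl}$ has dimension $\dim\Bun_{\cG_\ell}$. Consequently $\dim \cA_{\cG_\ell}=\dim\Bun_{\cG_\ell}$.

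To conclude, I would match this with $\dim\Hit(\bP^1)_{\cG_\ell}$ via Riemann--Roch on $\bP^1$. Explicitly,
\[
\dim \Hit(\bP^1)_{\cG_\ell} = \sum_{d_i<d}h^0\bigl(\bP^1,\omega^{d_i}(d_i\cdot\infty + (d_i-1)(y_1+\cdots+y_\ell))\bigr) + \sum_{d_i\geq d}h^0\bigl(\bP^1,\omega^{d_i}((d_i+1)\cdot\infty + (d_i-1)(y_1+\cdots+y_\ell))\bigr),
\]
while $\dim\Bun_{\cG_\ell}=-\chi(\bP^1,\mathrm{Lie}(\cG_\ell))$ is computed using the decomposition $\fj^+=\fu_\phi\oplus \fk\oplus \fp(2)$ at $\infty$ and the Iwahori filtration at the $y_i$. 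Collecting contributions in the grading by $\check{\rho}_G/d$ should make the two quantities match term by term. Since $\Hit(\bP^1)_{\cG_\ell}$ is an irreducible affine space and $\cA_{\cG_\ell}$ is a closed subscheme of the same dimension, they must coincide.

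The principal obstacle is the Riemann--Roch bookkeeping in the last step: one has to carefully track how the pole orders $d_i$ versus $d_i+1$ at $\infty$ (dictated by the splitting $\fj^+=\fu_\phi\oplus\fk\oplus\fp(2)$ and the $\check{\rho}_G/d$-grading on $\fg$) align with the dual contributions to $\dim\Bun_{\cG_\ell}$. Once this numerical identity is established, the rest of the argument follows the general template for dimension-based surjectivity of Hitchin maps on good stacks.
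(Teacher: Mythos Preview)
Your argument for the inclusion $\cA_{\cG_\ell}\subseteq\Hit(\bP^1)_{\cG_\ell}$ is fine and matches the paper. The reverse direction, however, breaks down, and not merely for lack of justification: the two numbers you propose to match are unequal. Take $\ell=1$. Since $\Bun_{\cG_1}\to\Bun_{\cG'}$ is a $\bar J$-torsor and $\dim\Bun_{\cG'}=0$ (Lemma~\ref{r:numerical criteria}), one has $\dim\Bun_{\cG_1}=\dim\bar J=\dim T_\phi+\dim\fm_1=(n-m)+(m+1)=n+1$. But $\dim\Hit(\bP^1)_{\cG_1}=\sharp\{d_i\geq d\}=n-m+1$, and these differ whenever $m\geq 1$. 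So the generic fiber of $H^{cl}$ is \emph{not} Lagrangian here; goodness only asserts $\dim T^*\Bun_{\cG_\ell}=2\dim\Bun_{\cG_\ell}$ and says nothing about Hitchin fibers. For the deeper-than-parahoric level $J^+$ there is no ready-made complete integrability statement to invoke. (What \emph{is} Lagrangian is the generic fiber of the moment map $T^*\Bun_\cG\to\bar\fj^*$, used later in Theorem~\ref{th:oper Hecke eigensheaf}; but that is a different map, with target of dimension $n+1$.)

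The paper therefore does not attempt a dimension count on the source. Instead it exhibits, by an explicit construction (\S\ref{l:Z dense}), elements of $(\fj^+)^\perp$ with prescribed characteristic polynomial, proving $Z\subseteq\cA_{\fj^+}$. For $\ell=1$ this already yields a closed subscheme of $\cA_{\cG_1}$ of dimension $n-m+1=\dim\Hit(\bP^1)_{\cG_1}$, forcing equality; the case of general $\ell$ then follows formally from $\ell=1$ together with $\cA_\fI=\Hit_\fI$. The substantive input is thus the explicit inclusion $Z\subseteq\cA_{\fj^+}$, not any abstract property of $\Bun_{\cG_\ell}$.
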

\begin{proof}
Recall that $\cA_\fI=\Hit_\fI$ \eqref{Hitchin map I}. Then we have
\[
\cA_{\cG_\ell}\simeq \Hit(\bP^1-S)\times_{D_\infty^\times}\cA_{\fj^+}\times_{D_{y_1}^\times}\Hit_\fI\times\cdots\times_{D_{y_\ell}^\times}\Hit_\fI.
\]
By Lemma \ref{l:j^+ image range} and the above formula, we have $\cA_{\cG_\ell}\subseteq\Hit(\bP^1)_{\cG_\ell}$. To obtain another direction, we first prove the case $\ell=1$. 
We show that the subspace $Z$ \eqref{eq:def of Z} of $\Hit_{\fj^+}$ is contained in $\cA_{\fj^+}$ in subsection \ref{l:Z dense}.
Then we have closed embeddings 
\begin{equation}\label{eq:cG_1 case}
\Hit(\bGm)\times_{D_\infty^\times}Z\times_{D_0^\times}\Hit(D_0)_\fI
\hookrightarrow\cA_{\cG_1}
\hookrightarrow\Hit(\bP^1)_{\cG_1}.
\end{equation}
Since $\sharp \{d_i\geq d\}=n-m+1$, we have
\[
\Hit(\bP^1)_{\cG_1}\simeq \bigoplus_{d_i\geq d}\Gamma(\bP^1,\Omega^{d_i}((d_i+1) \infty+(d_i-1)0))\simeq \bA^{n-m+1}. 
\]
For each $d\leq d_i\leq h_G$, let $t=s^{-1}$, we have
$
\bC t(\frac{\td t}{t})^{d_i}=\bC s^{-1}(\frac{\td s}{s})^{d_i} \subset Z.
$ 
Therefore, $\bC s^{-1}(\frac{\td s}{s})^{d_i}$ is contained in the LHS of \eqref{eq:cG_1 case}. By comparing dimension, we deduce that the closed embeddings \eqref{eq:cG_1 case} are isomorphisms, which proves the case $\ell=1$. 

In the general case, the proposition follows from the following isomorphisms:
$$
\cA_{\cG_\ell}
\simeq \cA_{\cG_1}\times_{D_{y_1}^\times}\Hit_\fI\times\cdots\times_{D_{y_\ell}^\times}\Hit_\fI
\xrightarrow{\sim} \Hit(\bP^1)_{\cG_1}\times_{D_{y_1}^\times}\Hit_\fI\times\cdots\times_{D_{y_\ell}^\times}\Hit_\fI
\simeq \Hit(\bP^1)_{\cG_\ell}.
$$
\end{proof}

\subsubsection{Proof of Proposition \ref{p:j^+ image}.(i).}
By Lemma \ref{l:j^+ image range}, it remains to show the image $h^{cl}((\fj^{+})^{\perp})$ is dense in $\Hit_{\fj^+}$.
Recall Example \ref{Hitchin map I}(i) that $\Hit(D)=\fc^*\times^{\bGm}\omega_{\cO}^\times$. It suffices to show that for any positive integer $N>0$, the image is dense in the quotient $\Hit_{\fj^+}/s^N\Hit(D)$. 
Consider the following commutative diagram
$$
\begin{tikzcd}
\cA_{\cG_\ell} \arrow{r} \arrow{d}{\wr} &\cA_{\fj^+} \arrow[hookrightarrow]{r} &\Hit_{\fj^+}\arrow{d}\\
\Hit(\bP^1)_{\cG_\ell} \arrow{rr}{\theta} &   &\Hit_{\fj^+}/s^N\Hit(D)
\end{tikzcd}
$$
We will show that for any $N>0$, we can choose $\ell\gg N$ such that $\theta$ is surjective. 
We do this by comparing dimensions, i.e. we verify the equlity
\begin{equation}\label{eq:Hitchin dim equality}
\dim \Hit(\bP^1)_{\cG_\ell}=\dim\ker(\theta)+\dim\Hit_{\fj^+}/s^N\Hit(D).
\end{equation}
Indeed, we have
\begin{align*}
\dim \Hit(\bP^1)_{\cG_\ell}
=&\sum_{d_i<d}\dim\Gamma(\bP^1,\Omega^{d_i}(d_i x+(d_i-1)(y_1+\cdots+y_\ell)))\\
&+\sum_{d_i\geq d}\dim\Gamma(\bP^1,\Omega^{d_i}((d_i+1) x+(d_i-1)(y_1+\cdots+y_\ell)))\\
=&m\sum_i(d_i-1)-\sum_i d_i+n+\sum_{d_i\geq d}1.
\end{align*}
\begin{align*}
\dim\Hit_{\fj^+}/s^N
&=\sum_{d_i<d}\dim(s^{-d_i}\cO/s^N\cO)+\sum_{d_i\geq d}\dim(s^{-d_i-1}\cO/s^N\cO)\\
&=Nn+\sum_i d_i+\sum_{d_i\geq d}1.
\end{align*}

If we take $\ell$ large enough such that $\ell(d_i-1)-N-2d_i>0$ for every $d_i$, then we also have
\begin{align*}
\dim\ker(\theta)
=&\sum_{d_i<d}\dim\Gamma(\bP^1,\Omega^{d_i}(-N x+(d_i-1)(y_1+\cdots+y_\ell)))\\
&+\sum_{d_i\geq d}\dim\Gamma(\bP^1,\Omega^{d_i}(-N x+(d_i-1)(y_1+\cdots+y_\ell)))\\
=&-Nn+m\sum_i(d_i-1)-2\sum_i d_i+n.
\end{align*}
Combining above, we get \eqref{eq:Hitchin dim equality}, which completes the proof of Part (i) of the proposition. \hfill\qed

\subsubsection{Proof of Proposition \ref{p:j^+ image}.(ii)} \label{sss:pf j^+ image ii}
The morphism $p$ in \eqref{eq:local Hitchin diagram} is a quotient by $\Hit_{\fj}$. In order to obtain the right vertical map and the commutative diagram, we need to show
$$
(p\circ h^{cl})(X+Y)=(p\circ h^{cl})(X),\qquad \forall X\in(\fj^{+})^{\perp},\ Y\in\fj^\perp.
$$

To prove this assertion, we may assume $\cg=\gl_{n+1}$ in the type A case.  
We set $\fu^c_-=\oplus_{\alpha\in \Phi(\fu^c)}\fg_{-\alpha}$, $\ft_{\fm}=\prod_{j=n-m+1}^n \Lie(T_j)$ in type B, C, D or $\ft_{\fm}=\prod_{j=n-m+1}^{n+1} \Lie(T_j)$ in type A. 
Similar to \eqref{eq:DK* of j+perp}, we have
$$
DK^*(\fj^\perp)=(\ft_{\fm}\oplus\fu_0\oplus\fu^c_-\oplus\prod_{j\geq 1}\fg_j u^j)\frac{\td u}{u}.
$$
As $\fu_\phi^{\perp}=\ft\oplus \fu_0\oplus\fu^c_-$, we have a decomposition
$$
DK^*((\fj^{+})^{\perp})=DK^*(\fj^\perp)\oplus(\ft_\phi\oplus\fm_{-1}u^{-1})\frac{\td u}{u}. 
$$

After taking $s=u^d$, we have $Z=\bigoplus_{d_i\geq d}\bC u^{-d}(\frac{\td u}{u})^{d_i}$.
Denote by $t_{d_i,d+d_i}$ the coefficients of $u^{-d-d_i}\lambda^{d_i}$ in the determinant $\det(\lambda I+X)$. For $X\in DK^*((\fj^{+})^{\perp})$, $(p\circ h^{cl})(X)$ is calculated by $t_{d_i,d+d_i}$ for $d_i\geq d$.
Each $t_{d_i,d+d_i}$ is a sum of products $x_1x_2\cdots x_N$, where $x_i$'s are entries of some principal minor $X_I$ of $X$ belonging to different columns and rows. 

The proposition amounts to show that each $x_i$ in $t_{d_i,d+d_i}$ belongs to $(\ft_\phi\oplus\fm_{-1}u^{-1})\frac{\td u}{u}$. 

There are at most $d$ non-zero entries in the matrix of $\fm_{-1}$ and these entries are in different columns and rows. 
If $x_1x_2\cdots x_N$ is a summand of $t_{d_i,d+d_i}$, there are $d$ terms of $x_i$'s belonging to $\fm_{-1}u^{-1}\frac{\td u}{u}$, and the rest of $x_i$ are entries of $DK^*((\fj^{+})^{\perp})\cap\fg\frac{\td u}{u}=(\fb_0\oplus\fu^c_-)\frac{\td u}{u}$ and lie in different columns and rows to those of $\fm_{-1}$. 
By Lemma \ref{l:roots in U_phi}, $-\gamma\in\Phi(\fu^c_-)$ if and only if there exists $\alpha\in\Phi(\fm_{-1})$ such that $\gamma+\alpha\in\Phi_G$.
This implies and any element of $\fu^c_-$ has a common column or row with $\fm_{-1}$, thus cannot appear as $x_i$. 
So are elements of $\ft_{\fm}$ and of $\fu^c\subset\fb_0=\ft\oplus\fu^c\oplus\fu_\phi$. Thus the rest of $x_i$ can only come from $\ft_\phi\oplus\fu_\phi$, and their product is a summand of the determinant of $\ft_\phi\oplus\fu_\phi$. Therefore, they all belong to $\ft_\phi$. This completes the proof. \hfill \qed

\subsection{Proof of $Z\subseteq \cA_{\fj^+}$} \label{l:Z dense}
Indeed, we show a slightly stronger result; namely, we shall prove that 
\begin{equation}
Z\subset\overline{h^{cl}(\fm_{-1}u^{-1}\oplus\fu_\phi^\perp)},
\end{equation}
where $\fm_{-1}u^{-1}\oplus\fu_\phi^\perp$ are defined in \eqref{eq:DK* of j+perp}. 
Recall \S \ref{sss:pf j^+ image ii} that $\fu_\phi^\perp=\fb_0\oplus\fu^c_-$. For $X\in \fm_{-1}u^{-1}\oplus\fu_\phi^\perp$, we write
\[
X=X_1+X_2+X_3, \quad \textnormal{where}~ X_1\in \fm_{-1}u^{-1}, X_2\in\fb_0, X_3\in\fu^c_-.
\]
Moreover, we set $X_2=X_2'+X_2''$, where $X_2'=(x_1,x_2,...,x_n)\in\ft$ and $X_2''\in\fu_0$.

By Lemma \ref{l:roots in U_phi}, a root $\gamma\in\Phi(\fu_0)$ is a weight of $\fu^c$ if and only if there exists $\alpha\in\Phi(\fm_{-1})$ such that $\gamma+\alpha\in\Phi_G$. This means entries of $\fu^c_-$ has common row or column with those of $\fm_{-1}$. 
Since the Hitchin map of classical group is given by coefficients of characteristic polynomials, above discussion shows
$$
\det(\lambda I+X)=\det(\lambda I+X_2+X_3)+u^{-d}\det(X_1)\prod_{i=1}^{n-m}(\lambda+x_i)
$$ 
where for $X_1=u^{-1}\sum_{\gamma\in\Phi(\fm_{-1})}c_\alpha E_\alpha$, $\det(X_1)=c_{\theta_M}\prod_{-\alpha\in\Delta_M}c_\alpha^{a_\alpha}$, $a_\alpha$ the coefficient of $-\alpha$ in $\theta_M$.

We keep the notation and assumption of \ref{sss:pf j^+ image ii}. Recall that $\ft_\phi=\oplus_{i=1}^{n-m}\Lie(T_i)$. Thus we have $Z=p(\overline{h^{cl}(u^{-1}\fm_{-1}\oplus\ft_\phi)})$. 
It remains to show that fixing the coefficients $(x_i)_{i=1}^{n-m}$ of $X_2$ in $\ft_\phi$, we can choose coefficients $(x_i)$ of $X_2$ in $\ft_\fm$ and of $X_3$, such that $\det(\lambda I+X_2+X_3)=\lambda^N$, where $N=n+1,2n+1,2n,2n$ for type A, B, C, D respectively. 

We can decompose $\fg_0$ as simple Lie subalgebras $\fl_i$ of type A \eqref{eq:G_0 block decomp}. 
Let $X_{2,i}$ and $X_{3,i}$ be the restriction of $X_2$ and $X_3$ to $\fl_i$ and $N_i$ the size of $\fl_i$. 
We reduce to prove the following claim: there exist coefficients of $X_{2,i}$ in $\ft_\fm$ and of $X_{3,i}$ such that
\begin{equation}\label{eq:claim det}
\det(\lambda I+X_{2,i}+X_{3,i})=\lambda^{N_i}.
\end{equation}

Label the simple roots of $\fl_i$ by $\beta_1,\beta_2,...,\beta_{n_i}$ so that they give roots from left to right in Dynkin diagram of $L_i$. We set $\beta_{i,j}=\sum_{k=i}^{j-1}\beta_k$. 

With the notation of \S~\ref{ss:G_0 simple roots}, the following lemma follows from a straightforward case-by-case verification. 
\begin{lem}\label{l:u^c in blocks}
\textnormal{(i)} 
Except the case where $G$ is of type B when $d\leq n<\frac{3}{2}d$ and $i=m$, we have $\ft_{\fm}\cap \fl_i=\Lie(T_{u_i})$ \eqref{sss:pf j^+ image ii} for a unique integer $u_i$. 
	
\textnormal{(ii)} Since there exist only type I weights (Proposition \ref{p:G type v.s. weight type}), we consider $\Phi_{T_\phi}(\fu^c)$ as a subset of $\Phi_G$. 
There exists an integer $1\leq v_i\leq n_i$ such that 
$$
\Phi_{T_\phi}(\fu^c)\cap\Phi(\fl_i)=\{\beta_{1,v_i},\beta_{2,v_i},...,\beta_{v_i-1,v_i} \}\sqcup\{\beta_{v_i,v_i+1},\beta_{v_i,v_i+2},...,\beta_{v_i,n_i}\}.
$$
Moreover, $\beta_{v_i-1,v_i}=\chi_j\pm\chi_{i'}$ for some $j<i'$ or $\beta_{v_i,v_i+1}=\chi_{i'}\pm\chi_j$ for some $i'<j$.
\end{lem}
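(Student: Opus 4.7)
The plan is to verify both assertions by a case-by-case analysis across the four classical types, using the explicit descriptions of the decomposition $\Phi(G_0)=\bigsqcup_i\Phi_i(G_0)$ from \S\ref{ss:G_0 simple roots} together with the list of $\Phi_{T_\phi}(\fu^c)$ compiled in \S\ref{ss:structure Uphi}. Recall that each block is given by
\[
\fl_i=\bigoplus_{\chi_j\in\chi(\Phi_i(G_0))}\Lie(T_j)\,\oplus\,\bigoplus_{\alpha\in\Phi_i(G_0)}\fg_\alpha,
\]
so both parts reduce to tracking which indices $j$ from the ``upper range'' $j>n-m$ appear in $\chi(\Phi_i(G_0))$, and how the elements of $\Phi_{T_\phi}(\fu^c)$ distribute among the blocks.

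For part (i), in every classical type the set $\chi(\Phi_i(G_0))$ forms either a single arithmetic progression in the indexing variable with common difference $d$ (type A and the ``lower'' blocks of B, C, D) or two such progressions meeting at a midpoint (``upper'' blocks of B, C, D). A direct inspection of \S\ref{ss:G_0 simple roots} shows that in every non-excluded case exactly one index $u_i\in\chi(\Phi_i(G_0))$ lies in $[n-m+1,n]$ (or $[n-m+1,n+1]$ for type A), giving uniqueness. The sole exception occurs in type B with $d\le n<\frac{3}{2}d$ and $i=m$, where $\chi(\Phi_m(G_0))=\{\chi_{n+1-d}\}$ with $n+1-d=n+1-2m\le n-m$ strictly below the upper range, so $\ft_\fm\cap\fl_m=0$.

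For part (ii), by Proposition \ref{p:U^c wts=T_phi basis} every element of $\Phi_{T_\phi}(\fu^c)$ admits a lift to $\Phi_G$ of the form $\chi_j+u_j\chi_{r_j}$ with $j\in[1,n-m]$ and $r_j\in[n-m+1,n+1]$. Such a lift restricts to a root of $\fl_i$ only if both $\chi_j$ and $\chi_{r_j}$ lie in $\chi(\Phi_i(G_0))$; by part (i) the unique upper-range candidate is $\chi_{u_i}$, so all such restrictions must satisfy $r_j=u_i$, pinning down $v_i$ to the position in the Dynkin diagram of $\fl_i$ corresponding to $\chi_{u_i}$. Enumerating these restrictions against the explicit list in \S\ref{ss:structure Uphi} then shows that they form the claimed V-shape $\{\beta_{1,v_i},\ldots,\beta_{v_i-1,v_i}\}\sqcup\{\beta_{v_i,v_i+1},\ldots,\beta_{v_i,n_i}\}$, and the adjacent simple root $\beta_{v_i-1}$ or $\beta_{v_i}$ necessarily involves $\chi_{u_i}$ with $u_i>n-m$, yielding the form $\chi_j\pm\chi_{i'}$ with one index in each range. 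In the excluded case of part (i), a direct inspection shows $\Phi_{T_\phi}(\fu^c)\cap\Phi(\fl_m)=\emptyset$, which is vacuously consistent.

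The principal technical obstacle lies in the blocks $\fl_i$ that are not of type A, namely $i\in\{m,m+1\}$ in type B with $n\ge\frac{3}{2}d$ and $i\in\{m-1,m\}$ in type D, where the Dynkin diagram branches and type (II) or (III) $T_\phi$-weights contribute to $\Phi_{T_\phi}(\fu^c)$ via elements which are not single root vectors. For each such weight one must fix a specific lift to $\Phi_G$ (e.g., $\chi_j-\chi_{r_j}$), verify that the combinatorics of the restrictions remain consistent with the Dynkin labeling of \S\ref{ss:G_0 simple roots}, and check case-by-case that the V-shape description still applies at the vertex $v_i$ corresponding to $\chi_{u_i}$. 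Once this bookkeeping is carried out separately in each sub-case, the verification of both assertions is mechanical.
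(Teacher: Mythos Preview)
Your approach is essentially what the paper does: it simply declares the lemma ``follows from a straightforward case-by-case verification'' using \S\ref{ss:G_0 simple roots} and \S\ref{ss:structure Uphi}, and your outline of that verification for parts (i) and (ii) is correct.

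One remark: your final paragraph about ``the principal technical obstacle'' in the non-type-A blocks (type B with $n\ge\frac{3}{2}d$, type D with $n\ge d+1$) is unnecessary. The hypothesis in part (ii) that only type (I) weights occur, together with Proposition~\ref{p:G type v.s. weight type}, forces $n<\frac{3}{2}d$ in type B and $n\le d$ in type D; this is also the standing assumption of \S\ref{ss:J+}--\ref{ss:local opers} under which Proposition~\ref{p:j^+ image} is proved. In those ranges every block $\fl_i$ is of type A (cf.\ cases (1)--(2) of \S\ref{ss:G_0 simple roots}), so the branching and type (II)/(III) bookkeeping you worry about never arises. Once you drop that paragraph, the verification is indeed mechanical.
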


If we take $X_{3,i}=\sum_{\gamma\in\Delta(L_i)}E_\gamma$, the claim \eqref{eq:claim det} follows from Lemmas \ref{l:u^c in blocks} and the calculation of a characteristic polynomial in Lemma \ref{l:image of type A block}. 
The exceptional case in type B can also be checked in a similar way. This complete the proof of $Z\subseteq \cA_{\fj^+}$.
\hfill \qed

\subsubsection{Calculation of a characteristic polynomial} 
Given positive integers $N>1$ and $i\in [1,N]$, consider the following matrix:
\begin{equation}
X:=\sum_{j=1}^N x_jE_{j,j}+\sum_{j=1}^{n-1}E_{j,j+1}+\sum_{j=1}^{i-1}y_jE_{i,j}+\sum_{j=i}^{n-1} y_jE_{j+1,i},
\end{equation}
$$
X=\begin{pmatrix}
x_1& 1 &      &       &       &       &      &   \\
   &x_2& 1    &       &       &       &      &   \\
   &   &\ddots&\ddots &       &       &      &   \\
   &   &      &x_{i-1}&1      &       &      &   \\
y_1&y_2&\cdots&y_{i-1}&x_i    &1      &      &   \\
   &   &      &       &y_i    &x_{i+1}&\ddots&   \\
   &   &      &       &\vdots &       &\ddots&1  \\
   &   &      &       &y_{N-1}&       &      &x_N  
\end{pmatrix}
$$

\begin{lem}\label{l:image of type A block}
For any $1\leq i\leq N$, given any complex numbers $x_1,x_2,...,x_{i-1},x_{i+1},...,x_N$ where $x_j\neq x_{j'}, \forall1\leq j\leq i-1$, $i+1\leq j'\leq N$, there exist $x_i,y_1,y_2,...,y_{N-1}$ such that $\det(\lambda I_N+X)=\lambda^N$.
\end{lem}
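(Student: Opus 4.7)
The plan is to view $\det(\lambda I + X)$ as a polynomial in the $y_j$'s and reduce the lemma to the non-singularity of an explicit $(N-1) \times (N-1)$ linear system.

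First I would observe that $\det(\lambda I + X)$ is linear in the $y_j$'s, with no cross-terms. The entries $y_j$ with $j < i$ lie in row $i$, while the entries $y_j$ with $j \geq i$ lie in column $i$. A Leibniz term using two such entries would fix two rows and two columns; after deletion, rows $1, \ldots, i-1$ of the bidiagonal part $\lambda I + X|_{y=0}$ only reach columns $\leq i$, and a pigeonhole count rules out any admissible completion. Hence
\[
\det(\lambda I + X) = \prod_{l=1}^N(\lambda + x_l) + \sum_{j=1}^{i-1} y_j\, C_{ij}(\lambda) + \sum_{j=i}^{N-1} y_j\, C_{j+1, i}(\lambda),
\]
where the cofactors $C$ are minors of $\lambda I + X|_{y=0}$.

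Next, after deleting the relevant row and column, each such minor decomposes as a block-diagonal matrix and factors explicitly. Setting
\[
A(\lambda) = \prod_{l < i}(\lambda + x_l), \quad B(\lambda) = \prod_{l > i}(\lambda + x_l), \quad A_j = \prod_{l < j}(\lambda + x_l), \quad B_j = \prod_{l > j+1}(\lambda + x_l),
\]
the cofactor polynomials are $\{A_j\, B\}_{1 \leq j \leq i-1} \cup \{A\, B_j\}_{i \leq j \leq N-1}$ (up to sign). Matching $\det(\lambda I + X) = \lambda^N$ amounts to $N$ scalar equations. The $\lambda^{N-1}$-coefficient gives the trace condition $x_i = -\sum_{l \neq i} x_l$, after which the remaining $N-1$ equations become a linear system in $(y_1, \ldots, y_{N-1})$ whose RHS is the polynomial $\lambda^N - \prod_l(\lambda + x_l)$, now of degree at most $N-2$. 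The target space $\bC[\lambda]_{\leq N-2}$ has dimension exactly $N-1$.

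The main step, and the only place where the hypothesis $x_j \neq x_{j'}$ (for $1 \leq j \leq i-1$, $i+1 \leq j' \leq N$) is used, is to show that the $N-1$ cofactor polynomials are linearly independent. A linear dependence translates into an identity $B \cdot f = -A \cdot g$ with $f \in \mathrm{span}\{A_j\}$, $g \in \mathrm{span}\{B_j\}$. The hypothesis says $A$ and $B$ share no root, hence are coprime in $\bC[\lambda]$, so $A \mid f$ and $B \mid g$; but $\deg f \leq i-2 < \deg A$ and $\deg g \leq N-i-1 < \deg B$, forcing $f = g = 0$. Since the $A_j$'s have pairwise distinct degrees $0, 1, \ldots, i-2$ (similarly for the $B_j$'s), they are themselves linearly independent, so all coefficients vanish. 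This yields a basis of $\bC[\lambda]_{\leq N-2}$, hence a (unique) solution for $(y_1, \ldots, y_{N-1})$. The conceptual hurdle is essentially the coprimality step: once one recognizes the cofactors as products of the form $A_j B$ or $A B_j$, the hypothesis on the $x_j$'s delivers linear independence through a short divisibility argument.
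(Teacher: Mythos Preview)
Your proof is correct and shares the paper's overall setup: both observe that $\det(\lambda I_N+X)$ is affine-linear in the $y_j$'s, impose the trace condition $x_i=-\sum_{l\neq i}x_l$ from the $\lambda^{N-1}$ coefficient, and reduce to a square $(N-1)\times(N-1)$ linear system in the $y_j$'s.

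The difference lies in how non-singularity of that system is established. The paper asserts (via induction on $i$) that the coefficient matrix has determinant exactly $\pm\prod_{j<i}\prod_{j'>i}(x_j-x_{j'})$, a Vandermonde-type product, and invokes the hypothesis to conclude it is nonzero. You instead identify the cofactor polynomials explicitly as $A_jB$ and $AB_j$, then use the coprimality of $A=\prod_{l<i}(\lambda+x_l)$ and $B=\prod_{l>i}(\lambda+x_l)$ together with degree bounds to prove linear independence directly. Your route is shorter and more transparent about \emph{why} the hypothesis $x_j\neq x_{j'}$ is exactly what is needed; the paper's route yields the additional explicit determinant formula, though that formula is not used elsewhere.
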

\begin{proof}
One can write
\begin{equation}
\det(\lambda I_N+X)=\lambda^N+(\sum_{j=1}^N x_j)\lambda^{N-1}+\sum_{k=1}^{N-1}(\sum_{j=1}^{N-1}A_{kj}y_j+C_k)\lambda^{N-1-k},
\end{equation}
where $A_{kj}$ is constant coefficient polynomial in $x_j$ for $j\neq i$; $C_k$ is constant coefficient polynomial in all of $x_j$. Then $\det(\lambda I_N+X)=\lambda^N$ is equivalent to equations
$$
x_i=-\sum_{j\neq i}x_j; \quad \sum_{j=1}^{N-1}A_{kj}y_j=-C_k,\quad 1\leq k\leq N-1.
$$

Consider the $(N-1)\times(N-1)$ matrix $A=(A_{kj})_{1\le k,j\le N-1}$, where the entries $A_{kj}$ are polynomials in $x_j$, $j\neq i$. 
One can show by induction on $i$ that
\begin{equation}\label{eq:det of coeff}
\det(A)=\pm\prod_{j=1}^{i-1}\prod_{j'=i+1}^N(x_j-x_{j'}).
\end{equation}
The assumption of lemma guarantee that $\det(A)\neq 0$. Thus above equations always have solutions $x_i,y_1,...,y_{N-1}$.
\end{proof}

\subsection{Proof of Proposition \ref{p:global Hitchin diagram}}\label{ss:pf global Hitchin}
With the notation of \S \ref{sss:global Hitchin Gm} for $\cG=\cG_1$ and $y_1=0$, the first claim follows from Lemma \ref{eq:global Hitcin image}. 
Since the global Hitchin map is compatible with the local one, the commutativity of diagram \eqref{eq:global Hitchin diagram} follows from that of \eqref{eq:local Hitchin diagram}. 
In view of the isomorphism (\S \ref{sss:global Hitchin Gm}) 
$$
\Hit(\bP^1)_\cG\simeq \bigoplus_{d_i\geq d}\Gamma(\bP^1,\Omega^{d_i}((d_i-1)0+(d_i+1) \infty)),
$$
we deduce that $\Hit(\bP^1)_{\cG}\to Z$ is an isomorphism. \hfill \qed

\section{Study of the space of opers $\Op_{\cg}(\bP^1)_{\cG}$} \label{s:opers proof}

\subsection{Proof of Proposition \ref{p:local quant}} 
\label{ss:pf local quant} 
We first make some preparations. 
For every root $\alpha\in\Phi_G$, we set
$$
r_\alpha=\min\{r\in\bZ\,|\,s^r\fg_\alpha\subset\fj^+\}, \quad \textnormal{and} \quad r_0=1.
$$
We fix an order on a basis $\{E_{\alpha}[d]=s^d E_{\alpha}\}$ of $\fg(\!(s)\!)$:  
\begin{eqnarray} \nonumber
\cdots <E_{-\theta}[r_{-\theta}-1]<E_{-(\theta-\alpha_2)}[r_{-(\theta-\alpha_2)}-1]<\cdots < E_{\theta-\alpha_2-\alpha_3}[r_{\theta-\alpha_2-\alpha_3}-1]< E_{\theta-\alpha_2}[r_{\theta-\alpha_2}-1]\\
<E_{\theta}[r_{\theta}-1] < E_{-\theta}[r_{-\theta}]<\cdots < E_{\theta}[r_{\theta}]< \cdots \label{eq:order roots}
\end{eqnarray}
according to $r_{\alpha}$, heights and the relationship $\alpha_i>\alpha_{i+1}$. 
By Poincaré--Birkhoff--Witt theorem, there is a basis $\mathcal{B}$ of $\Vac_{\fj^+}=\Ug/\Ug(\fj^+ + \bC\bone)$ consisting of tensors $E_1\otimes E_2\otimes\cdots\otimes E_k$ where $E_j\in \{E_\alpha[r_\alpha-r]\,|\,r\in\bN^+,\alpha\in\Phi_G\}$ and $E_1\leq E_2\leq\cdots\leq E_k$. 

A Segal-Sugawara vector $S_i$ can be written as a linear combination of tensors $E_{\beta_1}[n_1]E_{\beta_2}[n_2]\cdots E_{\beta_k}[n_k]$ in $\mathcal{B}$ satisfying (c.f. \cite[Lemma10]{CK})
\begin{equation} \label{eq:SS operators}
\textnormal{(i)}.\ k\leq d_i;\qquad\qquad\quad
\textnormal{(ii)}.\ \sum_{b=1}^k\beta_b=0;\qquad\qquad\quad
\textnormal{(iii)}.\ \sum_{b=1}^k n_b=-d_i. 
\footnote{We use $d_i$ to denote fundamental degree, while in \emph{loc.cit.} $d_i$ stands for exponent and is different to fundamental degree by $1$. }
\end{equation}
Then the associated Segal-Sugawara operator $S_{i,j}$ is a linear combination of tensors 
$$X=E_{\beta_1}[n_1]E_{\beta_2}[n_2]\cdots E_{\beta_k}[n_k],$$
where the set of $\beta_j$ is same as that of a summand of $S_i$. 
Thus $X$ still satisfies conditions (i),(ii) above, while condition (iii) would be replaced by $\sum_{b=1}^k n_b=j-d_i+1$.

For a tensor $X=E_{\beta_1}[n_1]E_{\beta_2}[n_2]\cdots E_{\beta_k}[n_k]$ of $\Ug$, we rewrite its image $X\cdot\bone$ in $\Vac_{\fj^+}$ as a linear combination of tensors in $\mathcal{B}$.
In this process, we move some factor $E_{\beta_b}[n_b]\in\fj^+$ to the right using Lie bracket, and rearrange rest of factors. 
This process preserves $\sum_b\beta_b$ and $\sum_b n_b$, while $k$ may decrease. 
We collect above discussion in the following lemma:

\begin{lem}\label{l:S_ij tensor}
Let $S_i=\sum_{\underline{\gamma},\underline{m}} \lambda_{\underline{\gamma},\underline{m}} E_{\gamma_1}[m_1]E_{\gamma_2}[m_2]\cdots E_{\gamma_k}[m_k]$ 
be the decomposition of a Segal-Sugawara vector with respect to $\mathcal{B}$, indexed by pairs $(\underline{\gamma},\underline{m})$. 
Then each non-zero component
	
$E_{\beta_1}[n_1]E_{\beta_2}[n_2]\cdots E_{\beta_k}[n_k]\cdot\bone\in \mathcal{B}$ of $S_{i,j}\cdot\bone \in \Vac_{\fj^+}$ satisfies
\begin{itemize}
\item [(i)] $k\leq d_i$;
\item [(ii)] There exists an index $(\underline{\gamma},\underline{m})$ and a partition of $\underline{\lambda}=\{\gamma_1,\dots,\gamma_{m_c}\}$ such that each $\beta_b$ is a sum of roots in one of these subsets. 
In particular, $\sum_{b=1}^k\beta_b=0$;
\item [(iii)] $\sum_{b=1}^k n_b=j-d_i+1$, and each $n_b\leq r_{\beta_b}-1$. 
\end{itemize}
\end{lem}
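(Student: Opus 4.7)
\textbf{Proof proposal for Lemma \ref{l:S_ij tensor}.}
The plan is to trace the three invariants ``number of factors'', ``sum of roots'', and ``sum of loop-degrees'' through three successive operations: (a) the state--field correspondence that produces the operator $S_{i,j}$ from the vector $S_i$, (b) the action on the cyclic vector $\bone\in\Vac_{\fj^+}$, and (c) the normal ordering via the PBW basis $\mathcal{B}$, which is where reduction modulo $\fj^+$ takes place.

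First I would recall the state--field correspondence $S_i\mapsto Y(S_i,z)=\sum_j S_{i,j}\, z^{-j-d_i}$: if one monomial summand of $S_i$ is $E_{\gamma_1}[m_1]\cdots E_{\gamma_k}[m_k]$, then the contribution to $Y(S_i,z)$ is (up to combinatorial factors from $\partial^{(\ell)}$) a normally ordered product of the currents $E_{\gamma_b}(z)$ and their derivatives. Expanding modes, one sees that each summand of the corresponding mode $S_{i,j}$ is a tensor $E_{\gamma_1}[p_1]\cdots E_{\gamma_k}[p_k]$ with exactly the \emph{same} multiset of roots $\{\gamma_1,\dots,\gamma_k\}$, and with $\sum_b p_b = j-d_i+1$ (which uses $\sum_b m_b=-d_i$ from \eqref{eq:SS operators}(iii) and the shift $j+1$ in the state--field map). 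So before any reduction, conditions (i), (ii) (with the trivial partition into singletons) and the equality in (iii) are immediate.

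Next I would perform the reduction modulo $\fj^+$ in the PBW basis $\mathcal{B}$ with respect to the order \eqref{eq:order roots}. Given a tensor $E_{\gamma_1}[p_1]\cdots E_{\gamma_k}[p_k]\cdot\bone$, I apply the following procedure: if some factor $E_{\gamma_b}[p_b]$ lies in $\fj^+$ (equivalently $p_b\ge r_{\gamma_b}$), I either commute it to the right where it kills $\bone$, producing only commutator terms, or I move it to its sorted position and discard the annihilating tail. Each elementary commutator replaces the pair $E_{\gamma_a}[p_a]\,E_{\gamma_b}[p_b]$ by $c\,E_{\gamma_a+\gamma_b}[p_a+p_b]$ for some scalar (when $\gamma_a+\gamma_b$ is a root) or by a Cartan element with a degree from the central extension (which will also fit into the ``sum of subset'' pattern). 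This step preserves the total sum of roots (it is zero throughout) and the total sum of loop degrees (equal to $j-d_i+1$), and it strictly decreases the factor count $k$, so (i) and (iii) (the equality) are maintained.

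The key observation, which will be the main combinatorial point, is that after finitely many commutator steps the resulting surviving tensor $E_{\beta_1}[n_1]\cdots E_{\beta_{k'}}[n_{k'}]\cdot\bone$ has each $\beta_b$ equal to the sum $\sum_{a\in S_b}\gamma_a$ over one block $S_b$ of a partition of $\{1,\dots,k\}$; this is clear by induction because every commutator merges exactly two ``current classes'' into one. Correspondingly, each $n_b=\sum_{a\in S_b}p_a$, which keeps the total loop degree at $j-d_i+1$, and we have $\sum_b \beta_b = \sum_a \gamma_a = 0$. Finally, the membership in $\mathcal{B}$ (i.e.\ non-annihilation by $\bone$) forces $n_b\le r_{\beta_b}-1$ for every surviving factor, giving the last inequality of (iii).

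The hardest part will be bookkeeping in step (b)--(c): one must verify that every commutator executed during PBW normalisation corresponds to merging two subsets of the ambient partition (so that the ``each $\beta_b$ is a sum over a subset of $\{\gamma_a\}$'' description is preserved under iterated bracketing), and that no ``alien'' Cartan contribution from the central term $\langle E_\alpha,E_{-\alpha}\rangle\,c$ or from $[E_\alpha,E_{-\alpha}]\in\ft$ violates the partition structure. For the latter, one checks that central contributions $c=\bone$ only produce scalar factors (which fit (ii) trivially with an empty block), while Cartan contributions $H_{\alpha}[n]$ are handled by declaring $H_\alpha$ to be encoded by the zero root, which is consistent with the degeneration $\alpha+(-\alpha)=0$ and with the inequality $n_b\le r_0-1=0$ matching $r_0=1$. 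Once these bookkeeping points are dispatched, the three assertions follow.
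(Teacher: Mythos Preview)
Your proposal is correct and follows essentially the same route as the paper: the paper's argument (given in the discussion immediately preceding the lemma) is exactly your three steps---use the known constraints \eqref{eq:SS operators} on the PBW summands of $S_i$, pass to $S_{i,j}$ via the state--field correspondence (same roots, loop-degree sum shifted to $j-d_i+1$), and then rewrite $X\cdot\bone$ in the basis $\mathcal{B}$ by commuting factors of $\fj^+$ to the right, noting that each bracket merges two factors and preserves $\sum_b\beta_b$ and $\sum_b n_b$ while $k$ can only drop. Your version is more explicit than the paper's about the partition structure in (ii) arising from iterated brackets and about the Cartan/central bookkeeping, but the strategy is identical.
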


\subsubsection{} \label{l:S_ij vanish}
Next, recall \eqref{eq:decomposition j+} that $\fj^+=\fu_\phi\oplus\fk\oplus\fp(2)$. For any subspace $\fh\subset\fj^+$, we denote the set of finite part of the affine root subspaces contained in $\fh$ by $\Phi(\fh)\subset\Phi_G$. Note that $\Phi(\fu_\phi)\subset\Phi(\fg_0)$ is disjoint from $\Phi(\fk)\sqcup\Phi(\fm_1)=\Phi(\fg_1)$.
With this notation, $r_\alpha$ is equal to
$$
r_\alpha=
\begin{cases}
1-\frac{\Ht\alpha-1}{d},\quad &\alpha\in\Phi(\fm_1);\\
1-\lceil\frac{\Ht\alpha}{d}\rceil,\quad &\alpha\in\Phi_G-\Phi(\fm_1)-\Phi(\fu_\phi);\\
-\frac{\Ht\alpha}{d},\quad\quad\ \ &\alpha\in\Phi(\fu_\phi);\\
1,\quad\quad\qquad\ \ &\alpha=0.
\end{cases}
$$

If $S_{i,j}\cdot\bone\neq0$, then it has non-zero coefficient in at least one basis $X=E_{\beta_1}[n_1]E_{\beta_2}[n_2]\cdots E_{\beta_k}[n_k]\cdot \bone$ described in Lemma \ref{l:S_ij tensor}. For this tensor $X$, we have an inequality
\begin{equation}\label{eq:m_1 roots bound}
\begin{split}
j-d_i+1&=\sum_{b=1}^k n_b\leq \sum_{b=1}^k (r_{\beta_b}-1)
\leq\sum_{\beta_b\not\in\Phi(\fm_1)}(-\lceil\frac{\Ht\beta_b}{d}\rceil)+\sum_{\beta_b\in\Phi(\fm_1)}(-\frac{\Ht\beta_b-1}{d})\\
&\leq\sum_{b=1}^k(-\frac{\Ht\beta_b}{d})+\frac{|\{\beta_b\in\Phi(\fm_1)\}|}{d}
=\frac{|\{\beta_b\in\Phi(\fm_1)\}|}{d}.
\end{split}
\end{equation}
In particular, we have $j\leq d_i-1+\frac{d_i}{d}$. 
Then we conclude Lemma \ref{l:ker(fZ to fZ_j^+)}(i).

\subsubsection{Conclusion of the proof} \label{sss:proof local quant}
We simultaneously prove Proposition \ref{p:local quant}(i) and Lemma \ref{l:ker(fZ to fZ_j^+)}(ii). 
Proposition \ref{p:local quant}(ii) follows from \ref{p:local quant}(i). 

We first show that the image of $S_{i,d_i}$ lies in $U({\overline{\fj}})$ for $d_i\ge d$. 
Note that these operators all have degree $1$. 
Consider the decomposition of $S_{i, d_i}\cdot\bone$ in Lemma \ref{l:S_ij tensor}. For any non-zero component $X=E_{\beta_1}[n_1]\cdots E_{\beta_k}[n_k]\cdot \bone$ in $S_{i,d_i}\cdot \bone$, we need to show that $E_{\beta_b}[n_b]\in\fj$ for every $b$. 

For each root $\beta$, we set $E(\beta)=r_{\beta}-1+\frac{\Ht(\beta)}{d}$. Note that $-1<E(\beta)\le 1/d$. The equality holds when $\beta\in \Phi(\fm_1)$.
We will show that $n_b=r_b-1$. 
Then we deduce an equality
\begin{equation}
1=\sum_{b=1}^{k} E(\beta_b) = \sum_{b=1}^k E(\beta_b)=
\frac{|\{\beta_b|\beta_b\in \Phi(\fm_1)\}|}{d}+ \sum_{\beta_b\not \in \Phi(\fm_1)} E(\beta_b),
\label{equality Ebeta}
\end{equation}
and therefore $d\le |\{\beta_i|\beta_i\in \Phi(\fm_1)\}| \le d_i$. 

(a). In the case of type A (resp. type C), $S_i$ are constructed in \cite[Theorem 3.1]{MolevA} (resp. \cite[Theorem 4.4]{Yakimova}). 
The above operators $S_i$ are constructed from coefficients of certain characteristic polynomials. 
By Lemma \ref{l:S_ij tensor}(ii), we obtain $|\{\beta_b\in\Phi(\fm_1)\}|\leq d$ and therefore the equality $\sharp\{\beta_b\in\Phi(\fm_1)\}=d$. 
Thus $\beta_b=-\theta_M$ for a unique $b$, and the number of $b$ such that $\beta_b=\alpha_j\in\Delta_M$ equals to coefficient of $\alpha_j$ in $\theta_M$. This implies
$
\sum_{\beta_b\in\Phi(\fm_1)}\beta_b=0=\sum_{\beta_b\not\in\Phi(\fm_1)}\beta_b.
$ 

Then all of equalities in \eqref{eq:m_1 roots bound} hold. 
We deduce that $n_b=r_{\beta_b}-1$ for any $b$. 
Thus if $\beta_b\not\in\Phi(\fm_1)$, then we have $n_b=-\frac{\Ht(\beta)}{d}$ and $\beta_b\in\Phi(\fl)-\Phi(\fu_\phi)$.
Also, since $S_i$ is defined from coefficient of characteristic polynomials, and the root vector of any root in $\Phi(\fu^c)$ has common column or row with $\fm_1$, we also have $\beta_b\not\in\Phi(\fu^c)$ in this case, thus $\beta_b\in\Phi(\fb^-_L)=\Phi^-(\fl)\sqcup\{0\}$. 
From $\sum_{\beta_b\not\in\Phi(\fm_1)}\beta_b=0$, we deduce $\beta_b=0$ for any $\beta_b\not\in\Phi(\fm_1)$. 
Moreover, we have $E_{\beta_b}\in\ft_\phi$ since it cannot have common column or row with $\fm_1$. 
We obtain $E_{\beta_b}[n_b]\in\fj$ for every $b$, which completes the proof of assertion (i) in this case.

In view of $|\{\beta_b\in\Phi(\fm_1)\}|\leq d$ and inequality \eqref{equality Ebeta}, we conclude that $S_{i,j}\in \ker(\fZ\to \fZ_{\fj^+})$ for $j\ge d_i+1$. This finishes the proof of Lemma \ref{l:ker(fZ to fZ_j^+)}(ii). 

(b). In the case of type B or D and $d=h_G-2$ or $h_G$, all of equalities in \eqref{eq:m_1 roots bound} hold as $d_i\ge d$. 
We deduce that $n_b=r_{\beta_b}-1$ for any $b$. 

Then the length $k$ of $X$ may be $d,d+1,d+2$. 
When $k\le d+1$, in view of the equality \eqref{equality Ebeta}, 
there exists at most one root $\beta_b$ satisfying $E(\beta_b)=0$ and other roots belong to $\Phi(\fm_1)$.
In view of \eqref{eq:SS operators}(ii), we deduce that $\beta_b$ is a zero root. 
Moreover, since $S_{i,d_i}$ belongs to $\Vac_{\fj^+}^J$, the action of $E_{\alpha_i}$ on $S_{i,d_i}$ is trivial. Then we deduce that $E_{\beta_b}$ belongs to $\ft_{\phi}$ and the assertion in this case follows. 

When $k=d+2$, then $X$ is a component of $S_{n,d_n}$ and its image $\overline{X}$ in the associated graded $\gr A$ is a non-zero component of $h_{n,d_n}$. 
As $h_{n,d_n}$ is constructed from the invariant polynomials of $\cg$, then the assertion in this case follows from the same argument in (a). \hfill \qed

\subsection{Proof of Proposition \ref{l:global oper}} \label{ss:global oper}
The space $\Op_{\cg}(\bP^1)_{\cG}$ is a closed subscheme of $\Op_{\cg}(\bGm)_{(0,\varpi(0)),(\infty,\frac{1}{d})}$ \eqref{sss:functoriality}. 
By Lemma \ref{l:oper le 1/d}, we have
$$\mathbb{C}[\Op_{\cg}(\bGm)_{(0,\varpi(0)),(\infty,\frac{1}{d})}]\simeq \mathbb{C}[\lambda_{ij}]_{d_i\ge d, 0\le j\le \lfloor d_i/d\rfloor-1}.$$

We calculate the classical limit of $\Op_{\cg}(\bGm)_{(0,\varpi(0)),(\infty,\frac{1}{d})}$ following \cite[\S 3.1.14]{BD}. 
We consider a family of $\cg$-$\hbar$-opers on $\bGm$:
\[
\nabla=
\hbar \partial_t+t^{-1}p_{-1}+
\sum_{d_i\geq d}\lambda_i(t)p_i,\quad \lambda_i(t)=\sum_{j=0}^{\lfloor d_i/d\rfloor-1}t^j \lambda_{ij},\quad \lambda_{ij}\in\bC.
\]
These $\cg$-$\hbar$-opers form an affine space $\widetilde{\Op}_{\cg}(\bGm)_{(0,\varpi(0)),(\infty,\frac{1}{d})}$ flat over $\Spec(\mathbb{C}[\hbar])$, whose fiber at $\hbar=1$ is $\Op_{\cg}(\bGm)_{(0,\varpi(0)),(\infty,\frac{1}{d})}$. 
By a simple gauge transform, we see it is stable under the natural action of $\bGm$, sending $\nabla$ to $\lambda \cdot \nabla$, for $\lambda\in \mathbb{C}^*$. 
Its fiber at $\hbar=0$ is isomorphic to the associated Hitchin space. 

\begin{lem}\label{l:classical limit oper}
Assume moreover that $d>n$ when $\hG$ is of type $D_n$.  
The characteristic polynomials $\{c_{i}\}$ defined in \S \ref{ss:local Hitchin} induces an isomorphism
\begin{eqnarray*}
\biggl(\widetilde{\Op}_{\cg}(\bGm)_{(0,\varpi(0)),(\infty,\frac{1}{d})}\biggr)_0 &\xrightarrow{\sim}& \bigoplus_i \Gamma(\bP^1, \Omega^{d_i}( (d_i-1)0+ (d_i+\lfloor\frac{d_i}{d}\rfloor)\infty)),  \\
X=(t^{-1}p_{-1}+
\sum_{d_i\geq d}\lambda_i(t) p_i)
&\mapsto& (c_{d_i}(X) (dt)^{d_i})_{i}.
\end{eqnarray*}
Moreover, if $t$ denotes a coordinate of $\bP^1$ with a simple pole at $\infty$, and $\delta_{ij}$ denotes the coefficient of $t^{1+j} (\frac{dt}{t})^{d_i}$, then above isomorphism sends
\begin{equation} \label{eq:description classical limit}
\delta_{ij} \mapsto u_{ij} \lambda_{ij} + Q_{ij}(\lambda_{\mu}),
\end{equation}
where $u_{ij}$ is a constant and $Q_{ij}$ is a polynomial in variables $\lambda_{\mu\nu}$ with $\mu<i, \nu<j$. 
In particular, $\delta_{i0}$ is sent to a scalar multiple of $\lambda_i$($=\lambda_{i0}$). 
\end{lem}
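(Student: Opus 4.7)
The plan is to reduce the lemma to Kostant's section theorem together with a pole-and-weight computation. At $\hbar=0$ a classical $\hbar$-oper in the family degenerates to the Higgs field $X = t^{-1}p_{-1} + \sum_{d_i\ge d}\lambda_i(t)p_i$, and I must verify that (a) $\chi(X) = (c_{d_i}(X)(dt)^{d_i})_i$ lands in the claimed space of sections; (b) the resulting map is an isomorphism; and (c) the explicit coordinate formula \eqref{eq:description classical limit} holds. The hypothesis $d>n$ in type $D_n$ ensures that the Pfaffian, of degree $n<d$, does not appear among the fundamental invariants indexed by $d_i\ge d$, so only ordinary characteristic polynomial coefficients contribute.

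The key input will be Kostant's isomorphism $p_{-1} + \fn^{p_1}\xrightarrow{\sim}\fc$ combined with the rescaled $\bGm$-action $x\mapsto t\cdot\Ad(\check{\rho}(t))x$. This action fixes $p_{-1}$, acts on $p_j$ by $t^{d_j}$, and makes each $c_{d_i}$ homogeneous of weight $d_i$, hence
\[
c_{d_i}\bigl(p_{-1} + \sum_j v_j p_j\bigr) = \alpha_i v_i + P_i(v_j : d_j < d_i),\qquad \alpha_i\in\bC^{\times},
\]
where every monomial $v_{j_1}\cdots v_{j_r}$ appearing in $P_i$ satisfies $\sum_s d_{j_s}=d_i$. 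Substituting $v_j=v_j(t):=t\lambda_j(t)$, a polynomial in $t$ of degree $\lfloor d_j/d\rfloor$ with vanishing constant term, and using $\sum_s \lfloor d_{j_s}/d\rfloor\le \lfloor d_i/d\rfloor$, I will show that $c_{d_i}(X) = t^{-d_i}\bigl(\alpha_i v_i(t) + P_i(v_j(t))\bigr)$ is a Laurent polynomial in $t$ supported in exponents in $[1-d_i,\, \lfloor d_i/d\rfloor - d_i]$. Passing to the coordinate $s=1/t$ at infinity and keeping track of the poles of $(dt)^{d_i}$ then places $c_{d_i}(X)(dt)^{d_i}$ inside $\Gamma(\bP^1, \Omega^{d_i}((d_i-1)\cdot 0 + (d_i+\lfloor d_i/d\rfloor)\cdot\infty))$.

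To conclude the isomorphism and the explicit formula, I will match dimensions and exploit triangularity. By Riemann--Roch on $\bP^1$ each summand has dimension $\lfloor d_i/d\rfloor$, summing to $\sum_{d_i\ge d}\lfloor d_i/d\rfloor$, which equals the number of coordinates $\lambda_{ij}$ on the left. Rewriting $c_{d_i}(X)(dt)^{d_i} = \bigl(\alpha_i v_i(t) + P_i(v_j(t))\bigr)(dt/t)^{d_i}$ and extracting the coefficient $\delta_{ij}$ of $t^{1+j}(dt/t)^{d_i}$ yields $\delta_{ij} = \alpha_i\lambda_{ij} + Q_{ij}$ with $Q_{ij}$ a polynomial in $\{\lambda_{\mu\nu} : d_\mu < d_i\}$; in particular $\delta_{i0}=\alpha_i\lambda_{i0}$, as required. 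Triangularity together with $\alpha_i\ne 0$ yields injectivity, and hence bijectivity by the dimension count. The main obstacle will be confirming that the chosen homogeneous bases $p_i\in\fn^{p_1}_{d_i-1}$ behave uniformly across all classical types under the weight rescaling, and verifying in type $D_n$ that the hypothesis $d>n$ genuinely excludes any Pfaffian-type relations among the $c_{d_i}$'s with $d_i\ge d$, so that the Kostant-slice argument applies as stated.
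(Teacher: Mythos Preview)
Your approach is correct and genuinely different from the paper's. The paper first reduces to $\cg=\Sl_n$ via the functoriality of the canonical oper form (\S\ref{sss:functoriality}) and then argues by induction on $n$, expanding $c_n(X)=\det(X)$ and the lower $c_i(X)$ as sums of products of determinants of principal minors; the key estimate is the subadditivity $\sum_k \lfloor n_k/d\rfloor \le \lfloor n/d\rfloor$ for a partition $n=n_1+\cdots+n_k$. You instead stay type-uniform and invoke Kostant's section together with the weighted $\bGm$-action to obtain the weighted-homogeneous expansion $c_{d_i}(p_{-1}+\sum_j v_j p_j)=\alpha_i v_i + P_i(v_j:d_j<d_i)$ with $\sum_s d_{j_s}=d_i$ in every monomial, then substitute $v_j=t\lambda_j(t)$. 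The same floor-subadditivity appears, now as $\sum_s \lfloor d_{j_s}/d\rfloor \le \lfloor d_i/d\rfloor$, and the triangular form $\delta_{ij}=\alpha_i\lambda_{ij}+Q_{ij}$ drops out directly. Your route is shorter and avoids both the type-A reduction and the inductive minor expansion; the paper's route is more elementary in that it uses only determinant identities rather than the Kostant slice.

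Two small points to tighten. First, the lemma asserts $Q_{ij}$ depends only on $\lambda_{\mu\nu}$ with $\mu<i$ \emph{and} $\nu<j$; you state only the $\mu$-constraint explicitly. Your argument does give the $\nu$-constraint as well: in a degree-$r$ monomial with $r\ge 2$, the coefficient of $t^{1+j}$ in $\prod_s t\lambda_{j_s}(t)$ forces $r+\sum_s\nu_s=1+j$, hence each $\nu_s\le j-1<j$. You should say this, since it is exactly what makes the transformation triangular in the lexicographic order on $(i,j)$ and hence an isomorphism of affine spaces (not merely a bijection on points by a dimension count). Second, your worry about the Pfaffian in type $D_n$ is correctly resolved by the hypothesis $d>n$: the Pfaffian has degree $n<d$, so it does not occur among the invariants with $d_i\ge d$, and the remaining even degrees $d,d+2,\ldots,2(n-1)$ are distinct, so the Kostant/Jacobian argument giving $\alpha_i\ne 0$ goes through unchanged.
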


\begin{proof}
It suffices to prove the relationship \eqref{eq:description classical limit}. 
By functoriality of canonical opers form \S \ref{sss:functoriality}, it suffices to show the lemma for $\cg=\Sl_n$. 
We set $\lambda_{i0}=\lambda_i$ for simplicity. 
We proceed the proof by induction on $n$. When $n=d$, the assertion is clear.  
	
We first consider $c_n(X)=\det(X)$, which is expressed as $(\sum_{j=0}^{\lfloor n/d \rfloor -1} t^j \lambda_{ij})t^{-(n-1)}$ plus a sum of products of determinants of certain principal minors of $X$. 
We can calculate these determinant by induction hypothesis. 
Then the assertion for $c_n(X)$ follows from the induction hypothesis and the following inequality that for $n=n_1+\dots+n_k$, we have:
\[
\lfloor \frac{n_1}{d} \rfloor+\dots + \lfloor \frac{n_k}{d} \rfloor \le \lfloor \frac{n}{d} \rfloor.
\]
	
For $i<n$, $c_i(X)$ is calculated by a sum of products of determinants of certain principal minors of $X$. 
We can conclude the assertion for $c_i(X)$ by the induction hypothesis and a similar argument as above. 
\end{proof}

Now, we can prove Proposition \ref{l:global oper}.
We consider the following filtered homomorphisms:
\begin{equation}
\mathbb{C}[\lambda_{i0}]_{d_i\ge d} \hookrightarrow \mathbb{C}[\lambda_{ij}] \simeq  \mathbb{C}[\Op_{\cg}(\bGm)_{(0,\varpi(0)),(\infty,\frac{1}{d})}] 
\twoheadrightarrow \mathbb{C}[\Op_{\cg}(\bP^1)_{\cG}].
\label{eq:composition opers}
\end{equation}
By Lemma \ref{l:classical limit oper}, the composition of the their associated graded is an isomorphism:
\begin{align*}
\Spec(\gr \mathbb{C}[\lambda_{i0}]_{d_i\ge d}) &\leftarrow \bigoplus_i \Gamma(\bP^1, \Omega^{d_i}( (d_i-1)0+ (d_i+\lfloor\frac{d_i}{d}\rfloor)\infty))\\ &\hookleftarrow 
\Hit(\bP^1)_{\mathcal{G}} \simeq \bigoplus_{d_i\ge d} \Gamma(\bP^1, \Omega^{d_i}( (d_i-1)0+ (d_i+1)\infty)).
\end{align*}
Then so is the composition of \eqref{eq:composition opers}. 
This finishes the proof. \hfill \qed 

\subsection{Proof of Proposition \ref{p:Hyp opers}} \label{ss:pf Hyp opers}
We will prove a more general result.
Consider the following differential equations:
\[
\Hyp(u_m(t),\dots,u_n(t))=\delta^{n}+t\biggl( u_m(t)\delta^{n-m}+\dots+u_n(t)\biggr),
\]
where $u_{i}(t)=\sum_{j\ge 0} u_{ij} t^j\in \mathbb{C}[t]$,
and the following $\Sl_n$-opers:
\begin{equation} \label{eq:opers principal nilpotent}
\nabla(\lambda_m(t),\dots,\lambda_n(t))=
\partial_t+t^{-1}p_{-1}+
\sum_{d_i\geq d}\lambda_i(t)p_i,
\end{equation}
where $\lambda_i(t)=\sum_{j\ge 0} \lambda_{ij} t^j\in \mathbb{C}[t]$.

\begin{lem} \label{l:isomorphism companion}
We set $\deg \lambda_{ij}=\deg u_{ij}= j+1$. 
There exists an isomorphism of homogeneous $\mathbb{C}$-algebras 
\begin{equation}
\mathbb{C}[\lambda_{ij}]\xrightarrow{\sim} \mathbb{C}[u_{ij}],\quad
u_{ij}= c_i \lambda_{ij}+ Q_{ij}(\underline{\lambda}),
\label{eq:formula Hypopers}
\end{equation}
where $c_i$ is the sum of coefficients of $p_{i}$ and $Q_{ij}$ is a polynomial in variables $\lambda_{\mu \nu}$ with $\mu < i, \nu \le j$, such that under this isomorphism, the connection $\Hyp(u_m(t),\dots,u_n(t))$ is isomorphic to $\nabla(\lambda_m(t),\dots,\lambda_n(t))$. 
\end{lem}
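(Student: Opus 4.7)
The plan is to convert both sides of the asserted isomorphism into $\GL_n$-connections on the trivial bundle over $\bGm$ and to extract the relation $u_{ij}=c_i\lambda_{ij}+Q_{ij}(\underline{\lambda})$ from an explicit gauge transformation, following the standard theory of opers in type A. First I would rewrite $\Hyp(u_m(t),\dots,u_n(t))$ using $\delta=t\partial_t$ as a monic scalar operator of order $n$ in $\partial_t$ whose coefficients are $\mathbb{C}$-linear in the $u_{ij}$ and polynomial in $t^{\pm 1}$, then pass to the companion first order system $\partial_t + A_{\mathrm{Hyp}}(t)$.

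Next I would invoke the classical fact that in type A every $\Sl_n$-oper on the trivial bundle is gauge equivalent, via a unique unipotent lower triangular $g(t)$, to the companion form of a unique monic scalar differential operator of order $n$. Applied to $\nabla(\lambda_m(t),\dots,\lambda_n(t))$, this procedure produces a scalar operator of the shape $\Hyp(u_m(t),\dots,u_n(t))$ for certain $u_{ij}$ that are precisely what we wish to identify.

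The body of the argument is to track this gauge transformation and read off the dependence of $u_{ij}$ on the $\lambda_{\mu\nu}$. Since $p_i$ has $\check{\rho}$-weight $d_i-1$ in $\fn^{p_1}$, its conjugation by $g(t)$ deposits $\lambda_i(t)p_i$ in the $i$-th slot of the bottom row of $A_{\mathrm{Hyp}}(t)$, contributing $c_i\lambda_i(t)$, where $c_i$ is the sum of its matrix coefficients. All remaining contributions to that slot arise from commutators involving $\partial_t$, $t^{-1}p_{-1}$, and $\lambda_j(t)p_j$ for $j<i$, producing a polynomial $Q_{ij}$ in the $\lambda_{\mu\nu}$ with $\mu<i$ and $\nu\le j$. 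This yields formula \eqref{eq:formula Hypopers}, and the resulting map of polynomial algebras is triangular with non-zero diagonal entries $c_i$, hence an isomorphism. Homogeneity is then checked by assigning $\deg t=-1$, $\deg\partial_t=1$, $\deg\delta=0$: a direct inspection shows that $\Hyp$ and $\nabla$ (the latter after an auxiliary gauge by $t^{\check{\rho}}$ matching the principal grading on $\cg$) are both homogeneous of degree $0$ exactly when $\deg u_{ij}=\deg\lambda_{ij}=j+1$, and the gauge factor $g(t)$ respects this grading by construction.

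The hard part will be the combinatorial bookkeeping in the gauge step. The existence of the transformation and the bijection between type A opers and monic scalar differential operators of order $n$ are classical, but the precise identification of the leading constant $c_i$ as the sum of coefficients of $p_i$, together with the strict triangularity of $Q_{ij}$ in $(\mu,\nu)$, requires an inductive argument that exploits both the principal grading on $\fn^{p_1}$ and the lower triangular shape of $g(t)$.
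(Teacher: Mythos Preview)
Your proposal is correct and follows essentially the same route as the paper: pass from the oper $\nabla(\lambda_m(t),\dots,\lambda_n(t))$ to its companion (cyclic-vector) form, read off the coefficients $u_i(t)$, and observe the triangular dependence $u_{ij}=c_i\lambda_{ij}+Q_{ij}(\lambda_{\mu\nu})_{\mu<i,\,\nu\le j}$, from which the inverse and hence the isomorphism follow.

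The one substantive difference is the choice of derivation. You plan to first rewrite $\Hyp$ as a monic operator in $\partial_t$ and then compare with the $\partial_t$-companion form of $\nabla$. The paper instead works directly with $\delta=t\partial_t$: it takes the cyclic basis $(e_1,\delta(e_1),\dots,\delta^{n-1}(e_1))$ for $\nabla$, which immediately produces the $\Hyp$-shaped companion matrix (1's on the superdiagonal, entries $t\,u_i(t)$ in the first column) without any Stirling-number conversion between $\delta^k$ and $t^k\partial_t^k$. This shortcut also makes the triangularity and homogeneity transparent, since the paper obtains the closed expression
\[
t\,u_i(t)=c_i\,t\,\lambda_i(t)+\sum_{\underline{\alpha}} c_{\underline{\alpha}}\prod_{m\le \ell\le i-1}\prod_{j=1}^{k_\ell}\delta^{\alpha_{\ell,j}}\bigl(t\,\lambda_\ell(t)\bigr),
\]
from which both $\mu<i$, $\nu\le j$ and $\deg u_{ij}=j+1$ are read off directly (each factor contributes degree $\nu+1$ and these sum to $j+1$). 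Your route via $\partial_t$ works, but adopting the $\delta$-cyclic basis from the start would spare you the extra conversion and make the final bookkeeping considerably cleaner.
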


\begin{proof}
Let $(e_1,\dots,e_n)$ be a basis associated to the connection $\nabla(\lambda_m(t),\dots,\lambda_n(t))$. 
Then the companion connection form
\[
\nabla= d+
\begin{pmatrix}
0 & 1 & 0 & \dots & 0 \\
0 & 0 & 1 & \dots & 0 \\
\vdots & \vdots & \ddots & \ddots & \vdots \\
u_{m}(t)t &0 & \dots & \dots & \dots \\
\vdots & \vdots & \vdots & \vdots & \vdots \\
u_{n-1}(t)t & 0 & 0 & \dots & 1 \\
u_n(t) t & 0 & 0 & \dots & 0  \end{pmatrix}\frac{dt}{t}.
\]
associated to the basis $(e_1,\delta(e_1),\dots,\delta^{n-1}(e_1))$ allows us to define a homogeneous homomorphism:
\[
\mathbb{C}[\lambda_{ij}] \to \mathbb{C}[u_{ij}].
\]
It suffices to show that the above map satisfies the relationship in \eqref{eq:formula Hypopers}.
Indeed, if we know this relationship, then we prove \eqref{eq:formula Hypopers} by defining an inverse: 
\begin{equation} \label{eq:inverse companion}
\lambda_{ij}=\frac{1}{c_i} u_{ij}+R_{ij}(\underline{u}),
\end{equation}
where $R_{ij}$ is a polynomial in variables $u_{\mu\nu}$, with $\mu< i$ and $\nu\le j$. 

We can first determine $u_m(t)=c_m \lambda_m(t)$. 
Moreover, for every $i\ge m$, the gauge transform process shows that 
\[
tu_i(t)= c_i t\lambda_i(t)+ \sum_{\underline{\alpha}} c_{\underline{\alpha}} \prod_{m\le \ell \le i-1} \prod_{j=1}^{k_{\ell}} \delta^{\alpha_{\ell,j}}(t\lambda_{\ell}(t)),
\]
for certain pairs $\underline{\alpha}=(\alpha_{m,1},\dots, \alpha_{m,k_m},\dots,\alpha_{i-1,1},\dots,\alpha_{i-1,k_{i-1}})$ of positive integers and some constant $c_{\underline{\alpha}}$ associated to $\underline{\alpha}$.
Moreover, one can see that $\sum \alpha_{\ell,j}\le n$, and the above sum is therefore a finite sum. 
Then the assertion follows from the above formula. 
\end{proof}

\subsubsection{Proof of Proposition \ref{p:Hyp opers}(i)}
With the notation of Lemma \ref{l:isomorphism companion}, by \eqref{eq:formula Hypopers} and \eqref{eq:inverse companion}, if $u_{ij}=0$ for $j\ge 1$, then $\Hyp(u_m,\dots,u_n)$ is isomorphic to 
\[
\nabla=
\partial_t+t^{-1}p_{-1}+
\sum_{d_i\geq d}(\lambda_i+\sum_{j\ge 1} t^j P_{ij})p_i, 
\]
where $\lambda_i$ is a linear combination of $u_m,\dots,u_i$, and $P_{ij}$ is a polynomial in $\{\lambda_{\mu}\}_{\mu<i}$. 
The maximal slope of the above oper is $\le \frac{1}{m}$, which is the maximal slope of $\Hyp(u_m,\dots,u_n)$. Then we conclude assertion (i) by \cite[Proposition 3]{CK}. \hfill\qed

\subsubsection{}
To prove the assertion (ii), we consider the following companion form of $\Sl_n$-opers on $D_{\infty}^{\times}$: 
\begin{equation} \label{eq:companion partial}
\partial^n_s+v_2(s)\partial^{n-2}_s+\dots+v_n(s),
\end{equation}
where $s=t^{-1}$ is the coordinate at $\infty$ and $v_i(s)=\sum v_{i,j} s^{-j-1}\in \mathbb{C}(\!(s)\!)$. 
The coordinates $v_{i,j}$ form a set of topological generators of $\mathbb{C}[\Op_{\Sl_n}(D^{\times}_{\infty})]$. 
Under the isomorphism $\fZ\xrightarrow{\sim} \mathbb{C}[\Op_{\Sl_n}(D^{\times}_{\infty})]$, the Segal-Sugawara operator $S_{i,j}$ is sent to a scalar multiple of $v_{i,j}$ \cite[Corollary 4.9]{MasoudConductor} \footnote{Although \textit{loc.cit} shows the result for $\mathfrak{gl}_n$, one can deduce the corresponding result for $\Sl_n$ by letting $v_1(s)=0$ and $S_1=0$.}. 

\begin{lem}
\label{l:compare companions}
There exists a linear transformation given by 
\begin{eqnarray*}
v_{i,j}=u_{i,j-i}+ a_{i,j}^{i-1} u_{i-1,j-i}+\dots+a_{i,j}^{m}u_{m,j-i}, \quad m\le i\le n, j\ge i,\\
v_{i,j}=b_{i,j}, \quad m\le i\le n, j< i,
\end{eqnarray*}
where $a_{i,j}^{\ell},b_{i,j}$ are constants, 
such that $\Hyp(u_m(t),\dots,u_n(t))$ is isomorphic to \eqref{eq:companion partial} in $\Op_{\SL_n}(D^{\times}_{\infty})$.
\end{lem}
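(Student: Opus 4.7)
The plan is to compute the change of variable $t \mapsto s = t^{-1}$ directly on the scalar operator $\Hyp(u_m(t),\dots,u_n(t))$, normalize the result to monic companion form in $\partial_s$, and read off the linear relation between the two sets of coefficients.

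First, I would use $\delta = t\partial_t = -s\partial_s$ to rewrite
\[
\Hyp = (-1)^n (s\partial_s)^n + \sum_{i=m}^n (-1)^{n-i}\, s^{-1} u_i(s^{-1})\, (s\partial_s)^{n-i},
\]
then expand each $(s\partial_s)^k = \sum_{\ell} S(k,\ell)\, s^\ell \partial_s^\ell$ using Stirling numbers of the second kind. Multiplying on the left by $(-1)^n s^{-n}$ makes the operator monic in $\partial_s^n$, and its coefficient of $\partial_s^{n-i}$ becomes $S(n,n-i)\, s^{-i}$ plus the $u$-linear sum
\[
\sum_{i'=m}^{i}\sum_{j'\ge 0} (-1)^{i'} S(n-i', n-i)\, u_{i',j'}\, s^{-i-j'-1},
\]
the restriction $i' \le i$ coming from $S(n-i',n-i) = 0$ for $i' > i$. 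The resulting monic operator still carries a subleading $\partial_s^{n-1}$ term, which a standard scalar gauge transformation by a power of $s$ eliminates; since this gauge depends only on $s$ and not on any $u_{i,j}$, it preserves the separation of $u$-linear and $u$-independent contributions in every coefficient. Reading off $v_i(s) = \sum_j v_{i,j}\, s^{-j-1}$ after gauging then produces the asserted formulas: the identification $j' = j - i$ forces $u_{\ell, j-i}$ to appear for $\ell \le i$ when $j \ge i$, while for $j < i$ only $\delta^n$ and the gauge contribute, yielding pure constants $b_{i,j}$.

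The main obstacle will be verifying the triangular-linear form rigorously after the gauge step. The cleanest argument is by a weight grading: assigning $t$ weight $-1$ and $\partial_t$ weight $+1$ gives $\delta$ weight $0$ and makes $u_{i,j}$ homogeneous of weight $j+1$, so that $\Hyp$ is weight-homogeneous and the whole construction respects this grading on its $u$-linear part. Since $v_{i,j}$ must then carry weight $j+1-i$, it can depend linearly only on $u_{\ell, j-i}$ with $\ell \le i$, giving both the linearity and the index relation in one stroke. Invertibility of the resulting linear transformation follows because the coefficient of $u_{i,j-i}$ in $v_{i,j}$ is a nonzero scalar (a product of the relevant Stirling number with a sign, unaffected by the scalar gauge in the $u$-linear part), which makes the change-of-variables matrix lower-triangular with nonzero diagonal entries.
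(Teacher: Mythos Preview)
Your approach is valid and genuinely different from the paper's. The paper never touches the scalar operator after the change of variable: it writes $\Hyp$ as a first-order system $\partial_s + (p_{-1} + \sum_i u_i(s^{-1})E_{?,n})s^{-1}$, performs the diagonal gauge by $s^{\check\rho}$ to reach oper gauge $\partial_s + p_{-1} + \check\rho s^{-1} + \sum_i u_i(s^{-1})s^{-(i+1)}E_{n-i+1,n}$, and then passes to the cyclic basis $f_i=\nabla_{\partial_s}^{i-1}(e_1)$. The key observation there is that the transition $e_i\leftrightarrow f_i$ is unitriangular with entries that are constants times negative powers of $s$, so those constants can only land in $v_{i,j}$ with $j<i$; for $j\ge i$ one reads off $v_{i,j}$ directly from the $E_{n-i+1,n}$-entry, giving leading coefficient $1$. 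Your route through the Stirling expansion and a scalar gauge is more elementary and entirely scalar; what you gain is avoiding matrix bookkeeping, what you lose is the transparent unitriangular picture (and your leading coefficient comes out as $(-1)^i$ rather than $1$, which is a harmless convention mismatch for the application).

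Two points to tighten. First, your weight-grading argument pins down the shift $j'=j-i$ but does \emph{not} give the triangularity $\ell\le i$: all $u_{\ell,j-i}$ carry the same weight $j-i+1$ regardless of $\ell$. The triangularity genuinely comes from the Stirling support $S(n-\ell,k)=0$ for $k>n-\ell$, and you must check separately that the scalar gauge preserves it; it does, because the new $\partial_s^{n-i}$ coefficient is a $\bC[s^{-1}]$-linear combination of the old $\partial_s^{n-i+\ell}$ coefficients with $\ell\ge 0$, each of which involves only $u_{i'}$ with $i'\le i-\ell\le i$. Second, the scalar gauge killing the $\partial_s^{n-1}$ term is by $s^{-(n-1)/2}$, a half-integer power when $n$ is even. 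This is legitimate because an $\Sl_n$-oper is a $\PGL_n$-connection with Borel reduction, so twisting the associated rank-$n$ bundle by any rank-one connection leaves the oper unchanged; the paper's gauge by $s^{\check\rho}$ has the same half-integer feature, for the same reason.
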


\begin{proof}
By taking $t=s^{-1}$, $\Hyp(u_m(t),\dots,u_n(t))$ is written as
\[
\partial_s+ \biggl(p_{-1} +u_m(s^{-1}) E_{n-m,n} +\dots + u_n(s^{-1})E_{n,n}\biggr)s^{-1}.
\]
By taking a gauge transform by $s^{\check{\rho}}$, the above connection is isomorphic to
\[
\partial_s + p_{-1}+ \check{\rho}s^{-1} + u_m(s^{-1})s^{-(m+1)} E_{n-m+1,n}+\dots+ u_n(s^{-1})s^{-n-1} E_{1,n}. 
\]
If $(e_1,\dots,e_n)$ denotes a basis for the above connection form, we set $f_i=\nabla^{i-1}_{\partial_s}(e_1)$. 
Then we have linear relationship 
\[
f_i= e_i+ c_{i,i-1} e_{i-1}s^{-1}+\dots+ c_{i,1} e_1 s^{-i+1}, \quad e_i=f_i+d_{i,i-1}f_{i-1}s^{-1}+\dots + d_{i,1} f_1 s^{-i+1},
\]
for certain constants $c_{i,j},d_{i,j}$. 
The connection matrix associated to the basis $(f_1,\dots,f_n)$ gives rise to the companion form \eqref{eq:companion partial}. 
Note that constants $c_{i,j}$'s and $d_{i,j}$'s would not contribute to $v_{i,j}$ if $j\ge i$. 
Then a simple calculation allows us to conclude the lemma. 
\end{proof}

\subsubsection{Proof of Proposition \ref{p:Hyp opers}(ii)}
Let $\Hyp$ be the affine space with coordinates $u_{i,j}$ with $m\le i\le n$ and $j\ge 0$. 
We can consider the following commutative diagram
\[
\xymatrix{
\Op_{\Sl_n}(\bP^1)_{\cG} \ar@{^{(}->}[r] \ar@{^{(}->}[d]_{Lemma~ \ref{l:isomorphism companion}} & \Op_{\fj^+}(D_{\infty}^{\times}) \ar@{^{(}->}[d] \\
\Hyp \ar[r]^-{Lemma~\ref{l:compare companions}} & \Op_{\Sl_n}(D_{\infty}^{\times})}
\]
where each arrow is a closed immersion. 
By Lemma \ref{l:ker(fZ to fZ_j^+)}(ii), the right vertical map sends $v_{i,j}$ to zero if $j\ge i+1$. 
In view of Lemma \ref{l:compare companions} and above diagram, the left vertical map sends $u_{i,j}$ to zero if $j\ge 1$. 
By Lemma \ref{l:isomorphism companion}, $\mathbb{C}[\Op_{\Sl_n}(\bP^1)_{\cG}]$ is generated by the image of $u_{m,0},\dots,u_{n,0}$. This finishes the proof of Proposition \ref{p:Hyp opers}(ii). \hfill\qed

\begin{bibdiv}
\begin{biblist}

\bib{BK}{article}
{
	AUTHOR = {Baraglia, D.},
	Author = {Kamgarpour, M.},
	TITLE = {On the image of the parabolic {H}itchin map},
	JOURNAL = {Q. J. Math.},
	FJOURNAL = {The Quarterly Journal of Mathematics},
	VOLUME = {69},
	YEAR = {2018},
	NUMBER = {2},
	PAGES = {681--708},
}

\bib{BKV}{article}
{
	AUTHOR = {Baraglia, D.},
	Author = {Kamgarpour, M.},
	Author = {Varma, R.} 
	TITLE = {Complete Integrability of the parahoric {H}itchin system},
	JOURNAL = {IMRN},
	FJOURNAL = {The Quarterly Journal of Mathematics},
	VOLUME = {21},
	YEAR = {2019},
	PAGES = {6499--6528},
}

\bib{BD}{article}
{
	AUTHOR = {Beilinson, A.},
	Author = {Drinfeld, V.},
	TITLE  = {Quantization of Hitchin's integrable system and Hecke eigensheaves},
	Note = {\url{https://www.math.uchicago.edu/\textasciitilde mitya/langlands/hitchin/BD-hitchin.pdf}},
	Year={1997},
}

\bib{CK}{article}
{
	AUTHOR = {Chen, T.},
	Author = {Kamgarpour, M.},
	TITLE = {Preservation of depth in the local geometric {L}anglands
		correspondence},
	JOURNAL = {Trans. Amer. Math. Soc.},
	FJOURNAL = {Transactions of the American Mathematical Society},
	VOLUME = {369},
	YEAR = {2017},
	NUMBER = {2},
	PAGES = {1345--1364},
}

\bib{Del77}{article}
{
  title={Applications de la formule des traces aux sommes trigonom{\'e}trigues},
  author={Deligne, P.},
  booktitle={Cohomologie {\'e}tale},
  pages={168--232},
  year={1977},
  publisher={Springer}
}

\bib{MolevA}{article}
{
	AUTHOR = {Chervov, A. V.},
	Author = {Molev, A. I.},
	TITLE = {On higher-order {S}ugawara operators},
	JOURNAL = {Int. Math. Res. Not. IMRN},
	FJOURNAL = {International Mathematics Research Notices. IMRN},
	YEAR = {2009},
	NUMBER = {9},
	PAGES = {1612--1635},
}

\bib{Fal05}{article}
{
  title={Algebraic loop groups and moduli spaces of bundles},
  author={Faltings, G},
  journal={J. Eur. Math. Soc}
  fjournal={Journal of the European Mathematical Society},
  volume={5},
  number={1},
  pages={41--68},
  year={2003},
}

\bib{Frenkel}{article}
{
	AUTHOR = {Frenkel, E.},
	TITLE = {Langlands correspondence for loop groups},
	SERIES = {Cambridge Studies in Advanced Mathematics},
	VOLUME = {103},
	PUBLISHER = {Cambridge University Press, Cambridge},
	YEAR = {2007},
	PAGES = {xvi+379},
}

\bib{FF}{article}
{
	AUTHOR = {Feigin, B.}
	Author = {Frenkel, E.},
	TITLE = {Affine Kac-Moody algebras at the critical level and Gelfand-Diki algebras.},
	SERIES = {Infniteanalysis, PartA,B(Kyoto,1991), Adv. Ser. Math. Phys.},
	VOLUME = {16},
	PUBLISHER = {WorldSci.Publ.,RiverEdge, NJ},
	YEAR = {1992},
	PAGES = { 197–215},
}

\bib{FG}{article} 
{
    AUTHOR = {Frenkel, E.}
    Author= {Gross, B.},
    TITLE = {A rigid irregular connection on the projective line},
    JOURNAL = {Ann. of Math. (2)},
    VOLUME = {170},
    YEAR = {2009},
    NUMBER = {3},
    PAGES = {1469--1512},
}

\bib{GR}{article}
{
    Author= {Gross, B.},
    Author= {Reeder, M.}
    TITLE = {Arithmetic invariants of discrete Langlands parameters},
    JOURNAL = {Duke Math. J.},
    VOLUME = {154},
    YEAR = {2010},
    PAGES = {431--508},
}

\bib{HNY}{article} 
{
    Author={Heinloth, J.},
    Author={Ng\^{o}, B. C.},
    Author={Yun, Z.},
    Title={Kloosterman sheaves for reductive groups}, 
    Year={2013}, 
    Journal={Ann. of Math. (2)},
    Volume={177},
    Pages={241--310},
}

\bib{ND}{article}
{
	AUTHOR = {Dupré, N.},
	TITLE  = {Subgroups of Linear Algebraic Groups},
	Note = {\url{https://www.dpmms.cam.ac.uk/~nd332/alg_gps.pdf}},
}

\bib{JY}{article}
{
	Author={Jakob, K.},
	Author={Yun, Z.},
	title={Euphotic representations and rigid automorphic data},
	year={2020},
	eprint={2008.04029},
	archivePrefix={arXiv},
	primaryClass={math.AG}
}

\bib{MasoudConductor}{article}
{
	AUTHOR = {Kamgarpour, M.},
	TITLE = {On the notion of conductor in the local geometric {L}anglands	correspondence},
	JOURNAL = {Canad. J. Math.},
	FJOURNAL = {Canadian Journal of Mathematics. Journal Canadien de	Math\'{e}matiques},
	VOLUME = {69},
	YEAR = {2017},
	NUMBER = {1},
	PAGES = {107--129},
}

\bib{KY}{article}
{
    Author ={Kamgarpour, M.},
    Author ={Yi, L.},
    Title ={Geometric Langlands for Hypergeometric sheaves},
    JOURNAL ={Trans. Amer. Math. Soc.}
      Volume={374},
   Year ={2021},
   Pages={8435--8481},
}

\bib{KatzBook}{book}
{
	AUTHOR = {Katz, N. M.},
	TITLE = {Exponential sums and differential equations},
	SERIES = {Annals of Mathematics Studies},
	VOLUME = {124},
	PUBLISHER = {Princeton University Press, Princeton, NJ},
	YEAR = {1990},
	PAGES = {xii+430},
}

\bib{Milne}{book}
{
	AUTHOR = {Milne, J. S.},
	TITLE = {Algebraic groups},
	SERIES = {Cambridge Studies in Advanced Mathematics},
	VOLUME = {170},
	NOTE = {The theory of group schemes of finite type over a field},
	PUBLISHER = {Cambridge University Press, Cambridge},
	YEAR = {2017},
	PAGES = {xvi+644},
	ISBN = {978-1-107-16748-3},
}

\bib{MolevBCD}{article}
{
	AUTHOR = {Molev, A. I.},
	TITLE = {Feigin-{F}renkel center in types {$B$}, {$C$} and {$D$}},
	JOURNAL = {Invent. Math.},
	FJOURNAL = {Inventiones Mathematicae},
	VOLUME = {191},
	YEAR = {2013},
	NUMBER = {1},
	PAGES = {1--34},
}

\bib{MolevBCDnew}{article}
{
	AUTHOR = {Molev, A. I.},
	title={On Segal--Sugawara vectors for the orthogonal and symplectic Lie algebras},
	year={2020},
	eprint={2008.05256},
	archivePrefix={arXiv},
	primaryClass={math.RT}
}

\bib{MolevBook}{book}
{
	AUTHOR = {Molev, A. I.},
	TITLE = {Sugawara operators for classical {L}ie algebras},
	SERIES = {Mathematical Surveys and Monographs},
	VOLUME = {229},
	PUBLISHER = {American Mathematical Society, Providence, RI},
	YEAR = {2018},
	PAGES = {xiv+304},
}

\bib{Pan05}{article}
{
	AUTHOR = {Panyushev, D. I.},
	TITLE = {On invariant theory of $\theta$-groups},
	JOURNAL = {Journal of Algebra},
	FJOURNAL = {Journal of Algebra},
  volume={283},
  number={2},
  pages={655--670},
  year={2005},
}


\bib{PappasRapoport}{article}
{
	AUTHOR = {Pappas, G.},
	Author = {Rapoport, M.},
	TITLE = {Twisted loop groups and their affine flag varieties},
	NOTE = {With an appendix by T. Haines and Rapoport},
	JOURNAL = {Adv. Math.},
	FJOURNAL = {Advances in Mathematics},
	VOLUME = {219},
	YEAR = {2008},
	NUMBER = {1},
	PAGES = {118--198},
}

\bib{RY}{article}
{
	Author = {Reeder, M.},
	Author = {Yu, J.},
	TITLE = {Epipelagic representations and invariant theory},
	JOURNAL = {J. Amer. Math. Soc.},
	FJOURNAL = {Journal of the American Mathematical Society},
	VOLUME = {27},
	YEAR = {2014},
	NUMBER = {2},
	PAGES = {437--477},
}

\bib{Vinberg}{article} 
{ 
	AUTHOR = {Vinberg, \`E. B.},
	TITLE = {The {W}eyl group of a graded {L}ie algebra},
	JOURNAL = {Izv. Akad. Nauk SSSR Ser. Mat.},
	FJOURNAL = {Izvestiya Akademii Nauk SSSR. Seriya Matematicheskaya},
	VOLUME = {40},
	YEAR = {1976},
	NUMBER = {3},
	PAGES = {488--526, 709},
}

\bib{XuZhu}{article}
{
	Author = {Xu, D.},
	Author = {Zhu, X.},
	title  = {Bessel $F$-isocrystals for reductive groups},
	year={2022},
	JOURNAL = {Invent. Math.}
	eprint = {https://doi.org/10.1007/s00222-021-01079-5}
}

\bib{Yakimova}{article}
{
	Author={Yakimova, O.},
	title={Symmetrisation and the Feigin-Frenkel centre},
	year={2019},
	eprint={1910.10204},
	archivePrefix={arXiv},
	primaryClass={math.RT}
}

\bib{YunGalois}{article}
{
	AUTHOR = {Yun, Z.},
	Title={Motives with exceptional Galois groups and the inverse Galois problem},
	Journal={Invent. Math.}
	Year={2014},
	Volume={196},
	Pages={267--337}
}

\bib{YunCDM}{incollection}
{
	AUTHOR = {Yun, Z.},
	TITLE = {Rigidity in automorphic representations and local systems},
	BOOKTITLE = {Current developments in mathematics 2013},
	PAGES = {73--168},
	PUBLISHER = {Int. Press, Somerville, MA},
	YEAR = {2014},
}

\bib{YunEpipelagic}{article}
{
	AUTHOR = {Yun, Z.},
	Title={Epipelagic representations and rigid local systems},
	Journal={Selecta Math. (N.S.)}, 
	Year={2016},
	pages={1195--1243},
	Volume={22}, 
}

\bib{Zhu14}{article}
{
	AUTHOR = {Zhu, X.},
	TITLE = {On the coherence conjecture of Pappas and Rapoport},
	JOURNAL = { Ann. Math.},
	VOLUME = {180},
	YEAR = {2014},
	NUMBER = {1},
	PAGES = {1--85},
}

\bib{Zhu}{article}
{
	AUTHOR = {Zhu, X.},
	TITLE = {Frenkel-{G}ross' irregular connection and
		{H}einloth-{N}g\^{o}-{Y}un's are the same},
	JOURNAL = {Selecta Math. (N.S.)},
	FJOURNAL = {Selecta Mathematica. New Series},
	VOLUME = {23},
	YEAR = {2017},
	NUMBER = {1},
	PAGES = {245--274},
}

\end{biblist}
\end{bibdiv} 

\end{document}